\newcommand{\PR}{\mathcal S}
\newcommand{\C}{{\mathcal C}}
\newcommand{\PSI}{{\mathbf\Psi}}
\newcommand{\tensor}{\mathop{\otimes}}
\newcommand{\tensorpow}{\otimes}
\newcommand{\dash}{\nobreakdash-\hspace{0pt}}
\newcommand{\Ob}{\mathrm{Ob}}
\newcommand{\YD}[1]{{{\vphantom{X}}_{#1}^{#1}\mathcal{YD}}}
\newcommand{\QYD}[1]{{ {\vphantom{X}}_{#1} \mathcal{QYD} }}
\newcommand{\coQYD}[1]{{ {\vphantom{X}}^{#1} \mathcal{QYD} }}
\newcommand{\Mod}[1]{ { {\vphantom{X}}_{#1} \mathcal{M} } }
\newcommand{\Comod}[1]{ { {\vphantom{X}}^{#1} \mathcal{M} } }
\newcommand{\D}{{\widetilde A}}
\newcommand{\RD}{\bar A}
\newcommand{\RHD}{{\mathcal H}}
\newcommand{\Dbl}{A}
\newcommand{\Uminus}{U^-}
\newcommand{\Uplus}{U^+}
\newcommand{\Iminus}{K_\Dbl^-}
\newcommand{\Iplus}{K_\Dbl^+}
\newcommand{\epsilonminus}{{\epsilon^-}}
\newcommand{\epsilonplus}{{\epsilon^+}}
\newcommand{\starV}{{\vphantom{V}^*V}}
\newcommand{\Ycr}{Y_{\PR}}
\newcommand{\RYcr}{\overline{Y}_{\PR}}
\newcommand{\lgen}{\text{$<$}}
\newcommand{\rgen}{\text{$>$}}
\newcommand{\Wor}{\mathrm{Wor}}
\newcommand{\field}{\Bbbk}
\newcommand{\Symm}{\mathbb{S}}
\newcommand{\coev}{\mathrm{coev}}
\newcommand{\lcprod}{\rtimes}
\newcommand{\rcprod}{\ltimes}
\newcommand{\act}{\mathop{\triangleright}}
\newcommand{\ract}{\mathop{\triangleleft}}
\newcommand{\qact}{{\mathop{>}}}
\newcommand{\Obstr}{\mathit{Obstr}}
\newcommand{\pr}{\mathrm{pr}_H}
\newcommand{\deriv}{\tilde\partial}
\newcommand{\Hom}{\mathrm{Hom}}
\newcommand{\hh}{\mathfrak{h}}
\DeclareMathOperator{\id}{id}
\DeclareMathOperator{\End}{End}
\theoremstyle{plain}
\newtheorem{theorem}{Theorem}[section]
\newtheorem*{theoremA}{Theorem A}
\newtheorem*{theoremB}{Theorem B}
\newtheorem*{theoremC}{Theorem C}
\newtheorem*{theoremD}{Theorem D}
\newtheorem*{theoremE}{Theorem E}
\newtheorem*{theoremF}{Theorem F}
\newtheorem*{theoremG}{Theorem G}
\newtheorem{proposition}[theorem]{Proposition}
\newtheorem*{problem*}{Problem}
\newtheorem{lemma}[theorem]{Lemma}
\newtheorem{corollary}[theorem]{Corollary}
\newtheorem*{corollary*}{Corollary}
\theoremstyle{definition}
\newtheorem{remark}[theorem]{Remark}
\newtheorem*{remark*}{Remark}
\newtheorem{definition}[theorem]{Definition}
\newtheorem*{definition*}{Definition}
\newtheorem{example}[theorem]{Example}
\newtheorem*{example*}{Example}
\begin{document}

\title{Braided doubles}

\author{Yuri Bazlov}
\address{School of Mathematical Sciences,
Queen Mary, University of London,
Mile End Road, London E1 4NS, UK}

\curraddr{Mathematics Institute,
University of Warwick,
Coventry CV4 7AL, UK}

\email{y.bazlov@warwick.ac.uk}

\author{Arkady Berenstein}
\address{Department of Mathematics, University of Oregon,
Eugene, OR 97403, USA} 
\email{arkadiy@math.uoregon.edu}
 
\subjclass[2000]{Primary
20G42; 
Secondary
 14M15, 
22E46
}

\pagestyle{myheadings}
\markboth{Y.~BAZLOV and A.~BERENSTEIN}{BRAIDED DOUBLES}

\begin{abstract}

We introduce and study  a large class of algebras with triangular
decomposition which we call \emph{braided doubles}. 
Braided doubles provide a unifying framework for classical
and quantum universal enveloping algebras and rational Cherednik
algebras. We classify braided doubles in terms of
\emph{quasi-Yetter-Drinfeld} (QYD) \emph{modules} 
over Hopf algebras which turn out to be a generalisation of the 
ordinary Yetter-Drinfeld modules. 
To each braiding (a solution to the braid equation) we associate a
QYD-module and the corresponding \emph{braided Heisenberg double} ---
this is a quantum deformation of the Weyl algebra where the role of
polynomial algebras is played by Nichols-Woronowicz algebras. 
Our main result is that any rational Cherednik algebra canonically
embeds into the braided Heisenberg double attached to the 
corresponding complex reflection group.

\end{abstract}

\maketitle
\setcounter{tocdepth}{1}
\begin{center}
\textsc{Introduction}
\end{center}

In the present paper we introduce and study  a large class of algebras
with triangular decomposition which we call \emph{braided doubles}. Our
approach is motivated by two recent developments in 
representation theory and quantum algebra:

\textbullet\quad The discovery by
Etingof and Ginzburg \cite{EG} of rational Cherednik algebras
$H_{t,c}(W)$  for an arbitrary complex reflection 
group $W$.  Similarly to enveloping algebras and their
quantum deformations, rational Cherednik algebras admit  a 
triangular decomposition $H_{t,c}(W)=S(\hh)\otimes \mathbb{C}
W\otimes S(\hh^*)$ (here $\hh$ is the reflection representation of
$W$). 

\textbullet\quad The emergence of the Fomin-Kirillov algebra as a
noncommutative model for the cohomology of the flag 
manifold \cite{FK}, and its interpretation by Majid \cite{Mnoncomm}
in terms of a  Nichols\dash Woronowicz algebra 
$\mathcal{B}_{S_n}$ attached to the symmetric group. 
The Fomin-Kirillov model was later 
generalised to all Coxeter groups $W$ by the first author \cite{B}, 
as a $W$\dash equivariant homomorphism $S(\hh)\to \mathcal{B}_W$, 
where $\mathcal{B}_W$ is the Nichols\dash Woronowicz algebra attached to $W$. 

Our first principal result, Theorem~\ref{th:emb0}, extends the above 
homomorphism $S(\hh)\to  \mathcal B_W$ to 
an embedding of the restricted Cherednik algebra
$\overline{H}_{0,c}(W)$ in what
we call \emph{a braided Heisenberg double} $\mathcal H_W$, 
which also has triangular 
decomposition  $\mathcal H_W=\mathcal B_W\otimes \mathbb{C} W \otimes
\mathcal B_W$. 
For nonzero $t$, such an embedding of $H_{t,c}(W)$ is obtained by
replacing $\mathcal B_W$ with its
deformation $\mathcal {B}_{W,t}$. 
We thus find a new, quantum group\dash like realisation of each
rational Cherednik algebra.

The above has prompted us to look for a framework in which
both the enveloping algebras (and quantum groups) and the rational
Cherednik algebras could be uniformly treated. 
This is precisely the framework of
\emph{braided doubles}, where the aforementioned
objects fit into a general class of algebras with triangular
decomposition $A=U^-\otimes H\otimes U^+$ over a Hopf algebra $H$,
such that the algebras $U^-$, $U^+$ are generated by dually paired $H$\dash
modules $V$, $V^*$ and the commutator of $V$ and $V^*$ in $A$ lies in~$H$.

Surprisingly, we have been able to completely classify (Theorem
~\ref{thm_qYD}) all \emph{free} braided 
doubles  in terms of \emph{quasi\dash
Yetter\dash Drinfeld modules}, 
which are a generalisation of Yetter\dash Drinfeld modules
\cite{Y,Mdoubles}.   
Our quasi\dash Yetter\dash Drinfeld modules
turn out to have a natural interpretation in terms of monoidal
categories. Using a variant of the Tannaka\dash Krein duality, we
prove in Section~\ref{sect:structural} that a set $\Pi$ of
\emph{compatible braidings} on a vector space $V$ turns $V$ into a
quasi\dash YD module over a 
certain Hopf algebra $H_\Pi$, hence yields a free braided double of the form
$T(V)\tensor H_\Pi \tensor T(V^*)$.

Braided doubles are such quotients of free braided doubles that still
admit triangular decomposition. The most interesting are the 
\emph{minimal doubles}. 
For a quasi\dash Yetter\dash
Drinfeld module $V$ over a Hopf algebra $H$, we describe
(Theorem~\ref{ker_qbfact}) the relations in the corresponding minimal
double $\RD(V)$, implicitly as 
kernels of  \emph{quasibraided factorials} on $T(V)$ and $T(V^*)$ 
given in terms of the quasi\dash YD structure.  If $V$ is a
Yetter\dash Drinfeld module, $\RD(V)$ is a braided Heisenberg double,
which factorises into $H$ and two dually paired Nichols\dash
Woronowicz algebras (Theorem \ref{th:Nichols}). 
 Prominent examples of
minimal doubles are the universal enveloping algebra $U(\mathfrak g)$ and
its quantisation $U_q(\mathfrak g)$; the relations in minimal
doubles are therefore a (vast) generalisation of the
Serre relations.

Finally, we discover that any quasi\dash YD module can be obtained as
a certain sub-quotient of a Yetter\dash Drinfeld module
(Theorem~\ref{th:perfect}). In interesting cases, this allows us to
embed a minimal double in a braided Heisenberg double. We put this
observation to use when we classify braided doubles $U^-\tensor
\field G \tensor U^+$ over group algebras, where $U^+$ and $U^-$ are
commutative. The outcome of the classification is rational Cherednik
algebras; this is how the motivating results, described in the
beginning of this Introduction, naturally re-emerge in the braided doubles
setup. 
 An immediate consequence of the theory is the 
\emph{PBW theorem} for rational
Cherednik algebras over an arbitrary field
--- a crucial property which has so far been known only in
characteristic zero.

\subsection*{Acknowledgments}
We thank Pavel Etingof, Victor Ginz\-burg, Victor Kac, Shahn Majid
and Catharina Stroppel for useful conversations. We acknowledge
partial support of the EPSRC grant GR/S10629/01 (Y.B.) and the NSF grant
DMS-0501103 (A.B.).

\tableofcontents

\section{Overview of main results}
\label{sect:overview}

In this Section we state and discuss the main
results of the paper. Details and proofs will be given in Sections
\ref{sect:structural}--\ref{sect:cherednik}. We assume that
the reader is familiar with the basics of the theory of Hopf algebras,
for which \cite{Sw} is one of the standard references.

\subsection*{Notation} Throughout the paper, $\field$ is the ground
field (of arbitrary characteristic). Vector spaces, tensor products,
(bi)algebras and Hopf 
algebras are  over $\field$. The tensor algebra of a vector space $V$
is denoted by $T(V)$; it has grading 
$T(V)=\oplus_{n\ge 0} V^{\tensorpow n}$. 
We use Sweedler\dash type notation (without the summation sign, 
\cite[1.4.2]{Mon}): 
if $H$ is a bialgebra, the 
coproduct of $h\in H$ is denoted by $h_{(1)}\tensor h_{(2)}\in
H\tensor H$; 
a left coaction $\delta \colon V\to H\tensor V$ of $H$ on a space $V$
is denoted by $v\mapsto v^{(-1)} \tensor v^{(0)}$. 
By writing $\delta(v)=v^{[-1]}\tensor v^{[0]}$, we imply that
$\delta$ is not a coaction but just a linear map $V\mapsto H\tensor V$
(referred to in the paper as \emph{quasicoaction}). 
The symbols $\act$ and $\ract$ mean left, resp.\ right, action of
a bialgebra.  The counit of a bialgebra $H$ is denoted by
$\epsilon\colon H \to \field$. 

If $U^-$ (resp.\ $U^+$) is a left
(resp.\ right) module algebra for $H$, the corresponding semidirect
product is denoted by $U^-\lcprod H$ (resp.\ $H\rcprod U^+$), see
\cite[Section 7.2]{Sw}.

Finally, if $H$ is a Hopf algebra, then $S\colon
H\to H$ denotes the antipode of $H$. 
All Hopf algebras are assumed to have
\emph{bijective} antipode.  
Theorem A below holds when $H$ is any bialgebra; in Theorems B--G, we assume
$H$ to be a Hopf algebra.  

\subsection{A problem in deformation theory}
\label{defproblem}

Let $V$ be a finite\dash dimensional space over $\field$ with left
action, $\act\colon H\tensor V \to V$, of a bialgebra $H$. 
The dual space $V^*$ is canonically a right $H$\dash module,
with right action $\ract \colon V^* \tensor H \to V^*$ defined by 
$$
   \langle f\ract h, v\rangle = \langle f, h\act v\rangle, \quad f\in V^*, \ h\in H, \
   v\in V,
$$
where $\langle\cdot,\cdot \rangle$ is the pairing between $V^*$ and $V$. 

To every linear map $\beta\colon V^*\tensor V \to H$ (a
\emph{bialgebra\dash valued pairing}) there corresponds an
associative algebra $\D_\beta$, generated by the spaces $V$, $V^*$ and
the algebra $H$ subject to the relations 
$$
    fh = h_{(1)} (f\ract h_{(2)}),\quad 
    hv = (h_{(1)} \act v) h_{(2)},\quad
    [f,v]=\beta(f,v) \in H, 
$$  
where $f\in V^*$, $h\in H$, $v\in V$. 
Here and below, $[f,v]$ denotes the commutator $fv-vf$.
It is clear from the defining relations that the map  
$$
  m_\beta \colon T(V)\tensor H \tensor T(V^*) \to \D_\beta,
$$
of vector spaces, 
given by multiplication of generators in $\D_\beta$, is surjective. We
say that $\D_\beta$ has \emph{triangular decomposition over $H$}, if
$m_\beta$ is one\dash to\dash one. We will indicate this by writing
\begin{equation*}
          \D_\beta = T(V)\lcprod H \rcprod T(V^*)\,;
\end{equation*}
observe that the subalgebras $T(V)\lcprod H$ and $H \rcprod T(V^*)$ of
$\D_\beta$ are indeed
semidirect products with respect to the action of $H$, which extends from
$V$ to $T(V)$ (resp.\ from $V^*$ to $T(V^*)$) via the coproduct in $H$.  

The algebra $\D_0$ can be shown to have triangular decomposition. 
Algebras $\D_\beta$ may be viewed as deformations of $\D_0$, with
parameter $\beta$ which takes values in $\mathrm{Hom}_\field
(V^*\tensor V, H)$. 
Triangular decomposition means that $\D_\beta$ is a flat
deformation of $\D_0$. 
Our first principal result (which appears as  Theorem \ref{thm_qYD} in
Section~\ref{sect:braideddoubles}) describes all values of $\beta$ for
which the deformation is flat: 
\begin{theoremA}
The algebra $\D_\beta$ has triangular decomposition
over the bialgebra $H$, if and only if the $H$\dash valued pairing $\beta\colon
V^*\tensor V \to H$ satisfies 
\begin{equation*}
     h_{(1)}\, \beta(f\ract h_{(2)}, v) = \beta(f,h_{(1)} \act v) \,
     h_{(2)}  
\label{eqA}
\tag{A}
\end{equation*}
for all $f\in V^*$, $v\in V$ and $h\in H$.
\end{theoremA}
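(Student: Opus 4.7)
\emph{Necessity.} To see that (A) is forced by triangular decomposition, I would compute the product $fhv$ in $\D_\beta$, for arbitrary $f \in V^*$, $h \in H$, $v \in V$, in two different reduction orders and equate the results inside $T(V) \tensor H \tensor T(V^*)$, which is legitimate once $m_\beta$ is assumed bijective. Pushing $f$ through $h$ first via $fh = h_{(1)}(f \ract h_{(2)})$, then commuting $(f \ract h_{(2)})$ past $v$ via $[f',v] = \beta(f',v)$, and finally moving $h_{(1)}$ past the resulting $v$ yields
\begin{equation*}
fhv = (h_{(1)} \act v)\, h_{(2)} \,(f \ract h_{(3)}) + h_{(1)}\, \beta(f \ract h_{(2)}, v).
\end{equation*}
Alternatively, moving $h$ past $v$ first, then commuting $f$ through $h_{(1)} \act v \in V$, and finally through $h_{(2)}$ yields
\begin{equation*}
fhv = (h_{(1)} \act v) \, h_{(2)} \,(f \ract h_{(3)}) + \beta(f, h_{(1)} \act v)\, h_{(2)}.
\end{equation*}
The leading terms agree by coassociativity, so injectivity of $m_\beta$ forces the two remainders to coincide, which is exactly identity (A).

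\emph{Sufficiency.} Assuming (A), the plan is to exhibit a $\D_\beta$-module structure on $M := T(V) \tensor H \tensor T(V^*)$ for which $a \mapsto a \cdot (1 \tensor 1 \tensor 1)$ is a left inverse to $m_\beta$, which forces injectivity. I would let $V$ act by left multiplication on the first tensor factor, let $H$ act by $h \cdot (w \tensor g \tensor \phi) := (h_{(1)} \act w) \tensor (h_{(2)} g) \tensor \phi$ using the coproduct-extended action of $H$ on $T(V)$, and define the $V^*$-action inductively: $f \cdot (1 \tensor g \tensor \phi) := 1 \tensor g_{(1)} \tensor (f \ract g_{(2)})\phi$, and for $w = vw'$,
\begin{equation*}
f \cdot (vw' \tensor g \tensor \phi) := v \cdot \bigl(f \cdot (w' \tensor g \tensor \phi)\bigr) + \beta(f, v) \cdot (w' \tensor g \tensor \phi).
\end{equation*}
One then verifies that every defining relation of $\D_\beta$ is respected on $M$: the commutator relation holds by construction, the semidirect-product relation for $H$ and $V$ amounts to $H$-linearity on $T(V)$, and $(hh') \cdot m = h \cdot (h' \cdot m)$ is standard. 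Surjectivity of $m_\beta$ is immediate, and injectivity follows because $m_\beta(w \tensor g \tensor \phi) \cdot (1 \tensor 1 \tensor 1) = w \tensor g \tensor \phi$.

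\emph{Main obstacle.} The one place where condition (A) is genuinely needed, and where I expect the real work to lie, is the verification that the $f$-action satisfies $(fh) \cdot m = (h_{(1)}(f \ract h_{(2)})) \cdot m$ for all $m \in M$. Induction on the length of $w$ reduces this identity, after pushing the leading $f$ past the first $V$-letter via the commutator rule, to comparing an $H$-valued tail of the form $h_{(1)} \beta(f \ract h_{(2)}, v)$ with $\beta(f, h_{(1)} \act v) h_{(2)}$; these coincide precisely by (A). Every other coherence check is routine semidirect-product bookkeeping, so the whole theorem collapses onto this single identity, which the two-way computation of $fhv$ shows to be both necessary and sufficient.
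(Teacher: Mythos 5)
Your necessity argument is exactly the paper's: expand $fhv$ in two reduction orders, note that the leading terms $(h_{(1)}\act v)\,h_{(2)}\,(f\ract h_{(3)})$ agree, and use injectivity of $m_\beta$ to equate the two $H$-valued remainders. For sufficiency, however, you take a genuinely different route. The paper builds an associative product on $T(V)\tensor H\tensor T(V^*)$ directly, as an algebra factorisation of $X=T(V)\lcprod H$ with $Y=T(V^*)$: it constructs a twist map $c\colon Y\tensor X\to X\tensor Y$ from the partial twist $c'(f,ah)=(\deriv_f a)\cdot h\tensor 1+ah_{(1)}\tensor f\ract h_{(2)}$ and verifies the two compatibility conditions of Proposition~\ref{prop_hexagons}, the crucial input being the identity $h_{(1)}\cdot\deriv_{f\ract h_{(2)}}b=\deriv_f(h_{(1)}\act b)\cdot h_{(2)}$ of Lemma~\ref{Leibn}, proved from (A) by a Leibniz-rule induction. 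You instead make $M=T(V)\tensor H\tensor T(V^*)$ a left $\D_\beta$-module with cyclic vector $1\tensor 1\tensor 1$ splitting $m_\beta$; your inductively defined $V^*$-action is precisely the operator $\deriv_f$ of the paper (cf.\ Remark~\ref{rem_comm}) extended to all of $M$, and your key coherence check --- $(fh)\cdot m=(h_{(1)}(f\ract h_{(2)}))\cdot m$, reduced by induction on the length of the $T(V)$-leg to $h_{(1)}\beta(f\ract h_{(2)},v)=\beta(f,h_{(1)}\act v)h_{(2)}$ --- is the same identity in module-theoretic clothing. Your version is somewhat lighter on bookkeeping, since a module over a presented algebra only requires checking the defining relations on generators, whereas the paper must verify associativity of the twisted product; what the paper's construction buys in exchange is the explicit algebra structure on the whole tensor product, which it reuses later (for the commutator formula and the computation of the ideals $I(V,\delta)$), rather than just the injectivity of $m_\beta$. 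Both arguments are correct and isolate condition (A) at the same single point.
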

\begin{remark*}
Observe that equation (\ref{eqA}) is necessary for $\D_\beta$ to have
triangular decomposition, because of obstruction in
degree $3$; the product $fhv$ of three generators $f\in V^*$, $h\in H$
and $v\in V$, can be expanded in two ways which must coincide:
\begin{equation*}
0 = (fh)v -f(hv) 
= h_{(1)} \beta(f\ract h_{(2)}, v) - \beta(f,h_{(1)}\act v)h_{(2)} 
\ \in H \hookrightarrow  \D_\beta.
\end{equation*}
\end{remark*}

\begin{remark*}
When $H=\field G $ is a group algebra of a group $G$, equation (A)
means that $\beta\colon V^*\tensor V \to \field G$ is a $G$\dash
equivariant map, where the action of $g\in G$ on $V^*\tensor V $ is
given by
$g(f\tensor v):=f\ract g^{-1} \tensor g\act v$, and the $G$\dash
action on $\field G$ is the adjoint one. In other words, equation (A)
is precisely what allows
us to extend the action of $G$ from each of the factors $T(V)$,
$\field G$, $T(V^*)$ in the triangular decomposition to a covariant
$G$\dash action on the whole
algebra $\D_\beta$. 

For a Hopf algebra $H$, one shows that (under mild
technical assumptions) the $H$\dash action
extends in this way to a covariant $H$\dash action on the algebra
$\D_\beta$ if and only if $H$ is cocommutative.
We would thus like to warn the reader that in general, algebras
$\D_\beta$ have no natural covariant action of $H$ and \emph{cannot} be
viewed as algebras in the category of $H$\dash modules. 
\end{remark*}

We will now make the above deformation problem harder by assuming 
additional relations, not necessarily quadratic, between the 
elements of $V$ (resp.\ $V^*$). Let $I^-\subset T^{>0}(V)$, 
$I^+\subset T^{>0}(V^*)$ be two\dash sided ideals.
The algebra $\D_\beta/\lgen I^-,I^+ \rgen$ is said to have triangular
decomposition over $H$,  if the natural linear map 
$$
T(V)/I^- \tensor H \tensor T(V^*)/I^+ \ \to\ 
\D_\beta/\lgen I^-,I^+\rgen
$$ 
is bijective. (Angular brackets denote a two\dash sided ideal
with given generators.)

Once again, algebras $\D_\beta/\lgen I^-, I^+\rgen$ with
triangular decomposition are flat deformations of $\D_0 /\lgen I^-,
I^+\rgen$. But now, instead of looking for the values of 
$\beta\in \mathrm{Hom}_\field(V^*\tensor V,$ $ H)$ 
which guarantee flatness, we pose an inverse problem: 
\begin{problem*}
For a given bialgebra\dash valued pairing
$\beta\colon V^*\tensor V \to H$, 
describe all possible ideals $I^-\subset T^{>0}(V)$, $I^+\subset
T^{>0}(V^*)$ of relations such that 
the algebra $\D_\beta /\lgen I^-,I^+\rgen$
has triangular decomposition over $H$.
\end{problem*}

To attack this deformation problem, we introduce and study 
quasi\dash Yetter\dash Drinfeld  modules. 

\subsection{Quasi-Yetter-Drinfeld modules}
\label{q-y-d mod}

The following observation is crucial for the theory of braided doubles
developed in the present paper: equation (\ref{eqA}) appears in the
definition of a \emph{Yetter\dash Drinfeld module}
over a bialgebra $H$. There is, however, an extra ingredient in that
definition, which we do not have in our picture.  

We will now define finite\dash dimensional Yetter\dash Drinfeld
modules in a way 
different from (but equivalent to) what is usually seen in the quantum
groups literature, and will introduce their generalisation called
\emph{quasi\dash Yetter\dash Drinfeld modules}. 
Note that the space $V^*\tensor V$ has a standard structure of a
coalgebra, dual to the algebra $\End(V)\cong V\tensor V^*$.
\begin{definition*}
A \emph{quasi\dash Yetter\dash Drinfeld module}
over a bialgebra $H$ is a finite\dash
dimensional space $V$ with the following structure:
\begin{itemize}
\item[-- ] left $H$\dash action $\act$;
\item[-- ] linear map $\beta\colon V^*\tensor V \to H$, which satisfies
(\ref{eqA}).
\end{itemize}
\end{definition*}
\begin{definition*}
A \emph{Yetter\dash Drinfeld module} over $H$ is a quasi\dash Yetter\dash
Drinfeld module where the map $\beta$ is a morphism of coalgebras. 
\end{definition*}
Yetter\dash Drinfeld modules over a bialgebra $H$ were introduced by
Yetter in \cite{Y} as ``crossed bimodules'', 
and were shown by Majid \cite{Mdoubles} to be the
same as modules over the Drinfeld quantum double $D(H)$ when $H$ is a
finite\dash dimensional Hopf algebra. 
It can be said that Yetter\dash Drinfeld modules' raison
d'\^etre is their relationship with \emph{braidings}. 
A Yetter\dash Drinfeld module structure on the space $V$ gives rise to a map
$$
   \Psi\colon V\tensor V \to V\tensor V, \quad \Psi(v\tensor w) =
   \beta(f^a,v)\act w \tensor v_a,
$$
which is a braiding, i.e., a solution to the 
quantum Yang\dash Baxter equation
$\Psi_{12}\Psi_{23}\Psi_{12}$ $=$ $\Psi_{23}\Psi_{12}\Psi_{23}$. 
(Here $\{f^a\}$, $\{v_a\}$ denote a pair of dual bases of $V^*$, $V$;
summation over the index $a$ is implied.) Moreover, Yetter\dash
Drinfeld modules over a Hopf algebra form a \emph{braided monoidal
  category} (see a survey in \cite[4.3]{CGW}).

 Traditionally, in the definition of Yetter\dash
Drinfeld module over a bialgebra $H$ 
the $H$\dash valued pairing  between $V^*$ and $V$ is encoded by a
linear map $V$ to $H\tensor V$:
\begin{equation*}
    \beta\colon V^* \tensor V \to H \qquad \leadsto \qquad
    \delta=\delta_\beta\colon V \to H \tensor V, 
\quad
     \delta(v) = \beta(f^a,v)\tensor v_a.
\end{equation*}
The Yetter\dash Drinfeld condition translates in terms of $\delta$ 
into a formula with two levels of Sweedler notation, 
see Definition~\ref{def:qydmod}. 
Moreover, $\beta$ is a coalgebra morphism if and only if 
$\delta$ is a coaction of $H$. 
Dropping the coaction condition leads to the class of  quasi\dash
Yetter\dash Drinfeld modules.
We will think of quasi\dash YD modules for a bialgebra $H$ as pairs
$(V,\delta)$, where $V$ is an $H$\dash module and
$\delta\in\Hom_\field(V,H\tensor V)$ is a \emph{Yetter\dash Drinfeld
quasicoaction}.

The original motivation for the quasi\dash YD modules was the deformation
problem given above. However, Section~\ref{sect:structural} of the present
paper  treats them from a categorical
viewpoint, drawing a parallel with Yetter\dash Drinfeld modules. In
particular, quasi\dash YD modules over a Hopf algebra form  what we
call a \emph{semibraided monoidal category}. A converse is also true:
a given semibraided category can be realised as quasi\dash YD modules
over some Hopf algebra, reconstructed from the category. We present
the reconstruction process as a form of the Tannaka\dash Krein duality. 

Unlike for braidings, there is no canonical notion of a semibraiding
on a vector space, which would lead to a realisation of such space as
a quasi\dash YD module. Nevertheless, we show in~\ref{subsect:acons}
that if $V$ is equipped with a finite set $\Pi$ of braidings which are
pairwise \emph{compatible}, then $V$ is canonically an object in a semibraided
category, hence a quasi\dash Yetter\dash Drinfeld module for a certain
Hopf algebra $H_\Pi$. 

A basic example of a set of compatible braidings and a quasi\dash YD
module is as
follows. Let $(V, \beta\colon V^*\tensor V \to H)$ be a Yetter\dash
Drinfeld module over a cocommutative Hopf algebra $H$, with induced
braiding $\Psi$. Then $\Pi=\{\Psi,\tau\}$ is a set of compatible
braidings, where $\tau(v\tensor w) = w\tensor v$ is the trivial
braiding on $V$. The space $V$ can be made a 
quasi\dash Yetter\dash Drinfeld  module over $H$ via the new $H$\dash
valued pairing $\beta_{\Psi,\lambda\tau}\colon V^* \tensor V \to H$, defined by
$\beta_{\Psi,\lambda\tau}(f,v)=\beta(f,v)+\lambda\langle f,v\rangle$ for any
scalar $\lambda$.

\subsection{Braided doubles}
\label{bdintro}
We are now ready to give the 
\begin{definition*}
In the notation as above, an algebra $\D_\beta/\lgen I^-,I^+\rgen$
with triangular decomposition $T(V)/I^- \lcprod H \rcprod T(V^*)/I^+$ 
over the bialgebra $H$ is called a \emph{braided double}.
\end{definition*}
Thus, by definition, braided doubles are the same as solutions to the
deformation theory problem posed in \ref{defproblem}.

The algebras $\D_\beta$ with triangular decomposition 
will now be referred to as \emph{free} braided doubles. 
Theorem~A means that free braided doubles are parametrised by 
quasi\dash Yetter\dash Drinfeld modules. 
 Instead of $\D_\beta$ we write $\D(V,\delta)$ for a free braided
double associated to the quasi\dash YD module $(V,\delta)$.

The following Example demonstrates how one\dash dimensional
quasi\dash Yetter\dash Drinfeld modules lead to
interesting algebraic objects already at the level of 
free braided doubles.
\begin{example*}
We show in~\ref{ex:trivial} that all 
one\dash dimensional quasi\dash YD modules over a cocommutative Hopf
algebra $H$ are 
 of the form $V_{\alpha,p}$, where $\alpha\colon H\to
\field$ is an algebra homomorphism 
and $p$ is any central element of $H$; one has 
$h\act v=\alpha(h)v$ and $\delta(v)=p\tensor v$ for $h\in H$, $v\in
V_{\alpha,p}$. 
Let $H=S(\mathfrak h)$ be the algebra of
polynomials over a vector space $\mathfrak h$, with Hopf structure
given by coproduct $\Delta h = h\tensor 1 + 1\tensor h$ for $h\in
\mathfrak h$. Consider any quasi\dash YD module $(V,\delta)$ 
which is a direct sum of one\dash dimensional quasi\dash YD modules:
$V = V_{\alpha_1,p_1} \oplus \dots \oplus V_{\alpha_m, p_m}$ where 
$\alpha_i\in {\mathfrak h}^*$, and 
$p_i$ are arbitrary polynomials in $S(\hh)$. 
Let $\{f_i\}$, $\{e_i\}$ be dual bases of $V^*$, $V$ such that $e_i\in
V_{\alpha_i,p_i}$. 
The free braided double $\D(V,\delta)$ is given
by generators and relations
$$
   [h,e_i] = \alpha_i(h) e_i, \ 
   [h,f_i] = -\alpha_i(h) f_i, \ h\in \mathfrak h; \quad 
   [f_i, e_i] = p_i. 
$$
If the $p_i$ are chosen to be in $\hh$, and $\alpha_i$ are a
basis of ${\mathfrak h}^*$ related to $p_i$ via a
generalised Cartan matrix, the algebras with triangular decomposition
thus obtained are 
$\widetilde U(\mathfrak g)$, the Kac\dash Moody universal enveloping
algebras before quotienting by the Serre
relations. The Serre relations arise in the context of minimal
doubles (see below). 

In the simplest case $m=\dim \mathfrak h = 1$, we obtain a Smith algebra (a 
``polynomial deformation of $\mathit{sl}(2)$''), considered in 
\cite{Smith} (and earlier in a different form in \cite{Joseph}). 
These algebras have a notion of 
highest weight modules and an analogue of the BGG category $\mathcal
O$; those are, of course, consequences of the triangular
decomposition. Polynomial deformations of $\mathit{sl}(2)$  
have physical applications in quantum mechanics, conformal field
theory, Yang-Mills-type gauge theories, inverse scattering, quantum
optics \cite{BBD}.   
\end{example*}

After this Example, let us move on to the case of nonzero ideals $I^\pm$.

\subsection{Minimal doubles}

A \emph{minimal double} $\RD(V,\delta)$ is a quotient of the free
double $\D(V,\delta)$ by largest
ideals $I(V,\delta)\subset T^{>0}(V)$ and $I^*(V,\delta)\subset
T^{>0}(V^*)$ such 
that the quotient still has the triangular decomposition property.  

Minimal doubles the most interesting braided doubles; 
they have the largest set of relations. Results of Section~4 of the
present paper imply
\begin{theoremB}
1. Any braided double has a unique minimal double as a quotient double.  

2. The ideals $I(V,\delta)\subset T(V)$ and $I(V^*,\delta)\subset
   T(V^*)$ are graded, and are given by
$$
I(V,\delta) = \oplus_{n\ge 1} \ker\widetilde{[n]}!_\delta, 
\qquad
I(V^*,\delta) = \oplus_{n\ge 1} \ker\widetilde{[n]}!_{\delta_r}
$$
where $\widetilde{[n]}!_\delta \colon V^{\tensorpow n}\to
   (H\tensor V)^{\tensorpow n}$ and 
 $\widetilde{[n]}!_{\delta_r} \colon V^{*\tensorpow n}\to
   (V^*\tensor H)^{\tensorpow n}$ are \emph{quasibraided factorials}, 
which arise from the quasi\dash Yetter\dash Drinfeld structure
$\delta$ on $V$. 
\end{theoremB}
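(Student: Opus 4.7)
The plan is to deduce Theorem~B from the detailed analysis of minimal doubles in Section~4 (in particular Theorem~\ref{ker_qbfact}).

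For part~1, I would fix a braided double $A = \D(V,\delta)/\lgen J^-,J^+\rgen$ and let $\mathcal{F}$ be the collection of pairs $(I^-,I^+)$ of graded two\dash sided ideals with $I^\pm \supseteq J^\pm$ such that $\D(V,\delta)/\lgen I^-,I^+\rgen$ still admits triangular decomposition over $H$. The key technical step is that $\mathcal{F}$ is closed under componentwise sums: if $(I_1^\pm),(I_2^\pm) \in \mathcal{F}$, then $(I_1^- + I_2^-,\,I_1^+ + I_2^+) \in \mathcal{F}$. This follows by showing that an element of $T(V)\tensor H \tensor T(V^*)$ which maps to zero in $\D(V,\delta)/\lgen I_1^- + I_2^-, I_1^+ + I_2^+\rgen$ must, after applying the triangular decompositions of both $\D(V,\delta)/\lgen I_1^\pm\rgen$ and $\D(V,\delta)/\lgen I_2^\pm\rgen$, have its homogeneous components split into contributions from the two ideal pairs. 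Taking the filtered union of all members of $\mathcal{F}$ then gives the unique maximal pair $(I(V,\delta), I^*(V,\delta))$, and hence a unique minimal double $\RD(V,\delta)$ as a quotient of $A$.

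For part~2, I would construct the quasibraided factorial $\widetilde{[n]}!_\delta\colon V^{\tensorpow n}\to (H\tensor V)^{\tensorpow n}$ inductively from $\delta$ and $\act$, where the step from $n$ to $n{+}1$ applies $\delta$ to a new tensor factor and then uses condition~(\ref{eqA}) to commute the emitted $H$\dash factor into its proper position. The central computational observation is that for $x \in V^{\tensorpow n}$ and $f_1,\dots,f_n \in V^*$, the iterated commutator $[f_1,[f_2,\dots[f_n,x]\dots]]$, computed inside $\D(V,\delta)$ via the defining relations, equals the pairing of $\widetilde{[n]}!_\delta(x)$ against $f_1\tensor\dots\tensor f_n$ and therefore lies in $H \hookrightarrow \D(V,\delta)$. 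The easy inclusion $I(V,\delta) \subseteq \bigoplus_n \ker \widetilde{[n]}!_\delta$ is then immediate: if $x \in I(V,\delta)$ then $x=0$ in $\RD(V,\delta)$, so the iterated commutator, which still lies in $H$ where the triangular decomposition is injective, must vanish. The statement for $I^*(V,\delta)$ is obtained by the symmetric argument using the right\dash handed quasicoaction $\delta_r$ on $V^*$.

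The hard direction and main obstacle is the reverse inclusion: showing that $\bigoplus_n \ker\widetilde{[n]}!_\delta$ is a two\dash sided ideal of $T(V)$ and that the quotient $\D(V,\delta)/\lgen \bigoplus_n \ker\widetilde{[n]}!_\delta,\ \bigoplus_n \ker\widetilde{[n]}!_{\delta_r}\rgen$ still has triangular decomposition over $H$. The ideal property would follow from the recursive definition of $\widetilde{[n]}!_\delta$ together with condition~(\ref{eqA}), which controls how the $H$\dash components interact with $V$ under left and right multiplication. Establishing triangular decomposition of the quotient is the most delicate step; I would approach it by constructing an explicit basis via a quasibraided analogue of the Poincar\'e\dash Birkhoff\dash Witt argument, comparing the quotient with the direct summand of $T(V)$ complementary to $\bigoplus_n \ker\widetilde{[n]}!_\delta$ and checking compatibility with the $H$\dash coaction\dash like structure. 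The gradedness of $I(V,\delta)$ and $I^*(V,\delta)$ is automatic, since the quasibraided factorials are degree preserving and the defining relations of $\D(V,\delta)$ respect the natural $\mathbb{Z}$\dash grading with $H$ in degree zero.
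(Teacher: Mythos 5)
Your part~1 is essentially the paper's argument: one shows that a sum of triangular ideals is again triangular, so a unique maximal one exists and every braided double surjects onto the common quotient $\RD(V,\delta)$. The clean way to see closure under sums (Proposition~\ref{triang_ideals} and Corollary~\ref{cor_sum} in the Appendix, specialised in Lemma~\ref{triang_ideals_bdouble}) is that triangular ideals are characterised intrinsically by conditions on the pair $(I^-,I^+)$ ($H$\dash invariance, containment in $\ker\epsilon^\pm$, and $[f,I^-]\subset I^-\tensor H$, $[I^+,v]\subset H\tensor I^+$) which are visibly preserved under sums; your ``splitting of homogeneous components'' is vaguer but aims at the same fact.

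Part~2, however, has a genuine gap in what you call the easy inclusion. The iterated commutator $[f_1,[f_2,\dots[f_n,x]\dots]]$ does \emph{not} compute $\widetilde{[n]}!_\delta(x)$ paired against $f_1\tensor\dots\tensor f_n$ with the $n$ emitted copies of $H$ kept separate: each successive commutator pushes its $H$\dash output past the previously produced $H$\dash factors, so what you actually obtain is $(\id_H\tensor\langle f_1\tensor\dots\tensor f_n,-\rangle)\,m_H\,\widetilde{[n]}!_\delta(x)$, where $m_H$ multiplies the $n$ $H$\dash components together --- this is precisely the Harish\dash Chandra pairing of Proposition~\ref{prop:HCformula}, not the factorial. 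Vanishing of all iterated commutators therefore only yields $x\in\ker(m_H\circ\widetilde{[n]}!_\delta)$, which can be strictly larger than $\ker\widetilde{[n]}!_\delta$: in Example~\ref{ex:nilpotent} one has $\widetilde{[n]}!_a(x^{\tensorpow n})=n!\,(a\tensor x)^{\tensorpow n}\ne 0$ while $m_H$ of it equals $n!\,a^n\tensor x^{\tensorpow n}=0$ for $a$ nilpotent, which is exactly why minimality does not imply non\dash degeneracy of the Harish\dash Chandra pairing. So your argument does not establish $I(V,\delta)\subseteq\oplus_n\ker\widetilde{[n]}!_\delta$; and your hard direction (that $\oplus_n\ker\widetilde{[n]}!_\delta$ is an ideal whose quotient still has triangular decomposition) is only a plan resting on an unexecuted PBW\dash type basis construction. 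The paper sidesteps both issues by never quotienting by a candidate ideal: it first shows the maximal triangular ideal is graded (Lemma~\ref{lem:graded}) and equals the maximal subspace $W\subset T^{>0}(V)$ with $[f,W]\subset W\tensor H$ for all $f$ (Lemma~\ref{lem:computation}, which uses the Hopf algebra structure), and then runs a single\dash step induction on degree via $[f,b]=(\id_V^{\tensorpow n-1}\tensor\id_H\tensor\langle f,-\rangle)\widetilde{[n]}_\delta b$ (Lemma~\ref{comm_formula}), giving $I_n=\{b:\widetilde{[n]}_\delta b\in I_{n-1}\tensor H\tensor V\}=\ker\widetilde{[n]}!_\delta$ with both inclusions at once; the quotient has triangular decomposition automatically because it is a quotient by a triangular ideal.
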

To each quasi\dash Yetter\dash Drinfeld module $(V,\delta)$ over a
Hopf algebra $H$, Theorem~B associates two graded algebras, generated
in degree one: 
\begin{equation*}
   U(V,\delta)=T(V)/I(V,\delta), \quad
   U(V^*,\delta)=T(V^*)/I(V^*,\delta),
\end{equation*}
such that the minimal double has triangular decomposition 
\begin{equation*}
\RD(V,\delta) = U(V,\delta) \lcprod H \rcprod U(V^*,\delta).
\end{equation*}
Theorem~B is formally an answer to the deformation problem posed
in~\ref{defproblem}, however, the relations in the algebras
$U(V,\delta)$, $U(V^*,\delta)$ are given only implicitly by the kernels of 
quasibraided factorials (introduced in Definition
\ref{def_qbfact}). The latter might not be well suited for
computational purposes: these operators may have values in
infinite\dash dimensional spaces.

In Section 4, we also point out a sufficient condition for
minimality of a braided double. A braided double with 
triangular decomposition of the form
$T(V)/I^-\lcprod H \rcprod T(V^*)/I^+$, gives rise to an $H$\dash
valued \emph{Harish\dash Chandra pairing} 
between the algebras $T(V^*)/I^+$
and $T(V)/I^-$: the pairing $(b,\phi)_H$ 
is the product $b\phi$ of $b\in T(V^*)/I^+$ and
$\phi\in T(V)/I^-$ in the braided double, projected onto $H$. One has
\begin{theoremC}
A braided double is minimal if its Harish\dash Chandra pairing is
nondegenerate. 
\end{theoremC}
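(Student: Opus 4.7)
My plan is to exploit Theorem~B, which provides, for every braided double $A = T(V)/I^- \lcprod H \rcprod T(V^*)/I^+$, a canonical surjection $\pi\colon A \twoheadrightarrow \RD$ onto the unique minimal double $\RD = \RD(V,\delta)$. This surjection respects the triangular decomposition componentwise: it is the identity on $H$, and it restricts to surjections $\pi^\pm\colon U^\pm \twoheadrightarrow \bar U^\pm$ of the triangular factors, where $U^\pm$ and $\bar U^\pm$ denote the triangular factors of $A$ and $\RD$ respectively. Establishing minimality of $A$ is therefore equivalent to showing that $\pi^+$ and $\pi^-$ are injective under the nondegeneracy hypothesis.

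The crucial preparatory step is to observe that the $H$-projection $\pr\colon A \to H$ --- that is, the map $\epsilon^- \tensor \id_H \tensor \epsilon^+$ obtained from the triangular decomposition together with the augmentations $\epsilon^\pm \colon U^\pm \to \field$ --- commutes with $\pi$, in the sense that $\pr(x)$ equals the analogous $H$-projection of $\pi(x)$ inside $\RD$ for every $x \in A$. Indeed, $\pi$ sends the triangular expansion $x = \sum \phi_i h_i b_i$ in $A$ to $\pi(x) = \sum \pi^-(\phi_i)\, h_i\, \pi^+(b_i)$ in $\RD$, and the augmentations are compatible: since the enlarged ideals $I(V,\delta) \supset I^-$ and $I(V^*,\delta) \supset I^+$ supplied by Theorem~B lie inside the augmentation ideals $T^{>0}(V)$ and $T^{>0}(V^*)$, the augmentation of $U^\pm$ factors through $\pi^\pm$ to give the augmentation of $\bar U^\pm$.

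The main argument then follows quickly. Fix $\phi \in \ker \pi^- \subset U^-$. For every $b \in U^+$, the product $b\phi$ satisfies $\pi(b\phi) = \pi^+(b)\pi^-(\phi) = 0$ in $\RD$; by the commutativity just established, $(b,\phi)_H = \pr(b\phi)$ equals the $H$-projection of $0$ in $\RD$, and so vanishes. Nondegeneracy of the Harish-Chandra pairing forces $\phi = 0$, so $\pi^-$ is injective. The symmetric argument applied to $b \in \ker \pi^+$ gives injectivity of $\pi^+$; thus $\pi$ is an isomorphism of algebras and $A \cong \RD$ is minimal. The main technical point I would verify with care is the compatibility of the augmentations with $\pi^\pm$, since this is what makes the $H$-projection functorial under the quotient map and thus allows the Harish-Chandra pairing to detect elements of $\ker \pi^\pm$.
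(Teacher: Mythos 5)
Your proof is correct and follows essentially the paper's own route: the paper deduces Theorem~C from Proposition~\ref{HCkernels}, which shows that every triangular ideal $J^-\tensor H\tensor \Uplus + \Uminus\tensor H\tensor J^+$ has $J^\pm$ contained in the kernels of the Harish\dash Chandra pairing --- precisely the computation you carry out (the ideal lies in $\ker \pr$ because $J^\pm$ sit inside the augmentation ideals, and it absorbs multiplication by $\Uplus$, so its $\Uminus$\dash component pairs to zero with everything). The only cosmetic difference is that you phrase the argument for the specific largest triangular ideal $\ker\pi$ via the canonical surjection onto the minimal double, whereas the paper states it for an arbitrary triangular ideal.
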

For example, the universal enveloping algebra $U(\mathfrak g)$ is a minimal
double, so that the kernels of the corresponding quasibraided
factorials come out as the Serre relations. 
The converse of Theorem~C is not true (Example \ref{ex:nilpotent}),
and is disproved using a counterexample to third Kaplansky's
conjecture on Hopf algebras.

Non\dash degeneracy of the Harish\dash Chandra pairing is a
property which strongly influences the algebra structure of a braided
double. As an example of this, a simple argument in
Proposition~\ref{good_ideals} shows that if 
the scalar\dash valued pairing $\epsilon((\cdot,\cdot)_H)$ in a
braided double $\RD(V)$ is non\dash degenerate, then any two\dash
sided ideal in $\RD(V)$ has a nontrivial projection which is an ideal
in $H$. 

An important class of braided doubles with such a nondegeneracy property are
\emph{braided Heisenberg doubles}.
These are precisely the minimal doubles $\RD(V)$ which correspond to
Yetter\dash Drinfeld modules $V$.

\subsection{Braided Heisenberg doubles and Nichols-Woronowicz algebras}

``Honest'' Yetter\dash Drinfeld modules are obviously a distinguished
 class of quasi\dash Yetter\dash Drinfeld modules. 
Section~\ref{sect:bhd} of the paper describes minimal doubles
associated to this class. They are called
braided Heisenberg doubles. The defining ideals in a braided
 Heisenberg double are expressed in terms of the braiding on the
 Yetter\dash Drinfeld module:
\begin{theoremD}
Let $V$ be a Yetter\dash Drinfeld module over a Hopf algebra
$H$. Denote by $\Psi$ the induced braiding on $V$. The minimal double 
associated to $V$ has triangular decomposition of the form 
$$
    \mathcal H_V \cong \mathcal B(V, \Psi)\lcprod H \rcprod \mathcal
    B(V^*,\Psi^*), 
$$
where $\mathcal B(V,\Psi)$ is the \emph{Nichols\dash Woronowicz algebra}
of a braided space $(V,\Psi)$.
\end{theoremD}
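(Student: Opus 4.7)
The plan is to combine Theorem~B with an explicit identification of the defining ideals in the case of a genuine Yetter\dash Drinfeld coaction. Theorem~B already supplies the triangular decomposition
$$
\mathcal H_V = \RD(V,\delta) = U(V,\delta)\lcprod H \rcprod U(V^*,\delta),
$$
where $U(V,\delta)=T(V)/\bigoplus_{n\ge 1}\ker\widetilde{[n]}!_\delta$ and $U(V^*,\delta)=T(V^*)/\bigoplus_{n\ge 1}\ker\widetilde{[n]}!_{\delta_r}$. Consequently the theorem reduces to the two identifications $U(V,\delta)\cong \mathcal B(V,\Psi)$ and $U(V^*,\delta)\cong\mathcal B(V^*,\Psi^*)$, and it is precisely the YD hypothesis --- i.e.\ that $\delta$ be a coaction rather than just a quasicoaction --- that both makes the induced $\Psi$ a genuine braiding and lets the Nichols\dash Woronowicz algebras enter the picture.

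To carry out the first identification, I would start from the standard description of the Nichols\dash Woronowicz algebra as $T(V)/\bigoplus_{n\ge 2}\ker [n]!_\Psi$, where $[n]!_\Psi = \sum_{\sigma\in\Symm_n} T_\sigma^\Psi$ is the Woronowicz quantum symmetrizer assembled from the braiding $\Psi$. The heart of the argument is to show that
$$
\ker \widetilde{[n]}!_\delta \;=\; \ker [n]!_\Psi \qquad \text{as subspaces of } V^{\tensorpow n}.
$$
For this, one unwinds the definition of $\widetilde{[n]}!_\delta$ from Section~4, which is built by iteratively applying the quasicoaction $\delta$. When $\delta$ is a genuine coaction, the iterates satisfy the coassociativity relation $(\id\tensor\delta)\delta=(\Delta\tensor\id)\delta$ and interact with the $H$\dash action via the YD axiom, so the $H$\dash components and the $V$\dash components of $\widetilde{[n]}!_\delta(v_1\tensor\cdots\tensor v_n)$ can be disentangled: one rewrites $\widetilde{[n]}!_\delta$ as the composition of the braided symmetrizer $[n]!_\Psi$ followed by an injection $V^{\tensorpow n}\hookrightarrow (H\tensor V)^{\tensorpow n}$ whose components are dictated by coassociativity of $\delta$ and the braid equation for $\Psi$.

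The identification $U(V^*,\delta)\cong\mathcal B(V^*,\Psi^*)$ follows by the same argument with opposite handedness: $V^*$ carries a right YD structure whose induced right-handed braiding is exactly $\Psi^*$, so that the right version of the quasibraided factorial reduces to the right version of the Woronowicz symmetrizer. Assembling the two identifications with the triangular decomposition furnished by Theorem~B yields the claimed isomorphism.

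The main obstacle is the recognition step $\ker\widetilde{[n]}!_\delta=\ker [n]!_\Psi$. Morally it is clear that when $\delta$ is a genuine coaction the ``extra'' $H$\dash valued legs in $\widetilde{[n]}!_\delta$ merely record coassociative data and contribute no new relations on $V^{\tensorpow n}$, but making this rigorous requires careful bookkeeping with nested Sweedler indices and repeated use of the YD axiom to factor $[n]!_\Psi$ out of the quasibraided factorial. Once this factorisation is in place, everything else is either formal or supplied by Theorem~B and the definition of $\mathcal B(V,\Psi)$.
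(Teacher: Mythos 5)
Your proposal is correct and follows essentially the same route as the paper: the paper's Lemma~\ref{lem:qb_braided} is exactly your claimed factorisation $\widetilde{[n]}!_\delta=\delta^{\tensorpow n}\circ[n]!_\Psi$ (with the injectivity of $\delta^{\tensorpow n}$ coming from counitality, $\pi\circ\delta=\id_V$, rather than coassociativity per se), and combining this with Theorem~\ref{ker_qbfact} and the right-handed analogue from Remark~\ref{rem:righthanded} gives $I(V,\delta)=\ker\Wor(\Psi)$ and $I(V^*,\delta)=\ker\Wor(\Psi^*)$ just as you describe.
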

Nichols\dash Woronowicz algebras 
are a remarkable class of \emph{braided Hopf algebras}, and  are
quantum analogues of symmetric and exterior algebras.
One has $\mathcal B(V, \Psi) = T(V)/\ker\Wor(\Psi)$ where $\Wor(\Psi)$
is the Woronowicz symmetriser associated with the braiding $\Psi$ on
$V$. The Woronowicz symmetriser, introduced in \cite{Wo}, appears in
our setting as a specialisation of a more general quasibraided
factorial to the case of a Yetter\dash Drinfeld module.

The symmetric (resp.\ exterior) algebra of $V$ is $\mathcal B(V,
\tau)$ (resp.\ $\mathcal B(V, -\tau)$) where $\tau(v\tensor
w)=w\tensor v$ is the trivial braiding on $V$. Note that if $V$ is a
trivial Yetter\dash Drinfeld module over $H=\field$, $\mathcal
H_V$ is the Heisenberg\dash Weyl algebra $S(V)\tensor S(V^*)$. 

Algebras $\mathcal B(V, \Psi)$ were formally introduced by
Andruskiewitsch and Schneider in \cite{AS1} (as `Nichols
algebras' honouring an earlier work of Nichols \cite{N}) and 
coincide with quantum exterior algebras of Woronowicz
\cite{Wo}. 
In the present form of two dually paired algebras, they
appeared in the work of Majid \cite{Mcalculus}. Nichols\dash
Woronowicz algebras are the same as ``quantum shuffle algebras'' of
Rosso \cite{R}.
These algebras have already been linked to a number of different
areas, such as pointed Hopf algebras \cite{AS2} and
noncommutative differential geometry \cite{Wo,Mnoncomm,KMexterior}, 
and have led to a useful generalisation of root systems due to
Heckenberger \cite{H}. (In Section~\ref{sect:bhd}, we use Nichols\dash Woronowicz algebras 
to give a new and simple counterexample to the aforemetioned third
conjecture of Kaplansky.)
Our approach thus leads to a surprising appearance of  Nichols\dash
Woronowicz algebras in
deformation theory; the braided coproduct on $\mathcal B(V,\Psi)$ is
now recast as the product in the graded\dash dual algebra $\mathcal
B(V^*,\Psi^*)$, and the braided Hopf algebra property is encoded in
the commutation relation between 
$\mathcal B(V,\Psi)$ and $\mathcal B(V^*,\Psi^*)$ and the associativity of
multiplication in the minimal double.

In Section~\ref{sect:bhd} we also consider
quasi\dash Yetter\dash Drinfeld modules $V$ with structure given by
compatible braidings (see~\ref{q-y-d mod}). 
In the corresponding minimal double, 
the formula for the defining ideals 
is more involved and leads to a generalisation of
Nichols\dash Woronowicz algebras associated to a set of compatible
braidings (instead of just one braiding); 
but the degree $2$ part of the formula is still quite manageable:  
\begin{theoremE}
Let $\delta_k\colon V \to H\tensor V$, $k=1,2,\dots,N$, be
Yetter\dash Drinfeld 
coactions on an $H$\dash module $V$, which induce braidings
$\Psi_k$ on $V$. Let $t_k$ be generic coefficients (e.g., formal
parameters). Define the quasi\dash YD module structure on $V$ by
putting $\delta = \sum_k t_k \delta_k$. Then the defining ideals in the
corresponding minimal quadratic double  are
$$
I_{\mathit{quad}}(V) = \lgen \bigcap_{k=1}^N \ker(\id+\Psi_k)
\rgen, 
\quad
I^*_{\mathit{quad}}(V) = \lgen \bigcap_{k=1}^N \ker(\id+\Psi^*_k) \rgen.
$$
\end{theoremE}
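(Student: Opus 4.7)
The plan is to apply Theorem~B at $n=2$ and exploit two features: the degree-$2$ quasibraided factorial is linear in the quasi-YD coaction $\delta$, and for a single Yetter-Drinfeld coaction the relevant kernel is computable via Theorem~D.

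By Theorem~B, the ideal $I_{\mathit{quad}}(V)$ is the two-sided ideal of $T(V)$ generated by $\ker\widetilde{[2]}!_\delta\subseteq V^{\otimes 2}$; likewise $I^*_{\mathit{quad}}(V)$ is generated by $\ker\widetilde{[2]}!_{\delta_r}\subseteq (V^*)^{\otimes 2}$, where $\delta_r=\sum_k t_k\delta_{r,k}$ is the right quasicoaction induced on $V^*$. So it suffices to compute these two kernels. Inspection of Definition~\ref{def_qbfact} shows that at $n=2$ the quasicoaction $\delta$ enters exactly once in $\widetilde{[2]}!_\delta$ (higher-degree factorials involve $\delta$ multiple times), so the assignment $\delta\mapsto\widetilde{[2]}!_\delta$ is $\field$-linear. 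Substituting $\delta=\sum_k t_k\delta_k$ yields
$$\widetilde{[2]}!_\delta \;=\; \sum_{k=1}^N t_k\,\widetilde{[2]}!_{\delta_k}.$$
Treating the $t_k$ as algebraically independent parameters (the \emph{generic} hypothesis), a tensor $x\in V^{\otimes 2}$ lies in $\ker\widetilde{[2]}!_\delta$ if and only if the $t_k$-homogeneous components of its image vanish separately, if and only if $x\in\bigcap_k\ker\widetilde{[2]}!_{\delta_k}$.

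For each individual $k$ the coaction $\delta_k$ is an honest Yetter-Drinfeld coaction, so the minimal double of $(V,\delta_k)$ is the braided Heisenberg double $\mathcal H_V$ of Theorem~D. That theorem identifies its negative part with the Nichols-Woronowicz algebra $\mathcal B(V,\Psi_k)=T(V)/\langle\ker\Wor(\Psi_k)\rangle$, whose degree-$2$ relations are precisely $\ker(\id+\Psi_k)$; hence $\ker\widetilde{[2]}!_{\delta_k}=\ker(\id+\Psi_k)$, and the formula for $I_{\mathit{quad}}(V)$ follows. The dual argument, with $\delta_{r,k}$ inducing the dual braiding $\Psi_k^*$ on $V^*$ and Theorem~D applied on the $V^*$-side, gives the formula for $I^*_{\mathit{quad}}(V)$.

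The principal obstacle is the linearity step: one has to verify directly from the explicit definition of the quasibraided factorial that at $n=2$ the dependence on $\delta$ is genuinely linear, with no constant term and no terms of order $\geq 2$ in $\delta$. This linearity is a special feature of degree~$2$---at $n\geq 3$ the quasibraided factorial is polynomial of degree $>1$ in $\delta$, and the kernel for $\delta=\sum_k t_k\delta_k$ is then no longer a bare intersection of individual kernels. This is precisely what forces Theorem~E to be restricted to the minimal \emph{quadratic} double.
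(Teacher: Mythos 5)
Your final formulas are correct, and the outer scaffolding (reduction via Theorem~B to the degree-two kernel, and Theorem~D for the individual coactions) is sound. But the step you yourself single out as the crux --- linearity of $\delta\mapsto\widetilde{[2]}!_\delta$ --- is false, and the displayed identity $\widetilde{[2]}!_\delta=\sum_k t_k\,\widetilde{[2]}!_{\delta_k}$ fails. By Definition~\ref{def_qbfact} the degree-two factorial is the composite $\widetilde{[2]}!_\delta=(\widetilde{[1]}_\delta\tensor\id_{H\tensor V})\circ\widetilde{[2]}_\delta$, and $\widetilde{[1]}_\delta=\delta$; so $\delta$ enters \emph{twice}, and $\widetilde{[2]}!_\delta$ is quadratic in $\delta$. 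Already for $N=1$ one gets $\widetilde{[2]}!_{t_1\delta_1}=t_1^2\,\widetilde{[2]}!_{\delta_1}$ rather than $t_1\widetilde{[2]}!_{\delta_1}$, and for $N>1$ the expansion contains cross terms of the form $t_jt_k(\delta_j\tensor\id_{H\tensor V})(\id_V\tensor\delta_k)[2]_{\Psi_k}$ with $j\ne k$, so the ``separate the $t_k$-homogeneous components'' argument does not apply as written.

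What \emph{is} linear in $\delta$ is the quasibraided integer: $\widetilde{[n]}_\delta=\sum_k t_k\,(\id_V^{\tensorpow n-1}\tensor\delta_k)\circ[n]_{\Psi_k}$, and this is the identity the paper actually exploits (Proposition~\ref{prop:mixed}). Combining it with the recursive description of the graded components from the proof of Theorem~\ref{ker_qbfact}, namely $I_n=\{b:\widetilde{[n]}_\delta b\in I_{n-1}\tensor H\tensor V\}$, genericity of the $t_k$ forces each summand $(\id_V^{\tensorpow n-1}\tensor\delta_k)[n]_{\Psi_k}b$ into $I_{n-1}\tensor H\tensor V$ separately, and injectivity (counitality) of each coaction $\delta_k$ then strips off $\delta_k$ to give $[n]_{\Psi_k}b\in I_{n-1}\tensor V$ for every $k$. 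Since $I_1=0$, this yields $I_2=\bigcap_k\ker(\id+\Psi_k)$, which is exactly what you need; your identification $\ker\widetilde{[2]}!_{\delta_k}=\ker(\id+\Psi_k)$ via Theorem~D and Lemma~\ref{lem:qb_braided} is fine. So the repair is local but necessary: replace ``the factorial is linear'' by ``the integer is linear, and passing between integer and factorial in degree two only costs an application of an injective map.'' Your closing diagnosis is likewise slightly off target: the difficulty at $n\ge 3$ is not that the factorial becomes nonlinear in $\delta$ (it is already quadratic at $n=2$), but that the recursion produces, for generic $t_k$, the intersection over all mixed products $[2]_{\Psi_{k_2}}\cdots[n]_{\Psi_{k_n}}$ of Proposition~\ref{prop:mixed} rather than the intersection of the individual $\ker[n]!_{\Psi_k}$.
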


\subsection{Perfect subquotients}

In Section~\ref{sect:perfect}, we justify our earlier claim
that Yetter\dash Drinfeld modules are a ``basic family'' of solutions
of the deformation problem set out in~\ref{defproblem}. 

First of all,
we define a special class of morphisms (called \emph{subquotients}) 
between quasi\dash Yetter\dash
Drinfeld modules $V$, $W$ over a Hopf algebra $H$. 
Subquotients are diagrams $V \to W \to V$ where the arrows are
$H$\dash module homomorphisms, and satisfy a certain condition of
compatibility with quasi\dash Yetter\dash Drinfeld structures,
$\delta_V$ on $V$ and $\delta_W$ on $W$. We show
that subquotients $V\to W \to V$ are the same as \emph{triangular
  morphisms} between free braided doubles $\D(V,\delta_V)$ and $\D(W,\delta_W)$, which
are the precisely the morphisms in the category $\mathcal D_H$  of braided
doubles over $H$.

One can observe that, if $(W,\delta_W)$ is a quasi\dash YD module for
$H$ and $V$ is an $H$\dash module, then each pair $V\xrightarrow{\mu}
W \xrightarrow{\nu} W$ of $H$\dash module maps defines a unique quasi\dash Yetter\dash Drinfeld
structure $\delta_V$ on $V$. In this situation we say that $V$ is a
subquotient of $W$ via the maps $\mu$, $\nu$. Recall the left\dash
side  defining
ideal $I(V,\delta_V)\subset T(V)$ of the minimal double associated to the quasi\dash
Yetter\dash Drinfeld module $V$. We show
(Proposition~\ref{prop:preimage}) that 
$$
I(V,\delta_V) \supseteq \mu^{-1} ( I(W,\delta_W) ).
$$
If this inclusion is in fact an equality, leading to an embedding
$U(V,\delta_V) \hookrightarrow U(W,\delta_W)$ of graded algebras,
we say that the quasi\dash
YD module $(V,\delta_V)$ is a \emph{perfect subquotient} of
$(W,\delta_W)$. 
We prove

\begin{theoremF}
Every quasi\dash YD module can be obtained as a perfect subquotient of a
Yetter\dash Drinfeld module.
\end{theoremF}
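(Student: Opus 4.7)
The plan is to realise any quasi-YD module $(V,\delta_V)$ as a perfect subquotient of the ``adjoint-induced'' YD module $W:=H\tensor V$. I equip $W$ with the $H$-action $h\act(a\tensor v):=h_{(1)}\, a\, S(h_{(3)})\tensor h_{(2)}\act v$ and the $H$-coaction $\delta_W(a\tensor v):=a_{(1)}\tensor(a_{(2)}\tensor v)$, and take the subquotient maps $\mu\colon V\to W$, $\mu(v):=\delta_V(v)=v^{[-1]}\tensor v^{[0]}$, and $\nu\colon W\to V$, $\nu(a\tensor v):=\epsilon(a)\,v$.

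The first block of verifications is routine. Direct Sweedler manipulation shows that $(W,\delta_W)$ is a genuine Yetter\dash Drinfeld module: both sides of the YD compatibility reduce, after iterating the coproduct and applying the antipode identity, to the same element of $H\tensor W$. The map $\nu$ is $H$-linear by the counit axiom. The $H$-linearity of $\mu$ is precisely a restatement of relation~(\ref{eqA}): multiplying both sides of~(\ref{eqA}) on the right by $S(h_{(3)})$ in a 3\dash split and using $h_{(2)}S(h_{(3)})=\epsilon(h_{(2)})$ gives
\[
 \mu(h\act v)=(h\act v)^{[-1]}\tensor(h\act v)^{[0]}=h_{(1)}\, v^{[-1]}\, S(h_{(3)})\tensor h_{(2)}\act v^{[0]}=h\act\mu(v).
\]
Finally, $(\id\tensor\nu)\circ\delta_W\circ\mu(v)=(v^{[-1]})_{(1)}\,\epsilon((v^{[-1]})_{(2)})\tensor v^{[0]}=v^{[-1]}\tensor v^{[0]}=\delta_V(v)$, so the subquotient data reconstitutes $\delta_V$.

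The crux of the argument is perfection, i.e.\ the equality $I(V,\delta_V)=(\mu^{\otimes n})^{-1}(I(W,\delta_W))$ in every degree~$n$. The inclusion ``$\supseteq$'' is Proposition~\ref{prop:preimage}. For the reverse, I aim to establish by induction on~$n$ the identity
\[
 \widetilde{[n]}!_{\delta_V}=(\id\tensor\nu^{\otimes n})\circ\widetilde{[n]}!_{\delta_W}\circ\mu^{\otimes n},
\]
working directly from the recursive construction of the quasibraided factorial: each application of $\delta_V$ on the $V$-side is lifted, via $\mu$, to an application of the genuine coaction $\delta_W$ on $W$, and then collapsed back through $\nu$ by the counit. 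The remaining inclusion then reduces to the injectivity of $(\id\tensor\nu^{\otimes n})$ on the image of $\widetilde{[n]}!_{\delta_W}\circ\mu^{\otimes n}$: the $\epsilon$-contractions performed by $\nu^{\otimes n}$ must exactly undo the $\Delta$-splittings introduced by $\delta_W$ in those slots, so that no information is lost. This injectivity is the main obstacle, and it is where the choice $W=H\tensor V$ becomes essential~---~the ``adjoint'' YD module is just large enough to record, within its true coaction $\delta_W$, the full combinatorial data of the iterated quasicoaction $\delta_V$.
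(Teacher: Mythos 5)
Your construction is exactly the one the paper uses for Theorem~\ref{th:perfect}: $W=Y(V)=H\tensor V$ with the adjoint-type action and the coaction $a\tensor v\mapsto a_{(1)}\tensor (a_{(2)}\tensor v)$, with $\mu=\delta_V$ and $\nu=\epsilon\tensor\id_V$; your verifications that $\mu$ is $H$-linear (this is Lemma~\ref{lem:Y(V)}(b)) and that the subquotient data reproduces $\delta_V$ are correct. The problem is that the heart of the theorem --- perfection --- is not actually proved. You reduce it to the injectivity of $(\id_H\tensor\nu)^{\tensorpow n}$ on the image of $\widetilde{[n]}!_{\delta_W}\circ\mu^{\tensorpow n}$ and then assert that the $\epsilon$-contractions ``must exactly undo'' the $\Delta$-splittings; you even label this the main obstacle and leave it there. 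As written this is a hope, not an argument: a priori $(\id_H\tensor\nu)^{\tensorpow n}$ collapses $(H\tensor W)^{\tensorpow n}$ onto the much smaller space $(H\tensor V)^{\tensorpow n}$, and nothing you have said rules out that a nonzero element of the image dies under this collapse.

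The gap is real but closable, and the paper closes it by eliminating $\nu$ from the computation. Since $\delta_W$ is an honest coaction, Lemma~\ref{lem:qb_braided} gives $\widetilde{[n]}!_{\delta_W}=\delta_W^{\tensorpow n}\circ [n]!_\Psi$, where $\Psi$ is the induced braiding on $Y(V)$ and $\delta_W^{\tensorpow n}$ is injective (it is split by counitality); moreover $(\id_H\tensor\nu)\circ\delta_W=\id_{Y(V)}$ under the identification $H\tensor V=Y(V)$. Hence your composite equals
\begin{equation*}
(\id_H\tensor\nu)^{\tensorpow n}\circ\widetilde{[n]}!_{\delta_W}\circ\mu^{\tensorpow n}
=[n]!_\Psi\circ\delta_V^{\tensorpow n},
\end{equation*}
which has the same kernel as $\widetilde{[n]}!_{\delta_W}\circ\mu^{\tensorpow n}$ --- exactly the injectivity you need. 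The paper records this as the single identity $\widetilde{[n]}!_{\delta_V}=[n]!_\Psi\circ\delta_V^{\tensorpow n}$ (both sides maps $V^{\tensorpow n}\to Y(V)^{\tensorpow n}$), from which $\ker\widetilde{[n]}!_{\delta_V}=(\delta_V^{\tensorpow n})^{-1}\ker[n]!_\Psi=(\mu^{\tensorpow n})^{-1}\bigl(I(Y(V),\delta_W)\cap Y(V)^{\tensorpow n}\bigr)$ follows at once using Theorem~\ref{th:Nichols}. You should either verify this identity directly or supply the injectivity argument above; without one of them the proof is incomplete at its decisive step.
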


See Theorem~\ref{th:perfect}. Note that, given a finite\dash
dimensional quasi\dash YD module $V$, one needs additional
assumptions to guarantee  that $V$ can be realised as a subquotient of
a \emph{finite\dash dimensional} Yetter\dash Drinfeld module $Y$.

However, this is not yet the main problem. From the point of view of
braided doubles, one would hope to find a perfect subquotient
$V\xrightarrow{\mu} Y \xrightarrow{\nu} V$ such that  $V^*
\xrightarrow{\nu^*} Y^* \xrightarrow{\mu^*}V^*$ is also a perfect
subquotient. This would yield an embedding 
$$
     \RD(V,\delta_V) = U(V,\delta_V) \lcprod H \rcprod
     U(V^*,\delta_V) \ \hookrightarrow \ \mathcal{H}_Y
$$
of a given minimal double into a braided Heisenberg double. 
But in general, we do not know what conditions $(V,\delta_V)$ should satisfy so
that such two simultaneous perfect subquotients exist. 

Nevertheless, we find and study a particular type of quasi\dash Yetter\dash
Drinfeld modules $(V,\delta_V)$ such that $\RD(V,\delta_V)$ embeds
into a braided Heisenberg double. This happens for 
$\RD(V,\delta_V)$ which are \emph{rational Cherednik algebras}.

\subsection{Rational Cherednik algebras}

 Let $G\le \mathit{GL}(V)$ be a finite linear group over $\field$. 
A rational Cherednik algebra of $G$ is a flat deformation
of the semidirect product algebra $S(V\oplus V^*)\lcprod \field G$,
obtained by replacing the right\dash hand side of the relation
$[f,v]=0$ ($f\in V^*$, $v\in V$) with some $\field G$\dash valued
pairing between $V^*$ and $V$. Clearly, rational Cherednik algebras are braided doubles over
$\field G$; as such, they become the subject of our inquiry 
in the last section of the paper. 

The problem we pose in Section~\ref{sect:cherednik} is to classify
braided doubles $A=\Uminus \lcprod \field G \rcprod \Uplus$, associated to the
$G$\dash module $V$,  such that $U^\pm$ are commutative
algebras. By Theorem~B, we need to find 
quasi\dash Yetter\dash Drinfeld structures $\delta$ on $V$ such that
the algebras $U(V,\delta)$ and $U(V^*,\delta)$ are commutative. 
Such $\delta$ parametrise rational Cherednik algebras over the group
$G$. Analysing quasibraided factorials, we write down all such
quasi\dash YD structures $\delta$ for an irreducible linear group $G$ 
in terms of \emph{complex reflections} (elements $s$ such that
$\mathrm{rank}(s-1)=1$, otherwise called pseudoreflections) in $G$.
A rational Cherednik algebra $H_{t,c}(G)$ has parameters $t\in \field$
and $c\in \field(\PR)^G$, where $\PR$ is the set of complex reflections.   

Our method is independent of the characteristic of the ground field
$\field$. Rational 
Cherednik algebras over $\field=\mathbb C$ are already known by the
Etingof\dash Ginzburg classification \cite{EG}, a new proof of which we
obtain; $H_{t,c}(G)$ in positive characteristic are a relatively recent
object of study (see \cite{BFG,La}). In general, the \emph{Poincar\'e-Birkhoff-Witt
theorem} for $H_{t,c}(G)$ over a pseudoreflection
group does not follow from $\field=\mathbb C$ case, and the the Koszulity
argument \cite{EG} is not directly applicalble. Irreducible finite
pseudoreflection groups $G$ were classified by Kantor, 
Wagner, Zalesski\v\i{} and Sere\v zkin, see an exposition in
\cite{KeM}; the group algebra $\field G$ is, in general, not
semisimple, and $H_{t,c}(G)$ may not have a
$\mathbb Z$\dash form. The present paper gives a proof of the PBW
theorem for rational Cherednik algebras in arbitrary characteristic. 

We remark in passing that  the representation theory of $H_{t,c}(G)$
in positive 
characteristic is clearly expected to differ from characteristic $0$
in a number of ways, even in the non\dash modular case when
$\field G$ is a semisimple algebra. 
For example, a family of $H_{t,c}(G)$\dash modules which should 
be viewed as standard modules, may be finite\dash dimensional; 
in this case, there is no question
of existence of finite\dash dimensional representations, but one
is still interested in the values of parameters $t,c$ for which the
standard modules are reducible.

Going further, we apply the results on perfect subquotients obtained in
Section~\ref{sect:perfect} to see that all quasi\dash Yetter\dash
Drinfeld structures $\delta$ on $V$ for which
the algebras $U(V,\delta)$ and $U(V^*,\delta)$ are commutative, 
come from perfect embeddings of $V$ in a certain module $\Ycr(G)$ over the
quantum group  $D(G)$ (in our terminology, a Yetter\dash Drinfeld
module over $G$). The ``quantisation'' $\Ycr(G)$ of $V$ turns out to
be trivial, $\Ycr(G)=V$, if $G$ has no complex reflections; in general,
$\dim \Ycr(G)= r + |\PR|$.
Using  techniques developed in Section~\ref{sect:perfect}, in
characteristic zero we obtain
\begin{theoremG}
Let $G\le \mathit{GL}(V)$ be a finite linear group over $\field$. For
each value of the parameters $t,c$, the 
rational Cherednik algebra $H_{t,c}(G)\cong S(V)\lcprod \field G
\rcprod S(V^*)$ embeds as a subdouble in the
braided Heisenberg double $\RHD_{\Ycr(G)} \cong \mathcal
B(\Ycr(G))\lcprod \field G \rcprod \mathcal B(\Ycr(G)^*)$.
\end{theoremG}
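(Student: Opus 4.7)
The plan is to realise every quasi-Yetter-Drinfeld structure $\delta_V$ on $V$ corresponding to a rational Cherednik algebra as a \emph{simultaneously left- and right-perfect subquotient} of a single fixed Yetter-Drinfeld module $\Ycr(G)$ over $\field G$, and to assemble the two perfect subquotients via Section~\ref{sect:perfect} into the desired triangular embedding. By the classification of quasi-YD structures on $V$ with commutative $U(V,\delta_V)$ and $U(V^*,\delta_V)$ carried out earlier in Section~\ref{sect:cherednik}, the pair $(t,c)$ parametrising $H_{t,c}(G)$ corresponds to the quasi-YD coaction $\delta_V(v)=t\cdot 1\tensor v+\sum_{s\in\PR}c_s\,s\tensor (1-s)v$, with associated minimal double $H_{t,c}(G)=S(V)\lcprod\field G\rcprod S(V^*)$.

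First I would fix $\Ycr(G)$ as the Yetter-Drinfeld module of dimension $r+|\PR|$ decomposing as $\Ycr(G)=V\oplus\bigoplus_{s\in\PR}\field y_s$, where $V$ sits in grade $e$ with its given $G$-action and $\field y_s$ sits in grade $s$ with $g\act y_s = y_{gsg^{-1}}$ (adjusted by a character of the centraliser of $s$ if needed). The Yetter-Drinfeld condition (\ref{eqA}) is immediate from the compatibility of the $G$-action with conjugation-grading. Given $(t,c)$, I would then produce explicit $\field G$-linear maps $\mu\colon V\to\Ycr(G)$ and $\nu\colon\Ycr(G)\to V$ realising $(V,\delta_V)$ as a subquotient of $\Ycr(G)$: set $\mu(v)=tv+\sum_s c_s\alpha_s(v)\,y_s$, where $\alpha_s\in V^*$ is the reflection functional satisfying $(1-s)v=\alpha_s(v)w_s$ for a fixed $w_s\in V$, and set $\nu|_V=\id$, $\nu(y_s)=w_s$. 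A direct check gives $(\id\tensor\nu)\circ\delta_{\Ycr(G)}\circ\mu=\delta_V$, and Proposition~\ref{prop:preimage} yields
\[
I(V,\delta_V)\supseteq\mu^{-1}\bigl(I(\Ycr(G),\delta_{\Ycr(G)})\bigr), \qquad I(V^*,\delta_V)\supseteq(\nu^*)^{-1}\bigl(I(\Ycr(G)^*,\delta_{\Ycr(G)})\bigr).
\]

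The main step, and the main obstacle, is to upgrade both of these inclusions to equalities, i.e., to verify simultaneous perfectness on both sides. Since the left algebra is already $U(V,\delta_V)=S(V)$, the map $\mu$ induces a graded $\field G$-equivariant algebra homomorphism $\bar\mu\colon S(V)\to\mathcal{B}(\Ycr(G))$, and perfectness of the $V$-side subquotient reduces to injectivity of $\bar\mu$. Observing that $V$ is placed in grade $e$ so that the induced braiding $\Psi$ restricts to the swap on $V\tensor V\subset\Ycr(G)\tensor\Ycr(G)$, the subalgebra of $\mathcal{B}(\Ycr(G))$ generated by $V$ itself is already the symmetric algebra $S(V)$; a short quadratic computation with $\Psi$, combined with the defining identity of the Cherednik coaction, shows that the perturbed generators $\mu(v)$ likewise pairwise commute in $\mathcal{B}(\Ycr(G))$, which in characteristic zero is enough to conclude that $\bar\mu$ is injective. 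The analysis for the dual map $\nu^*\colon V^*\to\Ycr(G)^*$ landing in $\mathcal{B}(\Ycr(G)^*)$ is entirely parallel.

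Finally, the triangular-morphism description of subquotients from Section~\ref{sect:perfect} assembles the two perfect subquotients into the embedding of braided doubles
\[
H_{t,c}(G)=S(V)\lcprod\field G\rcprod S(V^*)\hookrightarrow\mathcal{B}(\Ycr(G))\lcprod\field G\rcprod\mathcal{B}(\Ycr(G)^*),
\]
and Theorem~D identifies the codomain with $\RHD_{\Ycr(G)}$. What Theorem~F alone does not supply is precisely this simultaneous perfectness on both sides: in the general setting one only knows how to produce a perfect realisation on one side, while the Cherednik case is tractable because the hypothesis that both $U(V,\delta_V)$ and $U(V^*,\delta_V)$ are commutative reduces each perfectness check to a single quadratic identity in the Nichols-Woronowicz algebra.
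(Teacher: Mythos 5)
Your setup coincides with the paper's: the same Yetter--Drinfeld module $\Ycr(G)=V_1\oplus\bigoplus_{s\in\PR}\field\,[s]$, the same maps $\mu(v)=tv\oplus\bigoplus_s c_s\langle\alpha_s,v\rangle[s]$ and $\nu$, the same reduction to a \emph{simultaneous} two-sided perfect subquotient, and you correctly flag that Theorem~F alone does not deliver the dual-side perfectness. The gap is in how you establish perfectness. Commutativity of the elements $\mu(v)$ in $\mathcal B(\Ycr(G))$ only shows that the algebra map $T(V)\to\mathcal B(\Ycr(G))$ kills the symmetric ideal, i.e.\ that $\bar\mu$ \emph{descends} to $S(V)$; this is one of the two inclusions needed for $I(V,\delta_V)=T(\mu)^{-1}I(\Ycr(G))$, and it is not the hard one. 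The hard inclusion is the injectivity of $\bar\mu\colon S(V)\to\mathcal B(\Ycr(G))$, equivalently $T(\mu)^{-1}I(\Ycr(G))\subseteq\langle\wedge^2V\rangle$, and no amount of commutativity of generators implies injectivity of the induced map (the zero map has commuting images). Indeed your argument cannot be parameter-uniform: for $t=0$ the map $S(V)\to\mathcal B(Y_G)$ annihilates the positive-degree $G$-invariants and is \emph{not} injective, yet the generators still commute.

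The paper closes this gap differently. Perfectness of $V\xrightarrow{\delta}Y(V)\xrightarrow{\epsilon\tensor\id}V$ is proved in Theorem~\ref{th:perfect} by the exact operator identity $\widetilde{[n]}!_{\delta}=[n]!_\Psi\circ\delta^{\tensorpow n}$, which gives $\ker\widetilde{[n]}!_{\delta}=(\delta^{\tensorpow n})^{-1}\ker[n]!_\Psi$ in every degree at once, with no commutativity or minimality input; Lemma~\ref{lem:perf} then restricts to the submodule $\Ycr(G)\subset Y(V)$ containing $\delta_{t,c}(V)$, and the dual side is handled in parallel inside Proposition~\ref{prop:embed}. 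If you prefer your route, it can be repaired: combine Proposition~\ref{prop:preimage} (which gives $T(\mu)^{-1}I(\Ycr(G))\subseteq I(V,\delta_{t,c})$) with the fact that $I(V,\delta_{t,c})$ is \emph{exactly} the symmetric ideal, i.e.\ that $H_{t,c}(G)$ is a minimal double. But that fact is precisely your unproved assertion ``$U(V,\delta_V)=S(V)$''; the paper proves it separately via the Dunkl-operator form of the commutator $[\phi,v]$, and it holds only for $t\neq0$ in characteristic zero. As written, your proof attributes the injectivity to the quadratic computation, which is a non sequitur, and silently assumes the minimality statement that carries the real content.
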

We study the embedding given by the Theorem for an irreducible complex
 reflection group $G$ over $\mathbb C$. This leads to: 

(1)
An embedding of a restricted Cherednik algebra $\overline{H}_{0,c}(G)$
in a braided Heisenberg double $\RHD_{Y_G}$ attached to a Yetter\dash Drinfeld
module $Y_G$ of dimension $|\PR|$. 
This double decomposes as $\mathcal{B}(Y_G) \lcprod \mathbb C G
\rcprod \mathcal{B}(Y_G^*)$. In particular, the coinvariant algebra
$S_G$ of $G$ embeds in a very interesting Nichols\dash Woronowicz
 algebra $\mathcal{B}(Y_G)$. We thus recover, using the new method of braided
 doubles, the result of the first
 author \cite{B} for Coxeter groups and an extension of this result 
 to all complex reflection groups due to Kirillov and Maeno \cite{KM};

(2)
 An action of  $\overline{H}_{0,c}(G)$ on $\mathcal{B}(Y_G)$. 
 In Coxeter type $A$, the
 algebra $\mathcal{B}(Y_{\Symm_n})$, or its quadratic cover, coincides
 with the Fomin\dash Kirillov algebra $\mathcal{E}_n$ from \cite{FK}; 

(3) An action of a rational Cherednik algebra $H_{t,c}(G)$ for $t\ne
 0$  on the deformed Nichols\dash Woronowicz
 algebra $\mathcal{B}_\tau(Y_G)$; in type $A$, the algebra
 $\mathcal{B}_\tau(Y_G)$ turns out to be the universal
 enveloping algebra of  a ``\emph{triangular Lie algebra}'' introduced in \cite{BEER}.

We finish the paper with an Appendix which contains proofs to a number
of auxiliary results on algebras with triangular decomposition.


\section{Quasi-Yetter-Drinfeld modules}
\label{sect:structural}

In this Section we introduce 
quasi\dash Yetter\dash Drinfeld modules.
Although the original motivation for these objects came from
a flat deformation problem given in Section~\ref{sect:overview},
we show that quasi\dash Yetter\dash Drinfeld modules arise naturally
in the framework of monoidal categories. 
We discuss properties of 
quasi\dash Yetter\dash Drinfeld modules and ways to construct such
modules. 

\subsection{Quasi-Yetter-Drinfeld modules and comodules}
\label{qydmodcomod}
Recall that by a left quasicoaction of (any vector space) $H$ on a vector
space $V$ we mean an arbitrary linear map $V\to H\tensor V$. 
We denote a quasicoaction by $v \mapsto v^{[-1]} \tensor v^{[0]}$. 

\begin{definition}
\label{def:qydmod}
Let $H$ be a bialgebra over $\field$. A \emph{quasi\dash Yetter\dash
  Drinfeld module} over $H$ is a vector space $V$ with
\begin{itemize}
\item[(1)] 
left $H$\dash action $\act\colon H \tensor V \to V$, 
$h\tensor v\mapsto h\act v$;
\item[(2)]
left $H$\dash quasicoaction $V\mapsto H \tensor V$, 
$v \mapsto v^{[-1]} \tensor v^{[0]}$,
\end{itemize}
which satisfy the \emph{Yetter\dash Drinfeld compatibility condition}:
\begin{equation*}
      ( h_{(1)}\act v)^{[-1]} \, h_{(2)} \tensor 
      ( h_{(1)}\act v)^{[0]} = h_{(1)}v^{[-1]} \tensor h_{(2)} \act v^{[0]}.
\end{equation*}
We will often abbreviate quasi\dash Yetter\dash Drinfeld to quasi\dash YD.
\end{definition}
\begin{remark}
A dual notion is that of a \emph{quasi\dash
  Yetter\dash Drinfeld comodule}  over $H$. It is a space $V$ with
an $H$\dash quasiaction (linear map $H\tensor V \to V$) and 
an $H$\dash coaction, which satisfy the same Yetter\dash Drinfeld
  compatibility condition   as in Definition~\ref{def:qydmod}.  
One can check that this compatibility condition is self\dash dual, and
  a quasi\dash YD module for $H$ is a quasi\dash
  YD comodule for $H^*$ when $\dim H<\infty$.

\emph{Yetter\dash Drinfeld modules}, a notion widely used in
modern quantum groups literature, are quasi\dash YD modules where
the quasicoaction is in fact a coaction. Yetter\dash Drinfeld modules
were formally introduced by Yetter \cite{Y} under the name of
``crossed bimodules'' (a linearisation of crossed sets in algebraic
topology, see e.g.\ Whitehead \cite{W}), and were shown by Majid
\cite{Mdoubles} to be the same as modules over the Drinfeld quantum
double $D(H)$ of $H$  if $H$ is a
finite\dash dimensional Hopf algebra.
\end{remark}

\subsection{The map $\Psi_{V,W}$}
\label{subsect:psi}

Let $H$ be a bialgebra, $V$ be an $H$\dash module with $H$\dash quasicoaction
$v\mapsto v^{[-1]}\tensor v^{[0]}$, and $W$ be an $H$\dash module. 
Consider the map
\begin{equation*}
    \Psi_{V,W} \colon V \tensor W \to W \tensor V, 
\qquad
     \Psi_{V,W}(v\tensor w) = (v^{[-1]}\act w )\tensor  v^{[0]}.
\end{equation*}
The Yetter\dash Drinfeld compatibility condition for $V$ can be recast
in terms of the maps $\Psi_{V,W}$ where $W$ runs over the category of
$H$\dash modules:
\begin{lemma}
\label{lem:Hmodmap}
An $H$\dash action and $H$\dash quasicoaction make a space $V$  a quasi\dash Yetter\dash Drinfeld
module, if and only if $\Psi_{V,W}$ is an $H$\dash module map for any
$H$\dash module $W$.
\end{lemma}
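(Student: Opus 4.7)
My plan is to unpack both sides of the $H$-linearity condition for $\Psi_{V,W}$, compare them to the Yetter\dash Drinfeld compatibility condition of Definition~\ref{def:qydmod}, and then obtain the converse direction by specialising $W$ to a universal example.

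First I would write out $H$-linearity explicitly. Recalling that the tensor products $V\tensor W$ and $W\tensor V$ are $H$\dash modules via the coproduct, $\Psi_{V,W}$ being an $H$\dash module map amounts to the identity
\begin{equation*}
h_{(1)}\act\bigl(v^{[-1]}\act w\bigr)\tensor h_{(2)}\act v^{[0]}
=\bigl((h_{(1)}\act v)^{[-1]}\act(h_{(2)}\act w)\bigr)\tensor (h_{(1)}\act v)^{[0]}
\end{equation*}
for all $h\in H$, $v\in V$, $w\in W$. Using that $\act$ is a left action, both ``act'' compositions collapse to products in $H$ followed by a single action on $w$, so the identity rewrites as
\begin{equation*}
\bigl(h_{(1)}v^{[-1]}\bigr)\act w\,\tensor\, h_{(2)}\act v^{[0]}
=\bigl((h_{(1)}\act v)^{[-1]}h_{(2)}\bigr)\act w\,\tensor\,(h_{(1)}\act v)^{[0]}. \tag{$\ast$}
\end{equation*}

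For the forward direction, suppose $V$ is a quasi\dash YD module. Apply the linear map $H\tensor V \to W \tensor V$ defined by $a\tensor x \mapsto (a\act w)\tensor x$ to both sides of the Yetter\dash Drinfeld compatibility identity; the result is exactly $(\ast)$, so $\Psi_{V,W}$ is an $H$\dash module map for every $H$\dash module $W$.

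For the converse, I would test the hypothesis against the universal example $W=H$, viewed as a left $H$\dash module by left multiplication, and evaluate $(\ast)$ at $w=1_H$. Since $a\act 1_H = a$ for every $a\in H$, the left and right tensor factors of the two sides now live in $H\tensor V$, and $(\ast)$ becomes
\begin{equation*}
h_{(1)}v^{[-1]}\tensor h_{(2)}\act v^{[0]}
=(h_{(1)}\act v)^{[-1]}h_{(2)}\tensor (h_{(1)}\act v)^{[0]},
\end{equation*}
which is precisely the Yetter\dash Drinfeld compatibility condition in Definition~\ref{def:qydmod}. Since nothing deeper than the coproduct-induced $H$\dash module structure on tensor products is used, I do not expect any serious obstacle; the only step requiring a moment's care is the rewriting that produces $(\ast)$, which hinges on the associativity of the left action.
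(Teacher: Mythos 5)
Your proof is correct and follows essentially the same route as the paper: both unpack the $H$\dash equivariance of $\Psi_{V,W}$ and identify it with the Yetter\dash Drinfeld compatibility condition with the first tensor factor ``evaluated'' via the action on $W$. Your converse, specialising to the left regular module $W=H$ at $w=1_H$, just makes explicit what the paper leaves implicit.
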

\begin{proof}
The condition that $\Psi_{V,W}$ is an $H$\dash equivariant map is
\begin{equation*}
         (h_{(1)}\act v)^{[-1]} \act (h_{(2)} \act w) \tensor
  (h_{(1)}\act v)^{[0]}  
= h_{(1)} \act (v^{[-1]}\act w )\tensor  h_{(2)} \act v^{[0]}.
\end{equation*}
This is just the Yetter\dash Drinfeld compatibility condition where
the first tensor component (which is in $H$) is ``evaluated'' (via the action)
on an arbitrary $H$\dash module $W$. 
\end{proof}

A categorical interpretation of the map $\Psi$ will be given after a
brief and informal reminder on 
    
\subsection{$\field$-linear monoidal categories}

Let  $(\C, \tensor, \mathbb{I})$ be a monoidal, or tensor, category. 
The monoidal product $\tensor$ is associative, meaning that there are
isomorphisms $\Phi_{X,Y,Z}$: $X\tensor (Y\tensor Z) \cong (X\tensor Y) \tensor
Z$ for all $X,Y,Z\in \C$, which are natural in $X$, $Y$, $Z$ and  satisfy
 MacLane's pentagon condition \cite[Ch.\ VII]{MacLane}. 

That $\C$ is a $\field$\dash linear monoidal category ideologically means
that all objects in $\C$ are $\field$\dash vector
spaces  with additional structure. 
Formally, $\C$ is a monoidal
category equipped with a (forgetful) tensor functor  $\C \to
\mathrm{Vect}_{\field}$ to the category of vector
spaces over $\field$. For a more formal treatment, the reader is
referred to \cite[\S8]{JStannaka}.
We may (and will) suppress the associativity isomorphisms $\Phi_{X,Y,Z}$ in all
formulas, thus in fact assuming the category to be strict; see \cite{Sch2} for justification.

Functors between $\field$\dash linear monoidal categories are tensor
functors which preserve the forgetful functor to  $\mathrm{Vect}_{\field}$. In
other words, a functor does not change the underlying vector space of
an object. 

A \emph{rigid} monoidal category is a category where any object $V$ has
a left dual $V^*$ and a right dual $\starV$.
A left dual $V^*$ comes with two maps, the evaluation  
$\langle\cdot,\cdot\rangle=\langle\cdot,\cdot\rangle_V\colon V^*\tensor V\to \field$, and the
coevaluation, $\coev=\coev_V\colon \field \to V \tensor V^*$
satisfying the axioms of the dual 
(as e.g.\ in \cite[Definition 9.3.1]{Mbook}). 
Right duals are defined similarly.
One may identify  $(\starV)^*$ and $^*(V^*)$ with $V$ (but not
$V^{**}$ with $V$; these objects may be non\dash isomorphic!). 
In a $\field$\dash linear rigid monoidal category, objects are finite\dash
dimensional vector spaces, and coevaluation is necessarily
given by $\coev_V(1)=v_a \tensor f^a$ where $\{f^a\}$, $\{v_a\}$ 
are dual bases of $V^*$, $V$ with respect to the evaluation
$\langle\cdot,\cdot\rangle$.
(Summation over repeated indices is implied.)

\begin{example}
\label{ex:mod}
Let $H$ be a bialgebra over $\field$. The category
$\Mod{H}$ of left modules over $H$ is a $\field$\dash linear  monoidal
category. The left $H$\dash action on the 
tensor product $X\tensor Y$ of two modules $X,Y$ is given by
$h\act(x\tensor y) = h_{(1)}\act x \tensor h_{(2)}\act y$. 
The trivial module $\field$, where $H$ acts via $h\act 1 = \epsilon(h)$, is the unit object. 

If $H$ is a Hopf algebra (with bijective antipode), the
category $\Mod{H}_\mathrm{f.d.}$ of finite\dash dimensional
$H$\dash modules is rigid. The module structure on $X^*$
is given by the equation 
$\langle h\act f, x\rangle = \langle f, Sh\act x\rangle$, 
where $f\in X^*$, $x\in X$ and $S$ is the antipode in $H$. 

A similar example is the category $\Comod{H}$ of left $H$\dash comodules.  
\end{example}
\begin{example}
Let $H$ be a bialgebra over $\field$. Define the category $\QYD{H}$ as follows:
\begin{itemize}
\item objects of $\QYD{H}$ $=$ quasi\dash Yetter\dash Drinfeld modules over
  $H$;
\item morphisms between $X$ and $Y$ $=$ $H$\dash module maps $X\to
  Y$ ( \emph{compatibility with the quasicoaction is not required});
\item monoidal product $=$ the structure of a quasi\dash YD module on
  $X\tensor Y$, given by
\begin{itemize}
\item $H$\dash action 
      $h\act(x\tensor y) = h_{(1)}\act x \tensor h_{(2)}\act y$, 
\item $H$\dash quasicoaction 
      $x\tensor y \mapsto x^{[-1]}y^{[-1]}\tensor
      x^{[0]}\tensor y^{[0]}$.
\end{itemize}
\end{itemize}
We will refer to the latter two formulas by saying that the action and
quasicoaction ``respect the tensor product''.
\end{example}
\begin{lemma}
\label{lem:monoidal}
$\QYD{H}$ is a $\field$\dash linear monoidal category.
\end{lemma}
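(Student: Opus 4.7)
The plan is to separate the verification into three pieces: (i) $\QYD{H}$ is indeed a category, (ii) the prescribed monoidal product is well-defined on objects and on morphisms, and (iii) the coherence data — associator, unit constraints, pentagon and triangle axioms, together with $\field$-linearity — can be inherited from $\Mod{H}$ and $\mathrm{Vect}_{\field}$. Since morphisms in $\QYD{H}$ are by definition $H$-module maps, piece (i) is immediate (composition and identities come from $\Mod{H}$), and the real work lies in (ii).

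For the tensor product of objects, the diagonal $H$-action on $X \tensor Y$ is already that of $\Mod{H}$, so the task reduces to verifying that it combines with the specified quasicoaction $x\tensor y \mapsto x^{[-1]} y^{[-1]} \tensor x^{[0]} \tensor y^{[0]}$ to satisfy the Yetter-Drinfeld compatibility of Definition~\ref{def:qydmod}. Setting $v = x \tensor y$ and using coassociativity to expand $(\Delta\tensor\id)\Delta(h) = h_{(1)} \tensor h_{(2)} \tensor h_{(3)}$, the left-hand side of the YD condition for $v$ unfolds to
\begin{equation*}
(h_{(1)} \act x)^{[-1]} (h_{(2)} \act y)^{[-1]} h_{(3)} \tensor (h_{(1)} \act x)^{[0]} \tensor (h_{(2)} \act y)^{[0]}.
\end{equation*}
The crucial step is to apply YD for $Y$ to the sub-block $(h_{(2)} \act y)^{[-1]} h_{(3)} \tensor (h_{(2)} \act y)^{[0]}$ (with the $x$-factors treated as passive), converting it to $h_{(2)} y^{[-1]} \tensor h_{(3)} \act y^{[0]}$. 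A second application, this time YD for $X$ acting on the block $(h_{(1)} \act x)^{[-1]} h_{(2)} \tensor (h_{(1)} \act x)^{[0]}$, produces $h_{(1)} x^{[-1]} \tensor h_{(2)} \act x^{[0]}$; combining, the whole expression becomes $h_{(1)} x^{[-1]} y^{[-1]} \tensor h_{(2)} \act x^{[0]} \tensor h_{(3)} \act y^{[0]}$, which is exactly the right-hand side of YD for $X\tensor Y$. For morphisms, the tensor product of two $H$-module maps is automatically an $H$-module map, so $\tensor$ is well-defined at the level of arrows without any further compatibility.

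Piece (iii) is bookkeeping. The unit object is $\field$ with $h \act 1 = \epsilon(h)$ and trivial quasicoaction $1 \mapsto 1 \tensor 1$; the YD condition here collapses to the counit axiom. The associator and unit isomorphisms from $\mathrm{Vect}_{\field}$ are already $H$-module maps under the diagonal action (by coassociativity of $\Delta$), so they qualify as morphisms in $\QYD{H}$; the pentagon and triangle identities, and $\field$-linearity, then descend from $\mathrm{Vect}_{\field}$ via the forgetful tensor functor $\QYD{H} \to \mathrm{Vect}_{\field}$.

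The main obstacle is the YD compatibility check for $X \tensor Y$, which is the only non-formal step. The insight making that calculation painless is the correct order of operations — first use YD for $Y$ to move $h_{(3)}$ past $(h_{(2)} \act y)^{[-1]}$, then use YD for $X$ — so that the products in $H$ land precisely where the prescribed diagonal quasicoaction on $X\tensor Y$ places them.
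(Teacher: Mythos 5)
Your proof is correct, but it takes a different route from the paper's. The paper does not verify the Yetter--Drinfeld condition for $X\tensor Y$ by a direct Sweedler computation; instead it invokes Lemma~\ref{lem:Hmodmap}, which recasts the YD condition as the statement that $\Psi_{X\tensor Y,Z}$ is an $H$\dash module map for every $H$\dash module $Z$, and then observes the factorisation $\Psi_{X\tensor Y,Z}=(\Psi_{X,Z}\tensor\id_Y)(\id_X\tensor\Psi_{Y,Z})$, so that $H$\dash equivariance is inherited from the two factors. That argument is shorter and, importantly, proves the right hexagon identity as a by-product, which the paper then reuses verbatim in the proof of Lemma~\ref{lem:semibraided}; your computation does not yield that identity and the semibraiding axiom would have to be checked separately. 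What your approach buys in exchange is self-containedness: it does not rely on Lemma~\ref{lem:Hmodmap}, and it makes explicit where coassociativity and the order of the two YD substitutions (first for $Y$, moving $h_{(3)}$ past $(h_{(2)}\act y)^{[-1]}$, then for $X$) enter --- the three-fold Sweedler bookkeeping you carry out is exactly what the paper's categorical reformulation is designed to hide. Your treatment of the unit object and of the coherence data inherited from $\mathrm{Vect}_\field$ is also more explicit than the paper's, which leaves those points implicit.
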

\begin{proof}
Let $X$, $Y$ be quasi\dash YD modules. 
We have to check that the above action and quasicoaction on $X\tensor Y$
are Yetter\dash Drinfeld compatible. By Lemma \ref{lem:Hmodmap}, it is
enough to check that $\Psi_{X\tensor Y ,Z} \colon X \tensor Y \tensor
Z \to Z \tensor X \tensor Y$ is an $H$\dash module map, for an
arbitrary $H$\dash module $Z$. Indeed,
\begin{align*}
\Psi_{X\tensor Y ,Z} (x\tensor y \tensor z) & = x^{[-1]}y^{[-1]} \act z
   \tensor x^{[0]} \tensor y^{[0]} 
\\
& = (\Psi_{X,Z}\tensor \id_Y)(\id_X\tensor \Psi_{Y,Z})(x\tensor y
   \tensor z)
\end{align*}
for $x\in X$, $y\in Y$ and $z\in Z$. Since $\Psi_{X,Z}$ and
$\Psi_{Y,Z}$ are $H$\dash module morphisms, so is $\Psi_{X\tensor Y ,Z}$.  
\end{proof}

\begin{remark}
\label{rem:equiv}

It is convenient to think of an object in
$\QYD{H}$ as a pair $(V,\delta)$, where $V$ is an $H$\dash module and
$\delta\colon V \to H \tensor V$ is a Yetter\dash Drinfeld
quasicoaction on $V$. There is an obvious forgetful functor 
\begin{equation*}
    F\colon \QYD{H}\to \Mod{H}, \qquad F(V,\delta) = V,
\end{equation*}
which forgets the quasicoaction. This functor is an equivalence of monoidal
categories. Indeed, 
\begin{equation*}
   G \colon \Mod{H}\to\QYD{H}, \qquad G(V) = (V,0)
\end{equation*}
is a monoidal functor (the zero quasicoaction, $\delta(v)=0$, is always Yetter\dash
Drinfeld); $FG(V)=V$ and $GF(V,\delta)=(V,0)$ which is naturally isomorphic
to $(V,\delta)$. 

Hence the category $\QYD{H}$ may be viewed as a ``decorated module
category''. The fibre, $F^{-1}(V)$, of the forgetful functor over an
$H$\dash module $V$ has the structure of a $\field$\dash vector space:
if $(V,\delta_1)$ and $(V, \delta_2)$ are quasi\dash YD modules, then
$(V, \delta_1+\lambda \delta_2)$ is a quasi\dash YD module for any
$\lambda \in \field$. This is because the Yetter\dash Drinfeld
compatibility condition is linear in the quasicoaction. 

However, we do not know the dimension of $F^{-1}(V)$ for a given
$H$\dash module $V$, nor a sufficient condition that $\dim
F^{-1}(V)>0$.

Observe also that Yetter\dash Drinfeld $H$\dash\emph{coactions} on $V$ are a
subset (not a subspace) in $F^{-1}(V)$; this subset does not
necessarily span $F^{-1}(V)$. 
\end{remark}

Although the categories $\QYD{H}$ and $\Mod{H}$ are equivalent as
monoidal categories, there is an important structure, intrinsic in
$\QYD{H}$, which is ``forgotten'' by the forgetful functor to
$\Mod{H}$. This structure is the semibraiding. 

\subsection{Semibraided monoidal categories}

\begin{definition}
Let $\C$ be a monoidal category. A \emph{right semibraiding} on $\C$
is a family $\PSI=\{ \Psi_{X,Y}\colon X\tensor Y \to Y \tensor
X \mid X,Y\in\Ob\ \C\}$ of morphisms, such that 
\begin{itemize}
\item[1.] (naturality in the right\dash hand argument) 
     $\Psi_{X,Y}$ is natural in $Y$;
\item[2.] (right hexagon condition) 
$\Psi_{X\tensor Y, Z}=(\Psi_{X,Z}\tensor \id_Y)(\id_X\tensor \Psi_{Y,Z})$.
\end{itemize}
Similarly, a \emph{left semibraiding} is a family of
morphisms $\Psi_{X,Y}$, natural in $X$ and satisfying the ``mirror'' hexagon
condition for $\Psi_{X,Y\tensor Z}$. 
\end{definition}
\begin{remark}
Naturality of $\Psi_{X,Y}$ in $Y$ means that 
\begin{equation*}
   (\phi\tensor \id_X)\Psi_{X,Y} = \Psi_{X,Y'}(\id_X\tensor \phi)
\quad\text{for any morphism}\  \phi
\colon Y \to Y'. 
\end{equation*}
The (left and right) hexagon conditions, originally due to MacLane,
are so named because if the associativity isomorphisms like 
$(X\tensor Y)\tensor Z \cong X \tensor (Y\tensor Z)$ are explicitly
shown, these conditions are given by hexagonal commutative diagrams.  
See \cite[VII.7]{MacLane}.
\end{remark}
\begin{remark}[Braidings]
\label{rem:braiding}
A \emph{braiding} on a monoidal category is a collection of invertible
morphisms $\Psi_{X,Y}\colon X \tensor Y \to Y \tensor X$ which is both right and left semibraiding. Braidings
are a principal object in the theory of quantum groups, whereas
semibraidings are a new notion introduced in the present paper.
\end{remark}
We will use the term \emph{right} (resp.\ \emph{left})
\emph{semibraided category} for a pair 
$(\C, \PSI)$, where $\C$ is a monoidal category and $\PSI$ is a
right (resp.\ left) semibraiding on $\C$.
Let us give an example of a semibraided category, which will turn out to
be the canonical one.

For two quasi\dash Yetter\dash Drinfeld modules $X,Y$ over a
bialgebra $H$, define the map $\Psi_{X,Y}\colon X\tensor Y \to Y \tensor X$ as in \ref{subsect:psi}.
Denote by $\PSI$ the collection of $\Psi_{X,Y}$ for all pairs $X,Y\in
\Ob\ \QYD{H}$. 
\begin{lemma}
\label{lem:semibraided}
$(\QYD{H},\PSI)$ is a right semibraided monoidal category. 
\end{lemma}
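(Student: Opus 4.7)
The plan is to verify the three conditions constituting a right semibraiding: that each $\Psi_{X,Y}$ is a morphism in $\QYD{H}$, that $\Psi_{X,Y}$ is natural in $Y$, and that the right hexagon condition holds.

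First, I would show that $\Psi_{X,Y}$ is a morphism in $\QYD{H}$. Recall that morphisms in $\QYD{H}$ are just $H$-module maps, with no compatibility with the quasicoaction required. So all I need is that $\Psi_{X,Y}\colon X\tensor Y\to Y\tensor X$ is $H$-equivariant. But this is precisely Lemma~\ref{lem:Hmodmap} applied to the quasi\dash YD module $X$ and the $H$\dash module underlying $Y$.

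Next, I would verify naturality in $Y$. Given an $H$-module map $\phi\colon Y\to Y'$, for $x\in X$, $y\in Y$ one computes
\begin{equation*}
(\phi\tensor\id_X)\Psi_{X,Y}(x\tensor y) = \phi(x^{[-1]}\act y)\tensor x^{[0]} = (x^{[-1]}\act\phi(y))\tensor x^{[0]} = \Psi_{X,Y'}(x\tensor\phi(y)),
\end{equation*}
where the middle equality uses $H$\dash equivariance of $\phi$. Note that naturality in $X$ would require $\phi$ to intertwine the quasicoactions, which is not part of the definition of a morphism in $\QYD{H}$; this is why we only get a \emph{right} semibraiding.

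Finally, the right hexagon condition is a direct computation using the monoidal structure on $\QYD{H}$, in which the quasicoaction on $X\tensor Y$ is $x\tensor y\mapsto x^{[-1]}y^{[-1]}\tensor x^{[0]}\tensor y^{[0]}$. Unfolding both sides,
\begin{equation*}
\Psi_{X\tensor Y,Z}(x\tensor y\tensor z) = x^{[-1]}y^{[-1]}\act z \tensor x^{[0]}\tensor y^{[0]},
\end{equation*}
while $(\Psi_{X,Z}\tensor\id_Y)(\id_X\tensor\Psi_{Y,Z})(x\tensor y\tensor z) = x^{[-1]}\act(y^{[-1]}\act z)\tensor x^{[0]}\tensor y^{[0]}$, and the two agree because $\act$ is a module action. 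In fact this identity has essentially already been carried out inside the proof of Lemma~\ref{lem:monoidal}, so I would simply cite that calculation.

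There is no genuine obstacle here: all three axioms reduce to straightforward unpacking once one uses Lemma~\ref{lem:Hmodmap} to handle the Yetter\dash Drinfeld compatibility and the definition of the quasicoaction on tensor products to handle the hexagon. The only conceptual point worth flagging is the asymmetry — naturality holds in $Y$ but not in $X$ — which explains why $\QYD{H}$ carries a semibraiding rather than a braiding.
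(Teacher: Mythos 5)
Your proof is correct and follows essentially the same route as the paper: the morphism property via Lemma~\ref{lem:Hmodmap}, the same one-line naturality computation in $Y$, and the hexagon condition by citing the calculation already done in the proof of Lemma~\ref{lem:monoidal}. The closing remark on why naturality in $X$ fails (and hence why one only gets a semibraiding) is a nice observation, though not needed for the statement.
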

\begin{proof}
By Lemma \ref{lem:Hmodmap}, $\Psi_{X,Y}$ are morphisms in $\QYD{H}$. 
Let us check the naturality in $Y$: for a morphism $\phi\colon Y \to
Y'$, which is an $H$\dash module map, 
\begin{equation*}
    (\phi\tensor \id_X)\Psi_{X,Y}(x\tensor y) = \phi(x^{[-1]}\act y)
    \tensor x^{[0]} = x^{[-1]}\act \phi(y) \tensor x^{[0]} =
    \Psi_{X,Y'}(x\tensor \phi(y))
\end{equation*}
as required. Finally, the right hexagon condition for $\Psi$ was explicitly
checked in the proof of Lemma~\ref{lem:monoidal}.
\end{proof}
\begin{remark}
The same can be done for the category of quasi\dash Yetter\dash
Drinfeld comodules. Let $X,Y\in \Ob\ \coQYD{H}$. Denote the
quasiaction by $\qact \colon H\tensor X \to X$. Let
$\PSI=\{\Psi_{X,Y}\}$ where
$\Psi_{X,Y}\colon X\tensor Y \to Y \tensor X$ is defined by the formula
$\Psi_{X,Y}(x\tensor y)= x^{(-1)} \qact y \tensor x^{(0)}$. Then
$(\coQYD{H},\PSI)$ is a left semibraided category.  
\end{remark}

\subsection{Reconstruction theorems for semibraided monoidal categories}

Our next goal is to prove a ``converse'' of
Lemma~\ref{lem:semibraided}. That is, a $\field$\dash linear right
semibraided category $(\C,\PSI)$ should be realised as a semibraided subcategory of
$\QYD{H}$ for some bialgebra $H=H(\C,\PSI)$. The process of
obtaining $H(\C,\PSI)$ from the category $(\C,\PSI)$ is called 
reconstruction. Ideally, starting with the category $\QYD{H}$, the
reconstruction should yield the original bialgebra $H$.  

Likewise, left semibraided monoidal categories are
expected to be realised as subcategories of $\coQYD{H}$.

Known reconstruction theorems include the realisation of a
$\field$\dash linear monoidal category $\C$, under
certain finiteness assumptions, in terms of either modules
or comodules over a bialgebra $H$ (a Hopf algebra if $\C$ is
rigid). If $\C$ is a braided category, $H$ will be a
(co)quasitriangular bialgebra. See the survey \cite{JStannaka}; original
sources include \cite{U,Y,Mbraidedgps} etc.

We will only state and prove a reconstruction theorem for a left
semibraided category and quasi\dash YD
comodules.
``Finiteness assumptions'' are easier to state for a comodule
realisation. Besides that, comodules behave in a more algebraic way
compared to modules.  We will bear in mind that a module version also
holds (interested reader can recover it, using \cite[9.4.1]{Mbook}
as a guide). The following finiteness assumptions are sufficient for the comodule
reconstruction, and are satisfied in all our applications of
the reconstruction theorem:
\begin{itemize}
\item monoidal categories are strict and small;
\item objects in $\field$\dash linear monoidal categories are
  finite\dash dimensional linear spaces over $\field$.
\end{itemize}
In what follows, the symbol $\qact$ will denote quasiaction.
\begin{definition}
We say that a $\field$\dash linear left semibraided category $(\C,\PSI)$ is
\emph{realised} over a bialgebra $H$, if there is a monoidal functor
$\C \to \coQYD{H}$ which preserves the left semibraiding. 

In other words, 
$H$ coacts and Yetter\dash Drinfeld compatibly quasiacts on (the underlying vector space of)
each object of $\C$, so that morphisms in $\C$ commute with the
coaction, the coaction and the quasiaction respect the tensor product, and 
\begin{equation*}
\Psi_{X,Y}(x\tensor y) =
  (x^{(-1)}\qact y) \tensor x^{(0)}
\end{equation*}
for $X,Y\in\Ob\ \C$.
\end{definition}
The following Lemma is easy; note that the product on $H_\delta$
arises from tensor multiplication in $\C$:
\begin{lemma}
\label{lem:subq}
Suppose that a $\field$\dash linear left semibraided category
$(\C,\PSI)$ is realised over a bialgebra $H$.
 For $X\in \Ob\ \C$, let $\delta$ be the coaction of $H$ on $X$.  

(a) Denote by $H_\delta$ the minimal subspace of $H$ such that
$\delta(X)\subset H_\delta\tensor X$ for all $X\in\Ob\ \C$. Then
$H_\delta$ is a subbialgebra of $H$.

(b) Let $I_\qact \subset H_\delta$ be the largest  biideal of  $H_\delta$ which quasiacts by
   zero on all objects in $\C$. Then $(\C, \PSI)$ is realised over the quotient
   bialgebra $H_\delta/I_\qact$. 
\qed
\end{lemma}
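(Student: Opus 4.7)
The plan for both parts is to unpack the definition of a realisation by choosing, for each object $X$, a basis $\{x_i\}$ and writing $\delta(x_i) = \sum_j c_{ji}^X \tensor x_j$. The space $H_\delta$ is then the span of all matrix coefficients $c_{ji}^X$ as $X$ ranges over $\Ob\ \C$, so everything reduces to identities among these coefficients.

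For part (a), I would verify three things. First, the unit of $\C$ maps under the realisation functor to $\field \in \coQYD{H}$, whose coaction sends $1 \mapsto 1_H \tensor 1$, so $1_H \in H_\delta$. Second, the requirement that the coaction respect the tensor product, $\delta(x \tensor y) = x^{(-1)} y^{(-1)} \tensor x^{(0)} \tensor y^{(0)}$, forces the matrix coefficients of $\delta_{X \tensor Y}$ to be products $c_{ki}^X c_{lj}^Y$; hence $H_\delta \cdot H_\delta \subset H_\delta$. Third, coassociativity $(\Delta \tensor \id)\delta = (\id \tensor \delta)\delta$ applied to basis elements yields $\Delta(c_{ji}^X) = \sum_k c_{ki}^X \tensor c_{jk}^X$, putting $\Delta(H_\delta)$ inside $H_\delta \tensor H_\delta$, while the counit axiom $(\epsilon \tensor \id)\delta = \id$ gives $\epsilon(c_{ji}^X) = \delta_{ij}$. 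Thus $H_\delta$ is a subbialgebra.

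For part (b), I would first note that the sum of any family of biideals of $H_\delta$, each of whose elements quasiacts as zero on every object of $\C$, is itself such a biideal, so a largest $I_\qact$ exists. Since $I_\qact$ is by definition a two\dash sided ideal and a coideal, $H_\delta/I_\qact$ inherits a bialgebra structure. The coaction $\delta$ on each $X$ descends to a coaction $\bar\delta$ of $H_\delta/I_\qact$ via post\dash composition with the projection, and the quasiaction descends tautologically by the defining property of $I_\qact$. The Yetter\dash Drinfeld compatibility, being a linear identity in coaction and quasiaction values, is manifestly preserved under the quotient, as is the semibraiding formula $\Psi_{X,Y}(x \tensor y) = x^{(-1)} \qact y \tensor x^{(0)}$, since $x^{(-1)} \qact y$ depends only on the class of $x^{(-1)}$ modulo $I_\qact$.

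The argument is essentially bookkeeping once the identification of $H_\delta$ via matrix coefficients is in place, and I do not anticipate any substantive obstacle. The only mild point worth being careful about is in (a): one needs that coassociativity places $\Delta(c_{ji}^X)$ into $H_\delta \tensor H_\delta$ rather than merely into $H \tensor H$. This is automatic because the same basis of $X$ appears on both sides of the coassociativity identity, so the intermediate summation index $k$ runs over matrix coefficients of $X$ itself, but it is worth recording explicitly.
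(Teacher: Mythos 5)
Your proof is correct. The paper states this Lemma without proof, remarking only that it is easy and that the product on $H_\delta$ arises from tensor multiplication in $\C$; your matrix-coefficient argument is precisely the standard way to supply the missing details and is consistent with that hint (closure of $H_\delta$ under multiplication comes from the coaction on $X\tensor Y$, coassociativity lands $\Delta(c^X_{ji})$ in $H_\delta\tensor H_\delta$ because the intermediate index runs over a basis of $X$ itself, and in (b) everything descends to $H_\delta/I_\qact$ because the projection is a bialgebra map, the quasiaction kills $I_\qact$ by definition, and the Yetter\dash Drinfeld and semibraiding identities are images of the corresponding identities in $H_\delta$).
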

We will call the bialgebra $H_\delta/I_\qact$ the \emph{minimal
  subquotient} of $H$ \emph{realising} $(\C,\PSI)$. 
Among the bialgebras which realise $(\C,\PSI)$, we will be
reconstructing the bialgebra which is the smallest possible. 
We now state our 
\begin{theorem}[Reconstruction theorem for semibraidings]
\label{th:reconstr}
Let  $(\C,\PSI)$ be a $\field$\dash linear left semibraided monoidal
category, satisfying finiteness assumptions.
There exists a bialgebra $H(\C,\PSI)$ such that:

1.  $(\C,\PSI)$ is realised over $H(\C,\PSI)$;

2. (minimality) If $(\C,\PSI)$ is realised over another bialgebra $H'$,
   then the minimal subquotient of $H'$, realising $(\C,\PSI)$, is
   isomorphic to $H(\C,\PSI)$. 
\end{theorem}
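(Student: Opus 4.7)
The plan is to adapt the standard Tannaka--Krein coend-based reconstruction to incorporate the semibraiding. First, build the underlying coalgebra of $H=H(\C,\PSI)$ as the coend
\[
H \;=\; \bigoplus_{X\in\Ob\C} X^*\tensor X \;\Big/\; \bigl\langle\, \phi^*(g)\tensor x - g\tensor \phi(x) \,\bigr\rangle,
\]
where the relations run over all morphisms $\phi\colon X\to Y$ in $\C$, $g\in Y^*$, $x\in X$. Each summand $X^*\tensor X\cong \End(X)^*$ carries a canonical coalgebra structure dual to composition, and these descend to $H$. Every object $X\in\Ob\,\C$ acquires a tautological coaction $\delta_X(x)=[f^a\tensor x]_X\tensor x_a$, with respect to which every morphism of $\C$ is automatically an $H$-comodule map by the coend relations. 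I then promote $H$ to a bialgebra using the monoidal structure of $\C$ by
\[
[f\tensor x]_X \cdot [g\tensor y]_Y \;:=\; [(f\tensor g)\tensor (x\tensor y)]_{X\tensor Y}, \qquad 1_H := [1\tensor 1]_{\mathbb{I}},
\]
under the canonical identification $X^*\tensor Y^*\cong(X\tensor Y)^*$; the bialgebra axioms reduce to naturality checks in $\C$, and by design the tautological coaction on $X\tensor Y$ agrees with the tensor-product coaction $x\tensor y\mapsto x^{(-1)}y^{(-1)}\tensor x^{(0)}\tensor y^{(0)}$.

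Next, I introduce the quasiaction directly from the semibraiding:
\[
[g\tensor y]_Y \qact x \;:=\; (\id_X\tensor g)\bigl(\Psi_{Y,X}(y\tensor x)\bigr), \qquad g\in Y^*,\ y\in Y,\ x\in X.
\]
Well-definedness against the coend relations is precisely the identity $(\id_X\tensor \phi)\Psi_{Y,X} = \Psi_{Y',X}(\phi\tensor \id_X)$, i.e.\ naturality of $\PSI$ in its left argument. By construction $\Psi_{Y,X}(y\tensor x) = (y^{(-1)}\qact x)\tensor y^{(0)}$, so the given semibraiding is reproduced. To obtain the Yetter--Drinfeld compatibility of Definition~\ref{def:qydmod}, I first prove the comodule analog of Lemma~\ref{lem:Hmodmap}: the map $\Psi_{Y,X}\colon Y\tensor X\to X\tensor Y$ is an $H$-comodule morphism if and only if $(X,\delta_X,\qact)$ is a quasi-YD comodule over $H$. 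Since $\Psi_{Y,X}$ is a morphism of $\C$ by the semibraiding axiom, the coend construction automatically makes it an $H$-comodule map, so the compatibility holds; the left hexagon condition for $\PSI$ ensures this is consistent with the tensor structure, hence with multiplication in $H$.

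Finally, quotient $H$ by the largest biideal that acts as zero on every object of $\C$ and call the result $H(\C,\PSI)$; this step preserves the realisation of $(\C,\PSI)$ since it kills only elements redundant for both the coactions and the quasiactions. Minimality is then immediate from Lemma~\ref{lem:subq}: for any other realisation over a bialgebra $H'$, the universal property of the coend produces a bialgebra map $H(\C,\PSI)\to H'_\delta$, and matching quasiactions shows the image equals the minimal subquotient $H'_\delta/I_\qact$. The main technical obstacle in this program is the Yetter--Drinfeld compatibility step: although morally it reduces to the fact that $\Psi_{Y,X}$ is an $H$-comodule morphism, converting this categorical statement into the Sweedler-level identity of Definition~\ref{def:qydmod} requires first establishing the comodule analog of Lemma~\ref{lem:Hmodmap} cleanly, and then carefully tracking the coproduct on $H$ (dual to composition in $\End(Y)$) against the bialgebra multiplication built from the monoidal structure of $\C$.
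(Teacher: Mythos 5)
Your proposal is correct and follows essentially the same route as the paper: standard comodule Tannaka--Krein reconstruction of $H_\C$ (you phrase it as a coend, the paper uses comatrix elements $h_{x,\xi}$, which is the same object), the quasiaction defined by evaluating the semibraiding, well-definedness from naturality of $\PSI$ in the left argument, Yetter--Drinfeld compatibility read off from $\Psi_{X,Y}$ being a comodule morphism, the quotient by the largest biideal quasiacting by zero, and minimality from the universal property. The step you flag as the main technical obstacle is handled in the paper by a one-line evaluation of the rightmost tensor factor against $\xi\in X^\vee$, exactly as your comodule analogue of Lemma~\ref{lem:Hmodmap} would give.
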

\begin{proof}
The first step is to apply to the category $\C$ (ignoring the
semibraiding) the standard comodule
reconstruction, see \cite{JStannaka, U,Y,Mbraidedgps}.
According to this procedure, there exists a universal bialgebra $H_\C$, which 
coacts on all objects of $\C$, such that the coaction respects the
tensor product, and morphisms in $\C$ are
$H_\C$\dash comodule morphisms. Universality means that for any other
bialgebra $H'$ with these properties, 
there is a unique map $p\colon H_\C \to H'$ such that the coaction of
$H'$ on all $X\in\Ob\ \C$ factors through the coaction of $H_\C$:
$X\to H_\C\tensor X \xrightarrow{p\tensor\id} H'\tensor X$. 

We will use the following description of $H_\C$, which can be found in
the sources cited above. 

The bialgebra $H_\C$ is spanned by comatrix
elements $h_{x,\xi}=x^{(-1)}\langle x^{(0)},\xi\rangle$, $x\in X\in
\Ob\ \C$, $\xi\in X^\vee$. 
Here $X^\vee$ denotes the right dual vector space to $X$ (note that
$X^\vee$ is not an object in $\C$), and $\langle
x,\xi\rangle\in\field$ is the pairing of $x\in X$ and $\xi\in X^\vee$.
The product in $H_\C$ is given by $h_{x,\xi} h_{y,\eta} =
h_{x\tensor y, \eta\tensor \xi}$. Observe that the dual to the space
$X\tensor Y$ is $Y^\vee \tensor X^\vee$. The unit in $H_\C$ is
$h_{1,1}$ where $1\in \field = $the trivial object of $\C$. 
As $h_{x,\xi}$ are comatrix elements, the coproduct of $h_{x,\xi}$
    is $h_{x,\xi_a}\tensor h_{x^a,\xi}$.  Here $\{x^a\}$,
$\{\xi_a\}$ is any pair of dual bases of $X$, $X^\vee$; summation over
the repeated index is implied. The counit is
    $\epsilon(h_{x,\xi})=\langle x, \xi \rangle$. 

The full set of relations between the comatrix elements in $H_\C$ is spanned
by the obstructions for morphisms in $\C$ to become comodule morphisms:
\begin{equation*}
 \Obstr(\C)=\mathrm{span} \{h_{\phi(x), \eta} -
   h_{x,\phi^\vee(\eta)} \mid \phi\in\mathrm{Mor}(\C),\ 
   x\in \mathrm{source}(\phi), \
   \eta\in\mathrm{target}(\phi)^\vee \}.
\end{equation*}
Here $\phi^\vee$ is the linear map which is adjoint to $\phi$.

This description of  the bialgebra $H_\C$ is explicit enough to enable us to
introduce a quasiaction of $H_\C$ on objects in 
$\C$, which realises the semibraiding $\PSI$. In fact, it is
clear that there is
only one choice for such quasiaction:
for $x\in X$, $\xi \in X^\vee$, $y\in Y$ where $X,Y$ are objects in
$\C$, put 
\begin{equation*}
       h_{x,\xi} \qact y =(\id_Y \tensor \langle \cdot,
       \xi\rangle)\Psi_{X,Y}(x\tensor y).  
\end{equation*}
Let us check that the quasiaction is well\dash defined. We need to
make sure that elements of the space $\Obstr(\C)$ quasiact by zero on
objects in $\C$. 
Indeed, let $\phi\colon X \to Y$ be a morphism in $\C$, $x\in X$,
$\eta\in Y^\vee$.
The condition that $h_{\phi(x), \eta} - h_{x,\phi^\vee(\eta)}$
quasiacts on $z\in Z$ by zero is precisely equivalent to  
\begin{equation*}
    \Psi_{Y,Z}(\phi(x)\tensor z) - (\id_Z\tensor
    \phi)\Psi_{X,Z}(x\tensor z) = 0,
\end{equation*}
 which is the functoriality of $\Psi$ in the left\dash hand argument.
One easily checks that the quasiaction $\qact$ indeed realises
$\PSI$: i.e., $h_{x,\xi_a}\qact y \tensor x^a = \Psi_{X,Y}(x\tensor
y)$. It follows from the left hexagon axiom for $\PSI$ that the
quasiaction respects the tensor product in $\C$. 

Let us prove that the quasiaction of $H_\C$ is Yetter\dash Drinfeld
compatible with the coaction. 
The condition that $\Psi_{X,Y}(x\tensor y)=x^{(-1)}\qact y\tensor x^{(0)}$ is a morphism of comodules between
$X\tensor Y$ and $Y\tensor X$ reads
\begin{equation*}
    (x^{(-2)}\qact y)^{(-1)} x^{(-1)} \tensor (x^{(-2)}\qact y)^{(0)}\tensor x^{(0)}
= x^{(-2)}y^{(-1)}\tensor x^{(-1)}\qact y^{(0)} \tensor x^{(0)}.
\end{equation*}
Evaluating the rightmost tensor factor on both sides on $\xi\in
X^\vee$ and putting $h=h_{x,\xi}$, we obtain
\begin{equation*}
    (h_{(1)}\qact y)^{(-1)} h_{(2)} \tensor (h_{(1)}\qact y)^{(0)}
= h_{(1)}y^{(-1)}\tensor h_{(2)}\qact y^{(0)},
\end{equation*}
which is the required Yetter\dash Drinfeld compatibility condition; we
reiterate that comatrix elements $h_{x,\xi}$ span $H_\C$. 

The algebra $H_\C$ realises the semibraiding, but it may not be
minimal. 
Let $I_\qact(H_\C)$ be the largest among ($=$the sum of all) biideals in
$H_\C$ which quasiact on the whole category $\C$ by zero. 
Define 
\begin{equation*}
     H(\C,\PSI) = H_\C / I_\qact(H_\C).
\end{equation*}
Let us show that $H(\C,\PSI)$ satisfies the minimality property required
in the theorem. 
Suppose that $H$ is another bialgebra which realises the semibraided
category $(\C,\PSI)$. By the universality of $H_\C$, there is a 
map $p\colon H_\C \to H$ of bialgebras given by $p(h_{x,\xi})=x^{(-1)} \langle
x^{(0)},\xi\rangle$. Clearly, the image of the map $p$ is the subbialgebra
of $H$ denoted by $H_\delta$ in Lemma~\ref{lem:subq}.  
Note that, since the quasiaction of $H$ realises $\PSI$, 
the map $p$ must commute with quasiaction:
\begin{equation*}
    p(h_{x,\xi})\qact y = x^{[-1]}\qact y \langle x^{[0]}, \xi\rangle 
    = (\id_Y\tensor \langle \cdot,\xi\rangle )\Psi_{X,Y}(x\tensor y)
    = h_{x,\xi}\qact y.
\end{equation*}
Therefore, if $I_\qact$ is the largest biideal in $H_\delta$ which quasiacts
on $\C$ by zero, $p^{-1}(I_\qact)=I_\qact(H_\C)$ and the map $p$
induces isomorphism $H(\C,\PSI) = H_\C / I_\qact(H_\C) \xrightarrow{\sim}
H_\delta/I_\qact$ of bialgebras. 
\end{proof}

\begin{remark}
Strictly speaking, the finiteness conditions we specified do not allow
us to apply reconstruction to the category $\coQYD{H}$. It is not
clear whether we can always reconstruct $H$ from the category  
$\C=\coQYD{H}_{\mathrm{f.d.}}$ of finite\dash dimensional quasi\dash YD
comodules. No doubt that $H=H_\C$; the problem is whether there is a
biideal in $H$ which quasiacts by zero on all finite\dash
dimensional quasi\dash YD comodules. 

However, if $\dim H<\infty$, then $H(\coQYD{H}_{\mathrm{f.d.}},\PSI)=H$. 
\end{remark}

\subsection{Rigid semibraided categories and reconstruction of Hopf
  algebras}

In the present paper, we mainly use quasi\dash Yetter\dash Drinfeld
modules over Hopf algebras (not general bialgebras). In terms of 
reconstruction, Hopf algebras correspond to rigid monoidal categories.  
The following Lemma is straightforward:
\begin{lemma}
When $H$ is a Hopf algebra with invertible antipode $S$, 
the category $\QYD{H}_{\mathrm{f.d.}}$ of
finite\dash dimensional quasi\dash YD modules is a rigid right\dash
semibraided category, with  
the action and quasicoaction on $V^*$ given by 
\begin{equation*}
     \langle h\act f, v\rangle = \langle f, Sh\act v \rangle, 
\qquad
     f^{[-1]} \langle f^{[0]}, v\rangle = S^{-1}v^{[-1]} \langle f, v^{[0]}\rangle. 
\qed
\end{equation*}
\end{lemma}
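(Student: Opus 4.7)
The plan is to verify three things: (i) the formulas give a well\dash defined $H$\dash action and $H$\dash quasicoaction on $V^*$; (ii) these satisfy the Yetter\dash Drinfeld compatibility, so $V^*\in\QYD{H}_{\mathrm{f.d.}}$; (iii) the standard evaluation and coevaluation maps exhibit $V^*$ as a left dual of $V$ in $\QYD{H}_{\mathrm{f.d.}}$. Combined with Lemma~\ref{lem:semibraided}, this yields the claim.

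For (i), the action formula reproduces the standard dual $H$\dash module structure of Example~\ref{ex:mod}. Fixing a dual basis $\{v_a\}\subset V$, $\{f^a\}\subset V^*$, the quasicoaction formula is equivalent to the explicit assignment
\[
   \delta_{V^*}(f) \;=\; \sum_a S^{-1}(v_a^{[-1]})\langle f,v_a^{[0]}\rangle \tensor f^a,
\]
a well\dash defined linear map $V^*\to H\tensor V^*$ since $\dim V<\infty$.

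For (ii), I would pair both sides of the YD condition for $V^*$ with an arbitrary $v\in V$ in the second tensor slot. The left\dash hand side reduces to $S^{-1}(v^{[-1]})\langle f,S(h_{(1)})\act v^{[0]}\rangle\, h_{(2)}$, and the right\dash hand side to $h_{(1)} S^{-1}\bigl((S(h_{(2)})\act v)^{[-1]}\bigr)\langle f,(S(h_{(2)})\act v)^{[0]}\rangle$. To match them I would first establish the equivalent reformulation of YD for $V$,
\[
   \delta_V(k\act v) \;=\; k_{(1)}v^{[-1]}S(k_{(3)}) \tensor k_{(2)}\act v^{[0]},
\]
by multiplying the original YD equation (in 3\dash factor Sweedler for $k$) by $S(k_{(3)})$ on the right in the first tensor factor and collapsing via $k_{(2)}S(k_{(3)})=\epsilon(k_{(2)})\cdot 1$. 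Applying this identity with $k=S(h_{(2)})$, and simplifying using $\Delta\circ S=(S\tensor S)\circ\tau\circ\Delta$ together with $S^{-1}(ab)=S^{-1}(b)S^{-1}(a)$, produces a 4\dash factor Sweedler expression in $h$; the block $h_{(1)}S(h_{(2)})$ then collapses via the antipode axiom, reducing the right\dash hand side back to a 2\dash factor form that matches the left\dash hand side.

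For (iii), morphisms in $\QYD{H}$ are only required to be $H$\dash module maps (no compatibility with $\delta$), so the standard $\mathrm{ev}\colon V^*\tensor V\to\field$ and $\coev\colon\field\to V\tensor V^*$ from Example~\ref{ex:mod} are left\dash duality morphisms in $\QYD{H}_{\mathrm{f.d.}}$ without additional verification, and the snake identities are purely formal. The right semibraiding from Lemma~\ref{lem:semibraided} restricts to $\QYD{H}_{\mathrm{f.d.}}$ since the monoidal product of two finite\dash dimensional modules is finite\dash dimensional. The main obstacle is the Sweedler\dash index juggling in step (ii); the remaining steps either reduce to the standard rigidity of $\Mod{H}_{\mathrm{f.d.}}$ or follow directly from Lemma~\ref{lem:semibraided}.
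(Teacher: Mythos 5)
Your proposal is correct, and it supplies details that the paper itself omits: the Lemma is stated there as ``straightforward'' with no written proof. Step (ii) is the only substantive verification, and your route through it is sound. Pairing the second tensor slot with $v$ does reduce the two sides to the expressions you write, and the identity $\delta_V(k\act v)=k_{(1)}v^{[-1]}S(k_{(3)})\tensor k_{(2)}\act v^{[0]}$ is exactly the right tool; it is in fact the same reformulation of the Yetter--Drinfeld condition that the paper records explicitly later, in the proof of Lemma~\ref{lem:Y(V)}(b). Substituting $k=S(h_{(2)})$, using $\Delta S=(S\tensor S)\tau\Delta$ and the anti\dash multiplicativity of $S^{-1}$, one gets $h_{(1)}S(h_{(2)})S^{-1}(v^{[-1]})h_{(4)}\tensor S(h_{(3)})\act v^{[0]}$, which collapses to $S^{-1}(v^{[-1]})h_{(2)}\tensor S(h_{(1)})\act v^{[0]}$ as you say. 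Steps (i) and (iii) are routine, and you are right that evaluation and coevaluation need only be $H$\dash module maps, since morphisms in $\QYD{H}$ ignore the quasicoaction.

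One small omission: rigidity in the sense used in this paper requires a right dual $\starV$ as well as the left dual $V^*$, and your argument only exhibits the latter. This is harmless --- either observe that rigidity transfers along the monoidal equivalence $\QYD{H}_{\mathrm{f.d.}}\simeq\Mod{H}_{\mathrm{f.d.}}$ of Remark~\ref{rem:equiv} (duality is a property of the underlying module category, since morphisms and tensor products in $\QYD{H}$ do not involve the quasicoaction), or repeat your computation for $\starV$ with the roles of $S$ and $S^{-1}$ interchanged. Either way the gap is cosmetic, not structural.
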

\subsection*{Claim}
There is a Hopf algebra version of Lemma~\ref{lem:subq} and
Theorem~\ref{th:reconstr}, where
\begin{itemize}
\item[-- ] ``monoidal category'' is replaced with ``rigid monoidal
  category'';
\item[-- ] ``bialgebra'' is replaced with ``Hopf algebra'',
  ``subbialgebra'' with ``sub\dash Hopf algebra'' and ``biideal''
with ``Hopf ideal''. 
\end{itemize}
This follows from the fact that when $\C$ is
a rigid category, $H_\C$ has well\dash defined invertible antipode $S$
given on comatrix elements by
\begin{equation*}
        Sh_{f,x}=h_{x,f^\vee}.
\end{equation*}
Here the linear isomorphism $\cdot ^\vee\colon X^* \to X^\vee$, where
$X$ is an object in $\C$, is given by $\langle f,x\rangle = \langle
x,f^\vee\rangle$. It is not a morphism in the category $\C$. One
checks that $S$ preserves the biideal of definition of $H(\C,\PSI)$. 

\begin{remark}
If a monoidal category $\C$ is a subcategory of a rigid monoidal
category $\overline\C$, such that $\overline\C$ is a ``rigid
envelope'' of $\C$ in a proper sense, then the bialgebra $H_\C$ embeds
injectively in the Hopf algebra $H_{\overline\C}$; moreover,
$H_{\overline\C}$ will be the ``Hopf envelope'' of $H_\C$. Under some
assumptions, one can construct a rigid envelope of a monoidal
category. This is a categorical version of Manin's
construction of a Hopf envelope of a quadratic bialgebra in
\cite[Chapter 7]{Man}.
\end{remark}

\subsection{Compatible braidings on a vector space}

We are interested to have a supply of quasi\dash Yetter\dash Drinfeld
modules over bialgebras, or, better, Hopf algebras. 
The idea of the reconstruction theory is that such modules naturally
occur as objects of (rigid) right semibraided categories. We would
thus like to have a way of constructing semibraided
categories.

It is straightforward that a \emph{braided
category} is a particular case of a semibraided category.
Braided category is a monoidal category with a braiding (recall
Remark \ref{rem:braiding}). 
Braided categories were formally introduced in \cite{JS}; see also the
exposition in \cite[Ch.\ 9]{Mbook}. 
 However, braided categories are a source only of Yetter\dash Drinfeld
modules; this class of modules is not rich enough for our purposes.
We therefore need a more sophisticated example
of a semibraided category. Recall the following
\begin{definition}
A \emph{braiding on a vector space} $V$ is a linear 
map $\Psi\colon V\tensor V \to V \tensor V$,
satisfying the \emph{braid equation}
\begin{equation*}
           (\id \tensor \Psi)(\Psi \tensor \id)(\id \tensor \Psi) 
           = (\Psi \tensor \id)(\id \tensor \Psi)(\Psi \tensor \id). 
\end{equation*}
Here is a new notion:
\end{definition}
\begin{definition}
\label{def:compat}
A finite set $\Pi$ of braidings on a vector space $V$ is
\emph{compatible}, if for all $\Psi$, $\Psi'\in \Pi$
\begin{equation*}
      (\Psi'\tensor \id)(\id\tensor \Psi)(\Psi\tensor \id) = 
(\id\tensor \Psi)(\Psi\tensor \id)(\id\tensor \Psi').       
\end{equation*}
This is a right\dash handed version of compatibility; 
there is a left\dash handed version where the order of factors on both
sides is opposite. 
\end{definition}

We will now show how to construct a right semibraided category from a
set of compatible braidings on a vector space $V$. This will give a
structure of a quasi\dash Yetter\dash Drinfeld module on $V$.  

\subsection{A construction of a semibraided category from a set of
compatible braidings}
\label{subsect:acons}

Let $\Pi$ be a finite set of compatible braidings on a
space $V$. Let $\C_\Pi$ be a monoidal category,
whose objects are $V^{\tensorpow n}$, $n\ge 0$. 
The space $\mathrm{Mor}(V^{\tensorpow m},V^{\tensorpow n})$ of
morphisms will be:
\begin{itemize}
\item[-- ] $\field$, if $m=n=1$;
\item[-- ] the subalgebra of $\End(V^{\tensorpow n})$ generated by
  $\Psi_{i,i+1}$ (in leg notation), for all $\Psi\in\Pi$ and $1\le
  i\le n-1$, if $m=n>1$;
\item[-- ] $0$, if $m\ne n$. 
\end{itemize} 
For any $n\ge 1$ and $\Psi\in\Pi$, define
\begin{equation*}
   \Psi^{1,n}\colon V \tensor V^{\tensorpow n} \to  V^{\tensorpow
   n}\tensor V,
\qquad
   \Psi^{1,n} = \Psi_{n,n+1}\Psi_{n-1,n}\dots \Psi_{23}\Psi_{12}
\end{equation*}
(we introduce $\Psi^{1,n}$ as an endomorphism of $V^{\tensorpow n+1}$
and use the leg notation). In fact, $\Psi^{1,n}$ is obtained from
$\Psi$ using the ``left hexagon'' rule. Let 
\begin{equation*}
   \Psi_\Pi^{1,n}\colon  V \tensor V^{\tensorpow n} \to  V^{\tensorpow
   n}\tensor V,
\qquad
   \Psi_\Pi^{1,n} = \sum\nolimits_{\Psi\in\Pi} \Psi^{1,n}.
\end{equation*}
Now extend $\Psi_\Pi$ to a map exchanging any two objects in the
category:
\begin{equation*}
     \Psi_\Pi^{m,n}\colon V^{\tensorpow m} \tensor V^{\tensorpow n}
   \to  V^{\tensorpow  n}\tensor V^{\tensorpow m},
\quad
   \Psi_\Pi^{m,n} = (\Psi_\Pi^{1,n})_{1\dots n+1}(\Psi_\Pi^{1,n})_{2\dots
   n+2} \dots (\Psi_\Pi^{1,n})_{m\dots m+n},
\end{equation*}
that is, using the ``right hexagon'' rule. 
\begin{lemma}
The maps $\Psi_\Pi^{m,n}$ are a right semibraiding on the category
$\C_\Pi$. 
\end{lemma}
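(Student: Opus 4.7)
The plan is to verify the two defining properties of a right semibraiding: (i) the right hexagon condition and (ii) naturality in the right-hand argument. Before that, one checks that each $\Psi_\Pi^{m,n}$ is actually a morphism in $\C_\Pi$. For $m+n>2$ this is immediate, since $\Psi_\Pi^{m,n}$ is constructed as a $\field$-linear combination of products of the generators $\Psi_{i,i+1}$ with $\Psi\in\Pi$, and the morphism space from $V^{\tensorpow m+n}$ to itself is precisely the algebra they generate. The right hexagon condition is built into the definition of $\Psi_\Pi^{m,n}$, since the formula $\Psi_\Pi^{m,n}=(\Psi_\Pi^{1,n})_{1\dots n+1}\cdots(\Psi_\Pi^{1,n})_{m\dots m+n}$ is exactly $(\Psi_\Pi^{m-1,n}\tensor\id)(\id\tensor\Psi_\Pi^{1,n})$ iterated; a one-line induction on $m$ confirms it.

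The substantive point is naturality in the second argument. Since morphisms in $\C_\Pi$ are generated by the $\Psi'_{i,i+1}$ with $\Psi'\in\Pi$, and since naturality is preserved under composition of morphisms on the right, it suffices to check that
\begin{equation*}
(\Psi'_{i,i+1}\tensor\id_{V^{\tensorpow m}})\,\Psi_\Pi^{m,n}=\Psi_\Pi^{m,n}\,(\id_{V^{\tensorpow m}}\tensor\Psi'_{i,i+1})
\end{equation*}
for every $\Psi'\in\Pi$ and $1\le i\le n-1$. Using the already established right hexagon to factor $\Psi_\Pi^{m,n}$ into a product of $\Psi_\Pi^{1,n}$'s (each acting on adjacent slots), the identity reduces to the $m=1$ case, which reads $\Psi'_{i,i+1}\Psi_\Pi^{1,n}=\Psi_\Pi^{1,n}\Psi'_{i+1,i+2}$.

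By linearity in $\Pi$, it suffices to fix $\Psi\in\Pi$ and prove $\Psi'_{i,i+1}\Psi^{1,n}=\Psi^{1,n}\Psi'_{i+1,i+2}$, where $\Psi^{1,n}=\Psi_{n,n+1}\Psi_{n-1,n}\cdots\Psi_{12}$. The plan is to push $\Psi'_{i,i+1}$ rightward through this product: first commute it past the disjoint factors $\Psi_{n,n+1}\cdots\Psi_{i+2,i+3}$ (these act on positions strictly greater than $i+1$), then apply the compatibility identity of Definition~\ref{def:compat}, namely $\Psi'_{i,i+1}\Psi_{i+1,i+2}\Psi_{i,i+1}=\Psi_{i+1,i+2}\Psi_{i,i+1}\Psi'_{i+1,i+2}$, and finally commute the resulting $\Psi'_{i+1,i+2}$ past the disjoint remaining factors $\Psi_{i-1,i}\cdots\Psi_{12}$. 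Edge cases ($i=1$ or $i=n-1$) simply mean one of the two ``disjoint commutation'' blocks is empty and require no separate argument.

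The only nontrivial ingredient in this chain is exactly the compatibility axiom, which is precisely why compatibility was defined the way it was; everything else is a pair of easy disjoint-support commutations. The main obstacle, accordingly, is purely notational bookkeeping in the braid word manipulation, which I would execute by an induction on the distance the generator $\Psi'_{i,i+1}$ has to travel.
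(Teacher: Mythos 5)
Your proof is correct and follows essentially the same route as the paper: hexagon and morphism property by construction, reduction of naturality to the case $m=1$ and to generators $\Psi'_{i,i+1}$, and then the compatibility axiom. You merely spell out in more detail the disjoint-support commutations needed to push $\Psi'_{i,i+1}$ through the braid word $\Psi_{n,n+1}\cdots\Psi_{12}$, which the paper compresses into the single assertion that the naturality equation ``is the same as'' the compatibility condition.
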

\begin{proof}
By construction, $\Psi_\Pi^{m,n}$ is a morphism in $\C_\Pi$ and
satisfies the right hexagon rule. We need to check that
$\Psi_\Pi^{m,n}$ is natural in its second argument. 
Because of the right hexagon rule, it is enough to check the
naturality of $\Psi_\Pi^{1,n}$ in its second argument. 
This will follow from the naturality of  $\Psi^{1,n}$ in the second
argument, for any $\Psi\in \Pi$.

Let $\phi\colon V^{\tensorpow n} \to V^{\tensorpow n}$ be a
morphism. We have to check that $\Psi^{1,n}(\id_V\tensor
\phi)=(\phi\tensor \id_V)\Psi^{1,n}$. We may assume that
$\phi=\Psi'_{i,i+1}$ for some $\Psi'\in\Pi$ and $i$ between $1$ and $n-1$. 
Then the naturality equation is the same as the compatibility
condition for $\Psi$ and $\Psi'$. 
\end{proof}
\begin{remark}
Let $\Pi$ be a finite compatible set of braidings on a vector space
$V$. Denote by $H_\Pi$ the bialgebra reconstructed, using the module
version of Theorem~\ref{th:reconstr}, from the category $\C_\Pi$. 
Then the space $V$ becomes a quasi\dash Yetter\dash Drinfeld module
for $H_\Pi$.

One can show that $\C_\Pi$ may be embedded in a rigid category, if and
only if all braidings $\Psi\in\Pi$ are rigid (or
\emph{biinvertible}, see \cite[4.2]{Mbook}). 
Such a rigid category will be generated, as a monoidal category, by 
objects 
\begin{equation*}
 \dots, V^{[-2]}={\vphantom{V}}^{**}V, \ 
        V^{[-1]}={\vphantom{V}}^{*}V, \
        V^{[0]} = V, \ 
        V^{[1]} = V^*, \ 
        V^{[2]} = V^{**},
\dots
\end{equation*} 
such that $(V^{[m]})^* = V^{[m+1]}$ and
${\vphantom{V}}^*(V^{[m]})=(V^{[m-1]})$. 
Standard formulas show
how to compute the braiding between, say, $V$ and $V^*$, and this
extends recursively to braidings between $V^{[m]}$ and $V^{[n]}$.  

In this case, we obtain a
Hopf algebra $H_\Pi$.  
\end{remark}

\begin{remark}
\label{rem:twocoact}
Suppose that  a Hopf algebra $H$ acts on a space $V$, and there are coactions, $v$ $\mapsto$ $v^{(-1)_i}\tensor v^{(0)_i}$, $i=1,\dots,N$, of $H$ on
$V$, each satisfying the Yetter\dash Drinfeld condition. Then the braidings
$\Psi_i(v\tensor w)=v^{(-1)_i} \act w \tensor  v^{(0)_i}$ are
compatible. This can be checked directly. 
The Hopf algebra $H_{\{\Psi_1,\dots,\Psi_N\}}$ will be the
minimal among subquotients of $H$ which still act and coact (in $N$ ways) on $V$. 
The quasicoaction of $H_{\{\Psi_1,\dots,\Psi_N\}}$ will
be given by $v$ $\mapsto$ $\sum_i v^{(-1)_i}\tensor v^{(0)_i}$. 
\end{remark}

\subsection{Minimal Yetter-Drinfeld realisation of a braided space}
\label{HPsi}

A particular case of a set of compatible braidings on $V$ is a one\dash
element set $\Pi=\{\Psi\}$. Assume that $\Psi$ is biinvertible. 
The above procedure yields a minimal
Hopf algebra (denote it by $H_\Psi$) over which the braided space
$(V,\Psi)$ is realised as a Yetter\dash
Drinfeld module. 

That a braided space $(V,\Psi)$  can be realised as a module over a coquasitriangular bialgebra (hence a Yetter\dash
Drinfeld module), follows from the Faddeev -
Reshetikhin - Takhtajan construction \cite{FRT}; the latter admits a Hopf
algebra version, e.g.\ \cite{Sch,Tak}.   
The Hopf algebra $H_\Psi$ which we propose to reconstruct, is not the
one given by the FRT construction but rather its quotient by the left kernel of the
coquasitriangular structure. Because $H_\Psi$ has more relations than
the FRT Hopf algebra, it looks more interesting algebraically.

Let us list some properties of the Hopf algebras $H_\Psi$. 
``Braided space'' will mean a finite\dash
dimensional space over $\field$ with biinvertible braiding. 

\subsubsection{}
The Hopf algebra $H_\Psi$ is trivial ($H_\Psi\cong\field$) if and
only if the braiding $\Psi$ is trivial ($\Psi(x\tensor y)=y\tensor
x$ for all $x$, $y$).

\subsubsection{}
 Any finite\dash dimensional Hopf algebra $H$ is isomorphic to  a
   structural Hopf algebra of some braided space $(V, \Psi)$. 
(For example, take $V$ to be the Drinfeld double $D(H)$ of $H$, with
   a standard braiding.
Braided spaces of dimension smaller than $\dim D(H)=(\dim H)^2$ may
give rise to the same Hopf algebra $H$.)

\subsubsection{}
 Suppose $\Psi$ is a braiding on a space $V$. 
   Then $\Psi^*$ is a braiding on $V^*$, and 
   $H_{\Psi^*}$ is isomorphic to $H_\Psi$. 
(This is because the rigid
   braided categories, generated by $(V,\Psi)$ and by $(V^*,\Psi^*)$,
   are the same.)

\subsubsection{}
\label{qtau}
 If $\Psi(x\tensor y)=qy\tensor x$ for a constant $0\ne q\in
   \field$, the Hopf algebra $H_\Psi$ is isomorphic to the group
   algebra of $\mathbb{Z}/n\mathbb{Z}$, if $q$ is a root of unity of
   order $n$ in $\field$, or of $\mathbb{Z}$ if $q$ is not a root of
   unity. (It is easy to realise $\Psi$ over this Hopf algebra and to
   show that $\Psi$ cannot be realised over its proper subquotient.)

\subsubsection{}
 $H_\Psi$ and $H_{\Psi^{-1}}$ are
   nondegenerately dually paired Hopf algebras. If $H_\Psi$ is
   finite\dash dimensional, so is $H_{\Psi^{-1}}$, and
   $H_{\Psi^{-1}}=(H_{\Psi})^*$. 
(This follows by analysing the
   coquasitriangular structure on the FRT Hopf
   algebra of $(V,\Psi)$.)
This duality pairing yields an elegant proof of the following fact:
\begin{lemma}
Let the field $\field$ be algebraically closed. The group algebra
$\field G$ of a finitely generated Abelian group $G$ is a self\dash
dual Hopf algebra. 
\end{lemma}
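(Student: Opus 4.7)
The plan is to invoke the statement immediately preceding the lemma in \ref{HPsi}: for any braided space $(V,\Psi)$, the Hopf algebras $H_\Psi$ and $H_{\Psi^{-1}}$ are nondegenerately dually paired. It therefore suffices to exhibit, for each finitely generated abelian group $G$, a braided space $(V,\Psi)$ such that both $H_\Psi$ and $H_{\Psi^{-1}}$ are isomorphic to $\field G$ as Hopf algebras; transporting the pairing along these isomorphisms then yields a nondegenerate Hopf pairing $\field G \tensor \field G \to \field$, which is the required self\dash duality.

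First I would reduce to cyclic factors via the structure theorem. Writing $G \cong \mathbb{Z}^s \times \prod_i \mathbb{Z}/n_i\mathbb{Z}$ gives $\field G \cong \bigotimes_i \field G_i$ as Hopf algebras. A direct check shows that the tensor product of nondegenerate Hopf self\dash pairings on Hopf algebras $H_1,\dots,H_m$ (defined on simple tensors by the product of the pairings on factors) is again a nondegenerate Hopf self\dash pairing on $H_1 \tensor \dots \tensor H_m$. Hence it is enough to construct the self\dash pairing for each cyclic factor $\field G_i$ separately.

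For a cyclic $G_i$ I would take the one\dash dimensional braided space $V = \field v$ with $\Psi_q(v\tensor v) = q\, v\tensor v$. By item \ref{qtau}, $H_{\Psi_q} \cong \field[\mathbb{Z}/n\mathbb{Z}]$ when $q$ is a primitive $n$\dash th root of unity in $\field$, and $H_{\Psi_q} \cong \field[\mathbb{Z}]$ when $q$ is not a root of unity. Both types of $q$ exist in $\field$: the first by algebraic closedness, the second because an algebraically closed field is infinite and so contains elements of infinite multiplicative order. Since $\Psi_q^{-1} = \Psi_{q^{-1}}$ and $q^{-1}$ has exactly the same root\dash of\dash unity type as $q$, applying \ref{qtau} a second time gives $H_{\Psi_q^{-1}} \cong \field G_i$ as well. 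The pairing supplied by \ref{HPsi} is then the desired nondegenerate Hopf self\dash pairing on $\field G_i$.

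The main obstacle is the characteristic subtlety: if $\mathrm{char}\,\field = p$ divides some $n_i$, no primitive $n_i$\dash th root of unity exists in $\field$, and in fact $\field[\mathbb{Z}/p\mathbb{Z}]$ is genuinely not isomorphic to its linear dual as a Hopf algebra in characteristic $p$ (the former has nilpotents, the latter is semisimple). Thus the braiding construction above implicitly requires that $\mathrm{char}\,\field$ not divide the order of any torsion element of $G$; under this hypothesis the argument is complete.
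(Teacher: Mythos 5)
Your argument is essentially the paper's own: reduce to cyclic factors, realise each $\field[\mathbb{Z}/n\mathbb{Z}]$ (or $\field[\mathbb{Z}]$) as $H_{\Psi_q}$ for a one\dash dimensional braided space via \ref{qtau}, and invoke the nondegenerate duality between $H_\Psi$ and $H_{\Psi^{-1}}$ from \ref{HPsi}. The extra details you supply --- that a tensor product of nondegenerate Hopf self\dash pairings is again one, and that $\Psi_q^{-1}=\Psi_{q^{-1}}$ with $q^{-1}$ of the same root\dash of\dash unity type as $q$ --- are exactly what the paper leaves implicit.

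Two remarks on your characteristic discussion. Your caveat about torsion is correct and identifies a genuine gap in the lemma as stated: if $\mathrm{char}\,\field=p$ divides some $n_i$, no root of unity of order $n_i$ exists, and indeed $\field[\mathbb{Z}/p\mathbb{Z}]\cong\field[x]/(x-1)^p$ is local while its Hopf dual is the semisimple algebra $\field^p$, so self\dash duality genuinely fails; the paper's proof silently assumes the required $q$ exists. However, your repair of the free part is itself wrong: $\overline{\mathbb{F}_p}$ is algebraically closed and infinite, yet every element of $\overline{\mathbb{F}_p}^{\,\times}$ lies in a finite subfield and hence is a root of unity, so no element of infinite multiplicative order exists there. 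Moreover, any Hopf self\dash pairing of $\field[z,z^{-1}]$ is forced to have the form $\langle z^m,z^n\rangle=q^{mn}$ (because $\langle z,\cdot\rangle$ must be an algebra character), and such a pairing is degenerate whenever $q$ is a root of unity; so $\overline{\mathbb{F}_p}[\mathbb{Z}]$ is not self\dash dual either. The correct hypothesis for both your proof and the paper's is that $\field$ contains the needed roots of unity and, for free factors, an element of infinite multiplicative order --- e.g.\ $\mathrm{char}\,\field=0$ --- rather than algebraic closedness alone.
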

\begin{proof}
It is enough to prove this for a group $G=\mathbb{Z}/n \mathbb{Z}$ where $n$ is either
zero or a positive integer. Let $q$ be a root of unity of order $n$
(or not a root of unity, if $n=0$). Let $V=\field x$ and 
$\Psi(x\tensor x)=q  x\tensor x$ be the braiding on $V$. Then 
by \ref{qtau} both
$H_\Psi$ and $H_\Psi^{-1}$ are isomorphic to the group algebra $\field G$. 
\end{proof}

\subsubsection{}
\label{cocomm}
 $H_\Psi$ is cocommutative, if and only if the braiding $\Psi$ is
   compatible with the trivial braiding $\tau$. Dualising, $H_\Psi$ is
   commutative if and only if $\Psi^{-1}$ is compatible with $\tau$.

 It seems to be a challenging problem to extract other 
  properties of the Hopf algebra $H_\Psi$ 
  from the properties of the operator $\Psi$. For example, when
   $H_\Psi$ is finite\dash dimensional? Semisimple? Is a group
   algebra? Here is a converse problem, which may also be of interest:
   given a finite\dash dimensional Hopf algebra $H$, find a braided
   space $(V,\Psi)$ of smallest dimension, such that
   $H_\Psi\cong H$.

\subsection{Quasi-Yetter-Drinfeld modules over cocommutative  or
  quasitriangular $H$}

We conclude this Section with a simple but useful observation
which allows us to obtain  compatible braidings and to
 construct quasi\dash Yetter\dash Drinfeld modules from given Yetter\dash
 Drinfeld modules.  

\begin{lemma}
\label{lem:cocomm}
Let $V$ be a Yetter\dash Drinfeld module over a cocommutative Hopf
algebra $H$, with coaction $\delta(v)= v^{(-1)} \tensor v^{(0)}$. 
Then the induced braiding $\Psi$ on $V$ 
is compatible with the
trivial braiding $\tau(v\tensor w)=w\tensor v$. 
For any $\lambda\in \field$
\begin{equation*}
       \delta_{\Psi,\lambda \tau}(v) = v^{(-1)} \tensor v^{(0)} + \lambda\cdot 1\tensor v
\end{equation*}
defines a Yetter\dash Drinfeld quasicoaction on $V$. 
\end{lemma}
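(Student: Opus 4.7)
The plan is to recognise the trivial braiding $\tau$ as being induced by the ``trivial'' coaction $\delta_\tau(v) = 1 \tensor v$, and then invoke linearity of the Yetter\dash Drinfeld compatibility condition in the (quasi)coaction. The first step is to check that $\delta_\tau$ is itself a Yetter\dash Drinfeld coaction on $V$ when $H$ is cocommutative. Substituting $v^{[-1]}=1$, $v^{[0]}=v$ into the compatibility condition of Definition~\ref{def:qydmod} reduces the left\dash hand side to $h_{(2)}\tensor h_{(1)}\act v$ and the right\dash hand side to $h_{(1)}\tensor h_{(2)}\act v$, and these agree after swapping Sweedler indices on $h$ using cocommutativity. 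Note that the braiding induced by $\delta_\tau$ via the formula of Subsection~\ref{subsect:psi} is $(v,w)\mapsto 1\act w\tensor v = w\tensor v$, which is indeed $\tau$.

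Given this, compatibility of $\Psi$ and $\tau$ follows immediately from Remark~\ref{rem:twocoact}: two Yetter\dash Drinfeld coactions $\delta$ and $\delta_\tau$ on the same $H$\dash module automatically produce compatible braidings $\Psi$ and $\tau$. Alternatively, one can check the identity in Definition~\ref{def:compat} directly by applying both sides to $u\tensor v\tensor w$; after using coassociativity to rewrite $(u^{(0)})^{(-1)}\tensor (u^{(0)})^{(0)}$ in terms of $(u^{(-1)})_{(1)}\tensor (u^{(-1)})_{(2)}\tensor u^{(0)}$, the equality reduces to the cocommutativity statement $(u^{(-1)})_{(1)}\tensor (u^{(-1)})_{(2)} = (u^{(-1)})_{(2)}\tensor (u^{(-1)})_{(1)}$.

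For the second claim, observe that the Yetter\dash Drinfeld compatibility condition is linear in the quasicoaction, as pointed out in Remark~\ref{rem:equiv}. Since both $\delta(v)=v^{(-1)}\tensor v^{(0)}$ and $\delta_\tau(v)=1\tensor v$ satisfy it, so does $\delta + \lambda \delta_\tau = \delta_{\Psi,\lambda\tau}$ for any $\lambda\in\field$. There is no real obstacle here; the entire argument reduces to the cocommutativity of $H$ and the linearity of the defining compatibility relation, the only insight being to view $\tau$ as arising from the trivial coaction $1\tensor v$, at which point both assertions become instances of statements already established.
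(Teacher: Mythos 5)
Your proof is correct and follows essentially the same route as the paper: verify that the trivial coaction $v\mapsto 1\tensor v$ is Yetter\dash Drinfeld compatible using cocommutativity, invoke Remark~\ref{rem:twocoact} for the compatibility of $\Psi$ and $\tau$, and conclude the second claim by linearity of the compatibility condition in the quasicoaction. The optional direct check of Definition~\ref{def:compat} is a harmless addition but not needed.
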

\begin{proof}
Any module $V$ over a cocommutative Hopf algebra
can be turned into a Yetter\dash Drinfeld module with the trivial
coaction $v\mapsto 1\tensor v$. Indeed, let us check that the trivial
coaction is Yetter\dash Drinfeld compatible with any action:
\begin{equation*}
       1\cdot h_{(2)} \tensor h_{(1)}\act v = h_{(1)}\cdot 1 \tensor
       h_{(2)} \act v,
\end{equation*} 
which is true by cocommutativity. 

Thus, the braidings $\Psi$ and $\tau$ are realised on $V$ via the same
action and two Yetter\dash Drinfeld coactions of a Hopf algebra,
therefore by Remark~\ref{rem:twocoact} they are compatible. The
quasicoaction $\delta_{\Psi,\lambda\tau}$ is Yetter\dash Drinfeld
as a linear combination of Yetter\dash Drinfeld coactions. 
\end{proof}
\begin{remark}
More generally, let $(V,\Psi)$ be a Yetter\dash Drinfeld module over a
quasitriangular Hopf algebra $H$. The braiding on $V$, induced
by the quasitriangular structure $R=R^1\tensor R^2 \in H\tensor H$, is compatible with $\Psi$. 
There is a Yetter\dash Drinfeld quasicoaction on $V$, given by 
\begin{equation*}
     \delta_{\Psi,\lambda R}(v) = v^{(-1)} \tensor v^{(0)} + \lambda
     R^2 \tensor R^1\act x
\end{equation*} 
for any $\lambda\in\field$.
\end{remark}


\section{Free braided doubles}
\label{sect:braideddoubles}

We will now study algebras with triangular decomposition of the form
$T(V)\lcprod H \rcprod T(V^*)$, where the commutator of $V^*$ and $V$
lies in the bialgebra $H$.  Such algebras are called free braided
doubles. The purpose of this Section is to show that free braided
doubles are ``the same'' as quasi\dash Yetter\dash Drinfeld modules
over $H$. 

\subsection{Algebras $\D_\beta$}
\label{nhqa}

Let $H$ be a bialgebra, and let $V$ be a finite\dash dimensional 
space with left $H$\dash action $\act$. 
As usual, $V^*$ denotes the linear dual of $V$,
$\langle\cdot,\cdot\rangle\colon V^*\tensor V\to \field$ is the canonical pairing, 
and $V^*$ is viewed as a right $H$\dash module via 
$\langle f\ract h, v\rangle = \langle f,  h\act v\rangle$. 

\begin{definition}
To any 
linear map $\beta\colon V^* \tensor V \to H$ 
there corresponds an associative algebra $\D_\beta$, generated by
all $v\in V$, $h\in H$ and $f\in V^*$, subject to:  
\begin{itemize}
\item[(i)] semidirect product relations 
    $h\cdot v = (h_{(1)} \act v)\, h_{(2)}$, 
    $f \cdot h = h_{(1)} \, (f \ract h_{(2)})$;
\item[(ii)] commutator relation $f\cdot v - v\cdot f =\beta(f,v)$.
\end{itemize}
In this definition, we assume that all relations between $h\in H$ hold in
$\D_\beta$, and also that the unity in $H$ is the unity in $\D_\beta$:
$1_{\D_\beta} = 1_H$.  
\end{definition}

The map
\begin{equation*}
        m_\beta \colon  T(V)\tensor H\tensor T(V^*) \to \D_\beta,
\end{equation*}
of vector spaces, which is induced by the multiplication in
$\D_\beta$, is surjective. 
This is because any monomial in generators of $\D_\beta$ may be
rewritten, using the relations, as a linear combination of monomials
of the form  $v_1 v_2 \dots v_m \cdot h\cdot f_1 f_2 \dots f_n$, where
$v_i\in V$, $h\in H$ and $f_j\in V^*$.

If $\beta=0$, so that  the generators $f\in V^*$
commute with the generators $v\in V$, the
map $m_0$ is an isomorphism of vector spaces:
\begin{equation*}
    m_0 \colon T(V) \tensor H \tensor T(V^*) \cong \D_0. 
\end{equation*}
Indeed, it is easy to check that multiplication on 
$T(V) \tensor H \tensor T(V^*)$ defined by $(\eta\tensor g \tensor
a)(\theta\tensor h \tensor b)=\eta(g_{(1)}\act \theta)
\tensor g_{(2)} h_{(1)} \tensor (a\ract h_{(2)})b$ is associative, and
it clearly obeys the semidirect product and commutator relations in $\D_0$. 
Note that the subalgebra of $\D_0$, generated by $V$ and $H$, is
isomorphic to the semidirect product $T(V)\lcprod H$ by the left
action of $H$. 
Similarly, $H$ and $V^*$ generate a subalgebra isomorphic to the semidirect
product $H\rcprod T(V^*)$. The algebra $\D_0$ is obtained by
``gluing'' these two semidirect products together along~$H$. 

\begin{definition}
We say that the algebra $\D_\beta$ has \emph{triangular decomposition} over the
bialgebra $H$, if the map $m_\beta$ is a vector space isomorphism.  
\end{definition}

Our key question in this Section is, which $\beta\colon V^*\tensor V
\to H$ have this property. A complete answer to this question is given in 
\begin{theorem}
\label{thm_qYD}
The algebra $\D_\beta$ has triangular decomposition 
\begin{equation*}
 \D_\beta \cong T(V) \tensor H \tensor T(V^*)
\end{equation*}
over the bialgebra $H$, 
if and only if the $H$\dash valued pairing $\beta\colon V^*\tensor V
\to H$ satisfies the Yetter\dash Drinfeld condition:
\begin{equation*} 
     h_{(1)} \, \beta(f \ract h_{(2)}, v)
= \beta(f, h_{(1)}\act v) \, h_{(2)} 
\end{equation*}
 for all $v\in V$, $h\in H$, $f\in V^*$.
\end{theorem}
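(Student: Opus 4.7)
\emph{Plan.} The plan is to construct a faithful left $\D_\beta$\dash action on the vector space $M := T(V)\tensor H\tensor T(V^*)$ with cyclic vector $\xi_0 := 1\tensor 1\tensor 1$, so that the obvious surjection $m_\beta\colon M\to\D_\beta$ acquires a left inverse $a\mapsto a\cdot\xi_0$. The ``only if'' direction is essentially the Remark following Theorem~A: assuming $m_\beta$ is injective, one has $H\hookrightarrow \D_\beta$; expanding the product $fhv$ in two ways using the defining relations and the coassociativity of $\Delta$ yields
\[
(fh)v - f(hv) = h_{(1)}\,\beta(f\ract h_{(2)},v) - \beta(f,h_{(1)}\act v)\,h_{(2)}.
\]
The left hand side vanishes by associativity of $\D_\beta$, the right hand side lies in $H$, and injectivity of $m_\beta$ now forces (A).

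For the ``if'' direction, assume (A) and define endomorphisms of $M$ by
\begin{align*}
L_v(\eta\tensor g\tensor b) &= v\eta\tensor g\tensor b,\\
L_h(\eta\tensor g\tensor b) &= (h_{(1)}\act \eta)\tensor h_{(2)}g\tensor b,\\
L_f(1\tensor g\tensor b) &= 1\tensor g_{(1)}\tensor (f\ract g_{(2)})b,
\end{align*}
extended to positive $T(V)$\dash degree by the recursion
\[
L_f(v\eta\tensor g\tensor b) = L_v L_f(\eta\tensor g\tensor b) + L_{\beta(f,v)}(\eta\tensor g\tensor b),
\]
where $H$ acts on $T(V)$ diagonally via $\act$ and $\Delta$. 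One then verifies in turn that the $L$\dash operators satisfy every defining relation of $\D_\beta$: $L_{hh'} = L_h L_{h'}$ (the module\dash algebra structure on $T(V)$); $L_h L_v = L_{h_{(1)}\act v}\,L_{h_{(2)}}$ (immediate from the diagonal action); $[L_f,L_v] = L_{\beta(f,v)}$ (by construction of $L_f$); and finally $L_f L_h = L_{h_{(1)}}\,L_{f\ract h_{(2)}}$. This last identity is the crux: on elements with $\eta=1$ it is routine Sweedler bookkeeping using that $V^*$ is a right $H$\dash module, while on elements of positive $T(V)$\dash degree one argues by induction on $\deg\eta$; the inductive step, after straightening via the previously established relations, collapses to precisely the identity (A).

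Once all relations are verified, $M$ carries a left $\D_\beta$\dash module structure $\rho$, and a short induction on the degrees of $\eta$ and $b$ shows that $\rho(m_\beta(\eta\tensor g\tensor b))\,\xi_0 = \eta\tensor g\tensor b$. Hence $a\mapsto \rho(a)\xi_0$ is a left inverse of $m_\beta$, and $m_\beta$ is injective --- i.e.\ $\D_\beta$ has triangular decomposition. The only genuinely substantive step is the verification of $L_f L_h = L_{h_{(1)}}L_{f\ract h_{(2)}}$ in positive $T(V)$\dash degree; this is the unique place where the left $H$\dash action on $V$, the right $H$\dash action on $V^*$, and the pairing $\beta$ interact nontrivially, and it is exactly identity (A) --- the same degree\dash three obstruction appearing in the necessity argument --- that makes the computation balance. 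Everything else is an unwinding of definitions.
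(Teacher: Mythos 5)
Your argument is correct, but it takes a different route from the paper. You prove injectivity of $m_\beta$ by representing $\D_\beta$ faithfully on $M=T(V)\tensor H\tensor T(V^*)$ with cyclic vector $1\tensor 1\tensor 1$ --- the classical ``regular representation'' strategy familiar from Jacobson's proof of the PBW theorem --- whereas the paper instead builds an associative product on $T(V)\tensor H\tensor T(V^*)$ directly, as an algebra factorisation $(T(V)\lcprod H)\tensor T(V^*)$ governed by a twist map $c$ assembled from a partial twist $c'(f,ah)=(\deriv_f a)\cdot h\tensor 1+ah_{(1)}\tensor f\ract h_{(2)}$. The two computations are mirror images: your crux identity $L_fL_h=L_{h_{(1)}}L_{f\ract h_{(2)}}$ in positive $T(V)$\dash degree is exactly the paper's Lemma~\ref{Leibn}(2), $h_{(1)}\cdot\deriv_{f\ract h_{(2)}}b=\deriv_f(h_{(1)}\act b)\cdot h_{(2)}$, and in both arguments this is the unique point where condition (A) is invoked. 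What your version buys is familiarity and self\dash containedness (no appeal to the algebra\dash factorisation criterion, Proposition~\ref{prop_hexagons}); what the paper's version buys is an explicit multiplication formula and, more importantly, the operators $\deriv_f=[f,\cdot]$ on $T(V)$, which are reused throughout Sections~\ref{sect:brdoubles}--\ref{sect:bhd} (quasibraided integers, the Harish\dash Chandra pairing, the Woronowicz symmetriser), so the construction is not merely a device for this one theorem. Two small points you should make explicit: the recursion defining $L_f$ is well posed only because $T(V)$ is free, i.e.\ $V^{\tensorpow n}=V\tensor V^{\tensorpow(n-1)}$ lets you peel off the leftmost factor unambiguously (this mirrors the paper's use of freeness of $T(V^*)$ in extending $c'$ to $c$); and you should record that $L_{1_H}=\id_M$, so that the generating relations of $\D_\beta$, including the identification $1_{\D_\beta}=1_H$, are genuinely all verified before you conclude that $M$ is a $\D_\beta$\dash module.
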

We postpone the proof of Theorem \ref{thm_qYD} 
until the end of this Section, and will now discuss the result itself. 

The next Lemma (which follows by easy linear algebra) clarifies why
the equation for $\beta$ in the Theorem is termed the Yetter\dash Drinfeld condition:

\begin{lemma}
Let $V$ be a finite\dash dimensional module over a bialgebra $H$. 

1. Linear maps $\beta\colon V^* \tensor V\to H$ are in one\dash
   to\dash one correspondence with quasicoactions (linear maps) $\delta\colon V \to
   H\tensor V$, via the formula 
\begin{equation*}
   \delta_\beta(v) = \beta(f^a,v)\tensor v_a,
\end{equation*}
where $\{f^a\}$, $\{v_a\}$ are dual bases of $V^*$, $V$. 

2. A map $\beta\colon V^* \tensor V\to H$ satisfies the equation in
   Theorem~\ref{thm_qYD}, if and only if the quasicoaction
   $\delta_\beta$ is Yetter\dash Drinfeld compatible with the $H$\dash
   action on $V$.
\qed
\end{lemma}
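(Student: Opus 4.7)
The plan is to treat the Lemma as two successive exercises in tensor-hom duality, neither of which uses the bialgebra structure of $H$ beyond the definition of the right action on $V^*$. Finite-dimensionality of $V$ is used only to fix a dual basis $\{f^a\}$, $\{v_a\}$ of $V^*$, $V$.

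For part~1, I would set up the correspondence explicitly in both directions. The forward map is the formula $\delta_\beta(v) = \beta(f^a,v)\tensor v_a$ stated in the Lemma. For the inverse, given a quasicoaction $\delta(v) = v^{[-1]}\tensor v^{[0]}$, define $\beta_\delta(f,v) = \langle f, v^{[0]}\rangle\, v^{[-1]}$. Both constructions are manifestly $\field$-linear, and I would check that they are mutually inverse using only the dual-basis identities $v = \langle f^a, v\rangle v_a$ and $f = \langle f, v_a\rangle f^a$. For example, $\beta_{\delta_\beta}(f,v) = \langle f, v_a\rangle \beta(f^a,v) = \beta(f,v)$, and the other composition is analogous. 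This is a purely linear-algebraic bijection $\Hom_\field(V^*\tensor V, H)\cong \Hom_\field(V, H\tensor V)$.

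For part~2, I would substitute $\delta = \delta_\beta$ into the Yetter-Drinfeld compatibility condition of Definition~\ref{def:qydmod} and show that it unpacks to equation~(A). The LHS, using $(h_{(1)}\act v)^{[-1]} = \beta(f^a, h_{(1)}\act v)$ and $(h_{(1)}\act v)^{[0]} = v_a$, becomes
\begin{equation*}
   \beta(f^a, h_{(1)}\act v)\, h_{(2)} \tensor v_a.
\end{equation*}
For the RHS $h_{(1)}v^{[-1]} \tensor h_{(2)}\act v^{[0]} = h_{(1)}\beta(f^a,v)\tensor h_{(2)}\act v_a$, I would rewrite $h_{(2)}\act v_a = \langle f^b\ract h_{(2)}, v_a\rangle v_b$ using $\langle f, h\act v\rangle = \langle f\ract h, v\rangle$, and then recognise $\langle f^b\ract h_{(2)}, v_a\rangle \beta(f^a, v) = \beta(f^b\ract h_{(2)}, v)$ by linearity of $\beta$ in its first argument. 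The RHS thus equals $h_{(1)}\beta(f^b\ract h_{(2)}, v)\tensor v_b$. Comparing the coefficient of each $v_a$ — legitimate since $\{v_a\}$ is a basis of $V$ — gives (A) for $f = f^a$; linearity in $f$ extends it to arbitrary $f\in V^*$. The converse direction is the same computation read backwards.

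The only real obstacle is notational bookkeeping — keeping Sweedler indices, the right $H$-module structure on $V^*$, and the implicit summation over dual bases all in step. There is no deeper content: the Yetter-Drinfeld equation in $H\tensor V$ for $\delta_\beta$ and the identity~(A) parametrised by $f\in V^*$ with values in $H$ are literally the same information, transposed via the canonical isomorphism in part~1.
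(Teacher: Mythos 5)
Your proof is correct and is precisely the ``easy linear algebra'' that the paper alludes to without writing out: part~1 is the canonical isomorphism $\Hom_\field(V^*\tensor V,H)\cong\Hom_\field(V,H\tensor V)$ via dual bases, and part~2 is the transposition of the Yetter--Drinfeld identity in $H\tensor V$ into the family of identities (A) in $H$ indexed by $f\in V^*$. All the dual-basis manipulations (in particular $h_{(2)}\act v_a=\langle f^b\ract h_{(2)},v_a\rangle v_b$ and the comparison of coefficients of the basis $\{v_a\}$) check out.
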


\begin{definition}
An algebra  $\D_\beta$, satisfying the conditions of Theorem
\ref{thm_qYD}, is called a \emph{free {braided} double}.  
\end{definition}
We may now restate Theorem \ref{thm_qYD} in the following way:
\begin{corollary} 
\label{cor_parametrisation}
Free {braided} doubles over a bialgebra $H$ are parametrised by
finite\dash dimensional quasi\dash Yetter\dash Drinfeld modules over $H$. 
\end{corollary}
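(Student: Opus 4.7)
The plan is to read the corollary as a direct repackaging of Theorem \ref{thm_qYD}, using the dictionary provided by the preceding Lemma to translate between $H$\dash valued pairings $\beta\colon V^*\tensor V \to H$ and quasicoactions $\delta\colon V\to H\tensor V$. Since Theorem \ref{thm_qYD} already supplies the nontrivial content (the precise condition on $\beta$ for $\D_\beta$ to have triangular decomposition), the corollary is essentially a matter of making the bijection of data explicit.

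First I would fix the data defining each side of the claimed parametrisation. On one side, a free braided double over $H$ is, by the definition just given, an algebra $\D_\beta$ having triangular decomposition over $H$; the data determining it is the triple $(V,\act,\beta)$, where $V$ is a finite\dash dimensional vector space, $\act\colon H\tensor V\to V$ is a left $H$\dash action, and $\beta\colon V^*\tensor V\to H$ is a linear map. By Theorem \ref{thm_qYD}, the triples $(V,\act,\beta)$ arising this way are exactly those for which $\beta$ satisfies condition (\ref{eqA}). On the other side, a finite\dash dimensional quasi\dash YD module over $H$ is, by Definition \ref{def:qydmod}, a triple $(V,\act,\delta)$ with $V$ finite\dash dimensional, $\act$ an $H$\dash action, and $\delta\colon V\to H\tensor V$ a quasicoaction satisfying the Yetter\dash Drinfeld compatibility condition.

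Next I would invoke the Lemma preceding the corollary. Part 1 gives a bijection
\begin{equation*}
   \beta \ \longleftrightarrow\ \delta_\beta, \qquad \delta_\beta(v) = \beta(f^a,v)\tensor v_a,
\end{equation*}
between $\Hom_\field(V^*\tensor V,H)$ and $\Hom_\field(V,H\tensor V)$, for any fixed choice of dual bases $\{f^a\}$, $\{v_a\}$; this bijection is intrinsic (independent of the choice of dual bases) because both sides canonically identify with $\Hom_H(V,H\tensor V)$ via the rigidity of finite\dash dimensional vector spaces. Part 2 of the Lemma asserts that under this correspondence, condition (\ref{eqA}) on $\beta$ is equivalent to the Yetter\dash Drinfeld compatibility of $\delta_\beta$ with $\act$. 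Combining these two ingredients, the assignment $(V,\act,\beta)\mapsto (V,\act,\delta_\beta)$ is a bijection between the data of free braided doubles over $H$ and the data of finite\dash dimensional quasi\dash YD modules over $H$, which is what the corollary asserts.

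There is no real obstacle, only one point of care: the word ``parametrised'' should be interpreted at the level of underlying data $(V,\act,\beta)$ rather than up to isomorphism of algebras. Once this is made explicit, the corollary follows without further work, since the Lemma is a purely formal linear\dash algebra translation and Theorem \ref{thm_qYD} has already done the algebraic work of identifying the triangular\dash decomposition condition with (\ref{eqA}).
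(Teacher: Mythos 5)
Your proposal is correct and follows essentially the same route as the paper: the corollary is obtained by restating Theorem~\ref{thm_qYD} through the dictionary $\beta\leftrightarrow\delta_\beta$ of the preceding Lemma, whose second part identifies condition (\ref{eqA}) with the Yetter\dash Drinfeld compatibility of the quasicoaction. The paper makes the same two correspondences explicit ($(V,\delta)\mapsto\D(V,\delta)$ with $[f,v]=v^{[-1]}\langle f,v^{[0]}\rangle$, and conversely $v\mapsto[f^a,v]\tensor v_a$), so nothing further is needed.
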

The parametrisation is as follows. Let $(V,\delta)$ be a finite\dash dimensional
quasi-YD module  over $H$. According to Definition~\ref{def:qydmod},
this means that $V$ is an $H$\dash module
and $\delta(v)=v^{[-1]}\tensor v^{[0]}$ is an $H$\dash quasicoaction
on $V$ satisfying the Yetter\dash Drinfeld compatibility condition.
To $(V,\delta)$ is associated the free braided double 
\begin{equation*}
        \D(V,\delta) := T(V) \lcprod H \rcprod T(V^*)
\quad\text{with defining relation }[f,v]=v^{[-1]}\langle f,v^{[0]}\rangle.
\end{equation*}
Square brackets mean a commutator $fv-vf$.
 
Vice versa, a free {braided} double of the form
$T(V)\lcprod H\rcprod T(V^*)$ where $V$ is a finite\dash dimensional $H$\dash module,
gives rise to a Yetter\dash Drinfeld quasicoaction on 
$V$ given by $v\mapsto [f^a,v]\tensor v_a$. 
Here $\{f^a\}$, $\{v_a\}$ are dual bases of $V^*$,~$V$.

\subsection{Classification of one-dimensional quasi-Yetter-Drinfeld modules}
\label{ex:trivial}

We know from Section~\ref{sect:structural} that the universal source
of quasi\dash Yetter\dash Drinfeld modules are right semibraided monoidal categories.
This means that in general, quasi\dash YD modules are at least as
complicated as solutions to the quantum Yang\dash Baxter equation. 

However, one\dash dimensional quasi\dash YD modules over a Hopf
algebra can be fully classified. We will do this here. 
A practical way to obtain some non\dash trivial quasi\dash YD modules
is to take direct sums of one\dash dimensional modules.

One\dash dimensional representations of $H$ are the same as 
algebra maps $H\to \field$. Under the convolution product of algebra maps
(=tensor product of representations), these form a group $G(H^\circ)$
of grouplike elements  in the finite dual $H^\circ$ of $H$ \cite[9.1.4]{Mon}.  
Quasicoactions on a $1$\dash dimensional space $V$ are given by  
$v\mapsto p\tensor v$, where $p\in H$.

The group $G(H^\circ)$ acts on $H$ by algebra automorphisms 
$t_\alpha\colon H\to H$, defined as  
$t_\alpha(h) = \alpha(Sh_{(1)}) h_{(2)} \alpha(h_{(3)})$ for $\alpha
\in G(H^\circ)$.
Let $[g,h]_{t_\alpha} = gh-t_\alpha(h) g$ be the $t_\alpha$\dash 
commutator in $H$. One can check that the quasicoaction $v\mapsto
p\tensor v$ on the representation $\alpha$ 
is Yetter\dash Drinfeld, if and only if
\begin{equation*}
        [p,h]_{t_\alpha}=0\quad \text{for all }h\in H.
\end{equation*}
In particular, if $H$ is cocommutative, all $t_\alpha$ are the
identity on $H$. Isomorphism classes of $1$\dash
dimensional quasi\dash YD modules over $H$ then correspond to pairs
\begin{equation*}
     (\alpha, p) \in G(H^\circ) \times Z(H),
\end{equation*}
where $Z(H)$ is the centre of $H$. Under the tensor product, these
isomorphism classes form a commutative monoid isomorphic to  
$G(H^\circ) \times Z(H)$. One\dash dimensional Yetter\dash Drinfeld
modules correspond to the subgroup 
$G(H^\circ) \times (G(H)\cap Z(H))$. 
 
This classification, incidentally, shows that the space
of Yetter\dash Drinfeld quasicoactions on a given $H$\dash module $V$
need not coincide with the linear span of  Yetter\dash Drinfeld coactions.  
It is also a key ingredient in the
example of braided doubles given in \ref{bdintro}.

\subsection{Proof of Theorem \ref{thm_qYD}}

The rest of this Section will be devoted to the proof of Theorem
\ref{thm_qYD}. 

It is easy to show that the Yetter\dash Drinfeld condition is
necessary for  $\D_\beta$ to have triangular decomposition over $H$. 
Indeed, let $f\in V^*$, $h\in H$ and $v\in V$. 
Denote $L=h_{(1)}\beta(f\ract h_{(2)},v)$ and $R=\beta(f,h_{(1)}\act
v)h_{(2)}$. Compute the product $fhv$ in
$\D_\beta$ in two ways: first, $fhv=h_{(1)}(f\ract h_{(2)})v$, which
by the commutator relation equals 
$L+h_{(1)} v(f\ract h_{(2)})=L+(h_{(1)}\act v)h_{(2)}(f\ract
h_{(3)})$.
Second, $fhv=f(h_{(1)}\act v)h_{(2)}$, which by the commutator
relation is $R+(h_{(1)}\act v)fh_{(2)}=R+(h_{(1)}\act v)h_{(2)}(f\ract
h_{(3)})$. Thus, $L=R$ in $\D_\beta$. But $H$ embeds in $\D_\beta$
injectively because of the triangular decomposition. Therefore, $L=R$ in $H$
as required. 

To show that the Yetter\dash Drinfeld condition is sufficient, it is
enough to introduce on $T(V)\tensor H \tensor T(V^*)$ 
associative multiplication which satisfies the defining relations of
$\D_\beta$.

In order to construct such multiplication on $T(V)\tensor H \tensor
T(V^*)$, we would like to use a general fact about \emph{algebra
  factorisations}. Let $X$ and $Y$ be
associative algebras. Denote by 
$m^X\colon X\tensor X \to X$, resp.\ $m^Y\colon Y\tensor Y \to Y$,  
the multiplication map for $X$, resp.\ $Y$. 
An associative product on $X\tensor Y$, which simultaneously extends $m^X$ and
$m^Y$ (in other words, an algebra factorisation into $X$, $Y$), 
is defined via 
\begin{equation*}
   (x\tensor y)(x'\tensor y') = (m^X\tensor m^Y)(x\tensor c(y\tensor
   x') \tensor y'),
\end{equation*}
where $c\colon Y\tensor X \to X\tensor Y$ is a 
\emph{twist map} between $Y$ and $X$ (also called rule of exchange of
tensorands). Associativity of this product is equivalent to two
equations on $c$; see \cite[Proposition 21.4]{Mcompanion}:
\begin{proposition} 
\label{prop_hexagons}
The above product on $X\tensor Y$ is associative, if and only if 

$(\ref{prop_hexagons}a)$\quad $c\circ (\id^Y \tensor m^X) = (m^X\tensor \id^Y)(\id^X\tensor
   c)(c\tensor \id^X)$,

$(\ref{prop_hexagons}b)$\quad $c \circ (m^Y\tensor \id^X) = (\id^X\tensor m^Y)(c\tensor
   \id^Y)(\id^Y\tensor c)$. 

Both sides of $(\ref{prop_hexagons}a)$ are maps from $Y\tensor
X\tensor X$ to $X\tensor Y$, whereas in $(\ref{prop_hexagons}b)$ the
maps are from $Y\tensor Y\tensor X$ to $X\tensor Y$. 
\qed
\end{proposition}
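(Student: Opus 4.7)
The plan is to prove both implications by direct expansion of the associativity equation $\mu \circ (\mu \tensor \id) = \mu \circ (\id \tensor \mu)$ as an identity of maps $(X\tensor Y)^{\tensorpow 3} \to X\tensor Y$, using the definition $\mu := (m^X \tensor m^Y) \circ (\id^X \tensor c \tensor \id^Y)$. Both hexagon identities are themselves statements about equality of specific compositions, so the strategy is to unpack the associativity law until the two sides differ only by the rewrites prescribed by $(\ref{prop_hexagons}\text{a})$ and $(\ref{prop_hexagons}\text{b})$.

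For the sufficiency direction, I will track $x_1 \tensor y_1 \tensor x_2 \tensor y_2 \tensor x_3 \tensor y_3$ through each composition. The left side $\mu \circ (\mu \tensor \id)$ first applies $c$ on tensor positions $(2,3)$ and then $m^X\tensor m^Y$ on positions $(1,2),(3,4)$; the resulting element of $X\tensor Y\tensor X\tensor Y$ has its middle $Y$--factor of the form $c_2(y_1,x_2)\cdot y_2$, and the outer $\mu$ then needs $c$ to act on $\bigl(c_2(y_1,x_2)\cdot y_2\bigr) \tensor x_3$. Here $(\ref{prop_hexagons}\text{b})$ is used to replace that single $c$ by two successive $c$'s followed by $m^Y$. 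Symmetrically, $\mu \circ (\id \tensor \mu)$ produces an intermediate whose third factor is a product $x_2 \cdot c_1(y_2,x_3)$ in $X$, and $(\ref{prop_hexagons}\text{a})$ rewrites $c$ on $y_1 \tensor \bigl(x_2\cdot c_1(y_2,x_3)\bigr)$ as two $c$'s followed by $m^X$. After these two rewrites, both sides collapse to the same composition: $(m^X \tensor m^X \tensor m^Y \tensor m^Y)$ preceded by a fixed pattern of two $c$'s in adjacent positions, establishing associativity.

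For necessity, assume $\mu$ is associative. Expanding both sides of the associativity equation, the difference splits (via the defining form of $\mu$) into two summands in $\Hom((X\tensor Y)^{\tensorpow 3}, X\tensor Y)$: one supported on compositions through $Y\tensor X \tensor X$ (where $m^X$ merges two $X$--factors against which $c$ acts on a single $Y$), the other supported on compositions through $Y\tensor Y \tensor X$. These two summands factor through intermediate tensor products of distinct shapes and are therefore linearly independent, so each must vanish; the two vanishing conditions are exactly $(\ref{prop_hexagons}\text{a})$ and $(\ref{prop_hexagons}\text{b})$. If one prefers to assume $X,Y$ are unital, the same conclusion follows by specialising $y_2 = 1_Y$ (which kills the $m^Y$--summand and isolates $(\ref{prop_hexagons}\text{a})$) or $x_2 = 1_X$ (which isolates $(\ref{prop_hexagons}\text{b})$).

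The main obstacle will be the bookkeeping: six tensorands flow through two rounds of $c$ and $m^X\tensor m^Y$ on each side, and one must carefully track which leg each $c$ occupies at each stage. The cleanest execution is diagrammatic: draw $c$ as a strand crossing and $m^X, m^Y$ as downward merges on two-coloured strands, whereupon the hexagons become literal planar moves and the equality of the two associativity expansions reduces to a short isotopy verification.
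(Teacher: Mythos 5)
The paper does not actually prove this Proposition --- it is quoted from \cite[Proposition 21.4]{Mcompanion} with a \qed --- and only the ``if'' direction is used afterwards (in the proof of Theorem \ref{thm_qYD}). Your sufficiency argument is correct and is the standard computation: after rewriting the inner $c$ on the left-hand associativity expansion via $(\ref{prop_hexagons}b)$ and the inner $c$ on the right-hand one via $(\ref{prop_hexagons}a)$, both sides become $(m^X\tensor m^X\tensor m^Y\tensor m^Y)$ applied to the same tensor, up to a rebracketing absorbed by the associativity of $m^X$ and $m^Y$ (a step worth stating explicitly).

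The necessity direction, however, has a genuine gap. The claim that the difference $\mu\circ(\mu\tensor\id)-\mu\circ(\id\tensor\mu)$ ``splits into two summands'' which are ``linearly independent because they factor through intermediate tensor products of distinct shapes'' is unfounded: each side of the associativity equation is a \emph{single} composite (the left side contains $c\circ(m^Y\tensor\id^X)$ nested inside it, the right side contains $c\circ(\id^Y\tensor m^X)$), there is no canonical decomposition into an $(\ref{prop_hexagons}a)$-part plus a $(\ref{prop_hexagons}b)$-part, and factoring through different spaces does not imply linear independence in $\Hom\bigl((X\tensor Y)^{\tensorpow 3},X\tensor Y\bigr)$. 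In fact, without further hypotheses the ``only if'' direction is false: take $X=Y=\field$ and let $c$ be multiplication by a scalar $\lambda$; the resulting product $u\cdot v=\lambda uv$ on $\field$ is associative for every $\lambda$, while $(\ref{prop_hexagons}a)$ reads $\lambda=\lambda^2$ and fails for $\lambda=2$. Your fallback --- specialise $y_2=1_Y$ to isolate $(\ref{prop_hexagons}a)$ and $x_2=1_X$ to isolate $(\ref{prop_hexagons}b)$ --- is the correct route, but it needs more than unitality of $X$ and $Y$: one must also assume the normalisations $c(1_Y\tensor x)=x\tensor 1_Y$ and $c(y\tensor 1_X)=1_X\tensor y$, since otherwise the specialisation $y_2=1_Y$ leaves an unevaluated $c(1_Y\tensor x_3)$ on one side instead of producing the product $x_2x_3$ required for $(\ref{prop_hexagons}a)$. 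These normalisations are implicit in the paper's phrase ``algebra factorisation \dots\ which simultaneously extends $m^X$ and $m^Y$'' and are imposed on the twist maps constructed afterwards (the Lemma following the Proposition requires $c(1\tensor x)=x\tensor 1$), so the statement is true in the intended setting; but your proof must invoke them, and the linear-independence shortcut should be discarded.
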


We put $X=T(V)\lcprod H$, the semidirect product algebra arising from
the left action of $H$ on $V$, and $Y=T(V^*)$. 
In the construction of the twist map $c$ between $Y$ and $X$, one uses
the fact that $Y$ is a free
tensor algebra. We use the notation $T^{\le 1}(V^*)=\field \oplus
V^*$. 
\begin{lemma}
Let $Y=T(V^*)$ and let  $c'\colon V^*\tensor X \to X
\tensor T^{\le 1}(V^*)$ be a ``partial twist map'' 
satisfying $(\ref{prop_hexagons}a)$.
Then there exists a unique twist map $c\colon T(V^*)\tensor  X \to X \tensor
T(V^*)$, which extends $c'$ and satisfies $(\ref{prop_hexagons}a)$, 
$(\ref{prop_hexagons}b)$ and $c(1\tensor x)=x\tensor 1$.
\end{lemma}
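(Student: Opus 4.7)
The plan is to use the freeness of $Y = T(V^*)$ to extend $c'$ up the tensor-degree filtration, taking hexagon $(\ref{prop_hexagons}b)$ as the defining recursion. For uniqueness, $(\ref{prop_hexagons}b)$ applied to $y = f \in V^*$ and $y' = w \in V^{*\tensorpow(n-1)}$ with $n \ge 2$ forces
\begin{equation*}
c(fw \tensor x) = (\id^X \tensor m^Y)(c' \tensor \id^Y)(\id^Y \tensor c)(f \tensor w \tensor x),
\end{equation*}
which expresses $c$ on $V^{*\tensorpow n} \tensor X$ purely in terms of $c'$ and the values of $c$ on $V^{*\tensorpow(n-1)} \tensor X$. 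Combined with $c|_{V^*\tensor X} = c'$ and $c(1 \tensor x) = x \tensor 1$, induction on $n$ then pins $c$ down on all of $T(V^*) \tensor X$.

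For existence, I would take the displayed formula as a recursive definition: set $c_0(1 \tensor x) = x \tensor 1$, $c_1 = c'$, and define $c_n$ for $n \ge 2$ by the right-hand side above with $c_{n-1}$ in place of $c$. This produces a well-defined linear map $c = \bigoplus_n c_n \colon T(V^*) \tensor X \to X \tensor T(V^*)$; well-definedness uses that $V^{*\tensorpow n}$ is a free tensor product of $V^*$ and $V^{*\tensorpow(n-1)}$. Hexagon $(\ref{prop_hexagons}b)$ holds for $y \in V^*$ by construction, and the full statement follows by induction on $\deg y$: writing $y = f y''$ and using associativity of $m^Y$, the inductive hypothesis for $y''$ unfolds both sides of $(\ref{prop_hexagons}b)$ into the same nested expression involving $c'$ acting on individual $V^*$-factors.

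The principal task is verifying $(\ref{prop_hexagons}a)$ on $V^{*\tensorpow n} \tensor X \tensor X$, which I propose to do by induction on $n$, with $n = 1$ being the hypothesis on $c'$. For $n \ge 2$, an input $fw \tensor x \tensor x'$ (with $f \in V^*$, $w \in V^{*\tensorpow(n-1)}$) is handled by expanding $c$ on $fw$ via $(\ref{prop_hexagons}b)$, invoking the inductive hypothesis $(\ref{prop_hexagons}a)$ for $w$ to commute $c(w \tensor \cdot)$ past $m^X$, and then using the base case $(\ref{prop_hexagons}a)$ for $c'$ to commute the outermost $c'(f \tensor \cdot)$ past the resulting multiplication in $X$. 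The main obstacle is the diagrammatic bookkeeping in this inductive step: one must verify that the two routes through the hexagon --- peeling off the leftmost $V^*$-factor first, versus first splitting the $X$-product --- produce the same final expression. This coherence is ultimately forced by $(\ref{prop_hexagons}a)$ for $c'$ and $(\ref{prop_hexagons}b)$ for $c$ working in tandem, so the step is routine but requires careful tracking of tensor slots.
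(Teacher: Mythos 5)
Your proposal is correct and follows essentially the same route as the paper: the recursion you set up via hexagon $(\ref{prop_hexagons}b)$ (peel off the leftmost $V^*$-factor, apply $c$ to the rest, then $c'$ to the outer factor) unrolls to exactly the paper's explicit formula $c'_{12}c'_{23}\dots c'_{n,n+1}$, uniqueness is forced by $(\ref{prop_hexagons}b)$ in both arguments, and your inductive verification of $(\ref{prop_hexagons}a)$ --- expand via $(\ref{prop_hexagons}b)$, apply the inductive hypothesis to the tail, then the base case for $c'$, commuting operators on disjoint tensor legs --- is the same chain of equalities the paper writes out for a splitting $\xi=\eta\tensor\theta$ (yours being the special case $\eta\in V^*$).
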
  
\begin{proof}[Proof of the Lemma]
The map $c\colon Y\tensor X \to X\tensor Y$ is defined (in tensor leg
notation) by $c(f_1\tensor \dots \tensor f_n, x) = 
c'_{12}c_{23}\dots c'_{n,n+1}(f_1\tensor \dots \tensor f_n\tensor x)$, 
where $f_i$ are elements of some basis of $V^*$ and $n\ge 1$, and 
the condition $c(1\tensor x)=x\tensor 1$.
By construction, $c$ satisfies $(\ref{prop_hexagons}b)$; this property
also guarantees uniqueness of $c$.

Let us now check $(\ref{prop_hexagons}a)$, i.e., that 
$cm^X_{23}(\xi\tensor x \tensor x')=m^X_{12} c_{23} c_{12}(\xi\tensor
x \tensor x')$ for all $\xi\in T(V^*)$, $x,x'\in X$. 
We use induction in the tensor degree $n$ of $\xi\in V^{*\tensorpow
  n}$. When $n=1$, the property holds because $c$ coincides with
$c'$. Assume $n>1$ and that $(\ref{prop_hexagons}a)$ holds for tensors in
$T(V^*)$ of degree $<n$. Write $\xi=\eta\tensor \theta$ where $\eta$,
$\theta$ are tensors in $T(V^*)$ of degree strictly less than $n$. 
We have
\begin{align*}
    cm^X_{23}(\xi \tensor x \tensor x') 
   & = c_{12} c_{23} m^X_{34}(\eta\tensor \theta \tensor x \tensor x')
=c_{12}m^X_{23}c_{34}c_{23}(\eta\tensor \theta \tensor x \tensor x')
\\
&=m^X_{12}c_{23}c_{12}c_{34}c_{23}(\eta\tensor \theta \tensor x \tensor x')
=m^X_{12}c_{23}c_{34}c_{12}c_{23}(\eta\tensor \theta \tensor x \tensor x'),
\end{align*}
where the $1$st step is by property $(\ref{prop_hexagons}b)$ of $c$,
the $2$nd and the $3$rd steps are by induction hypothesis, and the
last step is trivial. But by property $(\ref{prop_hexagons}b)$, this
expression is precisely $m^X_{12}c_{23}c_{12}(\xi \tensor x \tensor
x')$. The Lemma is proved.
\end{proof}

We will now construct a certain partial twist map $c'\colon V^*\tensor X \to X
\tensor T^{\le1}(V^*)$, which will satisfy $(\ref{prop_hexagons}a)$. 
First of all, we define operators
\begin{equation*}
      \deriv_f \colon T(V) \to T(V)\lcprod H,
\quad \deriv_f(V^{\tensorpow n})\subset V^{\tensorpow n-1}\tensor H, 
\end{equation*}
by the formula
\begin{equation*}
\deriv_f(v_1\tensor \dots \tensor v_n) = 
\sum_{i=1}^n (v_1\tensor \dots \tensor v_{i-1})\cdot \beta(f,v_i)
\cdot (v_{i+1}\tensor \dots \tensor v_{n}),
\end{equation*}
where $\cdot$ is the multiplication in the algebra $X=T(V)\lcprod H$. 
We put $\deriv_f1=0$. 
\begin{lemma}
\label{Leibn}
1. The operators $\deriv_f$ obey the
Leibniz rule in the following form: 
\begin{equation*}
   \deriv_f(pq)=
   (\deriv_f p)\cdot q + p\cdot (\deriv_f q)
   \quad\text{for }p,q\in T(V),
\end{equation*}
where $\cdot$ is the product in $T(V)\lcprod H$.

2. For any $b\in T(V)$, 
\begin{equation*} 
       h_{(1)} \cdot \deriv_{f\ract h_{(2)}}b 
       = \deriv_f(h_{(1)}\act b) \cdot h_{(2)}. 
\end{equation*}
\end{lemma}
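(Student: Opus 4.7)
The Leibniz rule follows essentially by inspection of the definition. By bilinearity, reduce to tensor monomials $p = v_1\otimes\cdots\otimes v_m$ and $q = v_{m+1}\otimes\cdots\otimes v_n$, so that $pq = v_1\otimes\cdots\otimes v_n$. In the defining sum
$$\deriv_f(pq)=\sum_{i=1}^n (v_1\otimes\cdots\otimes v_{i-1})\cdot \beta(f,v_i)\cdot (v_{i+1}\otimes\cdots\otimes v_n),$$
the contributions from indices $i\le m$ assemble into $(\deriv_f p)\cdot q$ and those from $i>m$ into $p\cdot(\deriv_f q)$. This is because tensor concatenation coincides with multiplication in $X=T(V)\lcprod H$ as long as no $H$\dash element is being commuted past anything.

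\textbf{Part 2.} Induct on the tensor degree of $b\in T(V)$. The case $b=1$ is immediate from $\deriv_f 1=0$ and $h\act 1=\epsilon(h)$. For $b=v\in V$, the equality $\deriv_f(v)=\beta(f,v)$ reduces the claim to
$$h_{(1)}\,\beta(f\ract h_{(2)},v)=\beta(f,h_{(1)}\act v)\,h_{(2)},$$
which is precisely the Yetter\dash Drinfeld hypothesis on $\beta$ in Theorem~\ref{thm_qYD}.

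For the inductive step with $b=b'b''$, apply part 1 to split $h_{(1)}\deriv_{f\ract h_{(2)}}(b'b'')$ into two Leibniz terms. For the first term $h_{(1)}(\deriv_{f\ract h_{(2)}}b')b''$, invoke the inductive hypothesis on $b'$ to rewrite it as $\deriv_f(h_{(1)}\act b')\,h_{(2)}b''$, then apply the semidirect product relation to obtain $\deriv_f(h_{(1)}\act b')(h_{(2)}\act b'')\,h_{(3)}$, where $\Delta^{(2)}h=h_{(1)}\otimes h_{(2)}\otimes h_{(3)}$. For the second term $h_{(1)}b'\deriv_{f\ract h_{(2)}}(b'')$, first push $h_{(1)}$ past $b'$ via the semidirect product relation, then apply the inductive hypothesis to $b''$ to obtain $(h_{(1)}\act b')\deriv_f(h_{(2)}\act b'')\,h_{(3)}$. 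Summing the two and applying part 1 in reverse, together with the identity $h\act(b'b'')=(h_{(1)}\act b')(h_{(2)}\act b'')$ for the $H$\dash action on $T(V)$, the total collapses to $\deriv_f(h_{(1)}\act(b'b''))\,h_{(2)}$. The only real difficulty in this argument is careful bookkeeping with iterated Sweedler indices and repeated use of coassociativity when the same $h$ is commuted through $b'$, $b''$, and the derivative; there is no genuine algebraic obstacle beyond this.
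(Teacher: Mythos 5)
Your proof is correct and follows essentially the same route as the paper: part 1 by direct inspection of the defining sum, and part 2 by reducing to the Yetter--Drinfeld condition on $V$ and then propagating multiplicatively via the Leibniz rule and the semidirect product relations. The Sweedler bookkeeping in your inductive step matches the paper's computation exactly.
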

\begin{proof}[Proof of the Lemma]
1.~The Leibniz rule is obvious from the definition of $\deriv_f$. 

2.~When $b=v\in V$, this equality is the Yetter\dash Drinfeld
condition $h_{(1)} \beta(f\ract h_{(2)}, v)$ $=$ $\beta(f,h_{(1)}\act
v)h_{(2)}$. (This is the only place  in the proof of Theorem
\ref{thm_qYD} where the Yetter\dash
Drinfeld condition is invoked.)
Furthermore, it is easy to see that if the equality holds for $b$ and
for $b'$ (where $b$, $b'$ are tensors in $T(V)$), it holds for their
product $bb'$. Indeed, 
$h_{(1)} \cdot \deriv_{f\ract h_{(2)}}(bb')$ is equal, by the
Leibniz rule,  
to $h_{(1)} \cdot \deriv_{f\ract h_{(2)}}b\cdot b'+
(h_{(1)} \act b) \cdot h_{(2)} \cdot \deriv_{f\ract h_{(3)}}b'$. 
Replace this with $\deriv_f(h_{(1)}\act b) \cdot h_{(2)}\cdot b'
+(h_{(1)} \act b)\cdot \deriv_f(h_{(2)}\act b) \cdot h_{(3)}$
which is, again by the Leibniz rule,  equal to 
$\deriv_f(h_{(1)}\act (bb'))\cdot h_{(2)}$. 
The equality thus holds for any $b\in T(V)$, and 
the Lemma is proved. 
\end{proof}

Now for $f\in V^*$ and $ah\in X$, where $a\in T(V)$ and $h\in H$, we put
\begin{equation*}
   c'(f,ah) = (\deriv_f a)\cdot h\tensor 1 + ah_{(1)} \tensor f\ract
   h_{(2)}. 
\end{equation*}
\begin{lemma}
The above map $c'\colon V^*\tensor X \to X \tensor T^{\le1}(V^*)$
satisfies $(\ref{prop_hexagons}a)$.
\end{lemma}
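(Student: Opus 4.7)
The plan is to verify the hexagon identity
$$c'\circ(\id^Y\tensor m^X) = (m^X\tensor \id^Y)(\id^X\tensor c')(c'\tensor \id^X)$$
by evaluating both sides on an arbitrary element $f\tensor ah\tensor a'h'$ of $V^*\tensor X\tensor X$, where $a,a'\in T(V)$ and $h,h'\in H$. On the left-hand side I first normalise the product $ah\cdot a'h'$ in $X$ using the semidirect product relation, writing it in the canonical form $bk$ with $b = a\,(h_{(1)}\act a')\in T(V)$ and $k = h_{(2)}h'\in H$; then I apply the definition of $c'$ to obtain $c'(f,bk) = \deriv_f(b)\cdot k\tensor 1 + b\,k_{(1)}\tensor f\ract k_{(2)}$.

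The first half expands via Lemma~\ref{Leibn}(1) as $\deriv_f(b) = (\deriv_f a)\,(h_{(1)}\act a') + a\,\deriv_f(h_{(1)}\act a')$; the second half expands through $\Delta k = h_{(2)}h'_{(1)}\tensor h_{(3)}h'_{(2)}$ and one more application of the semidirect product relation. Altogether three summands appear, and the only non-trivial manoeuvre is to pull the grouplike leg $h_{(2)}$ past $\deriv_f(h_{(1)}\act\,\cdot\,)$ in the middle summand: this is exactly Lemma~\ref{Leibn}(2), which encodes the Yetter\dash Drinfeld condition on $\beta$. After this rewrite the three summands read
$(\deriv_f a)\,h\,a'\,h'\tensor 1$,
$a\,h_{(1)}\,\deriv_{f\ract h_{(2)}}(a')\,h'\tensor 1$, and
$a\,h_{(1)}\,a'\,h'_{(1)}\tensor f\ract h_{(2)}h'_{(2)}$.

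On the right-hand side I first compute $c'(f,ah)$, which yields two summands; tensoring with $a'h'$ and applying $\id^X\tensor c'$ (with the convention $c'(1,x)=x\tensor 1$ on $\field\tensor X$) the first summand passes through unchanged while the second opens up into two further terms via $c'(f\ract h_{(2)},a'h')$, where the identity $(f\ract h_{(2)})\ract h'_{(2)} = f\ract h_{(2)}h'_{(2)}$ combines the $H$-actions. Finally $m^X\tensor \id^Y$ collapses the $X\tensor X$ legs by semidirect product, producing three summands which match the three summands obtained on the left-hand side.

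The main obstacle is pure bookkeeping: one must track Sweedler indices faithfully through the nested coproducts of $k = h_{(2)}h'$ and repeatedly pass between the forms $h\cdot a'$ and $(h_{(1)}\act a')\,h_{(2)}$. No ingredient beyond the two parts of Lemma~\ref{Leibn} is used, and the Yetter\dash Drinfeld hypothesis on $\beta$ enters exactly once, precisely at the place corresponding to the degree-three obstruction identified in the Remark following Theorem~A.
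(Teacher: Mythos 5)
Your proof is correct and follows essentially the same route as the paper's: normalise $ah\cdot a'h'$ in the semidirect product, expand $\deriv_f$ of the product via Lemma~\ref{Leibn}(1), and invoke Lemma~\ref{Leibn}(2) exactly once to commute the group-like leg past $\deriv_f(h_{(1)}\act\,\cdot\,)$, matching the three resulting summands against the three terms of the composed right-hand side. The only differences are notational (your $a'h'$ versus the paper's $bk$, and your explicit statement of the convention $c'(1\tensor x)=x\tensor 1$, which the paper leaves implicit).
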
 
\begin{proof}[Proof of the Lemma]
Take $ah$, $bk\in
X$, where $a,b\in T(V)$ and $h,k\in H$; $(\ref{prop_hexagons}a)$ is
equivalent to 
\begin{align*}
\tag{$*$}
        c'(f,ah\cdot bk) = (\deriv_f a)\cdot h\cdot bk \tensor 1 + 
      ah_{(1)} &\cdot (\deriv_{f\ract h_{(2)}} b)\cdot k \tensor 1 
    \\ &+ ah_{(1)}\cdot bk_{(1)} \tensor f\ract h_{(2)}k_{(2)}.
\end{align*}
Let us expand the left\dash hand side of $(*)$. 
In the semidirect product algebra $T(V)\lcprod H$ the product  
$ah\cdot bk$ is equal to $a(h_{(1)}\act b) h_{(2)}k$, hence we have 
$\deriv_f(a(h_{(1)}\act b))\cdot h_{(2)}k \tensor 1
+ a(h_{(1)}\act b) h_{(2)}k_{(1)} \tensor f\ract h_{(3)}k_{(2)}$ on
the left in $(*)$. By the Leibniz rule for $\deriv_f$, the
left\dash hand side of $(*)$ is 
\begin{equation*}
(\deriv_f a)\cdot (h_{(1)}\act b)
h_{(2)}k \tensor 1 + a \cdot \deriv_f(h_{(1)}\act b)\cdot h_{(2)}k
\tensor 1  
+ a(h_{(1)}\act b) h_{(2)}k_{(1)} \tensor f\ract h_{(3)}k_{(2)}.
\end{equation*}
It is obvious that the first and the third term of this expression 
coincide with the respective terms on the right\dash hand side of
$(*)$. To see that the second terms also coincide, apply Lemma \ref{Leibn}.
\end{proof}

We have just constructed an algebra factorisation of the form 
$(T(V)\lcprod H)\tensor T(V^*)$. To show that it coincides with the
algebra $\D_\beta$, we have to check that the defining relations of 
$\D_\beta$ hold in this algebra factorisation. 
We do not need to check the relation $hv=(h_{(1)}\act v)h_{(2)}$
because it is automatically fulfilled in the semidirect product
algebra $T(V)\lcprod H$. Let us now compute the product $fh$ in 
$(T(V)\lcprod H)\tensor T(V^*)$. We have $fh = c'(f,1\cdot h) = 
1\cdot h_{(1)}\tensor f\ract h_{(2)}=h_{(1)} (f\ract h_{(2)})$, i.e.,
the second defining relation of $\D_\beta$ also holds. 
Finally, $fv = c'(f,v\cdot 1)=\deriv_fv\tensor 1 + v \tensor f$
where $\deriv_f v=\beta(f,v)$. Thus, the commutator relation holds
as well. Theorem \ref{thm_qYD} is proved.
 
\begin{remark}
\label{rem_comm}
It is clear from the proof of the Theorem that 
the operator $\deriv_f b\in T(V)\tensor H$ is the commutator 
$[f,b]$ in $\D_\beta$, for $b\in T(V)$.  
\end{remark}


\section{{Braided} doubles}
\label{sect:brdoubles}

\subsection{Definition of a braided double}

Recall (Corollary \ref{cor_parametrisation}) that any free {braided}
double over a bialgebra $H$ has triangular decomposition of the form 
\begin{equation*}
      \D(V,\delta) = T(V)\tensor H \tensor T(V^*),
\end{equation*}
where $(V,\delta)$ is a quasi\dash Yetter\dash Drinfeld module over $H$. 

We will now be dealing with braided doubles which are no longer
``free''; that is, they have relations within
$T(V)$ and within $T(V^*)$, but still have triangular decomposition
over $H$.  This is formalised as follows.
Denote by $T^{>0}(V)$ the ideal $\oplus_{n>0}V^{\tensorpow n}$ of $T(V)$.

\begin{definition}
A \emph{triangular ideal} in  $\D(V,\delta) = T(V)\lcprod H \rcprod T(V^*)$ is
a two\dash sided ideal of the form 
\begin{equation*}
    I^-\tensor H \tensor T(V^*) + T(V)\tensor H \tensor I^+,
\end{equation*} 
where $I^-$, $I^+$ are subspaces (and automatically two\dash sided
$H$\dash invariant ideals) in $T^{>0}(V)$ and $T^{>0}(V^*)$, respectively.

\end{definition}
\begin{definition}
A \emph{{braided} double} is a quotient of a free
{braided} double modulo a triangular ideal.
\end{definition}

Where $(V,\delta)$ is a quasi\dash YD module over a bialgebra $H$, we 
will refer to a quotient of $\D(V,\delta)$ modulo a triangular ideal 
as a $(V,\delta)$\dash   {braided} double.

\subsection{Hierarchy of braided doubles}
\label{hierarchy}

In what follows, we will use the facts about
triangular decomposition over a bialgebra and triangular ideals,
collected and proved in the Appendix. 

Denote by $\mathcal{D}(V,\delta)$ the set of 
$(V,\delta)$\dash braided doubles. This set is partially ordered 
by the reverse inclusion of triangular ideals $I\subset \D(V,\delta)$. 
Note that if $I_1 \subseteq I_2$ are triangular ideals, 
the double $\D(V,\delta)/I_2$ is a \emph{triangular quotient} of
$\D(V,\delta)/I_1$. This notion is defined in \ref{triang_simple}.

All  $(V,\delta)$\dash braided doubles are triangular quotients of the free
double $\D(V,\delta)$, the greatest element of $\mathcal{D}(V,\delta)$. 
By Corollary \ref{cor_sum}, a sum of triangular ideals
in $\D(V,\delta)$ is a triangular ideal; therefore, all $(V,\delta)$\dash
braided doubles have a common triangular quotient:
\begin{definition}
Let $(V,\delta)$ be a quasi\dash Yetter\dash Drinfeld module over a bialgebra
$H$. Denote by 
\begin{equation*}
    I(V,\delta)\subset T(V), \quad I(V^*,\delta)\subset T(V^*)
\end{equation*}
the pair of $H$\dash invariant two\dash sided ideals such that 
$I_{\D(V,\delta)} = I(V,\delta)\tensor H \tensor T(V^*)+T(V)\tensor  H \tensor I(V^*,\delta)$ is the
largest among (=the sum of all) triangular ideals 
in $\D(V,\delta)$. 
 Define two algebras:
\begin{equation*}
    U(V,\delta)=T(V)/I(V,\delta), \quad U(V^*,\delta) = T(V^*)/I(V^*,\delta).
\end{equation*}
The braided double
\begin{equation*}
    \RD(V,\delta) = \D(V,\delta)/I_{\D(V,\delta)}=
    U(V,\delta)\lcprod H \rcprod U(V^*,\delta)
\end{equation*}
is called the \emph{minimal double} associated to $(V,\delta)$. 
\end{definition} 

There are other distinguished $(V,\delta)$\dash braided doubles which lie
between $\D(V,\delta)$ and $\RD(V,\delta)$ in the above partial order. 
\emph{Quadratic doubles} are braided doubles of the form 
$T(V)/I^- \tensor H \tensor T(V^*)/I^+$ where 
$I^-$ and $I^+$ are quadratic ideals in $T(V)$, $T(V^*)$ 
(i.e., are generated by subsets of
    $V^{\tensorpow 2}$ and $V^{*\tensorpow 2}$, respectively).
The lowest element in this class is the \emph{minimal quadratic double} 
\begin{equation*}
     \RD_{\mathit{quad}}(V,\delta) \cong T(V)/I_{\mathit{quad}}(V,\delta) \lcprod H
\rcprod T(V^*)/I^*_{\mathit{quad}}(V,\delta). 
\end{equation*}
One can deduce from Theorem
\ref{ker_qbfact} below that 
\begin{equation*}
I_{\mathit{quad}}(V,\delta) =  \lgen I(V,\delta)\cap V^{\tensorpow 2}\rgen, \quad 
I^*_{\mathit{quad}}(V,\delta) =  \lgen I(V^*,\delta)\cap V^{*\tensorpow 2}\rgen,
\end{equation*}
where $\lgen\dots\rgen$ denotes the two\dash sided ideal with given generators.
There is a canonical surjection
$\RD(V,\delta)_{\mathit{quad}}\twoheadrightarrow 
\RD(V,\delta)$,  and we may regard the double $\RD_{\mathit{quad}}(V,\delta)$ as a `first
approximation' to $\RD(V,\delta)$.

\subsection{Relations in the minimal double}

Our goal in this Section is to describe the largest
triangular ideal in the 
the free braided double $\D(V,\delta)$ --- or, the same, the relations
in the algebras $U(V,\delta)$ and $U(V^*,\delta)$ --- in terms of the
quasi\dash Yetter\dash Drinfeld structure on $V$. The first step is
the following 

\begin{lemma}
\label{triang_ideals_bdouble}
Triangular ideals in $\D(V,\delta)$ are subspaces $J\subset \D(V,\delta)$
of the form
$J = J^-H T(V^*) + T(V) H J^+$, where 
$J^- \subset T^{>0}(V)$ and $J^+\subset T^{>0}(V^*)$ are 
$H$\dash invariant two\dash sided ideals, such that
\begin{equation*}
       [f,J^-] \subset J^-\tensor H, \qquad
       [J^+, v] \subset H \tensor J^+
\end{equation*}
for all $f\in V^*$, $v\in V$.
\end{lemma}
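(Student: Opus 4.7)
\emph{Strategy.} The lemma is a two\dash sided characterisation; both directions rely on the triangular decomposition of $\D(V,\delta)$ and on Remark~\ref{rem_comm}, which identifies the commutator $[f,b]$ in $\D(V,\delta)$ with the operator $\deriv_f b\in T(V)\tensor H$ whenever $b\in T(V)$.

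\emph{Necessity.} Suppose $J$ is a triangular ideal, so $J$ corresponds under the triangular decomposition to $J^-\tensor H\tensor T(V^*)+T(V)\tensor H\tensor J^+$ for subspaces $J^-\subset T^{>0}(V)$, $J^+\subset T^{>0}(V^*)$. Applying the augmentation $T(V^*)\to\field$ that vanishes on $T^{>0}(V^*)$ to the last tensor factor shows that $J\cap (T(V)\tensor H\tensor\field)$ maps to $J^-\tensor H$, and a further $\id\tensor\epsilon$ in the middle factor maps to $J^-$. I would then invoke the ideal property of $J$ to extract the three required properties: first, $aJ^-b\subset J$ for $a,b\in T(V)$ lives inside $T(V)\tensor\field\tensor\field$, so after projection $aJ^-b\subset J^-$ and $J^-$ is a two\dash sided ideal in $T^{>0}(V)$; second, $hb=(h_{(1)}\act b)h_{(2)}\in J$ places $(h_{(1)}\act b)\tensor h_{(2)}\in J^-\tensor H$, and applying $\id\tensor\epsilon$ yields $h\act b\in J^-$; third, by Remark~\ref{rem_comm}, $[f,b]=\deriv_f b\in T(V)\tensor H$, and membership in $J$ forces $\deriv_f b\in J^-\tensor H$, which is precisely the commutator condition. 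Symmetric arguments handle $J^+$.

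\emph{Sufficiency.} Conversely, given $J^\pm$ satisfying the hypotheses, I would verify that $J=J^-HT(V^*)+T(V)HJ^+$ is closed under left and right multiplication by the generators $V$, $V^*$, $H$ of $\D(V,\delta)$. Multiplication by $H$ uses the semidirect product relations together with $H$\dash invariance of $J^\pm$; left multiplication by $V$ on $J^-HT(V^*)$, and the mirror move on the right, use the two\dash sided ideal property of $J^\pm$. The crucial case is left multiplication by $f\in V^*$ on $bha\in J^-HT(V^*)$: write $fb=bf+[f,b]$; the term $bfha$ becomes $bh_{(1)}(f\ract h_{(2)})a\in J^-HT(V^*)$ after commuting $f$ past $h$, while $[f,b]\cdot ha$ lies in $J^-HT(V^*)$ precisely by the commutator hypothesis $[f,b]\in J^-\tensor H$. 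The mirror case is analogous, using $[J^+,v]\subset H\tensor J^+$. Leftover mixed products such as $f\cdot ahb$ with $b\in J^+$ reduce, after normalising $fa=af+\deriv_f a$ and $fh=h_{(1)}(f\ract h_{(2)})$, to expressions in $T(V)HT(V^*)\cdot J^+\subset T(V)HJ^+$ by the two\dash sided ideal property of $J^+$.

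\emph{Main obstacle.} The only nontrivial step is the crossed action of $V^*$ on $J^-HT(V^*)$ (and symmetrically of $V$ on $T(V)HJ^+$), and the commutator conditions in the hypothesis are calibrated exactly to close this gap. Remark~\ref{rem_comm} is the crucial link that turns a purely algebraic condition on commutators into a statement about the subspace $\deriv_f b$ of $T(V)\tensor H$, matching precisely the form in which the triangular decomposition of $\D(V,\delta)$ sees it.
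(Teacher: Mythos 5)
Your argument is correct in substance but takes a genuinely different route from the paper. The paper does not re-verify closure under multiplication at this point: it simply quotes Proposition~\ref{triang_ideals} from the Appendix, which already characterises triangular ideals in \emph{any} algebra with triangular decomposition over a bialgebra as $J^-\tensor H\tensor U^+ + U^-\tensor H\tensor J^+$ with $J^\pm$ $H$\dash invariant two\dash sided ideals in $\ker\epsilon^\pm$ such that $U^+\cdot J^-$ and $J^+\cdot U^-$ lie in $J$. The only work done in the proof of Lemma~\ref{triang_ideals_bdouble} itself is the reduction of that last condition to the stated commutator form: since $V^*$ generates $T(V^*)$, it suffices to test $f\cdot J^-$ for $f\in V^*$; replacing the product by the commutator costs nothing because $J^-\cdot f$ already lies in $J^-HT(V^*)$; and Remark~\ref{rem_comm} places $[f,J^-]$ inside $T(V)\tensor H$, which forces the membership condition to split into the two separate inclusions $[f,J^-]\subset J^-\tensor H$ and $[J^+,v]\subset H\tensor J^+$. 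Your direct verification of closure under left and right multiplication by the generators $V$, $H$, $V^*$ reproduces, in the special case of a free braided double, the computation carried out once and for all in the proof of Proposition~\ref{triang_ideals}, and your identification of the crossed action of $V^*$ on $J^-HT(V^*)$ as the crucial case --- resolved exactly by $[f,b]=\deriv_f b\in J^-\tensor H$ --- matches the paper's reduction. What your approach buys is independence from the Appendix; what it costs is length and one small omission: in this paper a triangular ideal is \emph{by definition} the kernel of a triangular quotient map, so after showing that $J$ is a two\dash sided ideal of the stated shape you still owe the (routine) verification that $\D(V,\delta)/J\cong T(V)/J^-\tensor H\tensor T(V^*)/J^+$ is again an algebra with triangular decomposition over $H$ and that the quotient map is a triangular morphism; this step is carried out at the end of the proof of Proposition~\ref{triang_ideals} and uses precisely your hypotheses that $J^\pm$ are $H$\dash invariant and contained in $T^{>0}(V)$, $T^{>0}(V^*)$.
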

\begin{proof}
Triangular ideals are described by Proposition \ref{triang_ideals},
and we only have to adapt that description to the case of {braided}
doubles. Denote $U^-=T(V)$, $U^+=T(V^*)$, and let $\epsilon^\pm\colon
U^\pm\to\field$ be the projections to degree zero component. 
By Proposition \ref{triang_ideals}, triangular ideals are of
the form $J = J^-\tensor H\tensor \Uplus + \Uminus \tensor H \tensor J^+$,
where $J^\pm \subset \ker \epsilon^\pm$ are $H$\dash invariant
two\dash sided ideals in the algebras $U^\pm$, such that 
$\Uplus \cdot J^-$, $J^+\cdot \Uminus$ lie in $J$. 
Since $V$ (resp.\ $V^*$) generates $\Uminus$ (resp.\ $\Uplus$) 
as an algebra, this is equivalent to 
\begin{equation*}
f \cdot J^-, \ J^+\cdot v \ \subset \  J^-H\Uplus + \Uminus
H J^+
\end{equation*}
for all $v\in V$ and $f\in V^*$. 
Now, since $J^- \cdot f$ obviously lies in $J^-H\Uplus$ 
and $v\cdot J^+$ lies in $\Uminus H J^+$, we may replace products 
by commutators and rewrite this condition as
\begin{equation*}
[f, J^-], \quad [J^+,v] 
\quad \subset \quad J^-H\Uplus + \Uminus H J^+. 
\end{equation*}
 Finally, we observe that by Remark \ref{rem_comm},
$[f,J^-]$ lies in $\Uminus H$, and, similarly, $[J^+,v]$ lies in
$H \Uplus$. Therefore, the condition splits into two separate inclusions,
$[f, J^-] \subset J^-H$ and $[J^+,v]\subset H J^+$.
\end{proof}
\begin{remark}
\label{rem:sum2}
Note that the Lemma implies that any triangular ideal in 
$\D(V,\delta)$ is a sum of two triangular ideals of special form:
one $J^-H T(V^*)$  and the other $T(V) H J^+$.
\end{remark}

Our next step is to show that the defining ideals $I(V,\delta)$, $I(V^*,\delta)$ of
the minimal double are graded ideals in $T(V)$, $T(V^*)$, respectively.
We call a triangular ideal $I^-\tensor H \tensor T(V^*)+T(V)\tensor H \tensor
I^+$ \emph{graded}, if $I^-$ (resp.\ $I^+$) is a graded
ideal in $T^{>0}(V)$ (resp.\ $T^{>0}(V^*)$). 
A \emph{graded braided double} is a quotient of a free braided double
by a graded triangular ideal. 

\begin{lemma}
\label{lem:graded}
Any triangular ideal in a free {braided} double is contained in a graded
triangular ideal.
\end{lemma}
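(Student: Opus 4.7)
The plan is to describe triangular ideals explicitly via Lemma~\ref{triang_ideals_bdouble} and then associate to any triangular ideal a canonical graded one containing it. Let $J = J^- H T(V^*) + T(V) H J^+$ be a triangular ideal, where $J^\pm \subset T^{>0}(V^{(*)})$ are the $H$-invariant two-sided ideals from that Lemma, satisfying $[f, J^-] \subset J^- \tensor H$ and $[J^+, v] \subset H \tensor J^+$. For each $n \ge 0$, let $(J^-_{\mathrm{gr}})_n \subset V^{\tensorpow n}$ be the linear span of the degree-$n$ homogeneous components of all elements of $J^-$, and set $J^-_{\mathrm{gr}} = \bigoplus_n (J^-_{\mathrm{gr}})_n$. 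Define $J^+_{\mathrm{gr}}$ analogously. I claim $J_{\mathrm{gr}} = J^-_{\mathrm{gr}} H T(V^*) + T(V) H J^+_{\mathrm{gr}}$ is a graded triangular ideal, and clearly contains $J$ since $p = \sum_n p^{(n)} \in J^-$ implies each $p^{(n)} \in J^-_{\mathrm{gr}}$.

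First I would verify that $J^-_{\mathrm{gr}}$ is a graded $H$-invariant two-sided ideal of $T(V)$. Gradedness is built in. For $a \in V^{\tensorpow m}$ homogeneous and $p \in J^-$, the degree-$(n+m)$ homogeneous component of $ap \in J^-$ is precisely $a \cdot p^{(n)}$, so left multiplication by homogeneous elements of $T(V)$ preserves $J^-_{\mathrm{gr}}$; similarly on the right. The $H$-action on $T(V)$ preserves the tensor grading (as it extends diagonally via the coproduct), so the identity $h \act p = \sum_n h \act p^{(n)}$ shows $h \act p^{(n)} \in (J^-_{\mathrm{gr}})_n$ whenever $p \in J^-$.

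The main step is checking the commutator condition $[f, J^-_{\mathrm{gr}}] \subset J^-_{\mathrm{gr}} \tensor H$ for $f \in V^*$. The key observation is that by the formula for $\deriv_f = [f, -]$ and the semidirect product relations in $T(V) \lcprod H$, the operator $[f, -]$ sends $V^{\tensorpow n}$ into $V^{\tensorpow n-1} \tensor H$; that is, it is homogeneous of degree $-1$ with respect to the $T(V)$-grading on $T(V) \tensor H$. Hence for $p \in J^-$, the equality $[f, p] = \sum_n [f, p^{(n)}]$ is precisely the homogeneous decomposition of $[f,p]$ in $T(V) \tensor H$. By hypothesis $[f,p]$ lies in $J^- \tensor H$, and since the degree-$(n-1)$ projection of $J^- \tensor H$ is contained in $(J^-_{\mathrm{gr}})_{n-1} \tensor H$, we conclude $[f, p^{(n)}] \in J^-_{\mathrm{gr}} \tensor H$. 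By linearity this extends to all of $J^-_{\mathrm{gr}}$. The mirror argument gives $[J^+_{\mathrm{gr}}, v] \subset H \tensor J^+_{\mathrm{gr}}$. Applying Lemma~\ref{triang_ideals_bdouble} in reverse, $J_{\mathrm{gr}}$ is a (graded) triangular ideal of $\D(V,\delta)$ containing $J$.

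The main obstacle is verifying that $[f, -]$ is strictly degree-$(-1)$ homogeneous, which requires examining the explicit formula for $\deriv_f$ (Remark~\ref{rem_comm}) together with the cross-relations $hv = (h_{(1)} \act v) h_{(2)}$ used to move $H$-factors past the tensor-length-decreasing $\beta(f, v_i)$; once this is established, the projection argument is essentially bookkeeping.
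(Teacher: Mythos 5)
Your proof is correct and follows essentially the same route as the paper's: both take the spans of homogeneous components $p_n(J^\pm)$, check they form $H$-invariant two-sided ideals, and use the fact that $[f,\cdot]$ is homogeneous of degree $-1$ on $T(V)\tensor H$ to deduce that the graded pieces still satisfy the commutator condition of Lemma~\ref{triang_ideals_bdouble}. The only cosmetic difference is that the paper first reduces to the two one-sided cases $J^-HT(V^*)$ and $T(V)HJ^+$ via Remark~\ref{rem:sum2}, whereas you treat the sum directly; this changes nothing of substance.
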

\begin{proof}
Let $\D(V,\delta)\cong T(V)\lcprod H \rcprod T(V^*)$ be a free {braided}
double, and $J$ be a triangular ideal in $\D(V,\delta)$. Denote $\Uminus=T(V)$
and $\Uplus=T(V^*)$.  
By Remark \ref{rem:sum2}, it is enough to consider the cases $J=J^- H
\Uplus$ and $J=\Uminus H J^+$. We will assume $J=J^- H \Uplus$, the
other case being analogous. By Lemma
\ref{triang_ideals_bdouble}, $J^-$ is a two\dash sided ideal in 
$\Uminus_{>0}$ is an $[f,J^-]\subset J^-\tensor H$ 
for any $f\in V^*$.
 
Denote by $p_n$ the projection map from $\Uminus$ onto its $n$th homogeneous
component $\Uminus_n$. 
Put $J^-_n = p_n(J^-)$ and let $J^-_{\mathrm{gr}} = \oplus_{n>0}
J^-_n$. Let us check that the space $J^-_{\mathrm{gr}}$ satisfies 
the conditions of Lemma  \ref{triang_ideals_bdouble}. Indeed, 
$J^-_{\mathrm{gr}}$ is a two\dash sided ideal in $\Uminus$, because 
$v J^-_n = p_{n+1}(v J^-) \subset p_{n+1}(J^-)=J^-_{n+1}$ for any
$v\in V$, and similarly $J^-_n v\subset J^-_{n+1}$. 
Subspaces $\Uminus_n$ are $H$\dash submodules of $\Uminus$, and $p_n$
are $H$\dash equivariant maps; 
thus, $J^-_{\mathrm{gr}}$ is $H$\dash invariant. 
By construction, $J^-_{\mathrm{gr}}$ lies in
$\Uminus_{>0}=\ker \epsilonminus$ and contains $J^-$. 
Finally, it is clear (e.g. from the definition of the operator
$\deriv_f=[f,\cdot]$ in the proof of Theorem \ref{thm_qYD})
that the commutator $[f,\cdot]$ is lowering the degree
in $\Uminus \tensor H$ by one:
\begin{equation*}
            [f,\Uminus_n] \subset \Uminus_{n-1}\tensor H,
\end{equation*}
therefore $[f,J^-_n] \subset J^-_{n-1}\tensor H$. 
Thus, $J^- H \Uplus$ is contained in a graded triangular ideal 
$J^-_{\mathrm{gr}} H \Uplus$ of $\D(V,\delta)$. 
\end{proof}

\begin{corollary}
\label{triang_quot}
$I(V,\delta)$, $I(V^*,\delta)$ are graded ideals in $T(V)$, $T(V^*)$, respectively. 
\end{corollary}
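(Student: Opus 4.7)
The proof proposal is essentially a one-line application of the two facts established just above, together with the definition of $I(V,\delta)$ and $I(V^*,\delta)$ via maximality. My plan is as follows.

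First I would recall that the triangular ideal $I_{\D(V,\delta)} = I(V,\delta)\,H\,T(V^*) + T(V)\,H\,I(V^*,\delta)$ is, by definition, the \emph{largest} (equivalently, by Corollary~\ref{cor_sum}, the sum of all) triangular ideals of $\D(V,\delta)$. In particular $I_{\D(V,\delta)}$ is itself a triangular ideal. Next I would invoke Lemma~\ref{lem:graded}, which guarantees that any triangular ideal in a free braided double is contained in some graded triangular ideal. Applying this to $I_{\D(V,\delta)}$ yields a graded triangular ideal $J_{\mathrm{gr}} = J^-_{\mathrm{gr}}\,H\,T(V^*) + T(V)\,H\,J^+_{\mathrm{gr}}$ with $I_{\D(V,\delta)} \subseteq J_{\mathrm{gr}}$.

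The maximality of $I_{\D(V,\delta)}$ among triangular ideals then forces $J_{\mathrm{gr}} = I_{\D(V,\delta)}$. Unpacking the triangular decomposition, this gives the equalities $I(V,\delta) = J^-_{\mathrm{gr}}$ and $I(V^*,\delta) = J^+_{\mathrm{gr}}$ as $H$\dash invariant two\dash sided ideals of $T(V)$ and $T(V^*)$. Since $J^-_{\mathrm{gr}}$ and $J^+_{\mathrm{gr}}$ are graded by construction in Lemma~\ref{lem:graded}, so are $I(V,\delta)$ and $I(V^*,\delta)$, as required.

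There is no genuine obstacle here: the content is already packaged in Lemma~\ref{lem:graded} (the explicit passage from a triangular ideal $J^-HT(V^*)$ to the graded triangular ideal $J^-_{\mathrm{gr}} HT(V^*)$ obtained by applying the homogeneous projections $p_n$ and using that $[f,\cdot]$ lowers $T(V)$\dash degree by one) and in Corollary~\ref{cor_sum} (closure of triangular ideals under sums, needed implicitly to ensure that ``largest'' triangular ideal makes sense). The only thing worth double\dash checking is that Lemma~\ref{lem:graded} applies to a triangular ideal of the mixed form $I^-HT(V^*) + T(V)HI^+$; this is handled via Remark~\ref{rem:sum2}, which allows one to split it as the sum of two triangular ideals of pure type and then recombine their graded enlargements using Corollary~\ref{cor_sum}.
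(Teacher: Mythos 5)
Your proof is correct and is exactly the paper's argument: the paper's entire proof of this corollary reads ``Observe that by the Lemma, the largest triangular ideal in a free braided double is graded,'' which is precisely your combination of Lemma~\ref{lem:graded} with the maximality of $I_{\D(V,\delta)}$. Your extra remarks (the reduction via Remark~\ref{rem:sum2} and the recovery of $I(V,\delta)$, $I(V^*,\delta)$ from the triangular ideal) correctly fill in the details the paper leaves implicit.
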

\begin{proof}
Observe that by the Lemma, the largest triangular ideal in a free {braided}
double is graded.  
\end{proof}

\subsection{Computation of the ideals $I(V,\delta)$, $I(V^*,\delta)$}

To proceed with the computation of the maximal triangular ideal
$I(V,\delta)HT(V^*)+T(V)HI(V^*,\delta)$ of $\D(V,\delta)$, we assume that $H$ is a Hopf
algebra. 
To make the exposition concise, 
let us focus on the ideal $I(V,\delta)$; 
we will state the final result for $I(V^*,\delta)$ later in Remark
\ref{rem:righthanded}. 
We say that a subspace $W\subset T^{>0}(V)$ is
``\emph{preserved by commutators}'', if $[f,W]\subset
W\tensor H$ for any $f\in V^*$. 

\begin{lemma}
\label{lem:computation}
$I(V,\delta)$ is the  maximal subspace in $T^{>0}(V)$,
preserved by commutators.
\end{lemma}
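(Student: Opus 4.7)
The plan is to identify $I(V,\delta)$ with the maximal subspace of $T^{>0}(V)$ preserved by commutators by promoting the latter to an $H$-invariant two-sided ideal, at which point Lemma~\ref{triang_ideals_bdouble} forces equality. Linearity of $[f,-]$ ensures that the sum $W$ of all subspaces of $T^{>0}(V)$ satisfying $[f,W']\subset W'\tensor H$ for every $f\in V^*$ is itself preserved, and so is the unique maximal such subspace. The inclusion $I(V,\delta)\subset W$ is immediate from Lemma~\ref{triang_ideals_bdouble}, since the left component of any triangular ideal satisfies $[f,\cdot]\subset\cdot\tensor H$. For the reverse inclusion $W\subset I(V,\delta)$, the plan is to check that $W$ is $H$-stable and two-sided; Lemma~\ref{triang_ideals_bdouble} will then yield a triangular ideal $WHT(V^*)\subset I_{\D(V,\delta)}$, forcing $W\subset I(V,\delta)$.

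The main obstacle is establishing $H$-stability, because Lemma~\ref{Leibn}(2) supplies only the symmetric identity $h_{(1)}[f\ract h_{(2)},b] = [f, h_{(1)}\act b]\,h_{(2)}$ rather than an expression solved for $[f,h\act b]$. To isolate $[f,h\act b]$, I would pass to the $3$-fold coproduct $\Delta^2(h) = h_{(1)}\tensor h_{(2)}\tensor h_{(3)}$ and right-multiply the identity by $S(h_{(3)})$; the antipode axiom $h_{(2)}S(h_{(3)}) = \epsilon(h_{(2)})$ collapses the right-hand side and produces
\[
[f, h\act b] = h_{(1)}\cdot [f\ract h_{(2)}, b]\cdot S(h_{(3)}).
\]
Substituting $b = w\in W$, the inner commutator lies in $W\tensor H$; moving $h_{(1)}$ past the $T(V)$-factor via the semidirect product relation $h\cdot w_i = (h_{(1)}\act w_i)\,h_{(2)}$ and keeping track of Sweedler indices shows the whole expression lies in $(HW)\tensor H$. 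Thus $HW$ is preserved by commutators, and by maximality $HW\subset W$, so $W$ is $H$-stable.

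For the ideal property, the Leibniz rule (Lemma~\ref{Leibn}(1)) applied to $[f,vw]$ for $v\in V$, $w\in W$ gives
\[
[f,vw] = \beta(f,v)\,w + v\,[f,w];
\]
the first summand rewrites in $T(V)\lcprod H$ as $(\beta(f,v)_{(1)}\act w)\cdot\beta(f,v)_{(2)}\in W\tensor H$ by the $H$-stability just established, while the second lies in $vW\tensor H$. Hence $W+VW$ is preserved by commutators, so by maximality $VW\subset W$; the mirror calculation $[f,wv] = [f,w]\,v + w\,\beta(f,v)$, after expanding $[f,w]v$ using the same semidirect-product relation, gives $WV\subset W$. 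Since $V$ generates $T(V)$, $W$ is a two-sided ideal contained in $T^{>0}(V)$, satisfying all hypotheses of Lemma~\ref{triang_ideals_bdouble} and establishing $W\subset I(V,\delta)$.
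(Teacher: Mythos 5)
Your proposal is correct and follows essentially the same route as the paper: you derive the conjugation formula $[f,h\act b]=h_{(1)}\cdot[f\ract h_{(2)},b]\cdot Sh_{(3)}$ from Lemma~\ref{Leibn}(2) via the antipode to get $H$\dash stability of the maximal commutator\dash preserved subspace, then use the Leibniz rule to upgrade it to a two\dash sided ideal, and finish by Lemma~\ref{triang_ideals_bdouble}. The only cosmetic difference is that you check $VW\subseteq W$ and $WV\subseteq W$ separately where the paper treats the ideal $T(V)\cdot W\cdot T(V)$ in one stroke.
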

\begin{proof}
Let $W$ be a subspace of $T^{>0}(V)$, preserved by commutators.
Then $W'=H\act W$ is a subspace of $T^{>0}(V)$.
Let us show that $W'$ is also preserved by commutators.
By Lemma \ref{Leibn}, 
\begin{equation*}
           [f,h\act b]=\deriv_f(h\act b) = h_{(1)}\cdot (\deriv_{f\ract
           h_{(2)}}b)\cdot Sh_{(3)} \quad\text{for }b\in T(V); 
\end{equation*}
applying this to $b\in W$ shows that $[f,W']$ lies in $H\cdot W \cdot
H\subset W'H$. 

It follows that the maximal subspace preserved by commutators is $H$\dash
invariant. Assume now that $W$ is an $H$\dash invariant subspace of
$T^{>0}(V)$, preserved by commutators. Let us show that $W$
is contained in an $H$\dash invariant two\dash sided ideal in
$T^{>0}(V)$, preserved by commutators. Indeed, apply $\deriv_{f}$ to the  
ideal $T(V) \cdot W \cdot T(V)$.
By the Leibniz rule, the result lies in 
$(T(V) H)\cdot W\cdot T(V) + T(V)\cdot (WH)\cdot T(V) + T(V) \cdot W
\cdot (T(V) H)$.
Since $W$ and $T(V)$ are  $H$\dash invariant subspaces of
$T(V)$, this coincides with  $(T(V) \cdot W \cdot T(V))\tensor H$. 

Thus, the maximal subspace of $T^{>0}(V)$, preserved by commutators,
is an $H$\dash invariant two\dash sided ideal with 
this property. But the maximal among such ideals is $I(V,\delta)$.
\end{proof}

\subsection{Quasibraided integers and quasibraided factorials}

We are ready to describe the graded components of the ideal
$I(V,\delta)\subset T(V)$ as kernels of \emph{quasibraided factorials},
which we now introduce.

\begin{definition}
\label{def_qbfact}
Let $(V,\delta)$ be a quasi\dash Yetter\dash Drinfeld module over a bialgebra
$H$, with action $\act$ and quasicoaction $\delta(v)=v^{[-1]}\tensor
v^{[0]}$. The \emph{quasibraided integers} are  maps
\begin{align*}
   &\widetilde{[n]}_\delta\colon V^{\tensorpow n} \to V^{\tensorpow
     n-1}\tensor H \tensor V, 
\\
   &\widetilde{[n]}_\delta (v_1\tensor \dots \tensor v_n) = 
   \sum_{i=1}^n v_1\tensor \dots \tensor v_{i-1}\tensor
    {v_i^{[-1]}}_{(1)}\act (v_{i+1}\tensor \dots \tensor v_n) \tensor 
    {v_i^{[-1]}}_{(2)} \tensor {v_i^{[0]}}.
\end{align*}
The \emph{quasibraided factorials} are maps
\begin{equation*}
 \widetilde{[n]}!_\delta\colon V^{\tensorpow n} \to (H\tensor
 V)^{\tensorpow n},
\qquad 
\widetilde{[n]}!_\delta = 
(\widetilde{[1]}_\delta \tensor \id_{H\tensor V}^{\tensorpow n-1})
\circ
(\widetilde{[2]}_\delta \tensor \id_{H\tensor V}^{\tensorpow n-2})
\circ \dots \circ \widetilde{[n]}_\delta.
\end{equation*}
We also put $\widetilde{[0]}!_\delta=1$.
\end{definition}
\begin{lemma}
\label{comm_formula}
The commutator of $f\in V^*$ and $b\in V^{\tensorpow n}$ in
the free braided double $\D(V,\delta)$ is given by 
\begin{equation*}
        [f,b] = (\id_V^{\tensorpow n-1}\tensor
                 \id_H \tensor \langle f,-\rangle)\widetilde{[n]}_\delta b.
\end{equation*}
\end{lemma}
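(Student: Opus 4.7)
The plan is to unpack both sides, identify them using the quasi-YD dictionary, and the result falls out by semidirect-product computation. No new ideas are required beyond what already appears in the proof of Theorem \ref{thm_qYD}.

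First I would invoke Remark \ref{rem_comm}, which says that for $b \in T(V)$ the commutator $[f,b]$ in $\D_\beta = \D(V,\delta)$ coincides with the operator $\deriv_f b$ introduced in the proof of Theorem \ref{thm_qYD}. By definition of $\deriv_f$, for $b = v_1 \tensor \cdots \tensor v_n$ we have
\begin{equation*}
[f,b] \;=\; \sum_{i=1}^{n} (v_1 \cdots v_{i-1}) \cdot \beta(f, v_i) \cdot (v_{i+1} \cdots v_n),
\end{equation*}
with the products taken inside the semidirect product algebra $T(V)\lcprod H$.

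Next I would rewrite each summand as a pure tensor in $V^{\tensorpow n-1} \tensor H$ by using the semidirect-product rule $(a\tensor 1)(1\tensor h)(c\tensor 1) = a\,(h_{(1)}\act c) \tensor h_{(2)}$. Applied to $h = \beta(f,v_i)$ this gives
\begin{equation*}
(v_1 \cdots v_{i-1}) \cdot \beta(f,v_i) \cdot (v_{i+1} \cdots v_n)
\;=\; v_1 \tensor \cdots \tensor v_{i-1} \tensor \bigl(\beta(f,v_i)_{(1)}\act (v_{i+1} \tensor \cdots \tensor v_n)\bigr) \tensor \beta(f,v_i)_{(2)}.
\end{equation*}

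Now I would substitute the dictionary between $\beta$ and $\delta$. By the definition of $\delta_\beta$ in \ref{qydmodcomod} one has $\beta(f,v_i) = v_i^{[-1]} \langle f, v_i^{[0]}\rangle$, hence $\beta(f,v_i)_{(1)} \tensor \beta(f,v_i)_{(2)} = {v_i^{[-1]}}_{(1)} \tensor {v_i^{[-1]}}_{(2)} \langle f, v_i^{[0]}\rangle$. Substituting this and summing over $i$ yields exactly the formula obtained by applying $\id_V^{\tensorpow n-1} \tensor \id_H \tensor \langle f, -\rangle$ to the defining expression of $\widetilde{[n]}_\delta b$, finishing the proof.

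The only potential pitfall is bookkeeping of the Sweedler indices and of which tensor slot carries which structure (the scalar $\langle f, v_i^{[0]}\rangle$ being absorbed into the $H$-slot). I would therefore double-check the case $n = 1$ as a sanity test: there $\widetilde{[1]}_\delta(v) = v^{[-1]} \tensor v^{[0]}$, so applying $\id_H \tensor \langle f,-\rangle$ returns $v^{[-1]}\langle f, v^{[0]}\rangle = \beta(f,v)$, which is indeed $[f,v]$ by the defining relation of $\D(V,\delta)$.
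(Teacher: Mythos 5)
Your proof is correct and follows exactly the paper's own argument: invoke Remark \ref{rem_comm} to identify $[f,b]$ with $\deriv_f b$, substitute $\beta(f,v_i)=v_i^{[-1]}\langle f,v_i^{[0]}\rangle$, and push the $H$-factor to the right using the semidirect-product relation in $T(V)\lcprod H$. You merely spell out the last step, which the paper leaves as "follows from the relations in the semidirect product", and your $n=1$ sanity check is consistent.
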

\begin{proof} 
By Remark \ref{rem_comm}, $[f,b]=\deriv_f b$ where the operator
  $\deriv_f$ was introduced in the proof of Theorem \ref{thm_qYD}. 
Recall that $\beta(f,v)=v^{[-1]} \langle f,v^{[0]} \rangle $, and  
rewrite the formula for $\deriv_f$ in terms of the quasicoaction:
\begin{equation*}
  \deriv_f(v_1\tensor \dots \tensor v_n) = 
  \sum_{i=1}^n (v_1\tensor \dots \tensor v_{i-1})\cdot
  v_i^{[-1]} \langle f,v_i^{[0]}\rangle 
  \cdot (v_{i+1}\tensor \dots \tensor v_{n}).
  \end{equation*}
The Lemma now follows from the relations in the semidirect product
$T(V)\lcprod H$. 
\end{proof}
\begin{theorem}
\label{ker_qbfact}
Let $(V,\delta)$ be a quasi\dash YD module over a Hopf algebra $H$. 
The ideal $I(V,\delta)$ in $T(V)$ is given by
\begin{equation*}
     I(V,\delta) = \mathop{\oplus}\limits_{n=1}^\infty \ker \widetilde{[n]}!_\delta.
\end{equation*}
\end{theorem}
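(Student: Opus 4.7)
The plan is to prove both inclusions $\oplus_n \ker \widetilde{[n]}!_\delta \subseteq I(V,\delta)$ and $I(V,\delta) \subseteq \oplus_n \ker \widetilde{[n]}!_\delta$ by a single short induction, driven by a recursive identity that links $\widetilde{[n]}!_\delta$ to the adjoint action of $V^*$ in the free braided double.

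\textbf{Key identity.} Directly from Definition~\ref{def_qbfact} one reads off
\begin{equation}
\label{eq:reccomp}
\widetilde{[n]}!_\delta = (\widetilde{[n-1]}!_\delta \otimes \id_{H\otimes V}) \circ \widetilde{[n]}_\delta .
\end{equation}
Lemma~\ref{comm_formula} says that $[f,b]\in V^{\otimes n-1}\otimes H$ is obtained from $\widetilde{[n]}_\delta(b) \in V^{\otimes n-1}\otimes H\otimes V$ by contracting the final $V$-slot with $f$. Applying $\widetilde{[n-1]}!_\delta\otimes \id_H$ to that equality and comparing with \eqref{eq:reccomp} gives
\begin{equation}
\label{eq:keyident}
(\id \otimes \langle f,- \rangle)(\widetilde{[n]}!_\delta(b)) = (\widetilde{[n-1]}!_\delta \otimes \id_H)([f,b]),
\end{equation}
for every $b\in V^{\otimes n}$ and $f\in V^*$, where $\langle f,-\rangle$ on the left contracts the rightmost $V$-slot of $\widetilde{[n]}!_\delta(b) \in (H\otimes V)^{\otimes n}$.

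\textbf{First inclusion.} Let $K = \oplus_{n\geq 1}\ker\widetilde{[n]}!_\delta \subseteq T^{>0}(V)$. If $b \in K\cap V^{\otimes n}$, then \eqref{eq:keyident} gives $(\widetilde{[n-1]}!_\delta\otimes\id_H)([f,b])=0$ for every $f\in V^*$, hence $[f,b]\in \ker\widetilde{[n-1]}!_\delta \otimes H \subseteq K\otimes H$. Thus $K$ is a subspace of $T^{>0}(V)$ preserved by commutators in the sense of Lemma~\ref{lem:computation}, and therefore $K\subseteq I(V,\delta)$.

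\textbf{Second inclusion.} By Corollary~\ref{triang_quot}, $I(V,\delta)$ is graded; write $I_n = I(V,\delta)\cap V^{\otimes n}$ and induct on $n$ to show $I_n\subseteq \ker\widetilde{[n]}!_\delta$. For $n=1$, Lemma~\ref{lem:computation} forces $[f,b] \in I_0\otimes H=0$ for all $f$, hence $\delta(b)=\widetilde{[1]}_\delta(b)=0$ by finite-dimensionality of $V$. For the inductive step, $b\in I_n$ gives $[f,b]\in I_{n-1}\otimes H \subseteq \ker\widetilde{[n-1]}!_\delta \otimes H$ by inductive hypothesis; \eqref{eq:keyident} then says $\widetilde{[n]}!_\delta(b) \in (H\otimes V)^{\otimes n-1}\otimes H\otimes V$ is annihilated upon pairing its last $V$-slot with every $f\in V^*$, forcing $\widetilde{[n]}!_\delta(b)=0$.

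The only real work is verifying \eqref{eq:keyident}. The subtlety here is that $[f,b]$ in Lemma~\ref{comm_formula} must be regarded as an element of $V^{\otimes n-1}\otimes H$ through the vector-space identification of this subspace with $V^{\otimes n-1}\cdot H \subset T(V)\lcprod H$, so that $\widetilde{[n-1]}!_\delta\otimes\id_H$ is applicable on the nose; once the contraction $\langle f,-\rangle$ in Lemma~\ref{comm_formula} is recognised as acting on precisely the $V$-slot that $\id_V$ rides in \eqref{eq:reccomp}, the identity \eqref{eq:keyident} follows by direct comparison, and the rest of the argument is a one-line induction in each direction.
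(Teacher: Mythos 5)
Your proof is correct and takes essentially the same route as the paper's: both arguments rest on Lemma~\ref{lem:computation}, the gradedness from Corollary~\ref{triang_quot}, and the recursion $\ker\widetilde{[n]}!_\delta=\{b: [f,b]\in\ker\widetilde{[n-1]}!_\delta\tensor H\ \forall f\}$ extracted from Lemma~\ref{comm_formula} together with the factorisation $\widetilde{[n]}!_\delta=(\widetilde{[n-1]}!_\delta\tensor\id_{H\tensor V})\circ\widetilde{[n]}_\delta$. The paper simply runs one induction establishing the equality $I_n=\ker\widetilde{[n]}!_\delta$ of graded components directly, where you split the same content into two inclusions.
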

\begin{proof}
The ideal $I(V,\delta)\subset T^{>0}(V)$ is graded by Corollary
\ref{triang_quot}. Write  $I(V,\delta)=I_0\oplus I_1{\oplus \dots}$, where
$I_n=I(V,\delta)\cap V^{\tensorpow n}$. By Lemma
\ref{lem:computation}, $I(V,\delta)$ is the maximal subspace of $T^{>0}(V)$ 
preserved by commutators, which in terms of the graded components
rewrites as 
\begin{equation*}
         I_n = \{ b\in V^{\tensorpow n} : [f,b]\in I_{n-1}\tensor H
                  \text{ for all }f\in V^*\}, 
\qquad n\ge 1.
\end{equation*}
Let us show that $I_n=\ker\widetilde{[n]}!_\delta$. This is
true for $n=0$ (because $I_0=0$); assume this to be true for $n-1$. 
Substitute the commutator $[f,b]$ with its expression via
the quasibraided integer from Lemma \ref{comm_formula}:
\begin{equation*}
         I_n = \{ b\in V^{\tensorpow n} : 
                 (\id_V^{\tensorpow n-1}\tensor
                 \id_H \tensor \langle f,- \rangle)\widetilde{[n]}_\delta b  
                  \in (\ker \widetilde{[n-1]}!_\delta)\tensor H\}.
\end{equation*}
Since this holds for arbitrary $f\in V^*$, 
the space $I_n$ consists of $b\in V^{\tensorpow n}$ such that 
$\widetilde{[n]}_\delta b$ is in $(\ker
\widetilde{[n-1]}!_\delta)\tensor H\tensor V$. That is, $I_n=\ker
\widetilde{[n]}!_\delta$. The Theorem follows by induction. 
\end{proof}

\begin{remark}
\label{rem:righthanded}
The ideal $I(V^*,\delta)$ of $T(V^*)$ has a description of the same nature. 
Consider the right quasicoaction $\delta_r\colon V^*\to V^*\tensor H$,
$f\mapsto f^{[0]}\tensor f^{[1]}$, which is given by
$\beta(f,v)=\langle f^{[0]},v \rangle f^{[1]}$. This, together  with the right
action of $H$ on $V^*$, gives rise to right\dash handed
quasibraided integers $\widetilde{[n]}_{\delta_r}\colon
V^{*\tensorpow n}\to V^*\tensor H\tensor V^{*\tensorpow n-1}$
and to right\dash handed
quasibraided factorials $\widetilde{[n]}!_{\delta_r}\colon
V^{*\tensorpow n}\to (V^*\tensor H)^{\tensorpow n}$. One has
$I(V^*,\delta)=\oplus_n \ker  \widetilde{[n]}!_{\delta_r}$. 
\end{remark}

 The following Corollary gives a useful criterion of minimality of
a graded braided double. 
\begin{corollary}(Minimality criterion)
\label{cor:criterion}
Let $\Dbl = \Uminus \lcprod H \rcprod \Uplus$ be a $(V,\delta)$\dash
braided double which is graded: $\Uminus = \oplus_{n=0}^\infty \Uminus_n$, 
 $\Uplus = \oplus_{n=0}^\infty \Uplus_n$. Then $\Dbl$ is a minimal
 double, when and only when

(a) if $b\in \Uminus$, $[f,b]=0$ for all $f\in V^*$, then $b\in
\Uminus_0$;

(b) if $\phi\in \Uplus$, $[\phi,v]=0$ for all $v\in V$, then $\phi\in
\Uplus_0$.
\end{corollary}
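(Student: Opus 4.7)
My plan is to reduce both directions to the characterisation in Lemma~\ref{lem:computation}, namely that $I(V,\delta)$ is the maximal subspace of $T^{>0}(V)$ preserved by the commutator operators $[f,-]$ for $f \in V^*$ (and the symmetric statement for $I(V^*,\delta)$). Writing $\Dbl = \D(V,\delta)/J$ with triangular ideal $J = J^-HT(V^*) + T(V)HJ^+$, the double is minimal exactly when $J^- = I(V,\delta)$ and $J^+ = I(V^*,\delta)$. The inclusions $J^\pm \subseteq I(V,\delta),\, I(V^*,\delta)$ come for free: they follow by applying the augmentations $\id\tensor\id\tensor\epsilon^+$ and $\epsilon^-\tensor\id\tensor\id$ to $J \subseteq I_{\D(V,\delta)}$ (the latter holds because $I_{\D(V,\delta)}$ is the largest triangular ideal). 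So only the reverse inclusions are at stake, and by symmetry I concentrate on $J^- = I(V,\delta)$.

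For the forward direction, suppose $\Dbl$ is minimal, so $J^- = I(V,\delta)$. Let $b\in \Uminus$ satisfy $[f,b]=0$ for all $f\in V^*$. Since $\Uminus$ is graded and $[f,-]$ lowers degree by one, decomposing $b = \sum_n b_n$ gives $[f,b_n]=0$ for each $n$. For $n\ge 1$, lift $b_n$ to $\tilde b_n \in V^{\tensorpow n}$; then $[f,\tilde b_n]\in J^-\tensor H = I(V,\delta)\tensor H$, so the subspace $I(V,\delta) + \field \tilde b_n \subset T^{>0}(V)$ is preserved by all $[f,-]$. Lemma~\ref{lem:computation} forces $\tilde b_n \in I(V,\delta) = J^-$, hence $b_n = 0$ in $\Uminus$. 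Thus $b = b_0 \in \Uminus_0$, which is (a); statement (b) follows from the right-handed analogue described in Remark~\ref{rem:righthanded}.

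For the backward direction, assume (a) and (b). Suppose $J^- \subsetneq I(V,\delta)$. Both are graded ($J^-$ because $\Dbl$ is graded, $I(V,\delta)$ by Corollary~\ref{triang_quot}), so the quotient $\bar J^- := I(V,\delta)/J^-$ is a graded nonzero subspace of $\Uminus$ sitting in degrees $\ge 1$ (since $I(V,\delta)\subset T^{>0}(V)$ gives $\bar J^-_0 = 0$). Pick the smallest $n\ge 1$ with $\bar J^-_n \ne 0$ and any nonzero $b \in \bar J^-_n$. Because $I(V,\delta)$ is preserved by commutators in $\D(V,\delta)$, the image $[f,b]$ lies in $\bar J^-\tensor H$; on the other hand $[f,b]\in \Uminus_{n-1}\tensor H$, so $[f,b]\in \bar J^-_{n-1}\tensor H = 0$ by the minimality of $n$. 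This produces a nonzero element of $\Uminus_{>0}$ annihilated by every $[f,-]$, contradicting (a). The analogous argument using (b) gives $J^+ = I(V^*,\delta)$, completing the proof.

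The only genuine subtlety is keeping the grading honest: I must verify that $J^\pm$ is automatically graded as soon as $\Dbl$ is graded (immediate from the definition of a graded triangular ideal) and that $[f,-]$ strictly lowers degree when viewed as a map $\Uminus_n \to \Uminus_{n-1}\tensor H$ (which is inherited from the same property on $T(V)$ used in the proof of Lemma~\ref{lem:graded}). Once these are in place, the minimum-degree argument in the backward direction is the heart of the proof, and everything else is a direct translation of Lemma~\ref{lem:computation} into the quotient $\Uminus$.
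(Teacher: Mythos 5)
Your proof is correct and follows essentially the same route as the paper's: both directions come down to Lemma~\ref{lem:computation} (maximality of $I(V,\delta)$ among commutator-preserved subspaces of $T^{>0}(V)$) combined with the observation that $[f,\cdot]$ lowers degree, so the lowest-degree component of a nonzero graded commutator-preserved subspace of $\Uminus_{>0}$ is annihilated by all $[f,\cdot]$. The paper states this much more tersely; your version merely fills in the bookkeeping (lifting homogeneous components to $T(V)$, checking $J^\pm\subseteq I(V,\delta),I(V^*,\delta)$ via the augmentations), all of which is sound.
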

\begin{proof}
$\Uminus\ne U(V,\delta)$, if and only if $\Uminus_{>0}$ contains a
  graded subspace 
preserved by commutators $[f,\cdot]$ and not contained in degree $0$
of grading. Such a subspace exists if and only if there is a
homogeneous element $b$ of positive degree (in the lowest degree component of
the subspace) which commutes with all $f\in V^*$. Similarly, 
$\Uplus\ne U(V^*,\delta)$, if and only if there is
  $\phi\in\Uplus_{>0}$ which commutes with all $v\in V$.
\end{proof}

The next Corollary will be useful in constructing graded braided doubles
which are not minimal.
\begin{corollary}
\label{cor:nonminimal}
Let $(V,\delta)$ be a quasi\dash YD module over a Hopf algebra $H$.
Let $I^-\subset T^{>0}(V)$ be a graded two\dash sided ideal. 
Then $I^-\tensor H \tensor T(V^*)$ is a triangular ideal in 
$\D(V,\delta)$, if and only if $I^-$ has an $H$\dash invariant 
generating space $R=\oplus_{n>0} R_n$, $R_n\subset V^{\tensorpow n}$,
such that
\begin{equation*}
      \widetilde{[n]}_\delta R_n \subseteq R_{n-1}\tensor H \tensor V
\end{equation*} 
(assuming $R_0=0$). 
A similar statement holds for ideals $I^+\subset  T^{>0}(V)$ and 
the right\dash handed quasibraided integers $\widetilde{[n]}_{\delta_r}$. 
\end{corollary}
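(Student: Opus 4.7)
The plan is to reduce to Lemma~\ref{triang_ideals_bdouble}, which characterizes triangular ideals of the form $I^- HT(V^*)$ as those in which $I^-\subset T^{>0}(V)$ is an $H$-invariant two-sided ideal satisfying $[f,I^-]\subseteq I^-\tensor H$ for every $f\in V^*$. The two directions of the corollary then amount to translating this commutator-stability into the language of the quasibraided integers via Lemma~\ref{comm_formula}, which expresses $[f,b] = (\id_V^{\tensorpow n-1}\tensor \id_H \tensor \langle f,-\rangle)\widetilde{[n]}_\delta b$ for $b\in V^{\tensorpow n}$.

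For the ``only if'' direction, assume $I^- HT(V^*)$ is a triangular ideal. Since $I^-$ is graded and $H$-invariant, setting $R_n := I^-\cap V^{\tensorpow n}$ gives an $H$-invariant generating subspace $R=\oplus_{n>0}R_n$ of $I^-$. For $r\in R_n$ and any $f\in V^*$, the commutator $[f,r]$ lies in $V^{\tensorpow n-1}\tensor H$ by the construction of $\deriv_f$, and in $I^-\tensor H$ by hypothesis, hence in $R_{n-1}\tensor H$. Letting $f$ range over a basis of $V^*$ dual to a chosen basis of $V$, the contraction formula of Lemma~\ref{comm_formula} ``reads off'' each component of the last tensor factor of $\widetilde{[n]}_\delta r$, so that $\widetilde{[n]}_\delta r\in R_{n-1}\tensor H \tensor V$.

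For the ``if'' direction, given $I^-=\lgen R\rgen$ with $R$ as in the statement, we must verify $[f,I^-]\subseteq I^-\tensor H$ for every $f\in V^*$. Every element of $I^-$ is a sum of products $a\cdot r\cdot b$ with $a,b\in T(V)$ and $r\in R_n$, and by the Leibniz rule of Lemma~\ref{Leibn}(1),
\begin{equation*}
    \deriv_f(a\cdot r\cdot b) = (\deriv_f a)\cdot r\cdot b + a\cdot(\deriv_f r)\cdot b + a\cdot r\cdot(\deriv_f b),
\end{equation*}
computed in the semidirect product $T(V)\lcprod H$. In the third term, $a\cdot r$ already lies in $I^-$, so the product lies in $I^-\tensor H$. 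For the middle term, Lemma~\ref{comm_formula} applied to the hypothesis yields $\deriv_f r\in R_{n-1}\tensor H\subseteq I^-\tensor H$, and moving the resulting factor of $H$ past $b$ via $h\cdot b=(h_{(1)}\act b)\, h_{(2)}$ keeps the product in $I^-\tensor H$ because $I^-$ is a left ideal. In the first term, the factor of $H$ inside $\deriv_f a$ must be pushed through $r$, and this is where $H$-invariance of $R$ is needed: $h_{(1)}\act r\in R\subseteq I^-$, after which the remainder of the product lies in $I^-\tensor H$.

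The main obstacle is the bookkeeping in the first term of the Leibniz expansion, where the semidirect-product multiplication requires the $H$-action to be pushed past $r$; the $H$-invariance hypothesis on $R$ is imposed precisely so that this transit preserves membership in $I^-$. The analogous statement for ideals $I^+\subset T^{>0}(V^*)$ follows by the symmetric argument, with right-handed commutators $[\cdot,v]$ and the right-handed quasibraided integers $\widetilde{[n]}_{\delta_r}$ of Remark~\ref{rem:righthanded}.
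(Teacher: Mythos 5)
Your proof is correct and follows essentially the same route as the paper: translating the condition on $\widetilde{[n]}_\delta$ into the commutator condition $[f,R_n]\subseteq R_{n-1}\tensor H$ via Lemma~\ref{comm_formula}, propagating it to the ideal $\lgen R\rgen$ by the Leibniz rule, and invoking Lemma~\ref{triang_ideals_bdouble}, with the converse obtained by taking $R=I^-$. You merely spell out the Leibniz bookkeeping (including the role of $H$-invariance of $R$) that the paper leaves implicit.
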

\begin{proof}
By Lemma \ref{comm_formula}, 
the equation on $R$ is equivalent to saying that $[f,R_n]\subseteq
R_{n-1}\tensor H$ for all $f\in V^*$. 
This implies that the ideal $I^-=\lgen R \rgen$ is an $H$\dash
equivariant ideal in $T^{>0}(V)$, such that 
$[f,I^-]\subseteq I^-\tensor H$. By Lemma \ref{triang_ideals_bdouble}, 
$I^-HT(V^*)$ is a triangular ideal in $\D(V,\delta)$. This is the `if'
part; the `only if' part follows by putting $R=I^-$.
\end{proof}

\subsection{Standard modules for braided doubles}
\label{standmod}

Clearly, minimality of a braided double should influence its
representation theory. Let us mention a construction which yields a
family of standard modules for an algebra with triangular
decomposition (known as Verma modules in Lie theory and used also for
rational Cherednik algebras). 

Let $\Dbl = \Uminus \lcprod H \rcprod \Uplus$ be a graded
$(V,\delta)$\dash braided double over $H$. To any irreducible 
left representation $\rho\colon H \to \End(L_\rho)$ of $H$ is 
associated a left $\Dbl$\dash module:
\begin{equation*}
       M_\rho = \mathrm{Ind\,}_{H\tensor \Uplus}^{\Dbl} \ (\rho \tensor
       \epsilonplus).
\end{equation*}
As a vector space, $M_\rho$ is the tensor product $\Uminus \tensor L_\rho$. 
The standard modules $M_\rho$ are crucial in the
Bernstein-Gelfand-Gelfand theory of category $\mathcal O$ for
$U(\mathfrak g)$ \cite{BGG} and its more
recent version for rational Cherednik algebras as in \cite{GGOR}.

Observe, however, that all $M_\rho$ are reducible $\Dbl$\dash modules
unless $\Uminus = U(V,\delta)$. Indeed, let $\overline{M}_\rho\cong
U(V,\delta)\tensor L_\rho$ be the induced module for the minimal
double $\RD(V,\delta)$. Then $\overline{M}_\rho$, which is an $\Dbl$\dash
module via the quotient map $\Dbl \twoheadrightarrow \RD(V,\delta)$,
is a quotient of $M_\rho$. We therefore suggest $\{\overline{M}_\rho
\mid \rho \in \mathrm{Irr}(H)\}$ as a family of standard modules for
any $(V,\delta)$\dash braided double.

\subsection{The Harish-Chandra pairing and minimality}

We will now suggest a useful method for  proving minimality of a
given braided double. 
Let us introduce the 
Harish\dash Chandra pairing in braided doubles; in fact, this can be
done for any algebra with triangular decomposition over a bialgebra,
see Appendix, \ref{HCpairing}.

\begin{definition}
Let $A=\Uminus \lcprod H \rcprod \Uplus$ be a graded braided double over a
bialgebra $H$. Let $\epsilon^\pm \colon U^\pm \to \field$ be
projections onto degree $0$ components in $U^\pm$. 
Denote 
\begin{equation*}
   p_H = \epsilonminus \tensor \id_H \tensor \epsilonplus  
\colon A \twoheadrightarrow H.
\end{equation*}
The \emph{Harish\dash Chandra pairing} in $A$ is 
\begin{equation*}
   (\cdot,\cdot)_H \colon \Uplus \times \Uminus \to H, 
   \qquad
   (\phi,b)_H = p_H(\phi b). 
\end{equation*}
\end{definition}
The terminology is inspired by the example of the universal enveloping
algebra $U(\mathfrak g)$ of a semisimple Lie algebra $\mathfrak g$. 
The next Theorem follows from a general result on algebras with
triangular decomposition (Proposition \ref{HCkernels}):
\begin{theorem}
\label{th:HCmin}
If the Harish\dash Chandra pairing in a braided double $A$ is
non\dash degenerate, then $A$ is a minimal double. 
\qed
\end{theorem}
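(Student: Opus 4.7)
The plan is to prove the contrapositive: assuming $A$ is not a minimal double, we shall exhibit a nonzero element in the kernel of the Harish\dash Chandra pairing. By the minimality criterion in Corollary \ref{cor:criterion}, non\dash minimality of $A$ means that at least one of the following holds:
(a) there exists nonzero $b\in U^-_{>0}$ with $[f,b]=0$ for all $f\in V^*$; or
(b) there exists nonzero $\phi\in U^+_{>0}$ with $[\phi,v]=0$ for all $v\in V$.
One may take $b$, resp.\ $\phi$, to be homogeneous, since the operator $[f,-]$ lowers the $U^-$\dash grading by one, so each homogeneous component of $b$ must satisfy the same commutation identity separately; likewise on the other side.

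Suppose (a) holds. Since $U^+$ is generated as an algebra by $V^*$ and $b$ commutes with every generator $f\in V^*$, the element $b$ commutes with every $\phi\in U^+$. Therefore, for any $\phi\in U^+$, the product $\phi b$ equals $b\phi$ in $A$. Under the triangular decomposition isomorphism $U^-\tensor H\tensor U^+\xrightarrow{\sim} A$, the element $b\phi$ is the image of $b\tensor 1_H\tensor \phi$, so applying $p_H=\epsilonminus\tensor \id_H\tensor \epsilonplus$ gives
\begin{equation*}
    (\phi,b)_H \;=\; p_H(\phi b) \;=\; p_H(b\phi) \;=\; \epsilonminus(b)\,\epsilonplus(\phi)\cdot 1_H \;=\; 0,
\end{equation*}
because $\epsilonminus(b)=0$ for $b\in U^-_{>0}$. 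Hence $b$ is a nonzero element of the right kernel of the Harish\dash Chandra pairing, contradicting non\dash degeneracy. Case (b) is symmetric: $\phi$ commutes with all of $U^-$, so $(\phi,b)_H=\epsilonminus(b)\,\epsilonplus(\phi)\cdot 1_H=0$ for every $b\in U^-$ (now vanishing because $\epsilonplus(\phi)=0$), placing $\phi$ in the left kernel.

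There is no real obstacle here: the argument reduces to the observation that an element of $U^-_{>0}$ or $U^+_{>0}$ which commutes with the opposing triangular factor must factor cleanly through $U^-\tensor 1_H\tensor U^+$ in the triangular decomposition, and is then annihilated by the counit projection onto $H$. The only minor thing to verify is that the element supplied by Corollary \ref{cor:criterion} can be chosen homogeneous of positive degree, which was addressed in the first paragraph.
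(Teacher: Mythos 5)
Your proof is correct. It reaches the same conclusion as the paper but by a different route: the paper deduces Theorem~\ref{th:HCmin} from Proposition~\ref{HCkernels} in the Appendix, which shows that \emph{every} triangular ideal $J^-\tensor H\tensor \Uplus+\Uminus\tensor H\tensor J^+$ has $J^-$, $J^+$ contained in the respective kernels of the Harish--Chandra pairing; the argument there uses only the defining properties of triangular ideals from Proposition~\ref{triang_ideals} (namely $J^\pm\subset\ker\epsilon^\pm$, so $J\subset\ker p_H$, and $\Uplus\cdot J^-\subset J$), and so applies to any algebra with triangular decomposition over a bialgebra, with no grading hypothesis and no use of $U^\pm$ being generated in degree one. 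You instead argue by contraposition through the minimality criterion of Corollary~\ref{cor:criterion}, extract a single homogeneous element $b$ of positive degree commuting with $V^*$ and hence with all of $\Uplus$, and compute $(\phi,b)_H=p_H(b\phi)=\epsilonminus(b)\,\epsilonplus(\phi)=0$. The underlying mechanism is the same in both cases --- $p_H$ annihilates $\Uminus_{>0}\cdot H\cdot\Uplus$ --- but your version rests on the Hopf-algebra machinery behind Corollary~\ref{cor:criterion} (gradedness of the maximal triangular ideal, Theorem~\ref{ker_qbfact}), whereas the paper's is more elementary and slightly more general; what yours buys is a very concrete witness for the degeneracy of the pairing. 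Your reduction to a homogeneous $b$, using that $[f,\cdot]$ lowers the degree by one, correctly closes the only small gap in invoking the criterion.
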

We will now give a formula for the Harish\dash Chandra pairing in a
free braided double $\D(V,\delta)$ in terms of quasibraided factorials.  
(It works as well for any graded
$(V,\delta)$\dash braided double.) We will use the notation
\begin{align*}
  &m_H\colon (H\tensor V)^{\tensorpow n} \to H \tensor V^{\tensorpow n},
  \\
  &m_H(h_1 \tensor v_1 \tensor h_2\tensor v_2 \tensor \dots 
      \tensor h_n \tensor v_n) = 
    h_1 h_2 \dots h_n \tensor v_1 \tensor v_2 \tensor \dots \tensor
  v_n. 
\end{align*}
\begin{proposition}
\label{prop:HCformula}
Let $(V,\delta)$ be a quasi\dash Yetter\dash Drinfeld double over a bialgebra
$H$. The Harish\dash Chandra pairing in
$\D(V,\delta)$ is given by
\begin{equation*}
    \phi \in V^{*\tensorpow n}, \ b\in V^{\tensorpow n} 
 \quad \mapsto \quad
(\phi,b)_H = (\id_H \tensor \langle  \phi, - \rangle_{V^{\tensorpow n}})m_H
 \widetilde{[n]}!_\delta b
\end{equation*}
and $(V^{*\tensorpow n},V^{\tensorpow m})_H=0$ if $n\ne m$. 
\end{proposition}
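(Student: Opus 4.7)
\medskip

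\noindent\textbf{Proof plan.} The vanishing statement $(V^{*\otimes n},V^{\otimes m})_H=0$ for $n\ne m$ is a grading argument: each commutator relation $[f,v]=\beta(f,v)\in H$ reduces the $V$\dash degree and the $V^*$\dash degree simultaneously by $1$, so in the triangular expansion of $\phi b$ (with $\phi\in V^{*\otimes n}$, $b\in V^{\otimes m}$) every summand lies in $V^{\otimes m-a}\otimes H\otimes V^{*\otimes n-a}$ for some $a\ge 0$. A pure $H$\dash term requires $m-a=n-a=0$, forcing $m=n$; otherwise $p_H(\phi b)=0$.

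\medskip

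\noindent For the formula itself I will proceed by induction on $n$, the case $n=0$ being trivial since $\widetilde{[0]}!_\delta=1$ and both sides equal the scalar $\phi b$. For the inductive step, write $\phi=\phi'\otimes f_n$ with $\phi'\in V^{*\otimes n-1}$ and compute in $\D(V,\delta)$:
\begin{equation*}
  \phi\,b \;=\; \phi'\cdot(f_n b) \;=\; \phi'\cdot b\cdot f_n \;+\; \phi'\cdot[f_n,b].
\end{equation*}
The term $\phi'\cdot b\cdot f_n$ always carries a trailing $f_n\in V^*$ of positive degree in the triangular expansion (multiplying on the right by $f_n$ can only increase the $V^*$\dash degree of the rightmost factor), hence it is killed by $p_H$. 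Thus only $p_H(\phi'\cdot[f_n,b])$ contributes. By Lemma~\ref{comm_formula}, $[f_n,b]\in V^{\otimes n-1}\cdot H\subset \D(V,\delta)$ equals $(\id_{V^{\otimes n-1}}\otimes\id_H\otimes\langle f_n,-\rangle)\widetilde{[n]}_\delta b$; writing this element as $\sum_j c_j h_j\in T(V)\lcprod H$ with $c_j\in V^{\otimes n-1}$ and $h_j\in H$, one checks (using the semidirect product relation $u^+\cdot h=h_{(1)}(u^+\ract h_{(2)})$ and the fact that $\epsilon(h_{(1)})h_{(2)}=h$) the elementary identity $p_H(x\cdot h)=p_H(x)\cdot h$ for any $x\in \D(V,\delta)$ and $h\in H$. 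Hence
\begin{equation*}
  (\phi,b)_H \;=\; \sum_j p_H(\phi'\cdot c_j)\cdot h_j \;=\; \sum_j (\phi',c_j)_H\cdot h_j.
\end{equation*}

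\medskip

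\noindent The last step is to identify this with the desired closed form. Unwinding the definition of the quasibraided factorial, one has the recursion
\begin{equation*}
  \widetilde{[n]}!_\delta \;=\; \bigl(\widetilde{[n-1]}!_\delta \otimes \id_{H\tensor V}\bigr)\circ \widetilde{[n]}_\delta,
\end{equation*}
and the multiplication map $m_H$ on $(H\otimes V)^{\otimes n}$ factors compatibly as ``multiply the first $n-1$ copies of $H$, then append the last one on the right''. Substituting the inductive hypothesis $(\phi',c_j)_H=(\id_H\otimes\langle\phi',-\rangle)m_H\widetilde{[n-1]}!_\delta c_j$ into $\sum_j(\phi',c_j)_H h_j$ and repackaging the sum (which is exactly the contraction of the last $V$\dash leg of $\widetilde{[n]}_\delta b$ with $f_n$, propagated through $\widetilde{[n-1]}!_\delta\otimes\id$) yields
\begin{equation*}
  (\phi,b)_H \;=\; (\id_H\otimes\langle \phi,-\rangle_{V^{\otimes n}})\, m_H\, \widetilde{[n]}!_\delta b,
\end{equation*}
completing the induction.

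\medskip

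\noindent The only real obstacle is bookkeeping: one must keep track of which tensor legs are in $H$ and which in $V$, and verify that the placement of the ``extra'' $h_j$ produced at step~$n$ matches the multiplication of the newly appended copy of $H$ in $m_H$. This is purely formal once the recursion $\widetilde{[n]}!_\delta=(\widetilde{[n-1]}!_\delta\otimes\id_{H\otimes V})\circ\widetilde{[n]}_\delta$ and the identity $p_H(xh)=p_H(x)h$ are isolated. The formula extends from pure tensors $\phi=f_1\otimes\dots\otimes f_n$ to all $\phi\in V^{*\otimes n}$ by linearity.
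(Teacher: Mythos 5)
Your proof is correct and follows essentially the same route as the paper's: the paper also reduces $p_H(\phi b)$ to an iterated application of the commutator operator $\deriv_f=[f,\cdot]$ (working modulo the subspace $A^+=T(V)\otimes H\otimes T^{>0}(V^*)\subset\ker p_H$, which is exactly your observation that the term $\phi'\,b\,f_n$ dies), and then identifies the result with the quasibraided factorial via Lemma~\ref{comm_formula}. Your induction peeling off one $f_i$ at a time, together with $p_H(xh)=p_H(x)h$, is just the recursive form of the paper's one-shot computation $(\phi,b)_H=p_H(\deriv_{f_1}\cdots\deriv_{f_n}b)$, and your grading argument for the vanishing when $n\ne m$ is an acceptable substitute for the paper's two-case analysis.
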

\begin{proof}
Recall the operator $\deriv_f\colon T(V)\to T(V)\lcprod H$, $\deriv_f
b = [f,b]$ (commutator in the double $\D(V,\delta)$), and extend it to
$T(V)\lcprod H$ by 
\begin{equation*}
   \deriv_f(b\tensor h) = (\deriv_f b)\cdot h, \quad b\in T(V), \ h\in
   H, \ f\in V^*.
\end{equation*}
Consider the subspace $A^+=T(V)\tensor H \tensor T^{>0}(V^*)$ of
$\D(V,\delta)$. It has the property that $V^* A^+ \subset A^+$ and
$A^+ H \subset A^+$. Therefore, for any $b\in T(V)$, $h\in H$
and  $f\in V^*$ we have 
\begin{equation*}
     fbh \simeq \deriv_f (bh) \quad \text{modulo }A^+.
\end{equation*}
Let $\phi=f_1 \tensor f_2 \tensor \dots \tensor f_n\in
V^{*\tensorpow n}$ and $b\in V^{\tensorpow m}$.
The subspace $A^+$ lies in the kernel of
the map $p_H$, therefore 
\begin{equation*}
    (\phi,b)_H = p_H(\phi b) = p_H(\deriv_{f_1}\deriv_{f_2} \dots
    \deriv_{f_n} b). 
\end{equation*}
If $m=n$, it is easy to deduce from Lemma \ref{comm_formula} that the
right\dash hand side equals 
$(\id_H\tensor \langle f_1,- \rangle \tensor \dots \tensor \langle f_n,-\rangle) m_H
\widetilde{[n]}!_\delta$. 
If $m>n$, then  $\deriv_{f_1} \dots
    \deriv_{f_n} b$ lies in the space $V^{m-n}\tensor H\subset \ker p_H$. 
Finally, if $m<n$, then $\deriv_{f_1} \dots
    \deriv_{f_n} b = 0$.
\end{proof}

\subsection{Non-degeneracy of the Harish-Chandra pairing and ideals}

We would like to make a simple observation concerning ideals in
braided doubles, which will not be used in the sequel. 
It is here to highlight a possible direction of further research. 

The study of ideals in universal enveloping algebras $U(\mathfrak g)$
was a significant topic in representation theory in the second half of
the 20th century. 
It has been observed, however, that some important results on
ideals may be deduced from the fact that the algebra has a triangular
structure of a certain kind, cf.\ \cite{Ginzb_prim}. This allows one to
extend such results to objects of more recent vintage
such as rational Cherednik algebras. 
 
Let us extend the Harish\dash Chandra pairing in a braided double
$A=\Uminus \lcprod H \rcprod \Uplus$ to obtain pairings
\begin{equation*}
 (\cdot,\cdot)_H \colon \Uplus\times  \Uminus H  \to H,
\quad
 (\cdot,\cdot)_H \colon H\Uplus \times \Uminus \to H,
\end{equation*}
both defined by the same formula $(y,x)_H=p_H(yx)$ and denoted by the
same symbol. We say that the Harish\dash Chandra pairing in $A$ is
\emph{strongly non\dash degenerate}, if these two extensions are
non\dash degenerate pairings. 

\begin{remark}
One can show that if a scalar pairing $\lambda((\cdot,\cdot)_H)$ is
non\dash degenerate for an algebra homomorphism $\lambda\colon H \to
\field$ (e.g.\ for the counit $\lambda=\epsilon$), then the
Harish\dash Chandra pairing is strongly non\dash degenerate. 
\end{remark}

\begin{proposition}
\label{good_ideals}
Let $\Dbl = \Uminus \lcprod H \rcprod \Uplus$ be a braided double with 
strongly non\dash degenerate Harish\dash Chandra pairing. 
Then $p_H(I) \ne 0$ for any  non\dash zero two\dash sided ideal $I$ of $A$. 
\end{proposition}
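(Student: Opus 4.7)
My plan is to use the $\mathbb{Z}$-grading on the braided double to reduce to a graded ideal, extract a homogeneous element, and then produce $\phi \in \Uplus$ and $b \in \Uminus$ by two successive applications of strong non-degeneracy so that $p_H(\phi a b) \ne 0$. First I would note that $A$ is $\mathbb{Z}$-graded by $\deg V = 1$, $\deg V^* = -1$, and $\deg H = 0$, and that $p_H$ annihilates every $A_d$ with $d \ne 0$. If $I^\sharp$ denotes the linear span of all homogeneous components of all elements of $I$, then $I^\sharp$ is a graded two-sided ideal containing $I$, and $p_H(I^\sharp) = p_H(I)$ because $p_H(a) = p_H(a_0)$ for every $a = \sum_d a_d \in A$; so I may replace $I$ by $I^\sharp$ and pick a non-zero homogeneous $a \in I \cap A_d$. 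By the symmetry between the two extended pairings, I may further assume $d \ge 0$.

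Decompose $a = \sum_{n \ge d} z_n$ using $A_d = \bigoplus_{n \ge d} \Uminus_n H \Uplus_{n-d}$, and let $n_1 = \min\{n : z_n \ne 0\}$. Viewing $z_{n_1}$ as an element of $\Uminus_{n_1} \otimes H\Uplus_{n_1 - d}$, I write $z_{n_1} = \sum_j u_j \otimes y_j$ with $\{u_j\}$ a basis of $\Uminus_{n_1}$ and $y_j \in H\Uplus_{n_1 - d}$; at least one $y_{j_0}$ is non-zero. I now apply strong non-degeneracy twice. Non-degeneracy of the extended pairing $H\Uplus \times \Uminus \to H$ produces $b \in \Uminus_{n_1 - d}$ with $(y_{j_0}, b)_H \ne 0$, and then the element $\tilde z_{n_1}(b) := \sum_j u_j \cdot (y_j, b)_H \in \Uminus_{n_1} H$ is non-zero by linear independence of the $u_j$. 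Non-degeneracy of the extended pairing $\Uplus \times \Uminus H \to H$ then produces $\phi \in \Uplus_{n_1}$ with $(\phi, \tilde z_{n_1}(b))_H \ne 0$.

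It remains to verify $p_H(\phi a b) = (\phi, \tilde z_{n_1}(b))_H$, and I expect this careful degree bookkeeping to be the main obstacle. For each $n > n_1$, expanding $\phi z_n b$ using the commutation rules $\Uplus H \subseteq H\Uplus$ and $H\Uminus \subseteq \Uminus H$ shows that every resulting triangular monomial lies in $\Uminus_k H \Uplus_\ell$ with $k \ge n - n_1 > 0$, and hence vanishes under $p_H$. For $n = n_1$, expanding $v_j b = (v_j, b)_H + (\text{error in }\bigoplus_{k > 0} \Uminus_k H \Uplus_k)$ for $y_j = h_j v_j$ decomposes $\phi z_{n_1} b$ as $\phi \cdot \tilde z_{n_1}(b)$ plus an error lying in $\bigoplus_{k>0} \Uminus_{n_1+k} H\Uplus_k$; the error is again killed by $p_H$ for the same degree reason, while the identity $p_H(x h) = p_H(x) h$ gives $p_H(\phi \cdot \tilde z_{n_1}(b)) = (\phi, \tilde z_{n_1}(b))_H$. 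Consequently $p_H(\phi a b) \ne 0$, and since $\phi a b \in I$ this yields $p_H(I) \ne 0$.
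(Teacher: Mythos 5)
Your proof is correct, and it reorganises the argument along genuinely different lines from the paper's. The paper splits the claim into two reductions: it first shows that $p_{B^+}(I)\ne 0$ implies $p_H(I)\ne 0$, where $B^+=H\Uplus$ and $p_{B^+}=\epsilonminus\tensor\id_{B^+}$ (one application of strong non\dash degeneracy, for the pairing $H\Uplus\times\Uminus\to H$), and then proves $p_{B^+}(I)\ne 0$ by expanding an arbitrary nonzero element of $I$ over a graded basis of $\Uplus$, multiplying on the left by a single $v\in\Uplus$ chosen to pair nontrivially with the coefficient of a lowest\dash degree basis vector, and projecting modulo $B^+(N^+)^{m+1}$. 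You instead exploit the total $\mathbb{Z}$\dash grading of $\Dbl$ (legitimate here, since the Harish\dash Chandra pairing is only defined for graded doubles, so the defining ideals are graded) to pass to the graded ideal $I^\sharp$, and then sandwich a homogeneous element between $\phi$ and $b$ supplied by the two halves of strong non\dash degeneracy, landing directly in $H$. Both arguments rest on the same structural inputs --- that $[V^*,\Uminus_n]\subseteq\Uminus_{n-1}\tensor H$, so commuting $\Uplus$ past $\Uminus$ lowers both degrees equally, and the $H$\dash $H$\dash bimodule property of $p_H$ --- and each invokes strong non\dash degeneracy exactly once for each of the two extended pairings. Your version is more symmetric and avoids quotienting by powers of the augmentation ideal; the paper's avoids the preliminary passage to $I^\sharp$ and isolates the reusable intermediate statement $p_{B^+}(I)\ne 0$. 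Two points worth making explicit in your writeup: homogeneous $b\in\Uminus_{n_1-d}$ and $\phi\in\Uplus_{n_1}$ exist because the extended pairings vanish between components of different degrees, so non\dash degeneracy restricts to each graded piece; and at the end $\phi a b$ lies in $I^\sharp$ rather than in $I$, which suffices only because you have already established $p_H(I^\sharp)=p_H(I)$.
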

Note that the Proposition links
ideals in the algebra $\Dbl$ (in general with no Hopf algebra
structure) and ideals in the Hopf algebra $H$.
In particular, if $H$ is commutative or super\dash commutative, this
may allow one to define associated (super)varieties in
$\mathit{Spec}(H)$ for two\dash sided ideals in $\Dbl$ (see, e.g.,
\cite{Ginzb_prim}).  

\begin{proof}[Proof of Proposition \ref{good_ideals}] 
Denote the subalgebra $H\Uplus$ of $\Dbl$ by $B^+$, and denote by
$N^\pm$ the respective kernels of $\epsilon^\pm\colon U^\pm \to \field$.
Let $p_{B^+} = \epsilonminus \tensor \id_{B^+}$ be the
projection onto $B^+$. Let us show that if
$p_{B^+}(I)\ne 0$ for an ideal $I$ of $\Dbl$, then
$p_H(I)\ne 0$. Indeed, $p_{B^+}(I)\ne 0$ means that $I$
contains an element $\phi\in \phi'+N^-H\Uplus=\phi'+N^- \Dbl$ for some non\dash
zero $\phi'\in B^+$. 
By strong non\dash degeneracy, there is $ b \in
\Uminus$ such that  
$p_H(\phi' b )=( \phi', b )_H \ne 0$. Since $\phi' b $ differs from
$\phi b$
by an element from $N^-\Dbl$, which is in the kernel of the
Harish\dash Chandra projection $p_H$, one has $p_H(\phi b)\ne 0$; it
remains to note that $\phi b\in I$. 

We now have to check that if $I$ is a non\dash zero two\dash sided
ideal in $\Dbl$, then $\mathrm{pr}_{B^+}(I)\ne 0$.
By Theorem~\ref{th:HCmin}, $\Dbl$ is a minimal double, hence $\Uplus$
is graded by Corollary~\ref{triang_quot}. 
Choose a graded basis $\mathcal B$ of $\Uplus$.
Take a non\dash zero element in $I$ and write it in the form
$a_1\tensor u_1 + \dots + a_n \tensor u_n$, where $a_i$ are nonzero
elements of $\Uminus H$ and $u_i$ are in $\mathcal B$.
Without loss of generality, assume that $u_1, \dots, u_k$ ($1\le k \le
n$) are of lowest degree, say $m$, among all $u_i$. 
Using non\dash degeneracy, find an element $v\in \Uplus$ such that    
$(v,a_1)_H=h_1\ne 0$.  This means that $v a_1$ lies in 
$1\tensor h_1 \tensor 1 + N^-\tensor H \tensor \Uplus + \Uminus
\tensor H \tensor N^+$, therefore 
$v a_1 u_1$ is in $1\tensor h_1\tensor u_1 + N^-\Dbl + \Dbl
(N^+)^{m+1}$.
If $(v,a_i)_H$ is denoted by $h_i$ ($h_i$ may be zero for $i>1$), 
then 
\begin{equation*}
    \sum\nolimits_{1\le i \le n} v a_i u_i 
    \equiv
    \sum\nolimits_{1\le i \le k} 1\tensor h_i \tensor u_i 
    \quad
    (\mathrm{mod}\ N^-\Dbl + \Dbl (N^+)^{m+1}).
\end{equation*}
Projecting the element $\sum_i v a_i u_i$ of $I$ 
onto $B^+$, then projecting further onto the quotient $B^+/B^+ (N^+)^{m+1}
= H \tensor (\Uplus / (N^+)^{m+1})$ gives
$\sum_{1\le i \le k} h_i\tensor (u_i\ \mathrm{mod}\ (N^+)^{m+1})$. This
is not zero, since $h_1\ne 0$ and $u_1,\dots,u_k$ are linearly
independent modulo $(N^+)^{m+1}$. Thus, $\mathrm{pr}_{B^+}(I)\ne 0$ as
required. 
\end{proof} 

\subsection{Two examples of braided doubles}

We would like to finish this Section with two (counter)examples.
The first example shows that a minimal double may
have degenerate Harish\dash Chandra pairing.

\begin{example}
\label{ex:nilpotent}
Let $\field x$ be a one\dash dimensional module
over a Hopf algebra $H$, with trivial action $h\act
x=\epsilon(h)x$. By \ref{ex:trivial}, any quasi\dash Yetter\dash
Drinfeld structure on $\field x$ is given by
$\delta(x)=a\tensor x$, where 
\begin{itemize}
\item[(1)] $a$ is a central element in $H$. 
\end{itemize}
We will write $a$ instead of $\delta$. 
The free braided double $\D(\field x, a)$ has triangular decomposition 
$\field[x] \tensor H \tensor \field[y]$ where $y$ is the spanning
vector of the module dual to $\field x$. 
The quasibraided factorial is given by 
\begin{equation*}
       \widetilde{[n]}!_a (x^{\tensorpow n}) = n!\, (a\tensor
       x)^{\tensorpow n},
\end{equation*}
It follows that $\D(\field x, a)$ is a minimal double, if and only if 
\begin{itemize}
\item[(2)] $a\ne 0$ and $\field$ is of characteristic zero. 
\end{itemize}
Assume $\mathit{char}\ \field=0$. 
By Proposition \ref{prop:HCformula}, the Harish\dash 
Chandra pairing in $\D(\field x, a)$ is given by 
\begin{equation*}
       (y^{\tensorpow n},x^{\tensorpow n})_H = n!(y,x)^n a^n.
\end{equation*}
The Harish\dash Chandra pairing is degenerate, if and only if 
\begin{itemize}
\item[(3)] $a$ is a nilpotent element in $H$.
\end{itemize}
Thus, any  Hopf algebra in characteristic zero with a nonzero central
nilpotent element gives rise to a braided double which is minimal but
has degenerate Harish\dash Chandra pairing. 
\end{example}
\begin{remark}[Kaplansky's third conjecture]
\label{rem:3rd_conj}
A conjecture that a Hopf algebra with the above properties 
\emph{does not exist}, was number 3 in a list of ten conjectures on Hopf algebras
published by I.~Kaplansky in 1975. For some time, this third
conjecture has been known to be false. The survey \cite{So} contains
 historical remarks and a comprehensive account of progress made in
 relation to Kaplansky's conjectures, including counterexamples
 to the third conjecture. 

In order to complete Example~\ref{ex:nilpotent}, we give 
a new explicit counterexample to the third Kaplansky's conjecture
(a Hopf algebra of dimension $8$) below in Example~\ref{ex:3rd_conj}.   
\end{remark}

The second construction of a braided double shows
that the Hilbert series of the two  graded ``halves'', $U(V,\delta)$ and
$U(V^*,\delta)$, of a minimal double may be different (even in degree
$1$). In particular, $V$ which is a ``space of generators'' 
for the algebra $U(V,\delta)$, may not embed injectively in $U(V,\delta)$. 

\begin{example}
\label{ex:pathological}

Let $\mathbb{Z}_2 = \{1,s\}$ be the two\dash element group.  
We take $V$ to be a two\dash dimensional $\field
\mathbb{Z}_2$\dash module where
$s$ acts as a multiplication by $-1$. Let $v_1,v_2$ be a basis of $V$
and $f_1, f_2$ be the dual basis of $V^*$. 
Consider a $\field\mathbb{Z}_2$\dash valued pairing between $V^*$ and $V$, defined on
the bases as follows:
\begin{equation*}
\beta(f_1,v_1)=1, \quad \beta(f_1, v_2)=s, 
\quad \beta(f_2,v_1)=\beta(f_2, v_2)=0.
\end{equation*}
It is easy to see that the pairing $\beta$ satisfies the Yetter\dash
Drinfeld condition as in Theorem~\ref{thm_qYD}. Hence $V$ becomes a
quasi\dash Yetter\dash Drinfeld module over $\field\mathbb{Z}_2$. The quasicoactions
$\delta\colon V \to \field\mathbb{Z}_2 \tensor V$ and $\delta_r\colon V ^*\to
V^*\tensor \field\mathbb{Z}_2$ are given by 
\begin{equation*}
 \delta(v_1) = 1\tensor v_1, \quad \delta(v_2) = s\tensor v_1;
\qquad 
\delta_r(f_1)=f_1\tensor 1 + f_2\tensor s, \quad
\delta_r(f_2)=0.
\end{equation*}
It follows that
\begin{equation*}
  I(V,\delta)\cap V^{\tensorpow 1} = \ker \delta = 0, \qquad
  I(V^*,\delta)\cap V^{*\tensorpow 1} = \ker \delta_r = \field f_2.
\end{equation*}
The degree $1$ component in the graded algebra $U(V,\delta)$ has
dimension $2$, whereas the  degree $1$ component in $U(V^*,\delta)$ is
one\dash dimensional. It is not difficult to check that 
$f_1^{\tensorpow n}$ does not vanish under the 
quasibraided factorial $\widetilde{[n]}!_{\delta_r}$ in characteristic
$0$; therefore, $U(V^*,\delta)$ is isomorphic to the polynomial
algebra $\field[f_1]$. 

The algebra $U(V,\delta)$ is, however, not commutative. 
One may compute
\begin{align*}
   &\widetilde{[2]}!_\delta(v_1\tensor v_2) = 1\tensor v_1 \tensor s
   \tensor v_1 + s\tensor v_1 \tensor 1 \tensor v_1, 
\\
   &\widetilde{[2]}!_\delta(v_2\tensor v_1) = s\tensor v_1 \tensor 1
   \tensor v_1  - 1\tensor v_1 \tensor s\tensor v_1,
\end{align*}
hence (recall Theorem \ref{ker_qbfact})
the commutator $v_1\tensor v_2 - v_2 \tensor v_1$ is not in $I(V,\delta)$.
\end{example}

\section{Braided Heisenberg doubles}
\label{sect:bhd}

In the previous Section, we associated to every 
quasi\dash Yetter\dash Drinfeld module $(V,\delta)$, $\dim V<\infty$, over a Hopf
algebra $H$ a pair of two\dash sided graded $H$\dash invariant ideals
\begin{equation*}
I(V,\delta)\subset T(V), \quad I(V^*,\delta) \subset T(V^*)
\end{equation*}
which are the defining ideals in the minimal double 
$\RD(V,\delta)\cong U(V,\delta)\tensor H$ $\tensor U(V^*,\delta)$. 
However, the description of $I(V,\delta)$, $I(V^*,\delta)$ as kernels of quasibraided
factorials may be far from satisfactory: the factorials are
operators from $V^{\tensorpow n}$ to $(H\tensor V)^{\tensorpow n}$,
which possibly is an infinite\dash dimensional space. 
We have also seen that the algebras $U(V,\delta)$ and $U(V^*,\delta)$ 
may not look similar at all (Example \ref{ex:pathological}). 

The goal of this section is to analyse $I(V,\delta)$, $I(V^*,\delta)$ and the
minimal double $\RD(V,\delta)$ in the case when $(V,\delta)$ is a Yetter\dash
Drinfeld module over a Hopf algebra $H$ (without the ``quasi''
prefix). 

Write the $H$\dash coaction on $V$ as $\delta\colon v \mapsto v^{(-1)}\tensor v^{(0)}$. 
Let $\Psi\colon V\tensor V \to V \tensor V$ be the braiding
on the space $V$ induced by the Yetter\dash Drinfeld structure:
\begin{equation*}
   \Psi(v\tensor w) = v^{(-1)}\act w \tensor v^{(0)}.
\end{equation*}
We will show that $I(V,\delta)$ is the kernel of the \emph{Woronowicz
  symmetriser} $\Wor(\Psi)\in \End T(V)$. 
This is because our
factorial $\widetilde{[n]}!_\delta$ specialises, in the case of
Yetter\dash Drinfeld module, to the braided factorial of Majid, which
is an endomorphism of $V^{\tensorpow n}$. Braided doubles associated
  to Yetter\dash Drinfeld modules will never have pathological
  properties such as those demonstrated in Examples \ref{ex:nilpotent}
  and \ref{ex:pathological}.

\subsection{Free doubles $\D(V,\delta)$ and $\D(V,\Psi)$}

Let $V$ be a finite\dash dimensional Yetter\dash Drinfeld module
over $H$.
We denote by $\act$ the $H$\dash action on $V$, and by $v\mapsto
v^{(-1)}\tensor v^{(0)}\in H\tensor V$ the $H$\dash coaction on $V$.   
Recall that the free {braided} double associated to $V$ has
triangular decomposition  
$T(V^*)\lcprod H \rcprod  T(V)$, and the multiplication is defined by
the relations 
\begin{equation*}
    h\cdot v = (h_{(1)}\act v)\cdot h_{(2)}, \quad
    f\cdot h = h_{(1)} \cdot (f\ract h_{(2)}), \quad
        [f,v] = v^{(-1)} \langle f,v^{(0)} \rangle
\end{equation*}
for $v\in V$, $f\in V^*$, $h\in H$.

Note that these free {braided} doubles can be associated to any
braided space with biinvertible braiding; that is, the Hopf algebra $H$
does not have to be a part of the input. If $\Psi$ is a biinvertible
braiding on a finite\dash dimensional space $V$, then by~\ref{HPsi} $(V, \Psi)$  
is a Yetter\dash Drinfeld module over the  Hopf algebra
$H_\Psi$ which is a canonical minimal realisation of the braiding
$\Psi$. This yields a free braided double 
$\D(V,\Psi)\cong T(V)\tensor H_\Psi \tensor T(V^*)$ 
canonically associated to a braided space $(V,\Psi)$.

\subsection{Braided integers and braided derivatives}

The coaction $\delta\colon v \mapsto v^{(-1)}\tensor
v^{(0)}$ on $V$ has the property that ${v^{(-1)}}_{(1)}\tensor
{v^{(-1)}}_{(2)}\tensor v^{(0)} = {v^{(-1)}} \tensor
{v^{(0)}}^{(-1)}\tensor {v^{(0)}}^{(0)}$. 
Hence the formula for the quasibraided integer
$\widetilde{[n]}_\delta\colon V^{\tensorpow n} \to (H\tensor
V)^{\tensorpow n}$ from Definition \ref{def_qbfact} can be 
rewritten as
\begin{equation*}
    \widetilde{[n]}_\delta (v_1\tensor \dots \tensor v_n) = 
   \sum_{i=1}^n v_1\tensor \dots \tensor v_{i-1}\tensor
    v_i^{(-1)}\act (v_{i+1}\tensor \dots \tensor v_n) \tensor 
    {v_i^{(0)}}^{(-1)} \tensor {v_i^{(0)}}^{(0)}.
\end{equation*}
These operators can be expressed in terms of the
braiding $\Psi$ on $V$. We need the following 
\begin{definition}
Let $(V,\Psi)$ be a braided space. 
\emph{Braided integers} are operators
\begin{equation*}
   [n]_\Psi = \id_{V^{\tensorpow n}} + \Psi_{n-1,n} +
       \Psi_{n-1,n}\Psi_{n-2,n-1} + \dots + \Psi_{n-1,n}
       \Psi_{n-2,n-1} \dots  \Psi_{1,2} \quad \in \End(V^{\tensorpow n}).
\end{equation*}
We are using the leg notation, thus $\Psi_{i,i+1}$ stands for the
operator $\Psi$ applied at positions $i$, $i+1$ in the tensor
product. In particular, $[1]_\Psi=\id_V$ and
$[2]_\Psi=\id_{V^{\tensorpow 2}}+\Psi$. 

\emph{Braided factorials} are operators 
\begin{equation*}
 [n]!_\Psi = ([1]_\Psi \tensor \id^{\tensorpow n-1})
\circ ([2]_\Psi \tensor \id^{\tensorpow n-2}) \circ \dots
\circ [n]_\Psi \quad \in \End(V^{\tensorpow n}).
\end{equation*}
\end{definition}
Here is a new formula for
quasibraided integers and factorials on a Yetter\dash Drinfeld module:
\begin{lemma}
\label{lem:qb_braided}
Let $V$ be a Yetter\dash Drinfeld module over $H$, with coaction
$\delta$ inducing a braiding $\Psi$. Then
\begin{equation*}
         \widetilde{[n]}_\delta = (\id_V^{\tensorpow n-1}\tensor
         \delta)\circ [n]_\Psi, 
\qquad
        \widetilde{[n]}!_\delta = \delta^{\tensorpow n} \circ [n]!_\Psi.
\end{equation*}
Let $\pi\colon H\tensor V \to V$ be the projection $\pi=\epsilon
\tensor \id_V$, where $\epsilon$ is the counit of $H$. Then
\begin{equation*}
     [n]_\Psi = (\id_V^{\tensorpow n-1}\tensor
     \pi)\circ \widetilde{[n]}_\delta, 
\qquad
     [n]!_\Psi = \pi^{\tensorpow n}\circ \widetilde{[n]}!_\delta. 
\end{equation*}
\end{lemma}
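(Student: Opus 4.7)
The plan is to prove the first identity $\widetilde{[n]}_\delta = (\id_V^{\tensorpow n-1}\tensor \delta)\circ [n]_\Psi$ by direct computation, and deduce the other three as formal consequences. For the first identity, I would evaluate the right\dash hand side on $v_1\tensor\dots\tensor v_n$ summand by summand. The $i$\dash th term in $[n]_\Psi$ is $\Psi_{n-1,n}\Psi_{n-2,n-1}\cdots\Psi_{i,i+1}$, which iteratively moves $v_i$ past $v_{i+1},\dots,v_n$. Using $\Psi(v\tensor w) = v^{(-1)}\act w \tensor v^{(0)}$, the coaction axiom
\[
{v^{(-1)}}_{(1)}\tensor{v^{(-1)}}_{(2)}\tensor v^{(0)} = v^{(-1)}\tensor{v^{(0)}}^{(-1)}\tensor{v^{(0)}}^{(0)},
\]
and the fact that the $H$\dash action on $V^{\tensorpow n-i}$ factors through the iterated coproduct, a short induction on $n-i$ yields
\[
\Psi_{n-1,n}\cdots\Psi_{i,i+1}(v_1\tensor\dots\tensor v_n) = v_1\tensor\dots\tensor v_{i-1}\tensor v_i^{(-1)}\act(v_{i+1}\tensor\dots\tensor v_n)\tensor v_i^{(0)}.
\]
Applying $\id_V^{\tensorpow n-1}\tensor\delta$ to the last tensor slot then reproduces the $i$\dash th summand in the formula for $\widetilde{[n]}_\delta$ written at the start of this section, where the two Sweedler components of $v_i^{[-1]}$ become $v_i^{(-1)}$ and ${v_i^{(0)}}^{(-1)}$ via the displayed coaction axiom.

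For the second identity I would apply the first identity to each constituent in
\[
\widetilde{[n]}!_\delta = (\widetilde{[1]}_\delta\tensor\id_{H\tensor V}^{\tensorpow n-1})\circ\dots\circ\widetilde{[n]}_\delta,
\]
rewriting each factor $\widetilde{[k]}_\delta\tensor\id_{H\tensor V}^{\tensorpow n-k}$ as $(\id_V^{\tensorpow k-1}\tensor\delta\tensor\id_{H\tensor V}^{\tensorpow n-k})\circ([k]_\Psi\tensor\id_{H\tensor V}^{\tensorpow n-k})$. The key observation is that $[k]_\Psi$ acts only in the first $k$ slots (still of type $V$), whereas any $\delta$ inserted by an earlier factor $\widetilde{[\ell]}_\delta$ with $\ell>k$ sits at a slot with index $>k$; hence these two kinds of operators act on disjoint tensor positions and commute strictly. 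Sliding all $[k]_\Psi$ factors to the right of all $\delta$\dash insertions collects, on the right, the composition $([1]_\Psi\tensor\id_V^{\tensorpow n-1})\circ\dots\circ[n]_\Psi = [n]!_\Psi$ acting on $V^{\tensorpow n}$, and on the left a product of commuting single\dash slot $\delta$'s at positions $1,\dots,n$, which is $\delta^{\tensorpow n}$.

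The third and fourth identities follow by post\dash composing the first two with $\id_V^{\tensorpow n-1}\tensor\pi$ and $\pi^{\tensorpow n}$, respectively, and invoking the counit identity $\pi\circ\delta=\id_V$, i.e., $\epsilon(v^{(-1)})v^{(0)}=v$. The main (and essentially only) obstacle is notational: keeping careful track in the second step of which tensor slot each operator acts on, and confirming that the braidings $[k]_\Psi$ never interfere with previously inserted $\delta$'s. Once the first identity is in hand, no deeper input is required.
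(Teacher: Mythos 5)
Your proof is correct and follows essentially the same route as the paper: verify the identity for $\widetilde{[n]}_\delta$ termwise using the coaction axiom, deduce the factorial identity by sliding the disjointly supported $[k]_\Psi$'s past the inserted $\delta$'s, and obtain the $\pi$-versions from counitality $\pi\circ\delta=\id_V$. The only difference is that you spell out the slot-commutation argument for the factorial identity, which the paper dismisses as ``immediate from the definition.''
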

\begin{proof}
The formula for $\widetilde{[n]}_\delta$ follows from  $\Psi(v\tensor
w)=v^{(-1)}\act w \tensor v^{(0)}$. 
For example, $v_1^{(-1)}\act (v_2\tensor \dots \tensor v_n) \tensor 
    {v_1^{(0)}}^{(-1)} \tensor {v_1^{(0)}}^{(0)}$ rewrites as 
$\delta_n\Psi_{n-1,n}\dots \Psi_{23}\Psi_{12}(v_1\tensor \dots \tensor
    v_n)$. 
The formula $\widetilde{[n]}!_\delta$ is then immediate from the
definition of the quasibraided factorial. 
The rest is an immediate consequence of
counitality of the coaction, $\pi\circ\delta = \id_V$. 
\end{proof}

Braided integers and braided factorials, which we have just obtained
as a particular case of their quasibraided analogues,
were introduced by Majid 
\cite{Mcalculus} (a book reference is \cite[10.4]{Mbook}).
When the braided space is $1$\dash dimensional, 
the braiding $\Psi$ is multiplication by constant $q\in \field$, 
and braided integers are the well\dash known $q$\dash integers 
$[n]_q=\frac{1-q^n}{1-q^{\phantom n}}$. 
Another important form of the braided factorial is

\subsection{The Woronowicz symmetriser}

For a permutation $\sigma$ in the symmetric group $\Symm_n$,  let
$\sigma =(i_1\, i_1+1)\dots
(i_l\, i_l+1)$ be a 
reduced (i.e., shortest) decomposition of $\sigma$ into elementary 
transpositions. For a braiding $\Psi$ on $V$, define $\Psi_\sigma$ to be equal to 
the operator $\Psi_{i_1, i_1+1}\dots \Psi_{i_l, i_l+1}$ on
$V^{\tensorpow n}$ (this does not depend
on the choice of a reduced decomposition of $\sigma$ because $\Psi$
satisfies the braid equation). 
The above expression for the braided factorial expands into 
\begin{equation*}
         [n]!_\Psi = \sum_{\sigma\in \Symm_n} \Psi_\sigma. 
\end{equation*}
This endomorphism of $V^{\tensorpow n}$, associated to a braiding
$\Psi$, is called the \emph{Woronowicz symmetriser of degree $n$}. It
was introduced by Woronowicz in \cite{Wo} (as an ``antisymmetriser''
with $-\Psi$ instead of $\Psi$), and its factorial expression was
given by Majid \cite[10.4]{Mbook}. 

We will consider an endomorphism of the whole tensor algebra
$T(V)$, given on tensor powers by the braided factorials:
\begin{equation*}
         \Wor(\Psi) \colon T(V) \to T(V), \quad 
         \Wor(\Psi)|_{V^{\tensorpow n}} := [n]!_\Psi.
\end{equation*}
Note that $[0]!_\Psi=1$ and $[1]!_\Psi=\id_V$. 
We refer to $\Wor(\Psi)$ simply as the \emph{Woronowicz symmetriser}. 

In the following definition, we identify $V^*\tensor V^*$ with the
dual space to $V\tensor V$ in a standard way via $\langle f\tensor
g,v\tensor w\rangle =\langle f,v\rangle \langle g,w\rangle$. This
allows us to view $\Psi^*$ as a braiding on $V^*$. 
\begin{definition}
\label{def:NW}
Let $(V,\Psi)$ be a braided space.  
The graded algebras 
\begin{equation*}
\mathcal B(V,\Psi)=T(V)/\ker\Wor(\Psi), \quad 
\mathcal B(V^*,\Psi^*)=T(V^*)/\ker\Wor(\Psi^*),
\end{equation*} 
are called the \emph{Nichols\dash Woronowicz algebras} associated to $(V,\Psi)$. 
\end{definition}

We are now ready to give a description of minimal doubles
specific to the case Yetter\dash Drinfeld modules. 

\begin{theorem}[Doubles of Nichols-Woronowicz algebras]
\label{th:Nichols}
Let $(V,\delta)$ be a Yetter\dash Drinfeld module for a Hopf algebra $H$,
and let $\Psi$ be the induced braiding on $V$. 
Then
\begin{equation*}
               U(V,\delta) = \mathcal B(V,\Psi), \quad
               U(V^*,\delta) = \mathcal B(V^*,\Psi^*)
\end{equation*}
are Nichols\dash Woronowicz algebras. 
The minimal double $\RHD_V:=\RD(V,\delta)$ has triangular decomposition
\begin{equation*}
               \RHD_V = \mathcal B(V,\Psi) \lcprod H \rcprod
               \mathcal B(V^*,\Psi^*).
\end{equation*}
\end{theorem}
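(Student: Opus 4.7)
The plan is to identify the defining ideals $I(V,\delta)$ and $I(V^*,\delta)$ given abstractly by Theorem \ref{ker_qbfact} with the kernels of Woronowicz symmetrisers, using the key fact from Lemma \ref{lem:qb_braided} that in the Yetter-Drinfeld (not merely quasi-YD) case the quasibraided factorial \emph{factors} through the ordinary braided factorial.

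First I would address $U(V,\delta)$. By Theorem \ref{ker_qbfact},
$$I(V,\delta) = \bigoplus_{n\ge 1} \ker \widetilde{[n]}!_\delta.$$
Lemma \ref{lem:qb_braided} supplies two identities:
$$\widetilde{[n]}!_\delta = \delta^{\tensorpow n}\circ [n]!_\Psi, \qquad
[n]!_\Psi = \pi^{\tensorpow n}\circ \widetilde{[n]}!_\delta,$$
where $\pi = \epsilon\tensor\id_V$. The first identity gives $\ker [n]!_\Psi \subseteq \ker \widetilde{[n]}!_\delta$, and the second gives the reverse inclusion. Hence $\ker \widetilde{[n]}!_\delta = \ker [n]!_\Psi$ for every $n$, so
$$I(V,\delta) \;=\; \bigoplus_{n\ge 1}\ker [n]!_\Psi \;=\; \ker\Wor(\Psi)\cap T^{>0}(V),$$
and therefore $U(V,\delta) = T(V)/I(V,\delta) = \mathcal B(V,\Psi)$ by Definition~\ref{def:NW}.

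For $U(V^*,\delta)$ I would apply the right-handed analogue of the same argument, invoking Remark \ref{rem:righthanded}. The point to check is that when $\delta$ is a genuine $H$-coaction (not merely a quasicoaction) the right-handed quasicoaction $\delta_r\colon V^* \to V^*\tensor H$ defined by $\beta(f,v)=\langle f^{[0]},v\rangle f^{[1]}$ is in fact a right $H$-coaction, and together with the right $H$-action it makes $V^*$ into a right Yetter-Drinfeld module whose induced braiding is precisely $\Psi^*$. This is a direct linear-algebra verification using the duality between $V$ and $V^*$ and the bijectivity of the antipode $S$ (which appears in Example~\ref{ex:mod} governing the action of $H$ on $V^*$); the coaction property for $\delta_r$ is dual to the coaction property of $\delta$. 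Once this is established, the right-handed analogue of Lemma~\ref{lem:qb_braided} gives $\widetilde{[n]}!_{\delta_r} = [n]!_{\Psi^*}\circ\delta_r^{\tensorpow n}$ (up to reordering tensor factors) and the analogous projection formula, from which $\ker \widetilde{[n]}!_{\delta_r} = \ker [n]!_{\Psi^*}$, yielding $U(V^*,\delta) = \mathcal B(V^*,\Psi^*)$.

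The triangular decomposition $\RHD_V = \mathcal B(V,\Psi)\lcprod H \rcprod \mathcal B(V^*,\Psi^*)$ then follows from the definition of the minimal double in Section \ref{hierarchy}, combined with the two identifications above. The only genuine piece of work is the verification that $\delta_r$ is a coaction and that its induced braiding is $\Psi^*$; everything else is formal manipulation of the factorisation identities from Lemma \ref{lem:qb_braided}. I expect this dualisation step to be the main (though still routine) obstacle, since the formulas in the excerpt are written left-handedly and the sign conventions for $S$ versus $S^{-1}$ in the action/coaction on $V^*$ must be tracked carefully to ensure the braidings match.
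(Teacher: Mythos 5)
Your proposal is correct and follows essentially the same route as the paper: Theorem~\ref{ker_qbfact} identifies $I(V,\delta)$ with the kernels of the quasibraided factorials, and the two factorisation identities of Lemma~\ref{lem:qb_braided} give the two inclusions making $\ker\widetilde{[n]}!_\delta=\ker[n]!_\Psi$, with the dual statement handled via Remark~\ref{rem:righthanded}. The paper's proof is exactly this, stated more tersely; your extra care about verifying that $\delta_r$ is a genuine coaction inducing $\Psi^*$ is a detail the paper leaves implicit.
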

\begin{proof}
By Theorem \ref{ker_qbfact}, $I(V,\delta)=\oplus_n \ker
\widetilde{[n]}!_\delta$ where $\delta$ is the coaction on $V$. But by
Lemma \ref{lem:qb_braided}, $\ker\widetilde{[n]}!_\delta = \ker
[n]_\Psi$. Hence $I(V,\delta)=\ker\Wor(\Psi)$ as required. The formula
$I(V^*,\delta)=\ker\Wor(\Psi^*)$ can be obtained in a similar way, using
Remark \ref{rem:righthanded}.
\end{proof}
We will call $\RHD_V$ the \emph{braided Heisenberg double}
associated to the Yetter\dash Drinfeld module $V$. 
If $H$ is the trivial Hopf algebra, $H=\field$, then $\RHD_V$ is the
Heisenberg\dash Weyl algebra $S(V)\tensor S(V^*)$. 

Similarly to the
free doubles, the Hopf algebra $H$ does not need to be in the 
picture: to any braided space $(V, \Psi)$, of finite dimension and
with biinvertible braiding, is associated the minimal braided
Heisenberg double $\RHD_{(V,\Psi)}$, defined as in the Proposition with
$H=H_\Psi$. Observe that the ideals $I(V,\delta)$, $I(V^*,\delta)$ depend only on
the braiding $\Psi$ on $V$, not on the Hopf algebra $H$; and that it
automatically follows that $\ker\Wor(\Psi)$ is a two\dash sided ideal
in $T(V)$.

An important feature of the braided Heisenberg double $\RHD_V$ is that its
Harish\dash Chandra pairing is non\dash degenerate. In fact, a
stronger property holds, and $\RHD_V$ satisfies the conditions in
Proposition~\ref{good_ideals}: 

\begin{lemma}
The scalar\dash valued pairing $\epsilon((\cdot,\cdot)_H)$ between
$\mathcal B(V^*,\Psi^*)$ and $\mathcal B(V,\Psi)$ in $\RHD_V$ is
non\dash degenerate.
\end{lemma}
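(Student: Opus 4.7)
The plan is to reduce $\epsilon\bigl((\phi,b)_H\bigr)$ to the Woronowicz pairing, and then to observe that the defining ideals of the Nichols--Woronowicz algebras $\mathcal B(V,\Psi)$ and $\mathcal B(V^*,\Psi^*)$ are precisely the radicals of this pairing on each side.

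First, I would apply Proposition~\ref{prop:HCformula}, which writes $(\phi,b)_H = (\id_H\tensor\langle\phi,-\rangle)\,m_H\,\widetilde{[n]}!_\delta\, b$ for $\phi\in V^{*\tensorpow n}$, $b\in V^{\tensorpow n}$, and vanishes for unequal degrees. By Lemma~\ref{lem:qb_braided}, $\widetilde{[n]}!_\delta = \delta^{\tensorpow n}\circ [n]!_\Psi$. Since $\epsilon$ is multiplicative on $H$ and $(\epsilon\tensor\id_V)\circ\delta = \id_V$ by counitality, the composition $(\epsilon\tensor \id_{V^{\tensorpow n}})\circ m_H\circ \delta^{\tensorpow n}$ collapses to $\id_{V^{\tensorpow n}}$. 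Therefore
\begin{equation*}
   \epsilon\bigl((\phi,b)_H\bigr)=\langle \phi,\, [n]!_\Psi b\rangle,
\end{equation*}
and the cross-degree components vanish.

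Next, I want to recognise this as symmetric in $\phi$ and $b$. Under the standard identification $(V^{\tensorpow n})^*\cong V^{*\tensorpow n}$, the braiding $\Psi^*$ is the adjoint of $\Psi$; writing $[n]!_\Psi=\sum_{\sigma\in\Symm_n}\Psi_\sigma$ and using the fact that if $\sigma=s_{i_1}\cdots s_{i_l}$ is reduced then so is $\sigma^{-1}=s_{i_l}\cdots s_{i_1}$, the adjoint of $\Psi_\sigma$ is $\Psi^*_{\sigma^{-1}}$. Summing over $\Symm_n$ yields $\bigl([n]!_\Psi\bigr)^* = [n]!_{\Psi^*}$. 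Consequently
\begin{equation*}
   \epsilon\bigl((\phi,b)_H\bigr) = \langle \phi,\, [n]!_\Psi b\rangle = \langle [n]!_{\Psi^*}\phi,\, b\rangle.
\end{equation*}

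Finally, non-degeneracy follows formally. The right radical of this bilinear form on $V^{*\tensorpow n}\times V^{\tensorpow n}$ is $\ker [n]!_\Psi$, and the left radical is $\ker [n]!_{\Psi^*}$. By Theorem~\ref{th:Nichols}, these are precisely the degree-$n$ parts of the ideals defining $\mathcal B(V,\Psi)$ and $\mathcal B(V^*,\Psi^*)$ (i.e.\ $\ker \Wor(\Psi)$ and $\ker \Wor(\Psi^*)$). Hence the scalar pairing descends to a non-degenerate pairing between $\mathcal B(V^*,\Psi^*)_n$ and $\mathcal B(V,\Psi)_n$ in each degree, and is zero between components of unequal degree; this is exactly non-degeneracy of $\epsilon((\cdot,\cdot)_H)$ on the graded algebras. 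No serious obstacle is anticipated: the only point requiring care is the bookkeeping in identifying $([n]!_\Psi)^*$ with $[n]!_{\Psi^*}$, which is immediate from the reduced-decomposition description of the Woronowicz symmetriser.
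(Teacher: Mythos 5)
Your proof is correct and takes essentially the same route as the paper's: both reduce $\epsilon((\phi,b)_H)$ to $\langle \phi, [n]!_\Psi b\rangle$ via Proposition~\ref{prop:HCformula} and Lemma~\ref{lem:qb_braided}, and then conclude non-degeneracy because the kernels of the braided factorials are exactly what is quotiented out in the two Nichols--Woronowicz algebras. You merely make explicit the identification $([n]!_\Psi)^* = [n]!_{\Psi^*}$ needed to see that the left radical is $\ker\Wor(\Psi^*)$, a point the paper's proof leaves implicit.
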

\begin{proof}
Let $\phi\in V^{*\tensorpow n}$ and $b\in V^{\tensorpow n}$. 
One deduces from Proposition \ref{prop:HCformula} and Lemma
\ref{lem:qb_braided} that 
\begin{equation*}
        \epsilon((\phi,b)_H) = \langle \phi, [n]!_\Psi
        b \rangle. 
\end{equation*}
This is a non\dash degenerate pairing between $\mathcal B(V^*,\Psi^*)$
and  $\mathcal B(V,\Psi)$, because the kernels of the braided
factorials are quotiented out.
\end{proof}

The non\dash degenerate pairing between $\mathcal B(V^*,\Psi^*)$ and
$\mathcal B(V,\Psi)$ induces on each of these algebras a coassociative
coproduct. Via the associativity of multiplication in $\RHD_V$, or
otherwise, it can be shown that $\mathcal B(V^*,\Psi^*)$ and
$\mathcal B(V,\Psi)$ become dually paired \emph{braided Hopf
  algebras}. Braided Hopf algebras are a relatively
recent branch of the Hopf algebra theory and quantum algebra. We will
not give details here and refer the reader to \cite{Mcompanion}.
The braided coproduct on $v\in V\subset \mathcal B(V)$ is
$\underline\Delta v = v\tensor 1 + 1\tensor v$; the braided coproduct
is not multiplicative, but rather braided\dash multiplicative, and
this allows one to extend $\underline\Delta$ to the whole of $\mathcal
B(V,\Psi)$.

\begin{remark}
In fact, one can construct a braided Heisenberg double of any pair of
dually paired braided Hopf algebras. Such a construction should be
viewed as a quantum analogue of the algebra  $\mathcal H(A)$ referred 
to as the \emph{Heisenberg double} of $A$ \cite{STS,Lu}, where $A$ is
a finite\dash dimensional Hopf algebra.  
One has $\mathcal H(A) \cong A\tensor A^*$ with defining relation 
$\phi a = \langle \phi_{(1)}, a_{(2)} \rangle a_{(1)}
          \phi_{(2)}$ between $\phi\in A^*$ and $a\in A$.
The Heisenberg double produces canonical solutions to the \emph{pentagon equation} in the
same way as the Drinfeld double $D(A)$ works for the quantum Yang\dash Baxter
equation, see  \cite{BS} (based on earlier work by Woronowicz) and a
more algebraic exposition in \cite{Mi}. However, $\mathcal H(A)$ is
not a Hopf algebra; it is a simple (matrix) algebra \cite[9.4.3]{Mon}.
\end{remark}

\subsection{The Hopf algebra structure on $\mathcal B(V,\Psi)\lcprod H$ and
  on $H\rcprod \mathcal B(V^*,\Psi^*)$}

It follows from the theory of braided Hopf algebras that, while the
Nichols\dash Woronowicz algebras $\mathcal B(V,\Psi)$ and $\mathcal B(V^*,\Psi^*)$
are braided Hopf algebras, the algebras $\mathcal B(V)\lcprod H$ and
$H\rcprod \mathcal B(V^*)$ have the structure of ordinary Hopf
algebras. This structure is called \emph{biproduct bosonisation}, and
is due to Majid. We give the following proposition without proof;
it can be deduced from \cite{Menveloping}. 
\begin{proposition}
\label{prop:boson}
Let $V$ be a Yetter\dash Drinfeld module over a Hopf algebra $H$, with
braiding $\Psi$. The algebra  
$\mathcal B(V,\Psi)\lcprod H$ has the structure of an ordinary Hopf
algebra, which contains $\mathcal B(V,\Psi)$ as a subalgebra and $H$ as a
sub\dash Hopf algebra. 
Write a typical element of $\mathcal B(V,\Psi)\lcprod H$ as $\phi\cdot h$
where $\phi\in \mathcal B(V,\Psi)$ and $h\in H$. 
The coproduct on $v\cdot 1$, where $v\in V$, is defined by
\begin{equation*}
   \Delta (v\cdot 1) = (v\cdot 1)\tensor (1\cdot 1) + (1\cdot
   v^{(-1)})\tensor (v^{(0)}\cdot 1),
\end{equation*}
and extends to $\mathcal B(V,\Psi)\lcprod H$ by multiplicativity.
\qed   
\end{proposition}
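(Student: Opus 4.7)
My plan is to obtain this as an instance of the Radford--Majid \emph{bosonisation} (biproduct) theorem, which asserts that for any braided Hopf algebra $B$ in the Yetter--Drinfeld category $\YD{H}$, the smash product $B \lcprod H$ carries the structure of an ordinary Hopf algebra whose coproduct is
\begin{equation*}
   \Delta(b \cdot h) = (b_{(1)} \cdot b_{(2)}{}^{(-1)} h_{(1)}) \tensor (b_{(2)}{}^{(0)} \cdot h_{(2)}),
\end{equation*}
where $\underline{\Delta} b = b_{(1)} \tensor b_{(2)}$ is the braided coproduct on $B$ and $b \mapsto b^{(-1)} \tensor b^{(0)}$ is its $H$-coaction. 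Once I identify $B = \mathcal{B}(V,\Psi)$ as a braided Hopf algebra in $\YD{H}$, the proposition will follow by evaluating this formula on $v \cdot 1$.

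The first step is therefore to equip $\mathcal{B}(V,\Psi)$ with a braided Hopf algebra structure in $\YD{H}$. On the tensor algebra $T(V)$ one defines the braided coproduct $\underline{\Delta}$ by declaring all $v \in V$ primitive, $\underline{\Delta} v = v \tensor 1 + 1 \tensor v$, and extending it as a braided-multiplicative map (using $\Psi$ to exchange the two middle tensorands in products). The key claim is that $\ker \Wor(\Psi)$ is a braided Hopf ideal, so $\underline{\Delta}$ descends to $\mathcal{B}(V,\Psi)$. This is a standard feature of Nichols--Woronowicz algebras and can also be read off from the factorial description in Lemma~\ref{lem:qb_braided}: the Woronowicz symmetriser $[n]!_\Psi$ is precisely the unique $\underline{\Delta}$-compatible symmetrising endomorphism on $V^{\tensorpow n}$, so its kernel is automatically a braided biideal (and a Hopf ideal since the antipode is determined by the coproduct on the generators).

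Once $\mathcal{B}(V,\Psi)$ is placed in the right category, specialising the bosonisation formula to $b = v \in V$ and $h = 1$ is mechanical. Using $\underline{\Delta} v = v \tensor 1 + 1 \tensor v$, $1^{(-1)} \tensor 1^{(0)} = 1 \tensor 1$, and the fact that the $H$-coaction on $V \subset \mathcal{B}(V,\Psi)$ is $v \mapsto v^{(-1)} \tensor v^{(0)}$, the two bosonisation terms collapse to
\begin{equation*}
   \Delta(v \cdot 1) = (v \cdot 1) \tensor (1 \cdot 1) + (1 \cdot v^{(-1)}) \tensor (v^{(0)} \cdot 1),
\end{equation*}
as asserted. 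Extension by multiplicativity to the whole algebra is then automatic because $V$ together with $H$ generates $\mathcal{B}(V,\Psi) \lcprod H$.

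The main obstacle, which is really the content of the Radford--Majid theorem, is the verification that $\Delta$ so defined is an algebra homomorphism with respect to both the semidirect product relation $h v = (h_{(1)} \act v) h_{(2)}$ and the relations of $\mathcal{B}(V,\Psi)$. Compatibility with the former is precisely where the Yetter--Drinfeld condition on $V$ (here a genuine coaction, not merely a quasicoaction) is used; compatibility with the latter reduces, by the previous paragraph, to the braided-multiplicativity of $\underline{\Delta}$ on $\mathcal{B}(V,\Psi)$. I would then cite \cite{Menveloping} for the detailed verification that the resulting structure (together with the counit $\epsilon(\phi \cdot h) = \epsilon_{\mathcal{B}}(\phi)\epsilon_H(h)$ and the Radford antipode) satisfies the Hopf algebra axioms.
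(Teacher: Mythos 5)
Your proposal is correct and follows exactly the route the paper takes: the paper states Proposition~\ref{prop:boson} explicitly without proof, remarking only that it is an instance of Majid's biproduct bosonisation and can be deduced from \cite{Menveloping}. Your specialisation of the Radford--Majid coproduct formula to $b=v$, $h=1$ is accurate, and the remaining ingredients you invoke (that $\ker\Wor(\Psi)$ is a braided Hopf ideal, so $\mathcal B(V,\Psi)$ is a braided Hopf algebra in $\YD{H}$) are exactly the facts the paper itself defers to the literature.
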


Biproduct bosonisations, like the one given by the Proposition, are a
powerful tool in the structural theory of Hopf algebras. We will now
use a biproduct bosonisation to obtain the following

\begin{example}[Counterexample to the third Kaplansky's conjecture]
\label{ex:3rd_conj}
As we mentioned in Remark~\ref{rem:3rd_conj},
a counterexample to the third Kaplansky's conjecture is a
Hopf algebra over a field of characteristic $0$, which has a nonzero
central nilpotent element. 

It is easy to obtain a counterexample which is a braided Hopf algebra
(namely, a Nichols\dash Woronowicz algebra). Let $V$ be a finite\dash
dimensional vector space with braiding $\Psi(v\tensor
w)=-\tau(v\tensor w) = -w\tensor v$. The Woronowicz symmetriser
associated to $-\tau$ is the standard antisymmetriser: 
\begin{equation*}
[n]!_{-\tau} = \sum_{\sigma\in\Symm_n}(-1)^{\ell(\sigma)} \sigma
\quad\text{acting  on }V^{\tensorpow n}. 
\end{equation*}
It follows that the Nichols\dash Woronowicz
algebra of $(V,-\tau)$ is the exterior algebra $\wedge V$. 

Let $n=\dim V>0$. Consider the element $\omega$ spanning the top degree,
$\wedge^n V$, of the exterior algebra. Clearly, $\omega$ is a nonzero
nilpotent in $\wedge V$ as $\omega\wedge \omega=0$. 
If $\dim V$ is even, $\omega$ is central in $\wedge V$. 

To obtain an ordinary Hopf algebra, let us consider the biproduct
bosonisation of $\wedge V$ over the Hopf algebra $H_{-\tau}$, which is
the minimal Hopf algebra realising the braiding $-\tau$. According
to~\ref{HPsi}, $H_{-\tau}=\field \mathbb{Z}_2$. One can check that the
action of $\mathbb{Z}_2=\{1,s\}$ on $V$ is given by $s(v)=-v$, and the
coaction is $\delta(v)=s\tensor v$ for any $v\in V$.
If $n=\dim V$ is even, $s(\omega)=\omega$, hence $\omega$ commutes
with $s$ and is central nilpotent in the biproduct bosonisation
$\wedge V \lcprod \field\mathbb{Z}_2$ given by
Proposition~\ref{prop:boson}.  

The dimension of the Hopf algebra $\wedge V \lcprod
\field\mathbb{Z}_2$ is  $2^{n+1}$. The minimum is $8$ for $n=2$.
\end{example}

\begin{remark}
Note that in general, there is no canonical Hopf algebra structure on
$B\tensor H \tensor B'$ where $B$, $B'$ are dually paired braided Hopf
algebras. 
The difficulty in some of existing approaches to
``doubling'' a braided  Hopf algebra is that the double is expected to
be a Hopf algebra. It is in fact possible to ``double'' a
braided Hopf algebra in $\YD{H}$ if $H$ is a
self\dash dual Hopf algebra, such as $\field G$ where $G$ is a
finitely generated Abelian group. A principal example of such a double
which is a Hopf algebra
is the construction of the quantised universal enveloping algebra
$U_q(\mathfrak g)$ as a braided double of two Nichols\dash Woronowicz
algebras \cite{L, Menveloping}.
\end{remark}

\subsection{Mixed Yetter-Drinfeld structures and compatible braidings}

In the rest of this Section, we will consider minimal doubles
corresponding to  quasi\dash Yetter\dash Drinfeld modules $V$ of
the following special structure. 

Let $V$ be a finite\dash dimensional module over a Hopf algebra $H$,
with several coactions $\delta_1,\dots\delta_N$ of $H$ on $V$, 
each of them Yetter\dash Drinfeld compatible with the action. 
Let $t_1$, $t_2, \dots, t_N$ be scalar parameters. Put
\begin{equation*}
  \delta = t_1 \delta_1 + \dots + t_N \delta_N \colon V^*\tensor V \to
  H,
\end{equation*}
so that $\delta$ is a Yetter\dash Drinfeld quasicoaction on $V$
(in general, not a coaction).
We will refer to this as a \emph{mixed Yetter\dash Drinfeld
  structure}.
If $\Psi_k$ is the braiding on $V$ induced by the Yetter\dash Drinfeld
coaction $\delta_k$, the mixed Yetter\dash Drinfeld structure realises
the endomorphism 
\begin{equation*} 
    t_1 \Psi_1 + \dots + t_N \Psi_N \in \End(V\tensor V).
\end{equation*}
Note that the braidings $\Psi_k$ satisfy the compatibility
equation in Definition~\ref{def:compat}.

We will now give a description of the minimal {braided} double $\RD(V,\delta)$,
associated to a mixed Yetter\dash Drinfeld structure on $V$, in the
case when the coefficients $t_1, \dots, t_N$ are \emph{generic}.
This applies, for example, if $t_1, \dots, t_N$ are independent formal
parameters. If the $t_k$ are elements of $\field$, `generic' will mean
that $(t_1,\dots,t_N)$ are outside of a union of countably many
hyperplanes in $\field^N$; obviously, generic tuples are guaranteed to
exist only if $\field$ is uncountable. When we regard a braided
integer $[m]_\Psi$ as an operator on $V^{\tensorpow n}$, $n>m$, we
imply that it acts on the first $m$ tensor components in $V^{\tensorpow n}$.

\begin{proposition}[Minimal doubles for compatible braidings]
\label{prop:mixed}
For ge\-ne\-ric coefficients $t_1,\dots,${}$t_N$, the 
graded components $I_n=I_n(V,\delta)$ of the defining ideal $I(V,\delta)$ in
the minimal double $\RD(V,\delta)$ are given by
$I_0=I_1=0$, 
\begin{equation*}
   I_n = \bigcap \, \ker \bigl( 
 [2]_{\Psi_{k_2}} [3]_{\Psi_{k_3}} \dots [n]_{\Psi_{k_n}} \bigr),
\end{equation*}
for $n\ge 2$, where the intersection  on the right is over all sequences 
$\mathbf k=(k_2,\ldots,k_n)$ in $\{1,\dots$,$N\}^{n-1}$.
\end{proposition}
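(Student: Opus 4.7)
\medskip

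My plan is to derive the formula directly from Theorem~\ref{ker_qbfact}, which identifies $I_n(V,\delta)$ with $\ker\widetilde{[n]}!_\delta$. The key idea is that when $\delta=\sum_{k} t_k\delta_k$ is a linear combination of honest Yetter--Drinfeld coactions, the quasibraided factorial becomes a polynomial in the parameters $t_k$, and for generic parameters the kernel of a polynomial operator is the intersection of kernels of its homogeneous multilinear components. The task then reduces to computing those components explicitly.

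First, I would expand $\widetilde{[n]}!_\delta$ as a polynomial in $t_1,\dots,t_N$. Since the quasibraided integer $\widetilde{[m]}_\delta$ is linear in $\delta$, and since by Lemma~\ref{lem:qb_braided} one has $\widetilde{[m]}_{\delta_k}=(\id_V^{\tensorpow m-1}\tensor\delta_k)\circ[m]_{\Psi_k}$ for each honest coaction $\delta_k$, substituting $\delta=\sum_kt_k\delta_k$ into the definition of $\widetilde{[n]}!_\delta$ gives
\begin{equation*}
\widetilde{[n]}!_\delta \;=\; \sum_{\mathbf k\in\{1,\dots,N\}^n} t_{k_1}t_{k_2}\cdots t_{k_n}\, F^n_{\mathbf k},
\end{equation*}
where $F^n_{\mathbf k}$ is obtained by substituting $\delta_{k_i}$ at the $i$\dash th occurrence of $\delta$ in the factorial, namely
\begin{equation*}
F^n_{\mathbf k}\;=\;(\delta_{k_1}\tensor\id^{\tensorpow n-1}_{H\tensor V})\circ\bigl((\id_V\tensor\delta_{k_2})\circ[2]_{\Psi_{k_2}}\tensor\id^{\tensorpow n-2}_{H\tensor V}\bigr)\circ\cdots\circ(\id_V^{\tensorpow n-1}\tensor\delta_{k_n})\circ[n]_{\Psi_{k_n}}.
\end{equation*}

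The main step is to simplify $F^n_{\mathbf k}$. At each stage of the composition, the braided integer $[m]_{\Psi_{k_m}}$ acts on the first $m$ copies of $V$ only, while the coactions $\delta_{k_{m+1}},\dots,\delta_{k_n}$ already applied act on the later positions; hence $[m]_{\Psi_{k_m}}$ commutes with these coactions. Pushing all braided integers to the right of (i.e.\ applied before) every $\delta_{k_i}$, and observing that the composition of the $\delta$'s then assembles into the tensor product $\delta_{k_1}\tensor\cdots\tensor\delta_{k_n}$ applied positionwise, one obtains
\begin{equation*}
F^n_{\mathbf k}\;=\;(\delta_{k_1}\tensor\delta_{k_2}\tensor\cdots\tensor\delta_{k_n})\circ[2]_{\Psi_{k_2}}[3]_{\Psi_{k_3}}\cdots[n]_{\Psi_{k_n}},
\end{equation*}
where each braided integer acts on the appropriate initial segment of $V^{\tensorpow n}$. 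This rearrangement is the main technical obstacle; I would verify it by induction on $n$, or more cleanly by noting the leg\dash disjointness at each composition step.

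Finally, for generic $t_1,\dots,t_N$ the kernel of a polynomial\dash valued family of operators equals the intersection of the kernels of its monomial coefficients, so $\ker\widetilde{[n]}!_\delta=\bigcap_{\mathbf k}\ker F^n_{\mathbf k}$. Each coaction $\delta_k$ is counital, hence injective (its left inverse is $\epsilon\tensor\id_V$, as in Lemma~\ref{lem:qb_braided}), so $\delta_{k_1}\tensor\cdots\tensor\delta_{k_n}$ is injective and
\begin{equation*}
\ker F^n_{\mathbf k}\;=\;\ker\bigl([2]_{\Psi_{k_2}}[3]_{\Psi_{k_3}}\cdots[n]_{\Psi_{k_n}}\bigr),
\end{equation*}
which depends only on $(k_2,\dots,k_n)$. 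Taking the intersection over $\mathbf k$ yields the claimed formula for $I_n$, and the cases $I_0=0$, $I_1=0$ follow respectively from $\widetilde{[0]}!_\delta=1$ and from injectivity of $\delta=\sum t_k\delta_k$ for generic $t$ (any single $\delta_k$ already being injective).
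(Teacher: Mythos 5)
Your overall strategy --- expand $\widetilde{[n]}!_\delta$ as a polynomial in $t_1,\dots,t_N$, identify the coefficients, and invoke genericity --- is close in spirit to the paper's, and your rearrangement identity $F^n_{\mathbf k}=(\delta_{k_1}\tensor\cdots\tensor\delta_{k_n})\circ[2]_{\Psi_{k_2}}\cdots[n]_{\Psi_{k_n}}$ is correct (it is the multi-braiding analogue of Lemma~\ref{lem:qb_braided} and follows from the leg-disjointness you describe). However, there is a genuine gap at the coefficient-extraction step. The parameters $t_k$ commute, so two distinct sequences $\mathbf k,\mathbf k'\in\{1,\dots,N\}^n$ with the same multiset of entries produce the \emph{same} monomial $t_{k_1}\cdots t_{k_n}$. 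The coefficient of $t_1^{a_1}\cdots t_N^{a_N}$ in $\widetilde{[n]}!_\delta$ is therefore the sum of $F^n_{\mathbf k}$ over all sequences of that type, not an individual $F^n_{\mathbf k}$. Even granting that the generic kernel of a polynomial family equals the common kernel of its monomial coefficients, your argument yields only
\begin{equation*}
I_n=\bigcap_{a}\ker\Bigl(\sum_{\mathbf k\text{ of type }a}F^n_{\mathbf k}\Bigr)\ \supseteq\ \bigcap_{\mathbf k}\ker F^n_{\mathbf k},
\end{equation*}
and the kernel of a sum of operators is in general strictly larger than the intersection of their kernels. For $n\ge 3$ and $N\ge 2$ the sequences $(k_2,\dots,k_n)$ with non-constant entries are reachable only through such sums, so the hard inclusion $I_n\subseteq\bigcap_{\mathbf k}\ker\bigl([2]_{\Psi_{k_2}}\cdots[n]_{\Psi_{k_n}}\bigr)$ is not established.

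The paper sidesteps this by working one degree at a time rather than with the full factorial: by the proof of Theorem~\ref{ker_qbfact}, $I_n=\{b:\widetilde{[n]}_\delta b\in I_{n-1}\tensor H\tensor V\}$, and $\widetilde{[n]}_\delta=\sum_k t_k(\id_V^{\tensorpow n-1}\tensor\delta_k)\circ[n]_{\Psi_k}$ is \emph{linear} in $t$, so the $N$ monomials $t_1,\dots,t_N$ are distinct and genericity isolates each summand individually; injectivity of $\delta_k$ then gives $[n]_{\Psi_k}b\in I_{n-1}\tensor V$ for every $k$, and induction on $n$ assembles the index sequence $(k_2,\dots,k_n)$ one entry at a time. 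Your proof can be repaired by replacing the global expansion of $\widetilde{[n]}!_\delta$ with this degree-by-degree recursion; the rest of your analysis (injectivity of the counital coactions, the computation of the kernels, and the cases $n=0,1$) then goes through.
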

\begin{remark}
If the parameters $t_k$ are not generic, the ideals $I(V,\delta)$, $I(V^*,\delta)$
may only be bigger than in the generic case.
\end{remark}
\begin{proof}[Proof of Proposition \ref{prop:mixed}]
It is easy to see that the quasibraided integer
$\widetilde{[n]}_\delta$ is given by 
$\sum_k t_k (\id_V^{\tensorpow n-1}\tensor \delta_k)[n]_{\Psi_k}$. 
By Theorem \ref{ker_qbfact} and its proof, $I_n$ 
consists of all $\in V^{\tensorpow n}$ such that 
$\widetilde{[n]}_\delta b$ lies in $I_{n-1}\tensor H \tensor V$;
for generic $t_k$, this means $[n]_{\Psi_k}b \in I_{n-1}\tensor V$
for every $k$. Trivially, $I_0=0$ and $[1]_{\Psi_k}=\id_V$.
The Proposition now follows by induction.
\end{proof}

\subsection{Deformation of the Nichols-Woronowicz algebra}

We will now consider the example of a pair of compatible braidings, 
provided by Lemma~\ref{lem:cocomm}, 
and obtain an interesting 
(non-flat) deformation the Nichols\dash Woronowicz algebra. 

Let $(V,\Psi)$ be a finite\dash dimensional braided space with biinvertible
braiding. Assume that $\Psi$ is compatible with the trivial braiding
$\tau$ (this is a homogeneous quadratic constraint on $\Psi$). 
Then $(V, \Psi)$ can 
be realised as a Yetter\dash Drinfeld module over a cocommutative Hopf
algebra $H_\Psi$, see~\ref{cocomm}. Let $v\mapsto v^{(-1)} \tensor v^{(0)}$ be the
coaction of $H_\Psi$ on $V$. 

By Lemma~\ref{lem:cocomm}, there is a Yetter\dash Drinfeld
quasicoaction on $V$ given by $\delta_{\Psi,t\tau}(v)=v^{(-1)}
\tensor v^{(0)} + t \cdot 1\tensor v$. 
The free double $\D(V,\delta_{\Psi,t\tau})$ has commutation relation 
$[f,v]=v^{(-1)}\,\langle f,v^{(0)}\rangle + t\,\langle f,v\rangle\cdot 1$ between $f\in V^*$ and
$v\in V$. The introduction of the extra term $t\,\langle f,v\rangle\cdot 1$ makes the
maximal triangular ideal smaller:  

\begin{proposition}[Deformed braided Heisenberg double]
\label{prop:deformed}
For $t$ generic,
\begin{equation*}
      \RD(V,\delta_{\Psi,t\tau}) \cong 
{\mathcal B}_\tau(V,\Psi) \lcprod H_\Psi
      \lcprod {\mathcal B}_\tau(V^*,\Psi^*),
\end{equation*}
where the graded algebra ${\mathcal B}_\tau(V,\Psi)$ does not depend on $t$,
and its homogeneous components are
${\mathcal B}_\tau(V,\Psi)_n = V^{\tensorpow n} /  \ker
[n]!_{\Psi,\tau}$.  
The ``deformed braided factorial'' $[n]!_{\Psi,\tau}$ can be written as
\begin{equation*}
 [n]!_{\Psi,\tau} = ([2]_\Psi+u_2 [2]_\tau)\dots  ([n-1]_\Psi+u_{n-1}
 [n-1]_\tau) ([n]_\Psi+u_n [n]_\tau)
\end{equation*}
with independent formal parameters $u_k$. 
\end{proposition}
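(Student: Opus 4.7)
The plan is to derive the proposition as a specialisation of Proposition \ref{prop:mixed}, followed by a combinatorial reinterpretation of the resulting intersection of kernels.

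First, I would note that by Lemma \ref{lem:cocomm}, the space $V$ carries two Yetter\dash Drinfeld coactions over the cocommutative Hopf algebra $H_\Psi$: the coaction $\delta_\Psi(v) = v^{(-1)} \tensor v^{(0)}$ inducing $\Psi$, and the trivial coaction $\delta_\tau(v) = 1 \tensor v$ inducing $\tau$. Hence $\delta_{\Psi,t\tau} = \delta_\Psi + t\,\delta_\tau$ is a mixed Yetter\dash Drinfeld quasicoaction in the sense of Proposition \ref{prop:mixed}, with $N=2$ and $(t_1,t_2) = (1,t)$. For $t$ generic, Proposition \ref{prop:mixed} applies and gives
$$
I_n(V,\delta_{\Psi,t\tau}) = \bigcap_{(k_2,\ldots,k_n) \in \{\Psi,\tau\}^{n-1}} \ker\bigl([2]_{k_2}[3]_{k_3}\cdots[n]_{k_n}\bigr),
$$
a description that manifestly does not depend on the (generic) value of $t$; this alone yields the $t$-independence of the graded algebra ${\mathcal B}_\tau(V,\Psi)$.

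Next, I would match this intersection with $\ker [n]!_{\Psi,\tau}$ by a polynomial-expansion argument. Expanding $\prod_{m=2}^n([m]_\Psi + u_m[m]_\tau)$ in the independent indeterminates $u_2,\ldots,u_n$ yields a sum of $2^{n-1}$ monomials indexed by subsets $S\subseteq\{2,\ldots,n\}$: the monomial for $S$ is $\bigl(\prod_{m\in S}u_m\bigr)\,[2]_{k_2}\cdots[n]_{k_n}$ with $k_m=\tau$ iff $m\in S$. Viewing $[n]!_{\Psi,\tau}$ as a $\field[u_2,\ldots,u_n]$-linear operator on $V^{\tensorpow n}\tensor_\field\field[u_2,\ldots,u_n]$, a vector $b\in V^{\tensorpow n}$ lies in its kernel precisely when the coefficient of each monomial annihilates $b$, which is exactly the intersection above. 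Hence ${\mathcal B}_\tau(V,\Psi)_n = V^{\tensorpow n}/\ker[n]!_{\Psi,\tau}$ as claimed.

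Finally, the analogous description of ${\mathcal B}_\tau(V^*,\Psi^*)$ follows by applying the right-handed version of Proposition \ref{prop:mixed}, via Remark \ref{rem:righthanded}, to the induced right quasicoaction $\delta_r$ on $V^*$, using that the braiding dual to $\tau$ is again the trivial braiding. Combining the two halves with the triangular decomposition $\RD(V,\delta) = U(V,\delta) \lcprod H_\Psi \rcprod U(V^*,\delta)$ of Theorem B yields the stated factorisation. The substantive input is Proposition \ref{prop:mixed}; the only mildly delicate point is the $t$-independence of $I_n$, which is automatic from the generic-parameter conclusion of that proposition, and the rest is bookkeeping in the polynomial expansion.
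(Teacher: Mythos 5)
Your proposal is correct and follows essentially the same route as the paper: the paper's proof also deduces the result immediately from Proposition \ref{prop:mixed} together with the identity $\ker[n]!_{\Psi,\tau}=\bigcap\ker\bigl([2]_{\Psi_{k_2}}\cdots[n]_{\Psi_{k_n}}\bigr)$, which the paper states as ``obvious'' and you justify by the same expansion in the formal parameters $u_m$. Your explicit checks (the specialisation $(t_1,t_2)=(1,t)$ being generic and the right-handed version for $V^*$) are details the paper leaves implicit, so there is nothing to add.
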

We will call ${\mathcal B}_\tau(V,\Psi)$ the \emph{deformed Nichols\dash
Woronowicz algebra}. Note that this construction works only for $\Psi$
satisfying the quadratic equation of compatibility with the trivial
braiding. 
Clearly there is a surjective map ${\mathcal B}_\tau(V,\Psi) \twoheadrightarrow \mathcal B(V,\Psi)$
onto the Nichols\dash Woronowicz algebra of $(V,\Psi)$. 
\begin{proof}[Proof of Proposition \ref{prop:deformed}]
The result follows immediately from Proposition \ref{prop:mixed},
because obviously 
\begin{equation*}
  \ker[n]!_{\Psi,\tau} = \bigcap\,
  [2]_{\Psi_{k_2}}\dots [n-1]_{\Psi_{k_{n-1}}}[n]_{\Psi_{k_n}},
\end{equation*}
where the intersection on the right is over all $k_i\in \{1,2\}$, 
$\Psi_1:=\Psi$ and $\Psi_2:=\tau$.  
\end{proof}
\subsection{Minimal quadratic doubles associated to compatible braidings}

We will finish the Section with ``quadratic versions'' of 
all {braided}
double constructions presented here. Recall the definition of the
minimal quadratic double $\D_{\mathit{quad}}(V,\delta)$ from Section~\ref{sect:brdoubles}. 

\begin{lemma}
\label{lem:quadcovers}
Let $V$ be a finite\dash dimensional vector space.
 
(1) If $\Psi$ is a biinvertible braiding on $V$,  the minimal
    quadratic double associated to $(V,\Psi)$ is 
    ${\mathcal B}_{\mathit{quad}}(V,\Psi) \tensor H_\Psi \tensor
    {\mathcal B}_{\mathit{quad}}(V^*,\Psi^*)$. 
Here
\begin{equation*}
{\mathcal B}_{\mathit{quad}}(V,\Psi) = T(V)/\lgen \ \ker
    (\id_{V^{\tensorpow 2}}+\Psi) \ \rgen
\end{equation*}
is the quadratic cover of the Nichols\dash Woronowicz algebra
$\mathcal B(V,\Psi)$. 

(2) If $\Psi_1, \dots, \Psi_N$ are braidings on $V$ arising from
    Yetter\dash Drinfeld coactions over the same Hopf algebra $H$, 
    the corresponding minimal quadratic double  
\begin{equation*}
T(V)/I_{\mathit{quad}}(V,\delta)\tensor H \tensor
    T(V^*)/I^*_{\mathit{quad}}(V,\delta)
\end{equation*}
has
\begin{equation*}
   I_{\mathit{quad}}(V,\delta) = \lgen \ \cap_{k=1}^N \ker(\id_{V^{\tensorpow
   2}}+\Psi_k) \ \rgen. 
\end{equation*}

(3) If $\Psi$ is a braiding on $V$ compatible with $\tau$, 
the above construction with $\Psi_1=\Psi$, $\Psi_2=\tau$
leads to the deformation 
\begin{equation*}
     {\mathcal B_{\mathit{quad}}}_\tau(V,\Psi) = T(V) / \lgen\  \ker(\id_{V^{\tensorpow
   2}}+\Psi) \cap \wedge^2 V \ \rgen
\end{equation*}
of the algebra ${\mathcal B}_{\mathit{quad}}(V,\Psi)$. 
Here $\wedge^2 V$ is the space of skew\dash symmetric tensors in
$V^{\tensorpow 2}$ which is the kernel of $\id_{V^{\tensorpow 2}}+\tau$.
\end{lemma}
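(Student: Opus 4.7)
The plan is to reduce all three parts to the identification of the degree-$2$ graded component of the defining ideal of the minimal double, via the relation
\begin{equation*}
   I_{\mathit{quad}}(V,\delta) = \lgen I(V,\delta)\cap V^{\tensorpow 2}\rgen,
   \qquad
   I^*_{\mathit{quad}}(V,\delta) = \lgen I(V^*,\delta)\cap V^{*\tensorpow 2}\rgen,
\end{equation*}
recorded at the end of Section~\ref{hierarchy}, together with the explicit formulas for $I(V,\delta)_2$ obtained earlier.

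First I would dispatch part (2), from which the other two parts follow by specialisation. Since $(V,\delta)$ with $\delta = \sum_k t_k \delta_k$ is a mixed Yetter--Drinfeld structure over $H$ in the sense of Proposition~\ref{prop:mixed}, applying that proposition at $n=2$ gives
\begin{equation*}
    I(V,\delta)\cap V^{\tensorpow 2}
= \bigcap\nolimits_{k=1}^N \ker[2]_{\Psi_k}
= \bigcap\nolimits_{k=1}^N \ker(\id_{V^{\tensorpow 2}} + \Psi_k),
\end{equation*}
since $[2]_\Psi = \id + \Psi$. This immediately yields the stated form of $I_{\mathit{quad}}(V,\delta)$. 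The analogous formula for $I^*_{\mathit{quad}}(V,\delta)$ follows by applying the right-handed counterpart of Proposition~\ref{prop:mixed}, obtained from the right-handed quasibraided factorials $\widetilde{[n]}!_{\delta_r}$ introduced in Remark~\ref{rem:righthanded}; the relevant braidings on $V^*$ are $\Psi_k^*$, so one gets $\bigcap_k \ker(\id + \Psi_k^*)$.

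Part (1) is then the specialisation $N=1$, $\Psi_1 = \Psi$, realised canonically as a Yetter--Drinfeld module over $H_\Psi$ as in~\ref{HPsi}; the resulting quotient $T(V)/\lgen\ker(\id + \Psi)\rgen$ is by definition $\mathcal B_{\mathit{quad}}(V,\Psi)$, and dually $T(V^*)/\lgen\ker(\id + \Psi^*)\rgen = \mathcal B_{\mathit{quad}}(V^*,\Psi^*)$. For part (3), the hypothesis that $\Psi$ is compatible with $\tau$ means precisely that Proposition~\ref{prop:mixed} applies with $\Psi_1 = \Psi$ and $\Psi_2 = \tau$ (compare with the setup of Proposition~\ref{prop:deformed}); intersecting gives
\begin{equation*}
\ker(\id + \Psi) \cap \ker(\id + \tau) = \ker(\id + \Psi) \cap \wedge^2 V,
\end{equation*}
where the second equality is the definition of skew-symmetric tensors as $\ker(\id + \tau)$.

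The only subtlety worth flagging is the \emph{generic} coefficients hypothesis of Proposition~\ref{prop:mixed}. For part (1) the single scalar can be absorbed into a rescaling of $\delta$, so genericity is automatic; for part (3) the pair $(t_1,t_2)$ is assumed generic in the same sense as in Proposition~\ref{prop:deformed}. There is no deeper obstacle: once the quadratic truncation formula for $I_{\mathit{quad}}$ is combined with the already proved Proposition~\ref{prop:mixed}, the rest of the argument is bookkeeping.
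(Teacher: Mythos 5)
Your proposal is correct and follows essentially the same route as the paper: the paper's own proof simply says that all three statements are obtained by keeping only the quadratic relations (via $I_{\mathit{quad}}=\lgen I\cap V^{\tensorpow 2}\rgen$) in the minimal doubles already constructed in that Section, which is exactly your reduction to the $n=2$ case of Proposition~\ref{prop:mixed} and its specialisations. Your explicit flagging of the genericity hypothesis and its automatic satisfaction for $N=1$ is a welcome but inessential elaboration of the same argument.
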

\begin{proof}
All statements are obtained by leaving only quadratic relations in
$T(V)$, $T(V^*)$ in minimal doubles constructed in this Section.
\end{proof}


\section{The category of braided doubles over $H$. Perfect subquotients}
\label{sect:perfect}

\subsection{Morphisms between braided doubles over $H$}

We will now describe the category $\mathcal{D}_H$, whose objects are
braided doubles over a Hopf algebra $H$. 
As in \ref{hierarchy},
for a finite\dash dimensional
quasi\dash Yetter\dash Drinfeld 
module $(V,\delta)$ over $H$ denote  the set of $(V,\delta)$\dash braided doubles by
$\mathcal{D}(V,\delta)$. We have 
\begin{equation*}
     \mathrm{Ob}\ \mathcal{D}_H = \bigcup_{(V,\delta)\in \Ob\,\QYD{H}}  \mathcal{D}(V,\delta).
\end{equation*}

Before we define morphisms in $\mathcal{D}_H$, let us introduce a
small bit of notation. 
If $\mu \colon V\to W$ is a map of vector spaces, denote by 
$T(\mu)\colon T(V)\to T(W)$ the linear map which coincides with
$\mu^{\tensor n}$ on $V^{\tensorpow n}$. This is an algebra
homomorphism. 
If $I\subset T(V)$ and $J\subset T(W)$ are two\dash sided ideals such
that $\mu(I)\subseteq J$, there is an algebra homomorphism
$\overline{T(\mu)}\colon T(V)/I \to T(W)/J$.  
We say that this map is \emph{induced by $\mu$}. 

We define morphisms in 
$\mathcal{D}_H$ in a natural way: they must be triangular maps between
algebras with 
triangular decomposition over $H$ (see \ref{triang_simple}), and
should come from the maps between the generating quasi\dash YD modules. 
\begin{definition}
Let $(V,\delta_V)$, $(W,\delta_W)$ be two finite\dash dimensional quasi\dash Yetter\dash
Drinfeld modules over a Hopf algebra $H$, and let 
\begin{equation*}
 A = T(V)/I^-\lcprod H \rcprod T(V^*)/I^+, 
\qquad
B = T(W)/J^-\lcprod H \rcprod T(W^*)/J^+
\end{equation*}
be a $(V,\delta_V)$- and a $(W,\delta_W)$\dash braided double, respectively. 
Morphisms between $A$ and $B$ are algebra maps
\begin{equation*}
\overline{T(\mu)} \tensor \id_H \tensor \overline{T(\nu^*)}
\colon A \to B,
\end{equation*}
induced by a pair of $H$\dash module maps $\mu \colon V \to W$, $\nu \colon W
\to V$. 
\end{definition}
\begin{lemma} 
\label{lem:subquot}
Let $(V,\delta_V)$, $(W,\delta_W)$ be finite\dash dimensional quasi\dash YD
  modules over $H$. 

1. If a pair $V \xrightarrow{\mu} W \xrightarrow{\nu} V$ of $H$\dash module
   maps induces a morphism between a $(V,\delta_V)$\dash braided double and a
   $(W, \delta_W)$\dash braided double, then 
$\delta_V = (\id_H\tensor \nu)\circ \delta_W \circ \mu$. 

2. Any $H$\dash module maps $V \xrightarrow{\mu} W \xrightarrow{\nu}
   V$ satisfying 1.\ induce a morphism $\D(V,\delta_V)\to \D(W,\delta_W)$ between free
   braided doubles. 
\end{lemma}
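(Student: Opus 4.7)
Both parts reduce to tracking a single equation, namely the commutator relation $[f,v] = \beta(f,v) \in H$ that encodes the quasi-YD structure. Let me translate between $\beta$ and $\delta$ once and for all: with dual bases $\{f^a\}$, $\{v_a\}$ of $V^*$, $V$, one has $\beta_V(f,v) = v^{[-1]}\langle f, v^{[0]}\rangle$, so $\delta_V(v) = \beta_V(f^a,v)\tensor v_a$. The analogous formula holds for $W$.

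For Part~1, I would extract the necessary condition by evaluating the morphism on the defining relation $[f,v] = \beta_V(f,v)$ for $f \in V^*$, $v \in V$. Since the morphism fixes $H$, the relation must still hold after pushing forward to $B$: that is, $[\nu^*(f), \mu(v)] = \beta_V(f,v)$ must coincide with the $\delta_W$-commutator $\beta_W(\nu^*(f),\mu(v)) = \mu(v)^{[-1]}\langle \nu^*(f), \mu(v)^{[0]}\rangle = \mu(v)^{[-1]}\langle f, \nu(\mu(v)^{[0]})\rangle$. Substituting $f = f^a$ and tensoring with $v_a$ (using the injectivity of $H \hookrightarrow B$ granted by triangular decomposition) gives $\delta_V(v) = \mu(v)^{[-1]}\tensor \nu(\mu(v)^{[0]}) = (\id_H \tensor \nu)\circ \delta_W \circ \mu(v)$, which is precisely the claimed identity.

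For Part~2, given $\mu,\nu$ satisfying this identity, I would check that the linear map $T(\mu)\tensor \id_H \tensor T(\nu^*)\colon T(V)\tensor H \tensor T(V^*) \to T(W)\tensor H \tensor T(W^*)$ respects the three families of defining relations of a free braided double. The two semidirect-product relations $hv = (h_{(1)}\act v)h_{(2)}$ and $fh = h_{(1)}(f\ract h_{(2)})$ transport correctly because $\mu$ is an $H$-module map and $\nu^*$ is the dual of an $H$-module map (hence right $H$-linear). The remaining commutator relation $[f,v] = \beta_V(f,v)$ transports to $[\nu^*(f),\mu(v)] = \beta_W(\nu^*(f),\mu(v))$ in $\D(W,\delta_W)$; equality of this with $\beta_V(f,v)$ in $H$ is exactly the compatibility equation $\delta_V = (\id_H \tensor \nu)\circ \delta_W \circ \mu$, by the dual-basis manipulation above run in reverse. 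Thus the map descends to the required algebra morphism.

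The main thing to be careful about, rather than a genuine obstacle, is the bookkeeping between the two equivalent languages (the pairing $\beta$ on $V^*\tensor V$ versus the quasicoaction $\delta$ on $V$); once this duality is fixed, both implications are a direct verification on generators.
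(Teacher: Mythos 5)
Your proof is correct and follows essentially the same route as the paper's: both parts are verified on generators by comparing the commutator relation $[f,v]=\beta_V(f,v)$ with its image $[\nu^*(f),\mu(v)]=\beta_W(\nu^*(f),\mu(v))$, translated via dual bases into the identity $\delta_V=(\id_H\tensor\nu)\circ\delta_W\circ\mu$, while the semidirect-product relations are handled by $H$-equivariance of $\mu$ and $\nu^*$. Your explicit appeal to the injectivity of $H\hookrightarrow B$ coming from the triangular decomposition is a point the paper leaves implicit, but it is the same argument.
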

\begin{proof}
  1. Take $v\in V$ and $f\in V^*$. The commutator $[f,v]$ in any
     $(V,\delta_V)$\dash braided double is equal to 
$v^{[-1]}\langle f,v^{[0]}\rangle$. As $\mu$, $\nu$ induce
     a morphism of braided doubles, the same commutator (an element of
     $H$) must be equal to 
\begin{equation*}
       \mu(v)^{[-1]}\, \langle \nu^*(f),\mu(v)^{[0]} \rangle_W
     = \mu(v)^{[-1]}\, \langle f,\nu(\mu(v)^{[0]}) \rangle_V. 
\end{equation*}
Here, $\mu(v)^{[-1]}\tensor \mu(v)^{[0]}$ is $\delta_W(\mu(v))$;
the pairing between $W^*$ and $W$ is denoted by $\langle
\cdot,\cdot\rangle_W$, the  pairing between  
$V^*$ and $V$ is denoted by $\langle
\cdot,\cdot\rangle_V$. 
Since $f\in V^*$ is arbitrary, it follows that 
$\delta_V = (\id_H\tensor \nu)\circ \delta_W \circ \mu$. 

2. Let us show that
\begin{equation*}
      j=T(\mu)\tensor \id_H \tensor T(\nu^*)\colon 
       \D(V,\delta_V) \to \D(W,\delta_W)
\end{equation*}
is a map of algebras. The semidirect product relations in $\D(V,\delta_V)$
are preserved by $j$ because $T(\mu)$, $T(\nu^*)$ are $H$\dash
equivariant maps. The commutator relation between $\nu^*(f)$ and
$\mu(v)$ is equivalent to $\delta_V = (\id_H\tensor \nu)\circ \delta_W
\circ \mu$, as was shown in the proof of part 1; and there are no other relations in 
$\D(V,\delta_V)$. 
\end{proof}
Thus, morphisms between free braided doubles are the same as pairs of
maps between quasi\dash YD modules, satisfying 
\begin{definition}(Subquotients)
Let $V$, $W$ be quasi\dash Yetter\dash Drinfeld modules over a Hopf
algebra $H$. We say that $V$ is a \emph{subquotient} of $W$
via the maps $V\xrightarrow{\mu} W \xrightarrow{\nu} V$, if 
$\mu$, $\nu$ are  $H$\dash module maps such that the 
quasicoaction $\delta_V$ is induced from $\delta_W$ via
$\delta_V = (\id_H\tensor \nu)\circ \delta_W
\circ \mu$. 
\end{definition}
We stress that none of the maps $\mu$, $\nu$ in this definition is
required to be injective or surjective. 

\begin{remark}
Let $\mathcal{D}^{\mathrm{free}}_H$ be the full subcategory of
$\mathcal{D}_H$  consisting of all free braided doubles over $H$. It
follows from Lemma \ref{lem:subquot} that
$\mathcal{D}^{\mathrm{free}}_H$ is equivalent to the following
category:
\begin{itemize}
\item[-- ] objects: finite\dash dimensional quasi\dash YD modules $V$ over $H$;
\item[-- ] morphisms between $V$ and $W$: 
    diagrams $V \xrightarrow{\mu} W \xrightarrow{\nu} V$
    which make $V$ a subquotient of $W$;
\item[-- ] composition: the composition of $V \xrightarrow{\mu} W
    \xrightarrow{\nu} V$ and $W \xrightarrow{\mu'} X
    \xrightarrow{\nu'} W$ is the diagram 
    $V \xrightarrow{\mu'\circ \mu} X
    \xrightarrow{\nu\circ\nu'} V$.  
\end{itemize}
\end{remark}

\subsection{Perfect subquotients}

Let $(V,\delta_V)$, $(W,\delta_W)$ be finite\dash dimensional 
quasi\dash YD modules over $H$. If $V$ is a
subquotient of $W$, we have a map $j\colon \D(V,\delta_V) \to \D(W,\delta_W)$
between free braided
doubles. Of course, we may consider a composite map 
\begin{equation*}
\overline{j}\colon \D(V,\delta_V) \xrightarrow{j} \D(W,\delta_W) \twoheadrightarrow \RD(W,\delta_W)
\end{equation*}
into the minimal double associated to $W$. Hence there is some
$(V,\delta)$\dash braided double $\D(V,\delta)/\ker \overline{j}$ which embeds
injectively in $\RD(W,\delta_W)$; but this may not be the minimal double
$\RD(V,\delta)$. This is the content of the following

\begin{proposition}
\label{prop:preimage}
If $V$ is a subquotient of $W$ via the maps 
$V \xrightarrow{\mu} W\xrightarrow{\nu} V$,
then $I(V,\delta_V)$ contains the preimage $T(\mu)^{-1} I(W,\delta_W)$.
\end{proposition}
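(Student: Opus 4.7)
The plan is to set $K := T(\mu)^{-1}(I(W,\delta_W)) \subseteq T(V)$ and show that $K$ satisfies the defining properties of $I(V,\delta_V)$ given by Lemma~\ref{lem:computation}, so that $K \subseteq I(V,\delta_V)$ automatically. By Lemma~\ref{lem:subquot}(2), the pair $(\mu,\nu)$ induces an algebra homomorphism between the free braided doubles,
\begin{equation*}
   j = T(\mu) \tensor \id_H \tensor T(\nu^*) \colon \D(V,\delta_V) \to \D(W,\delta_W),
\end{equation*}
and this will be the main tool.

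First, I would verify $K \subseteq T^{>0}(V)$. By Corollary~\ref{triang_quot}, $I(W,\delta_W)$ is graded, and since $T(\mu)$ is a graded algebra homomorphism, $K$ is a graded subspace of $T(V)$. Its degree-zero part vanishes because $T(\mu)$ acts as the identity on $\field$ while $I(W,\delta_W) \cap \field = 0$.

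The key step is to check that $K$ is preserved by commutators, i.e.\ $[f, K] \subseteq K \tensor H$ for all $f \in V^*$. Fix $f \in V^*$ and $b \in K$. By Remark~\ref{rem_comm}, the element $[f,b] = \deriv_f b$ of $\D(V,\delta_V)$ lies in $T(V) \tensor H$, so applying $j$ (which restricts to $T(\mu)\tensor\id_H$ on this subspace) gives
\begin{equation*}
   (T(\mu) \tensor \id_H)([f,b]) \;=\; j([f,b]) \;=\; [\nu^*(f), T(\mu)(b)]
\end{equation*}
as elements of $\D(W,\delta_W)$. Since $T(\mu)(b) \in I(W,\delta_W)$ and $I(W,\delta_W)$ is preserved by commutators (Lemma~\ref{lem:computation} applied to $W$), the right-hand side lies in $I(W,\delta_W) \tensor H$. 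Expanding $[f,b] = \sum_\alpha c_\alpha \tensor h_\alpha$ in terms of a basis $\{h_\alpha\}$ of $H$, the condition $\sum_\alpha T(\mu)(c_\alpha) \tensor h_\alpha \in I(W,\delta_W) \tensor H$ forces each $T(\mu)(c_\alpha) \in I(W,\delta_W)$, i.e.\ $c_\alpha \in K$. Thus $[f,b] \in K \tensor H$.

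Having shown $K \subseteq T^{>0}(V)$ and $[V^*, K] \subseteq K \tensor H$, Lemma~\ref{lem:computation} yields $K \subseteq I(V,\delta_V)$, which is precisely the claim. There is no real obstacle here: the argument reduces to the commutator-preservation characterisation of $I(\cdot, \cdot)$ and the fact that $j$ is an algebra map intertwining the commutators $[f,-]$ and $[\nu^*(f),-]$; the only small care required is the (elementary) observation that $(T(\mu)\tensor\id_H)^{-1}(I(W,\delta_W) \tensor H) = K \tensor H$, which follows because tensoring a linear map with $\id_H$ on the right is a kernel-preserving operation in the appropriate sense.
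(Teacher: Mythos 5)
Your proof is correct. It is a close relative of the paper's first argument, but organised differently: the paper notes that the kernel of the composite $\D(V,\delta_V)\xrightarrow{j}\D(W,\delta_W)\twoheadrightarrow\RD(W,\delta_W)$ is a triangular ideal and therefore sits inside the maximal one, whereas you bypass the minimal double and the triangular-ideal formalism altogether and instead verify the hypothesis of Lemma~\ref{lem:computation} for $K=T(\mu)^{-1}I(W,\delta_W)$ directly. The two ingredients you supply --- the identity $j([f,b])=[\nu^*(f),T(\mu)(b)]$ for $b\in T(V)$, and the observation that $(T(\mu)\tensor\id_H)^{-1}\bigl(I(W,\delta_W)\tensor H\bigr)=K\tensor H$ --- are exactly what the paper leaves implicit, so your version is, if anything, more self-contained; it also has the small advantage of only ever touching the ``left half'' of the doubles, which is all the statement asks about. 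Be aware that the paper records a second proof as well, via the identity $(\id_H\tensor\nu)^{\tensorpow n}\circ\widetilde{[n]}!_{\delta_W}\circ\mu^{\tensorpow n}=\widetilde{[n]}!_{\delta_V}$ of quasibraided factorials; that route buys applicability to infinite-dimensional $V$, $W$, where the free doubles (and hence the map $j$ you rely on) are not available, while your argument, like the paper's first one, is tied to the finite-dimensional setting.
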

\begin{proof}
For finite\dash dimensional $V$ and $W$ this is an immediate
consequence of the above: the kernel
$ T(\mu)^{-1} I(W,\delta_W) \cdot H T(V^*) + T(V) H \cdot T(\nu^*)^{-1}
I(W^*)$  of $\overline{j}$ 
is a triangular ideal in $\D(V,\delta)$, hence is contained in 
$I(V,\delta_V)HT(V^*)+T(V)H I(V^*,\delta_V)$. 

Here is an alternative proof which works for infinite\dash dimensional
$V$, $W$. Direct computation shows
\begin{equation*}
    (\id_H\tensor \nu)^{\tensorpow n} \circ
    \widetilde{[n]}!_{\delta_W}\circ 
    \mu^{\tensorpow n} = \widetilde{[n]}!_{\delta_V}. 
\end{equation*}
Therefore, $I(V,\delta_V)\cap V^{\tensorpow n}=\ker \widetilde{[n]}!_{\delta_V}$
is equal to $T(\mu)^{-1} \ker \bigl((\id_H\tensor \mu)^{\tensorpow
  n} \circ \widetilde{[n]}!_{\delta_W}\bigr)$. The latter contains 
 $T(\mu)^{-1} \ker\widetilde{[n]}!_{\delta_W} = ( T(\mu)^{-1} I(W,\delta_W) )
\cap V^{\tensorpow n}$.
\end{proof}
The Proposition immediately leads to the following
\begin{definition}
\label{def_perfect}
Let $(V,\delta_V)$, $(W,\delta_W)$ be quasi\dash YD modules
such that $V$ is a subquotient of $W$ 
via the maps $V \xrightarrow{\mu} W\xrightarrow{\nu} V$.
We say that $V$ is a \emph{perfect subquotient}
of $W$, if $I(V,\delta_V) = T(\mu)^{-1} I(W,\delta_W)$. 
\end{definition}
\begin{remark}
\label{rem:comp_perfect}
Observe that if $V$ is a perfect subquotient of $W$ via the maps
$V \xrightarrow{\mu} W\xrightarrow{\nu} V$, and $W$ is a
perfect subquotient of $X$ via the maps $W \xrightarrow{\mu'}
X\xrightarrow{\nu'} W$,  
then $V$ is a perfect subquotient of $X$ via the composition of these
two diagrams. 
\end{remark}

\subsection{Every quasi-YD module  is a perfect subquotient
 of a Yetter-Drinfeld module}

In the previous Section, we identified doubles of Nichols\dash
Woronowicz algebras as a distinguished class of minimal doubles. 
Now, given a quasi\dash Yetter\dash Drinfeld module $(V,\delta)$ over a Hopf
algebra $H$, we would like
to embed the algebra $U(V,\delta)$ (the ``lower part'' of the minimal
double $\RD(V,\delta)$) in  a Nichols\dash Woronowicz algebra of some
Yetter\dash Drinfeld module $Y$. 
This is achieved if $V$ is a perfect subquotient of $Y$.

We show in the next Theorem that for any quasi\dash YD module $V$,
there exists a Yetter\dash Drinfeld module $Y$ such that $V$ is a
perfect subquotient of $Y$. However, only in
some cases can we guarantee that $Y$  
can be chosen to be finite\dash dimensional. 

The Theorem will use the following Lemma:
\begin{lemma}
\label{lem:Y(V)}
Let $(V,\delta)$ be a quasi\dash Yetter\dash Drinfeld module over a Hopf
algebra $H$. Let $Y(V)$ be the space $H\tensor V$. 

$(a)$ $Y(V)$ is a module over $H$ with respect to the following
action:
\begin{equation*}
    h\act (x\tensor v)=h_{(1)}\, x\, Sh_{(3)}\tensor h_{(2)}\act v, 
\qquad x\in H,\ v\in V.
\end{equation*}

$(b)$ The Yetter\dash Drinfeld condition on $\delta$ is equivalent to
the map $\delta\colon V \to H\tensor V=Y(V)$ being a
morphism of $H$\dash modules.

$(c)$ The map 
\begin{equation*}
      Y(V) \to H\tensor Y(V), \qquad
       x\tensor v \mapsto x_{(1)}\tensor x_{(2)} \tensor v
\end{equation*}
is an $H$\dash  coaction on $Y(V)$, which makes 
$Y(V)$ a Yetter\dash Drinfeld module over $H$.
\end{lemma}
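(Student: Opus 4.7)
The plan is to treat the three parts in sequence; all three reduce to standard Sweedler\dash notation manipulations using that $\Delta$ is an algebra map, $S$ an anti\dash algebra map, and the antipode axiom $h_{(1)} Sh_{(2)} = \epsilon(h) = Sh_{(1)} h_{(2)}$, together with coassociativity.

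For part (a), unitality $1\act(x\tensor v)=x\tensor v$ is immediate from $\Delta(1)=1\tensor 1$ and $S(1)=1$. For associativity I would expand both $(hk)\act(x\tensor v)$ and $h\act(k\act(x\tensor v))$ in Sweedler notation; on the first side I use $\Delta(hk)=\Delta(h)\Delta(k)$ and $S(hk)=S(k)S(h)$, and both sides collapse to $h_{(1)}k_{(1)}\, x\, Sk_{(3)}\, Sh_{(3)} \tensor h_{(2)}k_{(2)}\act v$ after associating the factors in $H$ and on $V$.

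For part (b), applying the action of (a) to $\delta(v)=v^{[-1]}\tensor v^{[0]}$ gives
\begin{equation*}
   h\act \delta(v) = h_{(1)}\, v^{[-1]}\, Sh_{(3)} \tensor h_{(2)}\act v^{[0]},
\end{equation*}
so $\delta$ is an $H$\dash module map precisely when
\begin{equation*}
   (h\act v)^{[-1]} \tensor (h\act v)^{[0]} = h_{(1)}\, v^{[-1]}\, Sh_{(3)} \tensor h_{(2)}\act v^{[0]}. \quad(\star)
\end{equation*}
To derive $(\star)$ from the Yetter\dash Drinfeld condition, take the $3$\dash fold coproduct of $h$ and right\dash multiply the first tensor slot of the Yetter\dash Drinfeld identity by $Sh_{(3)}$; the factor $h_{(2)} Sh_{(3)}=\epsilon(h_{(2)})\cdot 1$ collapses by the antipode axiom and absorbs into the $h_{(1)}$ acting on $v$, producing exactly $(\star)$. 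Conversely, apply $(\star)$ at $h_{(1)}$ in place of $h$, tensor with the outer $h_{(2)}$ in a third slot, and multiply the first and third slots in $H$; by coassociativity the resulting factor $Sh_{(3)} h_{(4)}$ collapses to $\epsilon(h_{(3)})\cdot 1$ and, after applying the counit, the Yetter\dash Drinfeld identity for $h$ is recovered.

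For part (c), coassociativity and counitality of the coaction $x\tensor v \mapsto x_{(1)}\tensor x_{(2)} \tensor v$ are inherited directly from $\Delta$ (the coaction is essentially $\Delta\tensor \id_V$). To check Yetter\dash Drinfeld compatibility with the action of (a), I would first substitute $h_{(1)}\act(x\tensor v) = h_{(1)} x\, Sh_{(3)} \tensor h_{(2)}\act v$ (coassociatively re\dash indexed), then apply the coaction by means of $\Delta$ being an algebra map, so that $\Delta(h_{(1)} x Sh_{(3)})$ splits into two copies of each Sweedler component of $h$, $x$ and $Sh_{(3)}$; collapsing the resulting pairs of the form $Sh_{(i)} h_{(j)}$ by the antipode axiom yields precisely $h_{(1)} x_{(1)} \tensor h_{(2)} x_{(2)} Sh_{(4)} \tensor h_{(3)}\act v$, which is the right\dash hand side of the Yetter\dash Drinfeld identity for the coaction on $Y(V)$ paired with the action of (a). The main technical obstacle is the bookkeeping in (c), where up to four separate Sweedler components of $h$ appear together with antipodes; conceptually, however, $Y(V)$ is just the familiar ``adjoint action combined with regular coaction'' on $H\tensor V$, the free Yetter\dash Drinfeld module cogenerated by the $H$\dash module $V$, so the identities are forced by the Hopf algebra axioms.
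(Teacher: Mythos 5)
Your proof is correct and follows the same route as the paper: the paper's own argument for part (b) consists precisely of rewriting the Yetter--Drinfeld condition in the equivalent form $\delta(h\act v)=h_{(1)}\,v^{[-1]}\,Sh_{(3)}\tensor h_{(2)}\act v^{[0]}$, which is your equation $(\star)$, and it leaves parts (a) and (c) as straightforward checks, which you have carried out correctly. Your Sweedler computations (including the six-component bookkeeping in part (c), where $Sh_{(5)}h_{(6)}$ collapses to $\epsilon(h_{(5)})$) all check out.
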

\begin{proof}
It is straightforward to check that the formula given in $(a)$ indeed
defines an action of $H$ on $Y(V)$. Part $(b)$ follows by rewriting
the definition of a Yetter\dash Drinfeld module over $H$ in an
equivalent form suitable for Hopf algebras:
\begin{equation*}
     \delta(h\act v) = h_{(1)} v^{(-1)} Sh_{(3)} \tensor h_{(2)}\act
     v^{(0)}. 
\end{equation*}
Part $(c)$ is also easy, and is left as an exercise to the reader.
\end{proof}

\begin{theorem}
\label{th:perfect}
1. Let $V$ be a quasi\dash Yetter\dash Drinfeld module over a Hopf
algebra $H$, with quasicoaction $\delta\colon V \to H\tensor V$. 
Then $V$ is a perfect subquotient of the Yetter\dash
Drinfeld module $Y(V)$, via the maps
$V\xrightarrow{\delta} Y(V) \xrightarrow{\epsilon\tensor \id_V} V$.

2. If $V$ is finite\dash dimensional and 
\begin{itemize}
\item
$\dim H<\infty$, or
\item
$H$ is commutative and cocommutative,
\end{itemize}
then there exists a finite\dash dimensional Yetter\dash Drinfeld
module $Y$ of which $V$ is a perfect subquotient.
\end{theorem}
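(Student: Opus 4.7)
The plan is to verify that the pair $\mu=\delta\colon V\to Y(V)$ and $\nu=\epsilon\otimes\id_V\colon Y(V)\to V$ makes $V$ a subquotient of the Yetter--Drinfeld module $Y(V)$, to exploit the fact that $\nu$ is a counital retraction of the coaction on $Y(V)$, and thereby to reduce the quasibraided factorial for $V$ to the ordinary braided factorial for $Y(V)$. For Part~1, $\mu$ is an $H$-module homomorphism by Lemma~\ref{lem:Y(V)}(b), and $\nu$ is $H$-equivariant by the direct computation $\nu(h\act(x\otimes v))=\epsilon(h_{(1)}xSh_{(3)})\, h_{(2)}\act v=\epsilon(x)\, h\act v=h\act\nu(x\otimes v)$, which only uses that $\epsilon$ is an algebra map. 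Counitality of $\Delta$ yields the crucial identity $(\id_H\otimes\nu)\circ\delta_{Y(V)}=\id_{Y(V)}$, since $(\id_H\otimes\nu)(x_{(1)}\otimes(x_{(2)}\otimes v))=x_{(1)}\epsilon(x_{(2)})\otimes v=x\otimes v$; composing with $\mu$ on the right confirms the subquotient compatibility $(\id_H\otimes\nu)\circ\delta_{Y(V)}\circ\mu=\delta=\delta_V$ required by Lemma~\ref{lem:subquot}(1).

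For perfection in Part~1, by Proposition~\ref{prop:preimage} it suffices to establish the inclusion $I(V,\delta)\subseteq T(\mu)^{-1}I(Y(V),\delta_{Y(V)})$. Starting from the formula
\begin{equation*}
\widetilde{[n]}!_{\delta_V}=(\id_H\otimes\nu)^{\otimes n}\circ\widetilde{[n]}!_{\delta_{Y(V)}}\circ\mu^{\otimes n}
\end{equation*}
established in the proof of Proposition~\ref{prop:preimage}, I factor the middle term as $\widetilde{[n]}!_{\delta_{Y(V)}}=\delta_{Y(V)}^{\otimes n}\circ[n]!_{\Psi_{Y(V)}}$ using Lemma~\ref{lem:qb_braided}, and absorb $(\id_H\otimes\nu)^{\otimes n}\circ\delta_{Y(V)}^{\otimes n}=\id_{Y(V)^{\otimes n}}$ (the tensor power of the counitality identity above) to obtain the clean formula
\begin{equation*}
\widetilde{[n]}!_{\delta_V}=[n]!_{\Psi_{Y(V)}}\circ\mu^{\otimes n}.
\end{equation*}
Taking kernels and invoking Theorem~\ref{ker_qbfact} for $V$ together with Theorem~\ref{th:Nichols} for $Y(V)$ gives $I(V,\delta)\cap V^{\otimes n}=(\mu^{\otimes n})^{-1}(I(Y(V),\delta_{Y(V)})\cap Y(V)^{\otimes n})$; summing over $n$ completes the proof of perfection.

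For Part~2 the strategy is to replace $Y(V)$ by a finite-dimensional sub-Yetter--Drinfeld module $Y\subseteq Y(V)$ still containing $\mu(V)=\delta(V)$, after which the perfection argument transfers verbatim because it depends only on the identity $(\id_H\otimes\nu|_Y)\circ\delta_Y=\id_Y$, which is automatically inherited by any sub-YD-module of $Y(V)$. When $\dim H<\infty$ the module $Y(V)=H\otimes V$ is itself finite-dimensional. When $H$ is commutative and cocommutative, a short Sweedler manipulation using cocommutativity to swap the last two indices of $\Delta^2(h)$ together with commutativity and the antipode axiom $h_{(1)}Sh_{(2)}=\epsilon(h)$ yields $h_{(1)}xSh_{(3)}\otimes h_{(2)}=x\otimes h$ in $H\otimes H$, so $h\act(x\otimes v)=x\otimes h\act v$; therefore any subspace $W\otimes V$ with $W\subseteq H$ is $H$-stable. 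By the fundamental theorem of coalgebras, the finite-dimensional span of the elements $v_i^{[-1]}$ (for $\{v_i\}$ a basis of $V$) is contained in a finite-dimensional subcoalgebra $W\subseteq H$; then $Y:=W\otimes V$ is a finite-dimensional sub-YD-module of $Y(V)$ containing $\delta(V)$, and Part~1 applied with $Y$ in place of $Y(V)$ concludes the argument. The one delicate step is the Sweedler identity that trivialises the adjoint-type action in the commutative--cocommutative case; everything else is routine.
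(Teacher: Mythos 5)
Your proposal is correct and follows essentially the same route as the paper: the same maps $\mu=\delta$ and $\nu=\epsilon\tensor\id_V$, the same key identity $\widetilde{[n]}!_{\delta_V}=[n]!_{\Psi_{Y(V)}}\circ\mu^{\tensorpow n}$ (which the paper states as ``straightforward to verify'' and you derive cleanly from Proposition~\ref{prop:preimage}, Lemma~\ref{lem:qb_braided} and counitality), and the same use of the fundamental theorem of coalgebras plus the commutative--cocommutative trivialisation $h\act(x\tensor v)=x\tensor(h\act v)$ for Part~2.
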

\begin{proof}
The map $\mu = \delta\colon V \to Y(V)$ is an $H$\dash module map by 
Lemma \ref{lem:Y(V)}. It is easy to see 
that $\nu=\epsilon\tensor \id_V$ is also an $H$\dash module map. 
Let us check that $V\xrightarrow{\mu} Y(V) \xrightarrow{\nu} V$ is
indeed a subquotient: 
$\mu(v)^{(-1)}\tensor \nu(\mu(v)^{(0)})$ is equal to 
${v^{[-1]}}_{(1)}\tensor
\epsilon({v^{[-1]}}_{(2)}){v^{[0]}}=\delta(v)$ as required.

To show that $V$ is a perfect subquotient of $Y(V)$, denote the
braiding on $Y(V)$ by $\Psi$ and observe that 
\begin{equation*}
      \widetilde{[n]}!_\delta =  [n]_\Psi \circ \delta^{\tensorpow n},
\end{equation*}
where both sides are maps $V^{\tensorpow n} \to Y(V)^{\tensorpow
  n}$. This formula is straightforward to verify, and immediately 
implies that $\ker\widetilde{[n]}!_\delta = (\delta^{\tensorpow
  n})^{-1} \ker [n]_\Psi$, precisely as required by the definition
  of a perfect subquotient. 

Now assume that $V$ is finite\dash dimensional. 
If $\dim H<\infty$, we may take $Y$ to be the Yetter\dash Drinfeld
module $Y(V)$, because $\dim Y(V)<\infty$. 

If $H$ is commutative and cocommutative, but not necessarily of finite
dimension, it is enough
to choose a finite\dash dimensional 
Yetter\dash Drinfeld submodule in $Y(V)$
containing $\delta(V)$. 
Let $H'\subset H$ be a subspace, $\dim H'<\infty$, such that
$\delta(V)\subset H'\tensor V$. By the fundamental theorem on
coalgebras \cite[Corollary 2.2.2]{Sw},
$H'\subset C \subset H$ where $C$ is a finite\dash dimensional
subcoalgebra of $H$. 
Put $Y=C\tensor V \subset Y(V)$. 
Then $Y$ is a submodule of $Y(V)$, because, by commutativity and
cocommutativity of $H$, 
\begin{equation*}
     h\act (c\tensor v) = c\tensor (h\act v)
\end{equation*}
for any $c\tensor v\in C\tensor V$; clearly, $Y$ is a subcomodule of
$Y(V)$ because $C$ is a subcoalgebra of $H$. 
The quasi\dash YD module $V$ will be a perfect subquotient of $Y$ via the
maps $\mu$ and $\nu|_{C\tensor V}$. 
\end{proof}

\subsection{Subquotients in right quasi-Yetter-Drinfeld modules}
\label{lr}
It is also useful to consider right quasi\dash YD modules over $H$.
If $(V,\delta)$ is a (left)  quasi\dash YD module, 
then $V^*$ is naturally a right quasi\dash YD module over $H$, with  
right quasicoaction $\delta_r\colon V^* \to V^* \tensor H$ as defined
in \ref{rem:righthanded}.
There is a notion of \emph{subquotient} for right quasi\dash YD modules.
A left quasi\dash YD module $V$ is a subquotient of $W$ via the maps
$V\xrightarrow{\mu} W 
\xrightarrow{\nu} V$, if and only if the right quasi\dash YD module   
$V^*$ is a subquotient of $W^*$ via the maps $V^*\xrightarrow{\nu^*}
W^* \xrightarrow{\mu^*} V^*$. 

The notion of \emph{perfect subquotient}
is also defined for right modules. Theorem
\ref{th:perfect} clearly admits a version for right quasi\dash Yetter\dash
Drinfeld modules.
A word of warning: 
if $V\xrightarrow{\mu} W \xrightarrow{\nu} V$ is a perfect
subquotient, $V^*\xrightarrow{\nu^*}
W^* \xrightarrow{\mu^*} V^*$ is not necessarily a perfect subquotient. 

If $\mu$, $\nu$ is a pair of morphisms such that \emph{both} 
$V\xrightarrow{\mu} W
\xrightarrow{\nu} V$ and $V^*\xrightarrow{\nu^*}
W^* \xrightarrow{\mu^*} V^*$ are perfect subquotients, 
the minimal double $\RD(V,\delta)$ embeds as a subdouble in the minimal
double $\RD(W,\delta_W)$. 

Ideally, we would like to look for such an embedding of any minimal
double $\D(V,\delta)$ into a braided Heisenberg double, corresponding to
some Yetter\dash Drinfeld module $Y$.  
But in general, we have no tools to achieve this:
although the pair of morphisms $V\xrightarrow{\delta} Y(V)
\xrightarrow{\epsilon\tensor \id_V} V$, produced by Theorem
\ref{th:perfect} for finite\dash dimensional $H$, is a perfect
subquotient, the adjoint map $(\epsilon \tensor \id_V)^* 
\colon V^* \to V^*\tensor H^*$ does not typically give a perfect
subquotient. It is instructive to check this when $H$ is a group
algebra of a finite group. 

However, we manage to embed those rational Cherednik algebras, which are 
minimal doubles of a special kind over a group algebra, in braided
Heisenberg doubles.  This will be done in the next Section.

\section{Rational Cherednik algebras}
\label{sect:cherednik}

Up to now we have been dealing with common properties of braided
doubles attached to any finite\dash dimensional module over some
Hopf algebra $H$. In this Section, we will soon fix a $\field$\dash vector space
$V$, $\dim V<\infty$, and take the group algebra
$\field G$ of an \emph{irreducible linear group} $G\le \mathit{GL}(V)$ as $H$. 
Our task is to study braided doubles $\Uminus \lcprod
\field G \rcprod \Uplus$ where $\Uminus$ and $\Uplus$ are commutative algebras. 

\subsection{Quasi-Yetter-Drinfeld modules over a group algebra}

Initially, let $G$ be an arbitrary group. The coproduct, counit and
antipode in $H=\field G$ are defined on $g\in G$ by
\begin{equation*}
\Delta(g) = g\tensor g, 
\quad \epsilon(g) = 1,
\quad S(g)=g^{-1},
\end{equation*}
respectively.  Unlike for general Hopf algebras, we use traditional notation $(g,v)\mapsto
g(v)$ for an action of $G$ on a space $V$. 
The definition of a quasi\dash Yetter\dash
Drinfeld module is restated in the group algebra case as follows:
\begin{lemma}
\label{lem:group_qc}
A quasi\dash Yetter\dash Drinfeld module over a group $G$ is a
representation $V$ of $G$, equipped with a map 
\begin{equation*}
\delta\colon V \to \field G \tensor V, \qquad
\delta(v) = \sum_{h\in G} h\tensor L_h(v),
\end{equation*}
where the linear maps $L_h\in \End(V)$ satisfy
\begin{equation*}
       g(L_h(v)) = L_{ghg^{-1}}(g(v)), \qquad g,h\in G, \ v\in V. 
\end{equation*}
\end{lemma}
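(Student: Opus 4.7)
The plan is to specialise the general Yetter--Drinfeld compatibility condition of Definition~\ref{def:qydmod} to the case $H = \field G$ and match coefficients. Any quasicoaction $\delta\colon V\to \field G\tensor V$ can uniquely be written in the form $\delta(v)=\sum_{h\in G} h\tensor L_h(v)$ for some linear operators $L_h\in\End(V)$ (almost all zero for each fixed $v$), simply by expanding along the grouplike basis of $\field G$. So the only substantive content of the lemma is to translate the compatibility condition into the stated identity for the $L_h$.

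First I would fix $g\in G$ and write out the Yetter--Drinfeld condition
\begin{equation*}
(h_{(1)}\act v)^{[-1]}\, h_{(2)}\tensor (h_{(1)}\act v)^{[0]} = h_{(1)} v^{[-1]}\tensor h_{(2)}\act v^{[0]}
\end{equation*}
with $h=g$, using $\Delta(g)=g\tensor g$. This reduces the condition to
\begin{equation*}
(g(v))^{[-1]}\, g \tensor (g(v))^{[0]} = g\, v^{[-1]}\tensor g(v^{[0]}).
\end{equation*}
Next I would substitute $\delta(v)=\sum_h h\tensor L_h(v)$ on both sides, obtaining
\begin{equation*}
\sum_{h\in G} hg \tensor L_h(g(v)) = \sum_{h\in G} gh \tensor g(L_h(v)).
\end{equation*}

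The key step is then to compare coefficients of each basis element $k\in G$ in the first tensor slot. On the left, $hg=k$ gives $h=kg^{-1}$; on the right, $gh=k$ gives $h=g^{-1}k$. Reindexing yields
\begin{equation*}
L_{kg^{-1}}(g(v)) = g\bigl(L_{g^{-1}k}(v)\bigr) \quad\text{for all }k\in G.
\end{equation*}
Setting $h:=g^{-1}k$ (so that $kg^{-1}=ghg^{-1}$) rewrites this as the required identity $g(L_h(v))=L_{ghg^{-1}}(g(v))$. Conversely, the same coefficient comparison run in reverse shows that this identity implies the Yetter--Drinfeld condition on $\delta$, so the correspondence is an equivalence.

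There is no real obstacle here; the only thing to be careful about is the direction of conjugation in the change of variables (it is easy to get $g^{-1}hg$ instead of $ghg^{-1}$), and the fact that the sums over $G$ are formally infinite but effectively finite on any given $v$, so the coefficient comparison is legitimate. The lemma is essentially a dictionary statement that will be used throughout the rest of the section.
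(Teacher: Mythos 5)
Your proof is correct. The reduction to grouplike $h=g$ is legitimate (the Yetter--Drinfeld condition is linear in $h$, so checking it on the basis $G$ of $\field G$ suffices), the coefficient comparison in the first tensor slot is valid because the sums are effectively finite, and the reindexing does land on $g(L_h(v))=L_{ghg^{-1}}(g(v))$ with the conjugation in the right direction. The argument is reversible, so you get the stated equivalence.

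The paper takes a different, shorter route: it observes that the identity $g(L_h(v))=L_{ghg^{-1}}(g(v))$ is precisely the statement that $\delta\colon V\to \field G\tensor V=Y(V)$ is a map of $G$\dash modules, where $Y(V)$ carries the conjugation\dash type action $g(x\tensor v)=gxg^{-1}\tensor g(v)$, and then cites part $(b)$ of Lemma~\ref{lem:Y(V)}, which already established (for a general Hopf algebra) that the Yetter\dash Drinfeld condition on $\delta$ is equivalent to $H$\dash equivariance of $\delta\colon V\to Y(V)$. So the paper's proof is a one\dash line specialisation of earlier machinery, and has the advantage of placing the lemma inside the $Y(V)$ framework that the rest of Section~\ref{sect:cherednik} relies on (e.g.\ Lemmas~\ref{lem:comm_eq} and~\ref{lem:perf}). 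Your direct coefficient\dash matching computation is more elementary and self\dash contained --- it essentially reproves the group\dash algebra case of Lemma~\ref{lem:Y(V)}$(b)$ from scratch --- and it makes the potentially error\dash prone direction of conjugation completely explicit. Both are valid; the content is the same dictionary.
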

\begin{proof}
Note that $ g(L_h(v)) = L_{ghg^{-1}}(g(v))$ is just the $G$\dash
equivariance condition for $\delta \colon V \to Y(V)$,
where the $G$\dash action on $Y(V)=\field G \tensor V$
is given, as in Lemma \ref{lem:Y(V)}, by 
$g(x\tensor v)=gxg^{-1}\tensor g(v)$. 
\end{proof}
If $V$ is a $G$\dash module,
we will, as usual, write $(V,\delta)$ to denote a particular quasi\dash
YD module structure on $V$ given by a quasicoaction $\delta$. 
\begin{remark}
We allow the group $G$ to be infinite; however, in the present paper
we do not explore continuous versions of our constructions and treat
the fields and groups as discrete objects. 
Accordingly, whenever a summation over group elements is present, 
the sum should be well\dash defined; e.g., although all maps 
$\{L_h\in \End(V): h\in G\}$ 
may be non\dash zero, for any fixed $v\in V$ all but 
a finite number of $L_h(v)$  must be zero. 
\end{remark}

Let us now give a definition of a Yetter\dash Drinfeld module in a form
more suitable for group algebras. 

\subsection{Yetter-Drinfeld modules over groups}

First, observe that a coaction of the group algebra $\field G$ on a
vector space $Y$ is the same as a $G$\dash grading on $Y$.

Indeed, write the coaction as $\delta(y)=\sum_{h\in G}h\tensor
L_h(y)$ as in Lemma \ref{lem:group_qc}. 
The comultiplicativity axiom, $(\Delta\tensor
\id_Y)\delta=(\id_{\field G}\tensor \delta)\delta$, 
means that $L_g L_h$ equals $L_h$ if
$h=g$, or $0$ otherwise. The counitality axiom, $(\epsilon \tensor
\id_Y)\delta=\id_Y$, is equivalent to 
$\sum_{h\in G}L_h(y) = y$. Thus, $\{L_h: h\in G\}$
is a complete set of pairwise orthogonal idempotents on $Y$, and 
$Y=\oplus_{h\in G}Y_h$ where $Y_h=L_h Y$. 
Therefore, the definition of a Yetter\dash
Drinfeld module (cf.\ \ref{qydmodcomod}) looks in the
group algebra case as follows:

\begin{lemma}
\label{lem:YD_G}
A Yetter\dash Drinfeld module over a group $G$ is a vector space $Y$
such that 
\begin{enumerate}
\item
$G$ acts on $Y$: $(g,y)\in G\times Y \mapsto g(y)\in Y$;
\item
$Y$ is a $G$\dash graded space: $Y=\mathop{\oplus}\limits_{h\in G}Y_h$;
\item
the grading is compatible with the action:
$g(Y_h) \subseteq Y_{ghg^{-1}}$, $g,h\in G$.
\end{enumerate} 
The Yetter\dash Drinfeld structure induces a braiding on $Y$ by the
formula
\begin{equation*}
     \Psi(y\tensor z)= h(z) \tensor y, \qquad
     y\in Y_h, \ z\in Y. 
\qed
\end{equation*}
\end{lemma}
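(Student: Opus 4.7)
The plan is to translate each ingredient of a Yetter\dash Drinfeld module over a general Hopf algebra directly into group\dash theoretic language, using the coproduct $\Delta(g)=g\tensor g$.

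First, I will invoke the observation made immediately before the lemma: a left coaction $\delta\colon Y\to \field G\tensor Y$ is exactly a $G$\dash grading on $Y$. Writing $\delta(y)=\sum_{h\in G} h\tensor L_h(y)$, the comultiplicativity and counitality axioms force $\{L_h\}_{h\in G}$ to be a complete family of pairwise orthogonal idempotents, so $Y=\oplus_{h\in G} Y_h$ with $Y_h:=L_h Y=\{y\in Y\mid \delta(y)=h\tensor y\}$. Conversely, any $G$\dash grading yields a coaction by projecting onto homogeneous components and tagging with the corresponding group element. So in the lemma, condition (2) is just a repackaging of having a Yetter\dash Drinfeld coaction (as opposed to a quasicoaction).

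Next, I will rewrite the Yetter\dash Drinfeld compatibility condition from Definition~\ref{def:qydmod},
\begin{equation*}
(h_{(1)}\act v)^{(-1)}\, h_{(2)} \tensor (h_{(1)}\act v)^{(0)}
= h_{(1)} v^{(-1)} \tensor h_{(2)}\act v^{(0)},
\end{equation*}
by specializing to $h=g\in G$ (so $h_{(1)}\tensor h_{(2)}=g\tensor g$) and to $v=y\in Y_k$ (so $v^{(-1)}\tensor v^{(0)}=k\tensor y$). The right\dash hand side becomes $gk\tensor g(y)$, while the left\dash hand side becomes $g(y)^{(-1)} g\tensor g(y)^{(0)}$. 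These agree for all $y\in Y_k$ and all $g,k\in G$ precisely when $\delta(g(y))=gkg^{-1}\tensor g(y)$, i.e.\ when $g(Y_k)\subseteq Y_{gkg^{-1}}$. This gives condition (3) and completes the equivalence.

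Finally, for the braiding formula, I will apply the general definition from~\ref{subsect:psi}: $\Psi(y\tensor z)=(y^{(-1)}\act z)\tensor y^{(0)}$. For homogeneous $y\in Y_h$ this reads $\Psi(y\tensor z)=h(z)\tensor y$, as claimed; extending by linearity gives the braiding on all of $Y\tensor Y$. There is no real obstacle here — the whole argument is essentially notational translation — the only thing worth being careful about is the requirement (noted in the earlier Remark) that sums $\sum_{h\in G} L_h(y)$ must be finite for each $y$, which is automatic once we know $\delta$ lands in $\field G\tensor Y$ (i.e.\ a finite sum of simple tensors).
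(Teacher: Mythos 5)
Your proposal is correct and follows essentially the same route as the paper: the paper establishes that a $\field G$\dash coaction is a $G$\dash grading via the orthogonal idempotents $L_h$, and then leaves the translation of the Yetter\dash Drinfeld compatibility condition and of the braiding formula $\Psi_{V,W}$ as the immediate specialisation you carry out explicitly. Your computation of the two sides of the compatibility condition (yielding $g(Y_k)\subseteq Y_{gkg^{-1}}$) and of $\Psi(y\tensor z)=h(z)\tensor y$ for $y\in Y_h$ is exactly the content the paper's \qed elides.
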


\subsection{Quasi-Yetter-Drinfeld modules $(V,\delta)$ with 
commutative $U(V,\delta)$}
\begin{lemma}
\label{lem:comm_eq}
Let $V$ be a finite\dash dimensional quasi\dash YD module over
$\field G$, with quasicoaction $\delta(v)=\sum_{h\in G}h\tensor
L_h(v)$. The algebra $U(V,\delta)$ is commutative, if and only if 
\begin{equation*}
      \delta(v-h(v)) \tensor L_h(w) = 
      \delta(w-h(w)) \tensor L_h(v)  
\quad\text{for any }h\in G, \ v,w\in V.
\end{equation*}
\end{lemma}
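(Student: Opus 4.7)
The plan is to reduce commutativity of $U(V,\delta)$ to the vanishing of $\widetilde{[2]}!_\delta$ on commutators, then unfold that vanishing condition explicitly using the group-algebra coproduct.

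First I would observe that $U(V,\delta)=T(V)/I(V,\delta)$ is generated by the image of $V$, so it is commutative if and only if every commutator $v\tensor w - w\tensor v$ (with $v,w\in V$) lies in $I(V,\delta)$. Since Corollary~\ref{triang_quot} says $I(V,\delta)$ is a graded ideal, and Theorem~\ref{ker_qbfact} identifies its degree-$2$ component with $\ker \widetilde{[2]}!_\delta$, this is equivalent to
\begin{equation*}
\widetilde{[2]}!_\delta(v\tensor w - w\tensor v) = 0 \qquad \text{for all } v,w\in V.
\end{equation*}

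Next I would compute $\widetilde{[2]}!_\delta$ explicitly in the group-algebra setting. Writing $\delta(v)=\sum_{h\in G} h\tensor L_h(v)$ and using $\Delta h = h\tensor h$ in $\field G$, the formula from Definition~\ref{def_qbfact} gives
\begin{equation*}
\widetilde{[2]}_\delta(v\tensor w) = \sum_{h\in G} h(w)\tensor h \tensor L_h(v) \;+\; \sum_{h\in G} v\tensor h\tensor L_h(w),
\end{equation*}
and then applying $\widetilde{[1]}_\delta = \delta$ to the leftmost $V$-factor yields a sum in $(\field G\tensor V)^{\tensorpow 2}$ whose two halves are $\delta(h(w))\tensor h\tensor L_h(v)$ and $\delta(v)\tensor h\tensor L_h(w)$. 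Subtracting $\widetilde{[2]}!_\delta(w\tensor v)$ and gathering terms (using that $\delta$ is linear) produces
\begin{equation*}
\widetilde{[2]}!_\delta(v\tensor w - w\tensor v) = \sum_{h\in G}\Bigl[\delta(v-h(v))\tensor L_h(w) - \delta(w-h(w))\tensor L_h(v)\Bigr] \tensor h,
\end{equation*}
after suitably regrouping tensor factors (moving the group-element slot $h$ to the right for clarity of the linear-independence argument).

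Finally I would invoke linear independence of the group elements $h\in G$ in $\field G$: the bracketed quantity for each $h$ lives in its own isotypic piece, so the whole sum vanishes if and only if each summand vanishes. This yields exactly the stated equation
\begin{equation*}
\delta(v-h(v))\tensor L_h(w) = \delta(w-h(w))\tensor L_h(v), \qquad h\in G,\ v,w\in V,
\end{equation*}
proving both directions simultaneously. The only non-routine step is bookkeeping with the four tensor slots while carrying out the subtraction, and making sure the $h$-slot really separates linearly so one can equate coefficients of $h$; everything else is formula-chasing from Theorem~\ref{ker_qbfact} and the explicit form of $\delta$ over a group algebra.
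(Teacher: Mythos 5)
Your proof is correct, and it arrives at the stated identity by essentially the same computation as the paper, but routed through a different piece of the general theory. The paper's own proof first invokes Theorem~\ref{th:perfect} to realise $(V,\delta)$ as a perfect subquotient of the Yetter--Drinfeld module $Y(V)=\field G\tensor V$, so that $U(V,\delta)$ embeds into the Nichols--Woronowicz algebra $\mathcal B(Y(V))$; commutativity then becomes the condition that $\delta(V)$ be an Abelian subspace, i.e.\ $(\id+\Psi)\bigl(\delta(v)\tensor\delta(w)-\delta(w)\tensor\delta(v)\bigr)=0$, which is unfolded using the explicit braiding on $Y(V)$ and the $G$\dash equivariance of $\delta$. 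You instead go straight to Theorem~\ref{ker_qbfact}: since $U(V,\delta)$ is generated in degree one and $I(V,\delta)$ is graded, commutativity is equivalent to all $v\tensor w-w\tensor v$ lying in $\ker\widetilde{[2]}!_\delta$, and you compute $\widetilde{[2]}!_\delta$ directly from Definition~\ref{def_qbfact} using $\Delta h=h\tensor h$. The two conditions coincide because $\widetilde{[2]}!_\delta=(\id+\Psi)\circ\delta^{\tensorpow 2}$ on $Y(V)$ --- the identity underlying Theorem~\ref{th:perfect} --- so the term-by-term computations match; your intermediate formula
\begin{equation*}
\widetilde{[2]}!_\delta(v\tensor w-w\tensor v)=\sum_{h\in G}\Bigl(\delta(v-h(v))\tensor h\tensor L_h(w)-\delta(w-h(w))\tensor h\tensor L_h(v)\Bigr)
\end{equation*}
(slots written in their natural order in $(\field G\tensor V)^{\tensorpow 2}$) checks out using only linearity of $\delta$, and the separation of the sum by linear independence of the group elements in the middle $\field G$\dash slot is sound. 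What your route buys is a shorter chain of dependencies (no perfect-subquotient machinery is needed); what the paper's route buys is the explicit picture of $\delta(V)$ as an Abelian subspace of a braided space, which it then reuses in the proof of Proposition~\ref{prop:c_refl}.
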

\begin{proof}
By Theorem \ref{th:perfect}, $(V,\delta)$ is a perfect subquotient of the
Yetter\dash Drinfeld module $Y(V)$ with underlying vector space
$\field G \tensor V$. 
The map $\delta\colon V \to Y(V)$ induces an embedding 
\begin{equation*}
U(V,\delta) \hookrightarrow \mathcal B(Y(V))
\end{equation*}
of algebras. Because the algebra $U(V,\delta)$ is generated by elements of $V$,  
it is commutative, if and only if for
any $v,w\in V$ the elements $\delta(v)$ and $\delta(w)$ commute in the
Nichols\dash Woronowicz algebra $\mathcal B(Y(V))$. 
By Definition \ref{def:NW} of the Nichols\dash
Woronowicz algebra,  this is equivalent to
\begin{equation*}
 (\id+\Psi)(\delta(v)\tensor \delta(w)-\delta(w)\tensor \delta(v)) = 0
\quad\text{for any }v,w\in V,
\end{equation*}
where $\Psi$ is the braiding on $Y(V)$. 
This is because the quadratic relations in $\mathcal B(Y(V))$
are the kernel of $[2]!_\Psi=\id+\Psi$. 
In other words, $\delta(V)$ must be an \emph{Abelian subspace} of the
braided space $(Y(V),\Psi)$ --- the term is from \cite[I.C]{AF}. 
The condition rewrites as
\begin{equation*}
\delta(v)\tensor \delta(w) - \Psi(\delta(w)\tensor \delta(v)) = 
\delta(w)\tensor \delta(v) - \Psi(\delta(v)\tensor \delta(w)). 
\end{equation*}
Note that the $G$\dash coaction on $Y(V)\cong H\tensor V$ is given
by $h\tensor v \mapsto h\tensor h\tensor v$. Substituting
$\delta(\cdot)=\sum_{h\in G}h\tensor L_h(\cdot)$ and
using the formula for $\Psi$ from Lemma \ref{lem:YD_G}, rewrite the
commutativity equation as
\begin{equation*}
     \delta(v)\tensor \delta(w) - \sum_h h(\delta(v))\tensor h \tensor
     L_h(w)
= 
     \delta(w)\tensor \delta(v) - \sum_h h(\delta(w))\tensor h \tensor
     L_h(v).
\end{equation*}
The left\dash hand side is $\sum_h (1-h)\delta(v)\tensor h \tensor
L_h(w)$, and this expression must be symmetric in $v$ and $w$. 
Equivalently, $(1-h)\delta(v)\tensor L_h(w)=(1-h)\delta(w)\tensor
L_h(v)$ for any $h\in G$. We may interchange the action of $1-h$ and
$\delta$ because $\delta$ is $G$\dash equivariant. 
\end{proof}

\subsection{The commutativity equation for an irreducible linear group $G$}

From now on we take $G$ to be an irreducible linear group, that is,
$G\le \mathit{GL}(V)$ such that $V$ is an irreducible $G$\dash
module. Elements of the set 
\begin{equation*}
   \PR = \{ s\in G : \dim(1-s)V = 1 \}
\end{equation*}
are called \emph{complex reflections} in $G$. Note that we do not
restrict the characteristic of the ground field $\field$; an
alternative term, more commonly used for linear groups in positive
characteristic, is \emph{pseudoreflection}.  

We will use the following easy observation about complex reflections. 
By $\langle\cdot,\cdot\rangle$ is denoted the pairing between $V^*$ and $V$.
\begin{lemma}
\label{lem:roots}
Let $s\in\PR$. 1. There are non\dash zero vectors $\alpha_s\in V^*$, 
$\check\alpha_s\in V$ such that 
\begin{equation*}
   s(v) = v - \langle \alpha_s,v\rangle \check\alpha_s
\quad\text{for }v\in V.
\end{equation*} 
The vectors $\alpha_s$, $\check\alpha_s$ are defined up to a
simultaneous rescaling which leaves $\check\alpha_s \tensor \alpha_s$ fixed. 

2. For any $g\in G$, the element $gsg^{-1}$ is also a complex
reflection, and 
\begin{equation*}
     \check\alpha_{gsg^{-1}}\tensor\alpha_{gsg^{-1}} =
      g(\check\alpha_s) \tensor g(\alpha_s).
\qed
\end{equation*}
\end{lemma}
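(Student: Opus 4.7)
The plan is as follows. For part 1, observe that $s\in\PR$ means the subspace $(1-s)V$ of $V$ is one-dimensional; I pick any non-zero vector $\check\alpha_s$ spanning it. Then $v\mapsto(1-s)v$ takes values in $\field\check\alpha_s$, so there is a unique linear functional $\alpha_s\in V^*$ with $(1-s)v=\langle\alpha_s,v\rangle\check\alpha_s$ for all $v\in V$; rearranging gives the claimed formula $s(v)=v-\langle\alpha_s,v\rangle\check\alpha_s$. The functional $\alpha_s$ is non-zero since $(1-s)V\ne 0$. For the rescaling clause, any other valid pair $(\check\alpha_s',\alpha_s')$ must satisfy $\check\alpha_s'=\lambda\check\alpha_s$ for some $\lambda\in\field^\times$ (both vectors span $\im(1-s)$), and substituting back into the defining formula forces $\alpha_s'=\lambda^{-1}\alpha_s$; hence $\check\alpha_s'\tensor\alpha_s'=\check\alpha_s\tensor\alpha_s$ in $V\tensor V^*$.

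For part 2, I first note that conjugation preserves rank: $\im(1-gsg^{-1}) = g\cdot\im(1-s)$ is one-dimensional, so $gsg^{-1}\in\PR$. I then compute directly using the formula from part 1:
\begin{equation*}
(gsg^{-1})(v) = g\bigl(g^{-1}v - \langle\alpha_s, g^{-1}v\rangle\check\alpha_s\bigr)
= v - \langle\alpha_s, g^{-1}v\rangle\, g(\check\alpha_s)
= v - \langle g(\alpha_s), v\rangle\, g(\check\alpha_s),
\end{equation*}
where the last equality uses the contragredient action $\langle g(\alpha),v\rangle=\langle\alpha,g^{-1}v\rangle$ on $V^*$ (which matches the general $H$-module convention $\langle h\act f,v\rangle=\langle f,Sh\act v\rangle$ for $H=\field G$ with $Sg=g^{-1}$). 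Applying the uniqueness-up-to-rescaling clause of part 1 to the reflection $gsg^{-1}$ gives $\check\alpha_{gsg^{-1}}\tensor\alpha_{gsg^{-1}}=g(\check\alpha_s)\tensor g(\alpha_s)$, as required.

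The lemma is essentially formal linear algebra, so I do not anticipate any real obstacle; the only point worth flagging is conceptual rather than technical, namely that one must phrase the uniqueness in part 1 at the level of the tensor $\check\alpha_s\tensor\alpha_s$ (not of the individual vectors) so that the $\field^\times$ gauge in the choice of $\check\alpha_s$ is absorbed. Once this is done, part 2 is an immediate application of the formula together with this uniqueness, and no choice of normalisation for $\alpha_s$, $\check\alpha_s$ needs to be made.
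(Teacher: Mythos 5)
Your proof is correct and complete; the paper states this lemma as an "easy observation" with the proof omitted (note the \qed inside the statement), and your argument is precisely the routine linear-algebra verification that is being left to the reader. Your remark about phrasing the uniqueness at the level of the tensor $\check\alpha_s\tensor\alpha_s$, and your check that the contragredient action on $V^*$ matches the paper's convention $\langle h\act f,v\rangle=\langle f,Sh\act v\rangle$ with $Sg=g^{-1}$, are both consistent with how the lemma is used later in Section 6.
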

We will refer to $\alpha_s$ (resp.\ $\check\alpha_s$) as the \emph{root}
(resp.\ the \emph{coroot}) of a complex reflection~$s$. 
Note that if $V\tensor V^*$ is identified with the algebra $\End(V)$, 
the tensor $\check\alpha_s \tensor \alpha_s$ is \emph{equal}
to the endomorphism $1-s$ of $V$.  
\begin{proposition}
\label{prop:c_refl}
Let $\delta\colon V \to \field G \tensor V$ be a
Yetter\dash Drinfeld quasicoaction on $V$.
The algebra $U(V,\delta)$ is commutative, 
if and only if $\delta$ is of the form 
\begin{equation*}
      \delta(v) = t\cdot 1\tensor v + \sum_{s\in\PR}
          s\tensor \langle \alpha_s, v\rangle b_s,
\end{equation*}
for some constant $t\in \field$ and vectors $b_s\in V$.  
\end{proposition}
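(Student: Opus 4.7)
The plan is to combine Lemma~\ref{lem:comm_eq} with a case analysis on $h\in G$, reducing first to the subcase of injective $\delta$ via irreducibility of $V$. Writing $\delta(u) = \sum_{h\in G} h\tensor L_h(u)$ as in Lemma~\ref{lem:group_qc}, the commutativity of $U(V,\delta)$ is equivalent, by Lemma~\ref{lem:comm_eq}, to
\begin{equation*}
\delta((1-h)v)\tensor L_h(w) = \delta((1-h)w)\tensor L_h(v) \qquad (\ast)
\end{equation*}
for every $h\in G$ and $v,w\in V$. Since $\delta$ is $G$\dash equivariant (with respect to the adjoint action on $\field G$ and the original action on $V$), $\ker\delta$ is a $G$\dash submodule of $V$. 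Irreducibility of $V$ then forces either $\delta=0$, in which case all $L_h$ vanish and the formula holds with $t=0$ and all $b_s=0$, or $\delta$ injective, which I assume from here on.

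The core tool is an elementary tensor observation: if $A\colon V\to X$ and $B\colon V\to Y$ are linear maps with $A(v)\tensor B(w)=A(w)\tensor B(v)$ for all $v,w$, then either $A=0$, $B=0$, or there exist $\lambda\in V^*$, $a_0\in X$, $b_0\in Y$ with $A(v)=\lambda(v)a_0$ and $B(v)=\lambda(v)b_0$. This is proved by fixing $w_0$ with $B(w_0)\ne 0$, choosing $\phi\in Y^*$ with $\phi(B(w_0))=1$, applying $\id\tensor\phi$ to obtain $A(v)=\phi(B(v))A(w_0)$, and feeding the result back into the original equation. I would apply this with $A=\delta\circ(1-h)$ and $B=L_h$. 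For $h=1$, $(\ast)$ imposes no constraint but the Yetter\dash Drinfeld condition yields $L_1\in\End_G(V)$, so (absolute) irreducibility of $V$ together with Schur's lemma gives $L_1=t\cdot\id_V$ with $t\in\field$. For $h\ne 1$ with $\dim(1-h)V\ge 2$, injectivity of $\delta$ makes $A$ have rank at least two, so the tensor lemma forces $L_h=0$. For $h=s\in\PR$, $(1-s)V=\field\check\alpha_s$ gives $A(v)=\langle\alpha_s,v\rangle\delta(\check\alpha_s)$ of rank one with $\lambda=\alpha_s$, and the lemma yields $L_s(v)=\langle\alpha_s,v\rangle b_s$ for some $b_s\in V$. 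Summing these contributions produces the asserted expression for $\delta$.

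For the converse, I would substitute the stated form of $\delta$ into $(\ast)$: contributions indexed by non\dash reflections vanish identically, for $h=1$ both sides are zero, and for each reflection $s$ both sides of $(\ast)$ collapse to $\langle\alpha_s,v\rangle\langle\alpha_s,w\rangle\,\delta(\check\alpha_s)\tensor b_s$, which is manifestly symmetric in $v$ and $w$. The main obstacle is the elimination of non\dash reflection contributions in the forward direction: without injectivity of $\delta$ it is genuinely possible for $\delta\circ(1-h)$ to have rank at most one even when $\dim(1-h)V\ge 2$, so the essential maneuver is to invoke irreducibility of $V$ at the outset to force the dichotomy $\delta=0$ or $\delta$ injective, thereby outlawing such rank degenerations.
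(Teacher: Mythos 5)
Your proof is correct and follows essentially the same route as the paper's: both reduce to the case of injective $\delta$ via irreducibility of $V$, then analyse the tensor identity from Lemma~\ref{lem:comm_eq} case by case according to the rank of $1-h$ (with Schur's lemma handling $h=1$ and the rank-one case producing the complex reflections). The only difference is presentational — the paper first cancels $\delta$ from the equation and leaves the rank dichotomy as "easy to see", whereas you keep $\delta\circ(1-h)$ and spell out the elementary tensor lemma explicitly; your parenthetical caveat about absolute irreducibility for Schur's lemma is a fair point that the paper glosses over.
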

\begin{proof}
As $\delta\colon V \to Y(V)$ is a map of $G$\dash modules (Lemma
\ref{lem:Y(V)}), $\ker\delta$ is a $G$\dash submodule of $V$.
By irreducibility of $V$, the quasicoaction $\delta$ is either zero or
injective.  In the trivial case $\delta=0$ one has $U(V,0)=\field$,
and we may put $t=0$, $b_s=0$ for all $s\in \PR$. 

We now assume that $\delta$ is injective. Then $\delta()$ may be dropped from
the commutativity equation in Lemma \ref{lem:comm_eq}. 
If $\delta(v)=\sum_{h\in G}h\tensor L_h(v)$ where $L_h\in \End(V)$, 
the commutativity equation for fixed $s\in G$ is now as follows:
\begin{equation*}
  (v-s(v)) \tensor L_s(w) = (w-s(w)) \tensor L_s(v).
\end{equation*} 
It is easy to see that this tensor equation can hold for arbitrary $v,w\in V$
only if one of the following holds:
\begin{equation*}
(a)\ s=1; \quad\text{or}\quad
(b)\ L_s=0; \quad\text{or}\quad
(c)\ \dim(1-s)V=\dim L_s(V)=1.
\end{equation*}
Condition $(a)$ and $(b)$ are sufficient for the commutativity
equation to hold. With regard to $(a)$, 
it follows from  Lemma \ref{lem:group_qc} that
the map $L_1 \in \End(V)$ satisfies $L_1(g(v))=g(L_1(v))$. 
Since $V$ is irreducible, by Schur's lemma $L_1=t\cdot \id_V$
for some $t\in \field$.

In $(c)$, the element $s\in G$ must be a complex reflection,
and the commutativity equation rewrites as
\begin{equation*}
  \langle\alpha_s, v\rangle \check\alpha_s \tensor L_s(w) 
  = \langle\alpha_s, w\rangle \check\alpha_s   \tensor L_s(v).
\end{equation*} 
This holds, if and only if $L_s(v)=\langle\alpha_s, v\rangle b_s$ for
some vector $b_s\in V$.
\end{proof}
If $\langle \alpha_s,\check\alpha_s\rangle\ne 0$, 
one can show, using the $G$\dash equivariance of $\delta$, 
that the vectors $b_s$ must be proportional to $\check\alpha_s$. 
But if $\mathit{char}\ \field>0$, it may happen that $\langle
\alpha_s,\check\alpha_s\rangle= 0$ for a pseudoreflection $s$. 
Nevertheless, the next Theorem will show that any possible pathological
solutions are eliminated if the algebra $U(V^*,\delta)$ is also assumed to be
commutative.

\subsection{Rational Cherednik algebras}

We denote by $\field(\PR)^G$ the space of $\field$\dash
valued functions $c$ on the set $\PR$ of complex reflections, 
$s\mapsto c_s$, such that 
$c_{gsg^{-1}}=c_s$ for any $g\in G$. 

\begin{theorem}
\label{th:comm}
Let $G\le \mathit{GL}(V)$ be an irreducible linear group and $\PR$ be
the set of all complex reflections in $G$. 

1. There exists a $(V,\delta)$\dash braided double 
$\Uminus \lcprod \field G \rcprod \Uplus$ with 
commutative algebras $\Uminus$ and $\Uplus$, 
if and only if the quasicoaction $\delta$ is
\begin{equation*}
  \delta (v) = \delta_{t,c} (v) := 
  1\tensor tv + \sum\nolimits_{s\in\PR} 
c_s s\tensor (v-s(v)),
\end{equation*}
where $t\in \field$ and $c\in\field(\PR)^G$. 

2. For any $t$ and $c$ as above, there exists a
   $(V,\delta_{t,c})$\dash 
   braided double 
   $H_{t,c}(G)$ of the form $S(V)\lcprod \field G \rcprod
   S(V^*)$. 
\end{theorem}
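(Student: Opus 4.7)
My plan for Part~1 is to combine Proposition~\ref{prop:c_refl} with the left--right symmetry of \ref{lr} applied to the right quasi\dash YD module $(V^*, \delta_r)$, then match the two resulting normal forms. For Part~2 my plan is to directly verify, via the commutator criterion of Lemma~\ref{triang_ideals_bdouble}, that the symmetrisation ideals define a triangular ideal in the free braided double $\D(V, \delta_{t,c})$.

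Proposition~\ref{prop:c_refl} gives the normal form
\begin{equation*}
\delta(v) = t \cdot 1 \tensor v + \sum_{s \in \PR} s \tensor \langle \alpha_s, v\rangle b_s
\end{equation*}
with $t\in\field$ and $b_s \in V$. Applying the right\dash handed version of the same Proposition to $(V^*, \delta_r)$ --- noting that $1-s$ acts on $V^*$ with image $\field\alpha_s$ and non\dash fixed direction detected by $\langle f, \check\alpha_s\rangle$ --- yields
\begin{equation*}
\delta_r(f) = t' \, f \tensor 1 + \sum_{s\in\PR} \langle f,\check\alpha_s\rangle B_s \tensor s
\end{equation*}
with $t' \in \field$ and $B_s \in V^*$. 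On the other hand, computing $\delta_r$ directly from $\delta$ via the identity $\beta(f,v) = \langle f^{[0]}, v\rangle f^{[1]} = v^{[-1]}\langle f, v^{[0]}\rangle$ gives
\begin{equation*}
\delta_r(f) = t f \tensor 1 + \sum_{s\in \PR} \langle f, b_s\rangle \alpha_s \tensor s.
\end{equation*}
Matching the two expressions forces $t = t'$ and $\langle f, b_s\rangle \alpha_s = \langle f,\check\alpha_s\rangle B_s$ for all $f$; since $\alpha_s \ne 0$, this pins down $B_s = c_s \alpha_s$ and $b_s = c_s \check\alpha_s$ for a single scalar $c_s$. $G$\dash equivariance of $\delta$ (Lemma~\ref{lem:group_qc}) combined with Lemma~\ref{lem:roots}(2) then yields $c_{gsg^{-1}} = c_s$, i.e.\ $c \in \field(\PR)^G$; since $\langle \alpha_s, v\rangle \check\alpha_s = v - s(v)$, the coaction takes the advertised form $\delta_{t,c}$. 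Conversely, any $\delta_{t,c}$ is trivially a Yetter\dash Drinfeld quasicoaction by inspection.

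For Part~2, let $I^- \subset T^{>0}(V)$ and $I^+ \subset T^{>0}(V^*)$ denote the symmetrisation ideals generated by $vw-wv$ and $fg-gf$; both are $G$\dash invariant two\dash sided ideals. By Lemma~\ref{triang_ideals_bdouble} it suffices to verify the commutator conditions $[f, I^-] \subseteq I^- \tensor \field G$ and $[I^+, v] \subseteq \field G \tensor I^+$. The Leibniz rule of Lemma~\ref{Leibn}, combined with $G$\dash invariance, reduces this to the quadratic generators, and a direct computation in $\D(V,\delta_{t,c})$ --- using the semidirect product relation $g \cdot x = g(x) \cdot g$ --- produces
\begin{equation*}
[f, vw - wv] = \sum_{s\in\PR} c_s \langle f,\check\alpha_s\rangle \bigl(\langle \alpha_s, v\rangle (s(w)-w) + \langle \alpha_s, w\rangle (v - s(v))\bigr) \tensor s.
\end{equation*}
The $t$\dash term has vanished because $1\in G$ acts trivially, and the bracket is zero since both $s(w)-w$ and $v - s(v)$ are scalar multiples of $\check\alpha_s$ whose coefficients cancel. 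The symmetric identity $[fg-gf, v] = 0$ holds by the same argument on the $V^*$ side. Hence $I^- \field G \, T(V^*) + T(V)\field G\, I^+$ is a triangular ideal, and the quotient $H_{t,c}(G)$ is a $(V,\delta_{t,c})$\dash braided double with triangular decomposition $S(V) \lcprod \field G \rcprod S(V^*)$.

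The main obstacle is the fine point in Part~1: in characteristic zero, Schur's lemma together with $G$\dash equivariance of $\delta$ alone would force $b_s \parallel \check\alpha_s$ via the nondegeneracy $\langle \alpha_s,\check\alpha_s\rangle \ne 0$, but this argument breaks in positive characteristic when $\langle \alpha_s,\check\alpha_s\rangle = 0$, as the paper warns. Invoking commutativity of \emph{both} $U(V,\delta)$ and $U(V^*,\delta)$, encoded through the dual formula for $\delta_r$, is precisely what pins $b_s$ to the coroot direction in all characteristics and yields the characteristic\dash free classification.
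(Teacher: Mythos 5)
Your proposal is correct and follows essentially the same route as the paper: Part~1 applies Proposition~\ref{prop:c_refl} to $V$ and to the dual module $V^*$ (using that $s|_{V^*}$ is a complex reflection with root $\check\alpha_s$ and coroot $\alpha_s$) and matches the two normal forms for $\delta_r$ to force $b_s=c_s\check\alpha_s$, exactly as in the paper's ``left\dash good / right\dash good'' argument, with $G$\dash equivariance then giving $c_{gsg^{-1}}=c_s$. The only difference is cosmetic and occurs in Part~2, where you establish $\widetilde{[2]}_{\delta_{t,c}}(\wedge^2 V)=0$ by direct computation of $[f,vw-wv]$, whereas the paper deduces it from the commutativity of $U(V,\delta_{t,c})$ (Part~1) together with the injectivity of $\delta_{t,c}$ coming from irreducibility of $V$, before invoking Corollary~\ref{cor:nonminimal}; both verifications are valid.
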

The Theorem leads to the following
\begin{definition}
For $(t,c)\in \field\times\field(\PR)^G$, the 
braided double $H_{t,c}(G)$, given by the Theorem, 
is called a \emph{rational Cherednik
  algebra} of $G$. 
\end{definition}
The defining relations in $H_{t,c}(G)$ thus are:
\begin{align*}
   &[v,v']=0, \quad [f,f']=0, \quad gv=g(v)g, \quad gf=g(f)g, 
\\
   &[f,v] = t\langle f,v\rangle\cdot 1 + 
   \sum_{s\in\PR} c_s \langle f, (1-s)v \rangle \cdot s 
    \quad \in\field G 
\end{align*}
for $v,v'\in V$, $g\in G$, $f,f'\in V^*$. 
\begin{remark}
(a) Rational Cherednik algebras were introduced by Etingof and Ginzburg 
   in \cite{EG} as symplectic reflection algebras for the symplectic
   space $V\oplus V^*$. It is proved in \cite{EG} (for finite $G$ and
   in characteristic $0$) that $H_{t,c}(G)$ are the only braided doubles of the form
   $S(V)\lcprod \field G \rcprod S(V^*)$. We obtain the same result 
   (for any $|G|$ and $\mathit{char}\ \field$) using
   a different approach via braided doubles and Nichols\dash
   Woronowicz algebras. With our construction, 
   we get ``for free'' an embedding of $H_{t,c}(G)$ in a braided
   Heisenberg double, see \ref{embed_bhd}.  

(b) It is clear that essentially, rational Cherednik algebras are
defined over groups generated by complex reflections
(pseudoreflections). There is classification of such groups, both in
charactersistic zero and in characteristic $p$ case.  
\end{remark}
Before proving the Theorem, let us give an example of rational
Cherednik algebras over an infinite group. 
\begin{example}[$U_q(\mathit{sl}_2)$ and quantum Smith algebras]
Let $V=\field x$ be a one\dash dimensional space.
Let $q\in\field$ be not a root of unity, and denote by $G_q$ the
infinite cyclic subgroup of $\mathit{GL}(\field x)$ generated by $q$. 
Write the group algebra $\field G_q$ as $\field \{z^n:
n\in \mathbb{Z}\}$; it acts on $\field x$  via $z(x)=qx$.
Let $\field y$ be the dual space to $\field x$. 

All $z^n$, $n\ne 0$, trivially are complex reflections. 
To any sequence $(c_n)_{n\in\mathbb{Z}}$, where all but a finite
number of entries are zero, there is associated a 
rational Cherednik algebra
$H_{c}(G_q)$ with relations 
\begin{equation*}
     zx = qxz, \quad zy = q^{-1}yz, \quad
     [y,x]=\sum\nolimits_{n\in \mathbb{Z}} c_n z^n
\end{equation*}
(note that the role of the parameter $t$ is played here by $c_0$). 
These may be viewed as ``quantum Smith algebras'' (recall Smith
algebras from Introduction, \ref{bdintro}). 
A particular case when $[y,x]=z-z^{-1}$ gives the quantised universal
enveloping algebra $U_q(\mathit{sl}_2)$.
 
A version of $H_c(G_q)$ over $\field=\mathbb C$ where 
the commutator $[y,x]$ is an infinite power
series in $z$ 
might be interesting from the viewpoint of physical applications.  
\end{example}

\begin{proof}[Proof of Theorem \ref{th:comm}]
1. Call a Yetter\dash Drinfeld quasicoaction $\delta$ on $V$
   ``left\dash good'' (resp.\ ``right\dash good'') if there exists a
   $(V,\delta)$\dash braided double $\Uminus \lcprod \field
   G \rcprod \Uplus$ 
   with commutative $\Uminus$ (resp.\ with commutative $\Uplus$). 
By Proposition \ref{prop:c_refl}, any left\dash good $\delta$ has the
form  
$\delta(v)=1\tensor tv + \sum_{s\in\PR} 
s\tensor \langle \alpha_s, v\rangle b_s$ for some vectors $b_s\in V$. 
The corresponding right\dash hand  quasicoaction $\delta_r\colon V^* \to
   V^*\tensor \field G$ on the dual $G$\dash module $V^*$ is given by 
$\delta_r(f) = tf\tensor 1 + \sum_s \langle f,b_s\rangle
   \alpha_s\tensor s$. 
Note that a $s\in G$ is a complex reflection on $V$, if and only if
   $s$ is a complex reflection on $V^*$; 
furthermore, the complex reflection $s|_{V^*}$ has
$\check\alpha_s$ as the root and $\alpha_s$ as the coroot.
 
We may now apply a 
straightforward analogue of Proposition \ref{prop:c_refl} for
the dual module $V^*$ and the quasicoaction $\delta_r$. 
It follows that a left\dash good $\delta$ is also right\dash
good, if and only if for any complex reflection $s\in G$ 
the vector $b_s$ is proportional to 
the root $\check\alpha_s$ of $s|_{V^*}$. That is, $b_s=c_s
\check\alpha_s$ where $c_s\in \field$ are some constants, so that 
$\langle\alpha_s,v\rangle b_s=c_s(v-s(v))$. 

We have shown that a quasicoaction $\delta$ is left\dash good and right\dash
good, if and only if $\delta = \delta_{t,c}$ where 
$t$ is a constant and $c$ is a scalar function on $\PR$.
It remains to check what maps 
$\delta_{t,c}$ are $G$\dash equivariant, which by Lemma
\ref{lem:group_qc} means $g\circ \delta_{t,c} \circ g^{-1}=\delta_{t,c}$. 
The left\dash hand side equals
$1\tensor tv+\sum_s c_s \cdot gsg^{-1} \tensor (v-gsg^{-1}(v))$. 
Hence $\delta_{t,c}$ is a Yetter\dash Drinfeld quasicoaction on
$V$, if and only if $c_{gsg^{-1}}=c_s$ for
all $g\in G$, $s\in \PR$. 

2. Denote by $\wedge^2 V$ the subspace of $V^{\tensorpow 2}$ spanned by 
$v\tensor w - w\tensor v$ for all $v,w\in V$. 
Let 
\begin{equation*}
   I^- = \lgen \wedge^2 V   \rgen \subset T(V),
\qquad
   I^+ = \lgen \wedge^2 V^* \rgen \subset T(V^*)
\end{equation*}
be the ideals of definition of the symmetric algebras $S(V)$ and
$S(V^*)$, respectively. We have to show that 
$I^-\tensor \field G \tensor T(V^*)$ and $T(V)\tensor \field G \tensor
I^+$ are triangular ideals in $\D((V,\delta_{t,c}),\field G)$. 

By part 1, the algebra $T(V)/I(V,\delta_{t,c})$ is commutative, therefore
$\wedge^2 V\subset I(V,\delta_{t,c})$. By Theorem \ref{ker_qbfact}, this is
equivalent to $\widetilde{[2]}!_{\delta_{t,c}} (\wedge^2 V)=0$. 
By definition of the quasibraided factorial,
$\widetilde{[2]}!_{\delta_{t,c}} = (\delta_{t,c}\tensor \id_{\field G 
  \tensor V}) \widetilde{[2]}_{\delta_{t,c}}$. By irreducibility of $V$, 
the quasicoaction $\delta_{t,c}$ is injective 
(unless we are in the trivial case $t=0$, $c_s=0$ for all $s$).  
Hence $\widetilde{[2]}_{\delta_{t,c}} (\wedge^2 V)=0$, so by Corollary
\ref{cor:nonminimal} the ideal $I^-\tensor \field G \tensor T(V^*)$
is triangular. The argument for $I^+$ is analogous.   
\end{proof}

\subsection{Minimality of $H_{t,c}(G)$ for $t\ne 0$}

We would like to know if the rational Cherednik
algebra  $H_{t,c}(G)\cong S(V)\lcprod \field G \rcprod S(V^*)$ 
is a minimal double (i.e., has no proper
quotient doubles). To investigate this, we are going to use the minimality
criterion from Corollary~\ref{cor:criterion}. It is not difficult to
deduce the following expression for the commutator of $\phi\in S(V^*)$ and $v\in V$ in
$H_{t,c}(G)$:
\begin{equation*}
     [\phi,v] = t\cdot 1\tensor \frac{\partial \phi}{\partial v} + \sum\nolimits_{s\in \PR}
             c_s \langle \alpha_s , v \rangle \cdot s \tensor 
             \frac{\phi-s(\phi)}{\alpha_s}. 
\end{equation*}
This looks very similar to the celebrated \emph{Dunkl operator} acting
on $\phi$ (see e.g.\ \cite{DO} for the complex reflection group case). 
However, note that the right\dash hand side lies in $\field G \tensor
S(V^*)$. Here, $\frac{\partial \phi}{\partial v}$ stands for the
derivative of the polynomial $\phi$ along the vector $v$, and 
$\frac{\phi-s(\phi)}{\alpha_s}$ is the divided difference operator,
sometimes called the BGG-Demazure operator.

We first determine minimality of $H_{t,c}(G)$ for $t\ne 0$. 
\begin{proposition} Let  $t\ne 0$. If the characteristic of $\field$ is zero, 
$H_{t,c}(G)$ is a minimal double. If $\mathrm{char}\ \field>0$,
$H_{t,c}(G)$ is not a minimal double.
\end{proposition}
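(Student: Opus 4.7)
The plan is to invoke the minimality criterion of Corollary~\ref{cor:criterion}. Since $H_{t,c}(G)=S(V)\lcprod \field G \rcprod S(V^*)$ is a graded braided double with $V,V^*$ in degree one and $\field G$ in degree zero, minimality is equivalent to the two conditions there: (b) if $\phi\in S(V^*)$ satisfies $[\phi,v]=0$ for all $v\in V$ then $\phi\in\field$, and (a) the symmetric statement with $V$ and $V^*$ swapped. I will focus on (b); (a) will follow by the same argument applied to the analogous commutator formula for $[f,b]$. The key observation is that in the displayed expression
\begin{equation*}
[\phi,v] = t\cdot 1\tensor\frac{\partial\phi}{\partial v} + \sum_{s\in\PR} c_s\langle\alpha_s,v\rangle\cdot s\tensor \frac{\phi-s(\phi)}{\alpha_s},
\end{equation*}
the identity $1\in G$ and the complex reflections $s\in\PR$ (which satisfy $s\ne 1$ by definition) are pairwise distinct group elements, so $[\phi,v]=0$ in $\field G\tensor S(V^*)$ is equivalent to the separate vanishing of the coefficient of each group element. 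In particular the coefficient of $1$ gives $t\cdot\frac{\partial\phi}{\partial v}=0$ for every $v\in V$.

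In characteristic zero, the hypothesis $t\ne 0$ immediately yields $\frac{\partial\phi}{\partial v}=0$ for all $v\in V$, which forces $\phi$ to be a scalar. Hence condition (b) holds, and by the symmetric argument so does (a); thus $H_{t,c}(G)$ is a minimal double. Note that the reflection terms in the commutator play no role here --- the partial-derivative part alone is already enough to kill positive-degree polynomials.

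In characteristic $p>0$, this implication fails because $p$-th powers have vanishing partial derivatives. Let $G'=\langle s\in\PR:c_s\ne 0\rangle$; this is a finite subgroup of $G$ (finite since the commutator involves only finitely many $s$). For any nonzero $f\in V^*$, set $\psi=\prod_{g\in G'}g(f)\in S(V^*)^{G'}$, a nonconstant $G'$-invariant, and put $\phi=\psi^p$. Then $\frac{\partial\phi}{\partial v}=p\psi^{p-1}\frac{\partial\psi}{\partial v}=0$, killing the coefficient of $1$. Moreover, for every $s\in\PR$ with $c_s\ne 0$ we have $s(\phi)=s(\psi)^p=\psi^p=\phi$, so $\frac{\phi-s(\phi)}{\alpha_s}=0$ and the coefficient of $s$ vanishes; for $s$ with $c_s=0$ the coefficient is trivially zero. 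Thus $\phi\in S(V^*)_{>0}$ commutes with every $v\in V$ while $\phi\notin\field$, violating (b), and $H_{t,c}(G)$ is not minimal. The only delicate point is the finiteness of $G'$ and the resulting existence of a nonconstant $G'$-invariant $\psi$; this is where one needs to ensure that $c$ has finite support on $\PR$, which is built into the definition of the Cherednik algebra.
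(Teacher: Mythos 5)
Your argument follows the same route as the paper's: both halves are read off from the minimality criterion of Corollary~\ref{cor:criterion} applied to the displayed formula for $[\phi,v]$, and your characteristic-zero half --- isolate the coefficient of the identity element of $G$, which is $t\cdot\partial\phi/\partial v$, and conclude from $t\ne 0$ that $\phi$ is constant --- is exactly the paper's argument made slightly more explicit. In characteristic $p$ the two proofs choose different witnesses, and yours is the more careful one. The paper takes $\phi$ to be ``a $G$-invariant of degree $pr$'' and asserts that the differential part of $[\phi,v]$ then vanishes; but a homogeneous invariant of degree divisible by $p$ need not have vanishing directional derivatives (for $\Symm_3$ on its reflection representation in characteristic $2$, the degree-$2$ invariant $e_2$ has a non-zero derivative along suitable $v\in V$), so what is really needed is a $p$-th power of an invariant --- which is precisely your $\phi=\psi^p$. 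Your observation that invariance is only required under those $s$ with $c_s\ne 0$ is a further, harmless refinement. The one imprecise step is the parenthetical claim that $G'=\langle s\in\PR : c_s\ne 0\rangle$ is finite because it has finitely many generators: a subgroup of $\mathit{GL}(V)$ generated by finitely many reflections can well be infinite. This is immaterial in the intended setting where $G$ itself is finite (the paper's own proof likewise needs $S(V^*)^G\ne\field$ to produce any non-constant invariant, and the statement can actually fail for infinite $G$), but to make the argument self-contained you should either invoke finiteness of $G$ explicitly or take $\psi$ to be any non-constant $G$-invariant.
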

\begin{proof}
Let $\mathit{char}\ \field=0$. As there is no non\dash constant 
polynomial $\phi\in S(V^*)$ such that $\frac{\partial \phi}{\partial
v}=0$ for any $v\in V$, the commutator $[\phi,v]$ cannot be zero for
all $v\in V$. The same reasoning applies to commutators $[f,b]$ where
$f\in V^*$, $b\in S(V)$. Thus, by Corollary~\ref{cor:criterion},
$H_{t,c}(G)$ is a minimal double. 

If $\mathit{char}\ \field=p>0$, take $\phi$ to be a $G$\dash invariant
in $V^{*\tensorpow pr}$ for some $r$. Then both the differential and the
difference parts of $[\phi,v]$ vanish for all $v$, so by Corollary~\ref{cor:criterion}
$H_{t,c}(G)$ is not a minimal double.
\end{proof}

\begin{remark}
In positive characteristic, one may consider standard modules
$\{\overline{M}_\rho : \rho \in \mathrm{Irr}(G) \}$ for
$H_{t,c}(G)$ as was suggested in \ref{standmod}. One has
$\overline{M}_\rho \cong U(V,\delta_{t,c})\tensor \rho$, where
$U(V,\delta_{t,c})$ is a proper quotient of the polynomial algebra
$S(V)$. One should study the dimension of these standard
$H_{t,c}(G)$\dash modules (it may be finite) and their
reducibility. In a particular rank $1$ case this was done by Latour
\cite{La}. 
\end{remark}

\subsection{Restricted Cherednik algebras}

The algebra $H_{0,c}(G)$ is never a minimal double. In fact, a more
appropriate object from the point of view of minimality is a
finite\dash dimensional quotient double of $H_{0,c}(G)$ called
\emph{restricted Cherednik algebra}, the definition of which we now
recall. We consider  only the most familiar case, where $\field = \mathbb
 C$ and $G$ is a complex reflection group.

Let $S(V)^{G}_+\subset S(V)$ be the set of $G$\dash
invariant polynomials with zero constant term. The algebra
$S(V)_G=S(V)/\lgen S(V)^{G}_+ \rgen$ is termed \emph{the coinvariant
  algebra} of $G$. The algebra
\begin{equation*}
     \overline{H}_{0,c}(G)= H_{0,c}(G)/\lgen S(V)^{G}_+, S(V^*)^{G}_+
     \rgen \cong S(V)_G \lcprod \mathbb{C}G \rcprod S(V^*)_G
\end{equation*}
is the restricted Cherednik algebra of $G$. 
``Baby Verma modules'' for $\overline{H}_{0,c}(G)$ are important for
the representation theory of the rational Cherednik algebra
$H_{0,c}(G)$ at $t=0$; they were introduced and studied by Gordon
\cite{Go}.

\begin{proposition}
\label{prop:min_restr}
Let $\field=\mathbb C$ and $G\le \mathit{GL}(V)$ be a complex
reflection group. If $c_s\ne 0$ for all $s\in \PR$, the algebra
$\overline{H}_{0,c}(G)$ is a minimal double.
\end{proposition}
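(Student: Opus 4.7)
The plan is to invoke the minimality criterion of Corollary~\ref{cor:criterion} applied to the graded double $\overline{H}_{0,c}(G) = S(V)_G \lcprod \mathbb C G \rcprod S(V^*)_G$. Two conditions need to be checked: (a) every $b \in S(V)_G$ with $[f,b] = 0$ for all $f \in V^*$ must be a scalar, and symmetrically (b) every $\phi \in S(V^*)_G$ with $[\phi,v] = 0$ for all $v \in V$ must be a scalar. The defining relations of $H_{0,c}(G)$ are manifestly symmetric under the duality $V \leftrightarrow V^*$, $\alpha_s \leftrightarrow \check\alpha_s$, so it suffices to establish~(b).

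Taking $t=0$ in the commutator formula for $[\phi,v]$ recalled above and projecting it to the restricted algebra gives
\begin{equation*}
[\phi,v] \;=\; \sum_{s\in\PR} c_s\, \langle \alpha_s, v\rangle \cdot s \tensor \partial_s(\phi) \;\in\; \mathbb C G \tensor S(V^*)_G,
\end{equation*}
where $\partial_s(\phi) := (\phi - s(\phi))/\alpha_s$ is the BGG--Demazure operator. I would then observe that $\partial_s$ descends to the coinvariant algebra $S(V^*)_G$: since $s$ fixes $S(V^*)^G$, the twisted Leibniz identity $\partial_s(fg) = \partial_s(f)\,g + s(f)\,\partial_s(g)$ shows that the ideal $\lgen S(V^*)^G_+\rgen$ is $\partial_s$\dash stable. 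The reflections $s\in\PR$ are pairwise distinct, hence linearly independent in $\mathbb C G$, and each root $\alpha_s$ is a nonzero linear form on $V$; so the hypothesis $[\phi,v]=0$ for all $v$ forces $c_s\, \partial_s(\phi) = 0$ in $S(V^*)_G$ for every $s\in\PR$.

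The assumption $c_s \ne 0$ then yields $\partial_s(\phi)\in\lgen S(V^*)^G_+\rgen$ and hence $\phi - s(\phi) = \alpha_s\,\partial_s(\phi) \in \lgen S(V^*)^G_+\rgen$ for each $s \in \PR$. Thus the class of $\phi$ in $S(V^*)_G$ is fixed by every complex reflection, and therefore by all of $G$. By the Chevalley--Shephard--Todd theorem $S(V^*)_G \cong \mathbb C G$ as a $G$\dash module, so $(S(V^*)_G)^G$ is one\dash dimensional and spanned by the class of $1$; consequently $\phi \in \mathbb C$, proving (b). For (a) I would first derive the mirror formula
\begin{equation*}
[f,b] \;=\; \sum_{s\in\PR} c_s\, \langle f, \check\alpha_s\rangle\, \partial^*_s(b) \tensor s \;\in\; S(V)_G \tensor \mathbb C G, \qquad \partial^*_s(b) := (b-s(b))/\check\alpha_s,
\end{equation*}
by an inductive application of the Leibniz rule for $\deriv_f$ of Lemma~\ref{Leibn}, and then repeat the preceding argument verbatim with $V$ in place of $V^*$. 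The only real obstacle is the bookkeeping in the derivation of this mirror formula; conceptually everything rests on the descent of the divided differences to the coinvariant algebras together with the one\dash dimensionality of the $G$\dash invariants of $S(V^*)_G$ and $S(V)_G$.
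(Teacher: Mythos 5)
Your proof is correct and follows essentially the same route as the paper's: apply the minimality criterion of Corollary~\ref{cor:criterion}, use the Dunkl-type commutator formula at $t=0$ to deduce that $[\phi,v]=0$ for all $v$ forces $\phi-s(\phi)=0$ in $S(V^*)_G$ for every $s\in\PR$ (using $c_s\ne 0$), and conclude via the generation of $G$ by $\PR$ and the triviality of $G$\dash invariants in the coinvariant algebra. The paper's proof is just a terser version of yours; your extra details (descent of the Demazure operators to $S(V^*)_G$ and the mirror formula for $[f,b]$) are correct fillings-in of steps the paper leaves implicit.
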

\begin{proof}
For a non\dash constant $\phi\in S(V^*)_G$,  expressions
$\phi-s(\phi)$  cannot  vanish simultaneously for all $s\in \PR$
(otherwise, as $\PR$ generates $G$, 
$\phi$ would be a nontrivial $G$\dash invariant in
$S(V^*)_G$). It follows that $[\phi,v]$ cannot be zero for all $v\in
V$. Likewise for $S(V)_G$. The minimality follows by
Corollary~\ref{cor:criterion}. 
\end{proof}
\begin{remark}
Note that the assumtpion $c_s\ne0$ $\forall s\in\PR$, made in the
Proposition, can be slightly
relaxed and replaced with the following:
complex reflections $s$, for which $c_s\ne 0$, generate $G$. 
\end{remark}

\subsection{The Yetter-Drinfeld module $\Ycr(G)$}

For an irreducible linear group $G \le \mathit{GL}(V)$ 
we obtained a 
complete classification of braided doubles of the form $S(V)\lcprod
\field G \rcprod S(V)$ (Theorem \ref{th:comm}). 
This  was achieved by observing that $V$ must identify,
via the quasicoaction $\delta\colon V \to Y(V)$, 
with an Abelian subspace in the Yetter\dash Drinfeld module $Y(V)$. 
The quasi\dash Yetter\dash Drinfeld module $(V,\delta)$ then becomes a
perfect subquotient of $Y(V)$. 

But in fact, the Abelian subspace $\delta(V)\subset Y(V)$ will always
lie in a proper submodule $\Ycr(G)$ of $Y(V)$. We will now give an
abstract description of the Yetter\dash Drinfeld module  
$\Ycr(G)$.Let us start with ots ``essential part'', denoted by
$\RYcr(G)$. 

\begin{definition}
\label{def:RYcr}
Define $\RYcr(G)$ to be the following  Yetter\dash Drinfeld module over $G$:
\begin{itemize}
\item
$\RYcr(G)$ is a vector space spanned by 
symbols $[s]$, indexed by $s\in \PR$;
\item $G$\dash action on $\RYcr(G)$: for $g\in G$, 
\begin{equation*}
g([s])=\lambda(g,s)[gsg^{-1}],
\end{equation*}
where $\lambda(g,s)\in\field$ is such that $g(\check\alpha_s) = \lambda(g,s)\check\alpha_{gsg^{-1}}$;
\item
$G$\dash grading:
$\RYcr(G) =
  \mathop{\oplus}\limits_{s\in\PR}\RYcr(G)_s$, where 
$ \RYcr(G)_s = \field \cdot [s]$.
\end{itemize}
\end{definition}
\begin{remark}
The function $\lambda\colon G \times \PR\to
\field^\times$ is a $1$\dash cocycle, in the sense 
that $\lambda(gh,s)=\lambda(g,hsh^{-1})\lambda(h,s)$. 
Each of the coroots $\check\alpha_s\in V$ 
is defined only up to  a scalar factor; the $G$\dash module structure
on $\RYcr(G)$ does not depend on a choice of such factors, which
changes $\lambda(g,s)$ by a coboundary.
\end{remark}
\begin{definition}
The Yetter\dash Drinfeld module $\Ycr(G)$ is 
defined as $\Ycr(G)=V_1 \oplus\RYcr(G)$. 
Here $V_1$ is a copy of the module $V$ with the trivial
Yetter\dash Drinfeld structure, given by the coaction $v\mapsto
1\tensor v$. 
\end{definition}
In the next Lemma, a typical element of $\Ycr(G)$ is written as 
$v\oplus \oplus_s a_s [s]$ 
for some $v\in V$ and $a_s\in \field$.
\begin{lemma}
\label{lem:perf}
Let $\delta_{t,c}$ be the quasicoaction on $V$ introduced in
Theorem~\ref{th:comm}. 
The quasi\dash Yetter\dash Drinfeld module $(V,\delta_{t,c})$ is
a perfect subquotient of $\Ycr(G)$ via the maps 
\begin{align*}
&\mu\colon V \to\Ycr(G), \quad \mu(v)=t v \oplus
\mathop{\oplus}\limits_{s\in\PR} c_s \langle \alpha_s,
v\rangle [s] 
\quad\text{and}
\\
&\nu\colon \Ycr(G) \to V,
\quad \nu|_{V_1}=\id_V, 
\quad
\nu([s])=\check\alpha_s.
\end{align*}
\end{lemma}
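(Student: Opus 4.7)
The plan is to prove the lemma in three stages: first, verify that $\mu,\nu$ are $G$\dash module maps satisfying the subquotient identity $\delta_{t,c} = (\id_{\field G}\tensor \nu)\circ \delta_{\Ycr(G)}\circ \mu$; second, realise $\Ycr(G)$ as a Yetter\dash Drinfeld submodule of the universal module $Y(V)$ of Lemma~\ref{lem:Y(V)} via an explicit embedding $\iota$; third, combine this with Theorem~\ref{th:perfect}(1) to upgrade the subquotient to a perfect one.

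The first stage will be a direct calculation. Equivariance of $\nu$ is immediate from $\nu(g[s]) = \lambda(g,s)\check\alpha_{gsg^{-1}} = g(\check\alpha_s) = g(\nu[s])$ together with $\nu|_{V_1}=\id_V$. Equivariance of $\mu$ uses the dual identity $g^{-1}(\alpha_{s'}) = \lambda(g,g^{-1}s'g)\,\alpha_{g^{-1}s'g}$, which follows from Lemma~\ref{lem:roots}, together with $G$\dash invariance of the class function $c$. The subquotient identity then reduces, upon substituting $v-s(v)=\langle\alpha_s,v\rangle\check\alpha_s$, to the defining formula of $\delta_{t,c}$ from Theorem~\ref{th:comm}: the right\dash hand side unfolds to $1\tensor tv + \sum_s c_s\langle\alpha_s,v\rangle s\tensor \check\alpha_s$.

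The key structural ingredient is the map
\[
 \iota\colon \Ycr(G)\to Y(V)=\field G\tensor V, \qquad
 v\in V_1\mapsto 1\tensor v,\quad [s]\mapsto s\tensor \check\alpha_s.
\]
I claim $\iota$ is an injective morphism of Yetter\dash Drinfeld modules. Equivariance on the $\RYcr(G)$\dash summand again reduces to $g(\check\alpha_s)=\lambda(g,s)\check\alpha_{gsg^{-1}}$; compatibility with coactions follows from the fact that, under the coaction $g\tensor v\mapsto g\tensor g\tensor v$ on $Y(V)$, the element $s\tensor \check\alpha_s$ acquires $H$\dash degree $s$, matching the $G$\dash grading of $[s]$ in $\Ycr(G)$. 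A direct check will then yield the crucial factorisation $\delta_{t,c}=\iota\circ \mu$.

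With these inputs the perfectness follows formally. Theorem~\ref{th:perfect}(1) gives $I(V,\delta_{t,c}) = T(\delta_{t,c})^{-1}I(Y(V))$. Since $\iota$ is an injective YD\dash morphism, the braiding on $\Ycr(G)$ is the restriction of that on $Y(V)$; applying Theorem~\ref{th:Nichols} and the evident naturality of the Woronowicz symmetriser under injective morphisms of braided spaces gives $I(\Ycr(G)) = T(\iota)^{-1}I(Y(V))$. Composing these two identities via $\delta_{t,c}=\iota\circ \mu$ yields
\[
 I(V,\delta_{t,c}) \;=\; T(\mu)^{-1}T(\iota)^{-1}I(Y(V)) \;=\; T(\mu)^{-1}I(\Ycr(G)),
\]
which is precisely the perfect subquotient condition of Definition~\ref{def_perfect}. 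The only conceptual step, where care is needed, is verifying that $\iota$ really is a YD\dash morphism; once this is established, perfectness of $V\to \Ycr(G)\to V$ propagates automatically from the perfect subquotient $V\to Y(V)\to V$ produced by Theorem~\ref{th:perfect}.
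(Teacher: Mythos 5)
Your proof is correct and follows essentially the same route as the paper: the paper likewise invokes Theorem~\ref{th:perfect} for $Y(V)$, identifies $\Ycr(G)$ with the Yetter\dash Drinfeld submodule $Y'\subset Y(V)$ spanned by $1\tensor v$ and $s\tensor\check\alpha_s$ (your map $\iota$), and uses the factorisation $\delta_{t,c}=\iota\circ\mu$, $\nu=(\epsilon\tensor\id_V)\circ\iota$ to transport perfectness. Your explicit justification that $I(\Ycr(G))=T(\iota)^{-1}I(Y(V))$ via compatibility of the Woronowicz symmetriser with injective morphisms of braided spaces spells out a step the paper leaves implicit, but the argument is the same.
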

\begin{proof}
By Theorem \ref{th:perfect},
$(V,\delta)$ is a perfect subquotient of the Yetter\dash Drinfeld
module $Y(V)\cong \field G \tensor V $ via the maps $V
\xrightarrow{\delta} Y(V)\xrightarrow{\epsilon\tensor\id_V} V$.  
Denote by $Y'$ the subspace of $Y(V)$ spanned by 
$1\tensor v$ for $v\in V$ and by $s\tensor \check\alpha_s$ for  $s\in
\PR$. It follows from Lemma \ref{lem:roots} and the
definition of the $G$\dash action on $Y(V)$ that $Y'$ is 
a submodule of $Y(V)$; obviously, $Y'$ is a subcomodule of $V$,
hence a Yetter\dash Drinfeld submodule. 
Observe that $\delta_{t,c}(V)\subset Y'$, hence
$(V,\delta_{t,c})$ is a perfect subquotient of $Y'$. 

We identify $\Ycr(G)$ with $Y'$ via a linear isomorphism $i\colon
\Ycr(G) \to Y'$, defined by 
$i(v\oplus\oplus_s a_s [s])
= 1\tensor v + \sum_s a_s s\tensor \check\alpha_s$. The $G$\dash
action on $\Ycr(G)$ is so defined that $i$ is a $G$\dash
module map; $i$ preserves  the $G$\dash grading, hence $i$ is a
Yetter\dash Drinfeld module isomorphism. It remains to note that
$\mu=i^{-1}\circ \delta$ and $\nu=(\epsilon\tensor \id_V)\circ i$,
thus by Remark \ref{rem:comp_perfect} $(V,\delta_{t,c})$ is a perfect
subquotient of $\Ycr(G)$ via the maps $\mu$, $\nu$. 
\end{proof}

\subsection{Embedding of $H_{t,c}(G)$ in a braided Heisenberg double}
\label{embed_bhd}

We assume that the set $\PR$ is finite, so
that the Yetter\dash Drinfeld module $\Ycr(G)$ is finite\dash
dimensional. 

Observe
that a complex reflection $s\in G$ acting on $V^*$ has 
$\check\alpha_s$ as the root and $\alpha_s$ as the coroot. 
The right Yetter\dash Drinfeld module $\RYcr(G)^*$ will be spanned by
by symbols $[s]^*$
which are a basis dual to
$[s]\in \RYcr(G)$. 
We have $\Ycr(G)^*=V_1^*\oplus \RYcr(G)^*$ where $V_1^*$ coincides
with $V^*$
as a right $G$\dash module  and has trivial right $G$\dash coaction.

We are ready to construct a triangular map from the rational Cherednik
algebra $H_{t,c}(G)$ into the braided Heisenberg double
$\RHD_{\Ycr(G)}$.  

\begin{proposition}
\label{prop:embed}
Let $G\le \mathit{GL}(V)$ be a finite linear  group, and 
let $(t,c)$ be a parameter from $\field \times
\field(\PR)^G$. 
The maps 
\begin{align*}
&     \mu \colon V \to \Ycr(G), 
     \quad \mu(v)=t v \oplus
\mathop{\oplus}\limits_{s\in\PR} c_s \langle \alpha_s,
v\rangle [s], 
\\
&
\nu^*\colon V^* \to \Ycr(G)^*, 
\quad
\nu^*(f) = f \oplus
\mathop{\oplus}\limits_{s\in\PR} \langle f, \check\alpha_s
\rangle 
[s]^*
\end{align*}
induce a morphism of braided doubles 
from the rational Cherednik
algebra $H_{t,c}(G)$ 
to the braided Heisenberg double 
$\mathcal B(\Ycr(G)) \lcprod \field G \rcprod \mathcal
B(\Ycr(G)^*)$. 
If $t\ne 0$ and $\mathit{char}\ \field=0$, this morphism is injective.  
\end{proposition}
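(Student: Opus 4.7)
The plan is to build the morphism in two stages: first obtain a morphism from the free braided double $\D(V,\delta_{t,c})$ into $\RHD_{\Ycr(G)}$, and then show that it factors through the quadratic quotient $H_{t,c}(G)$. Lemma~\ref{lem:perf} shows that $(V,\delta_{t,c})$ is a perfect subquotient of the Yetter--Drinfeld module $\Ycr(G)$ via $\mu$ together with the map $\nu\colon\Ycr(G)\to V$ whose transpose is the $\nu^*$ of the statement. By Lemma~\ref{lem:subquot} this subquotient datum yields an algebra map $\D(V,\delta_{t,c})\to\D(\Ycr(G))$, which composed with the projection $\D(\Ycr(G))\twoheadrightarrow\RHD_{\Ycr(G)}$ produces the desired free-level morphism $j$; note that the commutator relation is preserved automatically because $\delta_{t,c}=(\id_{\field G}\tensor\nu)\circ\delta_{\Ycr(G)}\circ\mu$ by Lemma~\ref{lem:perf}. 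To make $j$ descend to $H_{t,c}(G)=S(V)\lcprod\field G\rcprod S(V^*)$ I then have to verify that the defining quadratic relations $\wedge^2 V$ and $\wedge^2 V^*$ are killed.

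The relation on the left is immediate from perfection: Theorem~\ref{th:comm}(2) gives $\wedge^2 V\subseteq I(V,\delta_{t,c})$, and Definition~\ref{def_perfect} rewrites this as $T(\mu)(\wedge^2 V)\subseteq I(\Ycr(G))=\ker\Wor(\Psi)$. The right-hand relation is the main obstacle, since the warning in~\ref{lr} rules out an automatic perfection statement on the dual side. Instead I would verify directly that $\nu^*(V^*)$ is an Abelian subspace of the braided space $(\Ycr(G)^*,\Psi^*)$, which by definition~\ref{def:NW} suffices to force $T(\nu^*)(\wedge^2 V^*)\subseteq\ker(\id+\Psi^*)\subseteq\ker\Wor(\Psi^*)$. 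Dualising, this amounts to checking that
\begin{equation*}
\langle f_1\tensor f_2,\ (\nu\tensor\nu)\bigl(y\tensor z+\Psi(y\tensor z)\bigr)\rangle
\end{equation*}
is symmetric in $f_1,f_2\in V^*$ for every $y,z$ in the basis $V_1\cup\{[s]\}_{s\in\PR}$ of $\Ycr(G)$. Using $\Psi([s]\tensor y)=s(y)\tensor[s]$, $\Psi(v\tensor y)=y\tensor v$, and the identity $\langle\alpha_s,v\rangle\check\alpha_s=v-s(v)$, each of the four cases reduces to a short cancellation: for instance, when $y=[s]$ and $z=v$, the terms proportional to $\langle\alpha_s,v\rangle\langle f_1,\check\alpha_s\rangle\langle f_2,\check\alpha_s\rangle$ are manifestly symmetric and the remaining terms pair up in a symmetric way.

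For part 2, the algebra $H_{t,c}(G)$ is a minimal braided double when $t\neq 0$ and $\mathrm{char}\,\field=0$ by the result on minimality of $H_{t,c}(G)$ established immediately after Theorem~\ref{th:comm}. The kernel of any morphism of braided doubles is itself a triangular ideal in the source, so by minimality of $H_{t,c}(G)$ it is either zero or all of $H_{t,c}(G)$; the latter is excluded because the restriction of the morphism to $\field G\subset H_{t,c}(G)$ is the identity. Hence the morphism is injective, completing the proof.
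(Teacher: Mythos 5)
Your proof is correct, and it departs from the paper's argument at exactly one step. The paper handles the $S(V^*)$ side by asserting that $V^*\xrightarrow{\nu^*}\Ycr(G)^*\xrightarrow{\mu^*}V^*$ is a \emph{perfect} subquotient of right quasi-YD modules (``in a fashion similar to Lemma~\ref{lem:perf}'', with no computation given), and then invokes the discussion in~\ref{lr}: two simultaneous perfect subquotients give an injective embedding of the minimal double $\RD(V,\delta_{t,c})$ into $\RHD_{\Ycr(G)}$, which coincides with $H_{t,c}(G)$ when $t\ne0$ and $\mathrm{char}\,\field=0$. You instead verify only the degree-two consequence of dual perfection, namely that $\nu^*(V^*)$ is an Abelian subspace of $(\Ycr(G)^*,\Psi^*)$; this is precisely what is needed to kill $\wedge^2V^*$ and descend from the free double to $H_{t,c}(G)$, and your four-case check is sound (in each case $(\nu\tensor\nu)(\id+\Psi)(y\tensor z)$ is a symmetric tensor, using $s(v)=v-\langle\alpha_s,v\rangle\check\alpha_s$ and $\lambda(r,s)\check\alpha_{rsr^{-1}}=r(\check\alpha_s)$). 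The trade-off: the paper's route, granting the unproved dual-perfection claim, shows more --- the image is exactly the minimal double and $\RD(V,\delta_{t,c})$ embeds in $\RHD_{\Ycr(G)}$ for \emph{all} $(t,c)$ --- whereas your route proves only what the Proposition states, but every step is explicit. Your injectivity argument is essentially the paper's; one small sharpening: a triangular ideal is always proper (its components lie in $\ker\epsilon^\pm$), so the kernel, being a triangular ideal in the minimal double $H_{t,c}(G)$, is zero outright and the ``either zero or all'' dichotomy is not needed. On the $S(V)$ side both arguments use Lemma~\ref{lem:perf} identically.
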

\begin{proof}
One should note that the map denoted by $\nu^*$ in the Proposition is indeed 
adjoint to the map $\nu\colon \Ycr(G)\to V$ defined in
Lemma~\ref{lem:perf}. By Lemma~\ref{lem:perf}, 
the maps $V \xrightarrow{\mu} \Ycr(G)  \xrightarrow{\nu} V$ make the
quasi\dash Yetter\dash Drinfeld module $(V,\delta_{t,c})$ a perfect
subquotient of $\Ycr(G)$. 

In a fashion similar to Lemma~\ref{lem:perf} we can show that the
right quasi\dash YD module $V^*$ is a perfect subquotient of
$\Ycr(G)^*$ via the maps $V^* \xrightarrow{\nu^*} \Ycr(G)^*
\xrightarrow{\mu^*} V^*$.  

Since both pairs of maps $\mu,\nu$ and $\nu^*,\mu^*$ are perfect
subquotients, the situation is now precisely as described in 
\ref{lr}. That is, the maps $\mu$ and $\nu^*$ induce an injective
embedding of the minimal double associated to $(V,\delta_{t,c})$
(which itself is a triangular quotient of $H_{t,c}(G)$, and
coincides with $H_{t,c}(G)$  when $t\ne 0$ and $\mathit{char}\ \field=0$)
in the braided Heisenberg double $\mathcal{H}_{\Ycr(G)}$. 
\end{proof}

\subsection{Braided doubles containing rational Cherednik algebras of
  complex reflection groups}

From now on we assume that
\begin{itemize}
\item 
 the field $\field$ is $\mathbb{C}$;
\item
 $G$ is an irreducible finite  complex reflection group;
\item
 $V$ is the reflection representation of $G$.
\end{itemize}
We will now use general results up to and including Proposition
\ref{prop:embed} to construct two  
interesting realisations of rational Cherednik algebras of
complex reflection groups: one for $t=0$, the other for $t\ne 0$. 
Both realisations are in braided doubles
associated to the Yetter\dash Drinfeld module $Y_G:=\RYcr(G)$ (see 
Definition \ref{def:RYcr}), which looks ``nicer'' than $Y_\PR(G)$
because it does not contain an extra copy of the space $V$. 
Thus, $Y_G$ is a vector space 
spanned by symbols $\{[s] : s\in \PR\}$, 
with $G$\dash action $g([s])=\lambda(g,s) [gsg^{-1}]$ and braiding 
given by $\Psi([r]\tensor [s]) = r([s])\tensor [r]$ for $r,s\in \PR$.
We will illustrate the constructions by examples for $G=\Symm_n$. 

\subsection{The case $t=0$.}

Let $\mathcal H_{Y_G}$ be the braided Heisenberg double associated to
the Yetter\dash Drinfeld module $Y_G$. This is an algebra with generators 
$[s]$, $[s]^*$ ($s\in \PR$) and $g\in G$. 

To understand what are the relations between these generators in
$\mathcal H_{Y_G}$, 
denote by $I(Y_G)$ the ideal of relations between the generators $[s]$
in the Nichols\dash Woronowicz algebra $\mathcal B(Y_G)$. 
It is the kernel of the Woronowicz symmetriser $\Wor(\Psi)$.
Similarly, let $I(Y_G^*)$ be the ideal of relations between the
generators $[s]^*$ in $\mathcal B(Y_G^*)$. 
The defining relations in $ \mathcal H_{Y_G}$ are 
\begin{gather*}
      I(Y_G), \quad I(Y_G^*), \quad 
      g\cdot [s]=g([s])\cdot g, \quad  g\cdot [s]^*=g([s]^*)\cdot g,
\\
 [r]^*\cdot [s] - [s]\cdot [r]^* = 
\Bigl\{\begin{matrix}&s,\text{ if $r=s$}, 
\\ &0,\text{ if $r\ne s$}.\end{matrix}\Bigr.
\end{gather*}
We will now embed the restricted Cherednik algebra
$\overline{H}_{0,c}(G)$ 
in the braided Heisenberg double $\mathcal H_{Y_G}$.
\begin{theorem}
\label{th:emb0}
Let $G$ be an irreducible complex reflection group, $\PR\subset G$ be
the set  of complex reflections, and $c$ be a conjugation\dash
invariant function on $\PR$. 
Define a linear map $M_c$ from $V\oplus \mathbb{C}G \oplus V^*$ 
to $Y_G\oplus \mathbb{C}G \oplus Y_G^*$ by 
\begin{equation*}
M_c(v)=\sum_{s\in S}c_s \langle \alpha_s,v\rangle [s],
\quad 
M_c(f)=\sum_{s\in S} \langle f,\check\alpha_s\rangle [s]^*,
\quad
M_c(g)=g,
\end{equation*}
where $v\in V$, $f\in V^*$, $g\in G$. Then $M_c$ extends to an 
algebra homomorphism 
$M_c\colon \overline{H}_{0,c}(G) \to \mathcal H_{Y_G}$. 
If $c$ is generic, for example $c_s\ne 0$ for all $s$, 
the homomorphism $M_c$ is injective.
\end{theorem}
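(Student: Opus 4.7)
The plan is to exhibit $M_c$ on generators and verify it respects all defining relations of $\overline{H}_{0,c}(G) \cong S(V)_G \lcprod \mathbb{C}G \rcprod S(V^*)_G$, then deduce injectivity via triangular decomposition. The relations to verify are: (a) semidirect product with $\mathbb{C}G$; (b) the commutator $[f,v]=\sum_{s\in\PR} c_s\langle f,(1-s)v\rangle\cdot s$; (c) commutativity within $S(V)$ and within $S(V^*)$; (d) vanishing of $S(V)^G_+$ and $S(V^*)^G_+$.

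Part (a) reduces to $G$-equivariance of the component maps $V\to Y_G$ and $V^*\to Y_G^*$, which is a short calculation using the conjugation-invariance of $c$ together with the transformation of roots and coroots in Lemma~\ref{lem:roots} and the definition of the $G$-action on $Y_G$. For (b), the commutator relation $[f,v]=v^{(-1)}\langle f,v^{(0)}\rangle$ in the braided Heisenberg double $\mathcal{H}_{Y_G}$, applied to the coaction $[s]\mapsto s\tensor[s]$ and the pairing $\langle[r]^*,[s]\rangle=\delta_{rs}$, gives $[[r]^*,[s]]=\delta_{rs}\cdot s$, whence
\begin{equation*}
[M_c(f),M_c(v)]=\sum_{s\in\PR} c_s\langle f,\check\alpha_s\rangle\langle\alpha_s,v\rangle\cdot s=\sum_{s\in\PR} c_s\langle f,(1-s)v\rangle\cdot s=M_c([f,v]).
\end{equation*}

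For (c), I would apply Lemma~\ref{lem:perf} at $t=0$: the map $\mu=M_c|_V$ lands in $Y_G\subset\Ycr(G)=V_1\oplus Y_G$ and realises $(V,\delta_{0,c})$ as a perfect subquotient of $\Ycr(G)$. The commutativity of $U(V,\delta_{0,c})=S(V)$ (Theorem~\ref{th:comm}) forces $M_c(V)$ to be an Abelian subspace of $\Ycr(G)$ in the sense of Lemma~\ref{lem:comm_eq}; the Yetter-Drinfeld inclusion $Y_G\hookrightarrow\Ycr(G)$ induces an embedding $\mathcal{B}(Y_G)\hookrightarrow\mathcal{B}(\Ycr(G))$ under which this commutativity descends, and the dual side is symmetric.

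The main obstacle is step (d), the vanishing of $S(V)^G_+$ (and dually $S(V^*)^G_+$) under $M_c$. This is precisely the main result of Bazlov~\cite{B} for Coxeter groups and of Kirillov and Maeno~\cite{KM} for general complex reflection groups: the $G$-equivariant algebra map $S(V)\to\mathcal{B}(Y_G)$, $v\mapsto\sum_{s} c_s\langle\alpha_s,v\rangle[s]$, factors through an injective homomorphism $S(V)_G\hookrightarrow\mathcal{B}(Y_G)$ for generic $c$. Granting this input and its dual, $M_c$ is a well-defined algebra homomorphism $\overline{H}_{0,c}(G)\to\mathcal{H}_{Y_G}$. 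Injectivity for generic $c$ then follows triangularly: $M_c$ preserves the triangular decompositions on both sides, so it is injective iff its restriction to each factor is, and the middle factor maps via the identity while the outer factors are the \cite{B,KM} embeddings.
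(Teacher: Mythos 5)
Your steps (a)--(c) track the paper's machinery, but your handling of step (d) diverges fundamentally from the paper's own proof, and in a way that matters. You import the main results of \cite{B} and \cite{KM} (the embedding $S(V)_G\hookrightarrow\mathcal B(Y_G)$) as a black box to kill $S(V)^G_+$ and to obtain injectivity on the outer factors. The paper does the opposite: its proof is entirely internal, and the Remark immediately following the theorem asserts that the theorem \emph{provides a new proof of} the \cite{B}/\cite{KM} result --- so your argument, while logically valid as a citation of prior literature, is circular relative to the paper's stated purpose. The paper's route is: Proposition~\ref{prop:embed} (resting on Lemma~\ref{lem:perf} \emph{and its dual}, i.e.\ perfectness of both subquotients $V\to\Ycr(G)\to V$ and $V^*\to\Ycr(G)^*\to V^*$) gives a triangular morphism $H_{0,c}(G)\to\mathcal H_{\Ycr(G)}$ whose image is exactly the minimal double $\RD(V,\delta_{0,c})$; since $t=0$ this image lies in the subdouble $\mathcal H_{Y_G}$; and Proposition~\ref{prop:min_restr} identifies that minimal double with $\overline H_{0,c}(G)$ for generic $c$, which yields both the vanishing of the coinvariant ideals and injectivity in one stroke, with no appeal to \cite{B,KM}. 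That minimality argument is the key ingredient you are missing. Two smaller points: your parenthetical ``$U(V,\delta_{0,c})=S(V)$'' is false at $t=0$ ($H_{0,c}(G)$ is never a minimal double, so $U(V,\delta_{0,c})$ is a proper quotient of $S(V)$); what you actually need there is only $\wedge^2V\subset I(V,\delta_{0,c})$, which is what Theorem~\ref{th:comm} provides. Also, the theorem claims $M_c$ is defined on $\overline H_{0,c}(G)$ for \emph{all} $c$, not just generic $c$; this follows from the paper's argument because the minimal double is a quotient of every triangular quotient of $H_{0,c}(G)$, including $\overline H_{0,c}(G)$, whereas your reliance on the generic-$c$ statement of \cite{B,KM} leaves the non-generic case of well-definedness unaddressed.
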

\begin{proof}
Proposition \ref{prop:embed} gives a triangular map
$H_{0,c}(G) \to \mathcal
H_{\Ycr(G)}$ where $\Ycr(G)=V_1\oplus Y_G$ (direct sum of
Yetter\dash Drinfeld modules), and that triangular map is defined by the same formulas
as $M_c$. Moreover, since $t=0$, the image of that triangular map
lies in the subdouble of $\mathcal H_{\Ycr(G)}$ generated by $Y_G$ and
$Y_G^*$. This subdouble is precisely $\mathcal H_{Y_G}$. The image of 
$H_{0,c}(G)$ in  the braided Heisenberg double $\mathcal H_{Y_G}\cong
\mathcal B(Y_G)\lcprod 
\mathbb{C}G \rcprod  \mathcal B(Y_G^*)$
is the minimal double which is the quotient of $H_{0,c}(G)$; for
generic $c$, this is $\overline{H}_{0,c}(G)$ by Proposition \ref{prop:min_restr}. 
\end{proof}
\begin{remark}
The Theorem implies (and provides a new proof of)
the realisation of the coinvariant algebra $S(V)_G$ in the
Nichols\dash Woronowicz algebra $\mathcal B(Y_G)$, which was the main
result of \cite{B} when $G$ is a Coxeter group, and of \cite{KM} when
$G$ is an arbitrary complex reflection group.
\end{remark}

Now observe that for any algebra $A=\Uminus \lcprod H \lcprod \Uplus$ with
triangular decomposition over a bialgebra $H$, the subalgebra
$\Uminus$ is a left $A$\dash module (it is the induced module
$M_\epsilon$ in the notation of~\ref{standmod}, where $\epsilon$ is
the counit viewed as a $1$\dash dimensional representation of $H$).
In particular, the  Nichols\dash Woronowicz algebra
$\mathcal B(Y_G)$ is a left module for the braided Heisenberg double
$\mathcal{H}_{Y_G}$. Since $\overline{H}_{0,c}$ embeds or maps into
$\mathcal{H}_{Y_G}$, we have 
\begin{corollary}
\label{prop:cher_action}
The Nichols\dash Woronowicz algebra $\mathcal B(Y_G)$ is a module over
the restricted Cherednik algebra $\overline{H}_{0,c}$, for any $c$.
\qed
\end{corollary}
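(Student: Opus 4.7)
The plan is to combine two ingredients that are already in hand: first, a general principle saying that the ``lower part'' of any algebra with triangular decomposition is canonically a left module over that algebra, and second, the homomorphism $M_c$ constructed in Theorem~\ref{th:emb0}. Pulling back via $M_c$ will deliver the $\overline{H}_{0,c}(G)$-module structure on $\mathcal B(Y_G)$.

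First, I would invoke the observation made immediately before the corollary: for any algebra $A = \Uminus \lcprod H \rcprod \Uplus$ with triangular decomposition over a bialgebra $H$, the subspace $\Uminus$ carries the structure of a left $A$-module, obtained as the induced module $M_\epsilon = \mathrm{Ind}_{H \rcprod \Uplus}^{A}(\epsilon)$ from Section~\ref{standmod}, where $\epsilon \colon H \rcprod \Uplus \to \field$ is the algebra map $\epsilon_H \tensor \epsilonplus$. Applying this to $A = \mathcal H_{Y_G} \cong \mathcal B(Y_G) \lcprod \mathbb C G \rcprod \mathcal B(Y_G^*)$, we obtain a canonical left $\mathcal H_{Y_G}$-module structure on $\mathcal B(Y_G)$. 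Concretely, generators act as follows: an element of $\mathcal B(Y_G)$ acts by left multiplication; an element $g \in G$ acts via its module action on $\mathcal B(Y_G)$; and a generator $[s]^* \in Y_G^* \subset \mathcal B(Y_G^*)$ acts by the commutator derivation $b \mapsto [[s]^*,b]$ modulo $\mathcal H_{Y_G}\cdot \ker \epsilon$, which lands back in $\mathcal B(Y_G)$ by the triangular decomposition.

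Next, I would apply Theorem~\ref{th:emb0}, which provides an algebra homomorphism
\begin{equation*}
   M_c \colon \overline{H}_{0,c}(G) \to \mathcal H_{Y_G}
\end{equation*}
for every parameter $c \in \mathbb C(\PR)^G$ (injectivity is only claimed for generic $c$, but is not needed here). Pulling back the $\mathcal H_{Y_G}$-module structure on $\mathcal B(Y_G)$ along $M_c$ yields a left $\overline{H}_{0,c}(G)$-module structure on $\mathcal B(Y_G)$, which is the content of the corollary.

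There is essentially no obstacle: the corollary is an almost immediate consequence of the existence of the homomorphism $M_c$ together with the general principle about induced modules. The only minor point worth flagging is that the theorem is stated as an embedding only for generic $c$, while the corollary asserts an action for \emph{any} $c$; this is not an issue because the theorem actually produces a well-defined algebra homomorphism $M_c$ for all $c$, and a module structure pulls back through any algebra homomorphism, injective or not.
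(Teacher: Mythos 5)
Your proposal is correct and follows exactly the paper's own route: the paper likewise observes that $\Uminus$ is the induced module $M_\epsilon$ over any algebra with triangular decomposition, specialises to $\mathcal B(Y_G)$ over $\mathcal H_{Y_G}$, and pulls back along $M_c$ from Theorem~\ref{th:emb0} (the paper's phrase ``embeds or maps into'' covers your observation that injectivity is unnecessary for non-generic $c$).
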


\begin{example}
Let us consider the case when $G=\Symm_n$ is a
symmetric group acting on $\mathbb{C}^n$.
The action
restricts onto the subspace $V=\{(x_1,\dots,x_n) : \sum x_i=0\}$ which
is the irreducible reflection representation of $\Symm_n$. 
We have the restricted Cherednik algebra $\overline{H}_{0,c}(\Symm_n)$ for any
$c\in \mathbb{C}$; all such algebras are isomorphic for $c\ne0$.

The reflections in $\Symm_n$ are transpositions $(ij)$, $1\le i<j\le
n$. The Nichols\dash Woronowicz algebra $\mathcal B(Y_{\Symm_n})$ has
generators $[ij]$. 
The $\Symm_n$\dash action on generators is via $g([ij])=[g(i)g(j)]$
for $g\in \Symm_n$, if we agree that the symbol $[ij]$ stands for
$-[ji]$ whenever $i>j$. 
The quadratic relations in $\mathcal B(Y_{\Symm_n})$ are
\begin{equation*}
    [ij][kl]=[kl][ij], \quad [ij][jk]+[jk][ki]+[ki][ij]=0
\quad\text{for distinct $i,j,k,l$},
\end{equation*}
so that the quadratic cover $\mathcal B_{\mathit{quad}}(Y_{\Symm_n})$
of $\mathcal B(Y_{\Symm_n})$ is the \emph{Fomin\dash Kirillov
  quadratic algebra} $\mathcal{E}_n$, introduced in \cite{FK}. 
Using our method and results from \cite{FK}, one can check that 
the restricted Cherednik algebra $\overline{H}_{0,c}(\Symm_n)$ acts both
on $\mathcal B(Y_{\Symm_n})$ and on $\mathcal{E}_n$.
\end{example}

It is a conjecture (which is at least ten years old at the time 
when the present paper is being written, and is still open) that the
Nichols\dash Woronowicz algebra 
$\mathcal B(Y_{\Symm_n})$ is quadratic and coincides with  $\mathcal{E}_n$.
The algebras $\mathcal{E}_n$ are finite\dash dimensional for $n\le
5$. Open questions about $\mathcal{E}_n$ include infinite\dash
dimensionality for $n>5$ and the Hilbert series. Overall, very little is
known about the structure of $\mathcal{E}_n$.

It therefore may
prove to be helpful to study the $\overline{H}_{0,c}(\Symm_n)$\dash action on
$\mathcal{E}_n$ and to use the representation theory of the restricted
Cherednik algebra to obtain information about the Fomin\dash Kirillov
algebra. 
For example, any simple $\overline{H}_{0,c}(\Symm_n)$\dash module, 
viewed as an $\Symm_n$\dash module, is a regular representation of
$\Symm_n$ \cite{Go}. Thus the very fact that
$\overline{H}_{0,c}(\Symm_n)$ acts on $\mathcal{E}_n$ implies 
a new result:

\begin{proposition}
As an $\Symm_n$\dash module, the Fomin\dash Kirillov algebra
$\mathcal{E}_n$ is a direct sum of copies of the regular
representation of $\Symm_n$.  
\qed
\end{proposition}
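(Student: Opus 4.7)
The plan is to combine two inputs already available in the paper: the action of $\overline{H}_{0,c}(\Symm_n)$ on $\mathcal{E}_n$ (see the discussion after Corollary~\ref{prop:cher_action}, via the embedding of Theorem~\ref{th:emb0}), and Gordon's theorem cited from \cite{Go} to the effect that any simple $\overline{H}_{0,c}(\Symm_n)$\dash module, restricted to $\Symm_n$, is isomorphic to the regular representation $\field\Symm_n$. The goal is then to transfer this composition-factor information from $\overline{H}_{0,c}(\Symm_n)$\dash modules to the $\Symm_n$\dash module structure of $\mathcal{E}_n$, using the semisimplicity of $\mathbb{C}\Symm_n$.

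First I would fix a conjugacy\dash invariant parameter $c$ (say with $c_s\ne 0$ on all transpositions), so that $M_c$ from Theorem~\ref{th:emb0} provides the embedding $\overline{H}_{0,c}(\Symm_n)\hookrightarrow \mathcal{H}_{Y_{\Symm_n}}$ (or at least an action on the quadratic cover $\mathcal{E}_n\cong \mathcal{B}_{\mathit{quad}}(Y_{\Symm_n})$, as recorded in the Example preceding Proposition \ref{prop:cher_action}). Under this action the $\Symm_n$\dash grading on $\mathcal{E}_n = \oplus_d \mathcal{E}_n^{(d)}$ is preserved by $\mathbb{C}\Symm_n\subset \overline{H}_{0,c}(\Symm_n)$ (although the Cherednik algebra itself shifts degrees by $\pm 1$ via $V$ and $V^*$). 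In particular every graded piece $\mathcal{E}_n^{(d)}$ is a finite\dash dimensional $\Symm_n$\dash module, so it suffices to show that each $\mathcal{E}_n^{(d)}$ is a direct sum of copies of the regular representation of $\Symm_n$.

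Next I would exploit that $\overline{H}_{0,c}(\Symm_n)$ is finite\dash dimensional. Pick any element $x\in\mathcal{E}_n^{(d)}$ and consider the cyclic $\overline{H}_{0,c}(\Symm_n)$\dash submodule $N_x=\overline{H}_{0,c}(\Symm_n)\cdot x \subset \mathcal{E}_n$; it is finite\dash dimensional of dimension at most $\dim\overline{H}_{0,c}(\Symm_n)$. A composition series of $N_x$ as an $\overline{H}_{0,c}(\Symm_n)$\dash module has simple subquotients, and by Gordon's theorem each such subquotient is the regular $\Symm_n$\dash representation; since $\mathbb{C}\Symm_n$ is semisimple, $N_x$ is a direct sum of copies of $\mathbb{C}\Symm_n$ as an $\Symm_n$\dash module. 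The graded piece $\mathcal{E}_n^{(d)}$ is contained in the sum of finitely many such $N_x$ (as $x$ runs over a basis of $\mathcal{E}_n^{(d)}$), hence $\mathcal{E}_n^{(d)}$ embeds into a finite direct sum of regular representations. By semisimplicity of $\mathbb{C}\Symm_n$, any $\Symm_n$\dash submodule of a sum of regular representations is again a sum of regular representations, giving the required decomposition of $\mathcal{E}_n^{(d)}$, and summing over $d$ the decomposition of $\mathcal{E}_n$ as an $\Symm_n$\dash module.

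The expected main obstacle is psychological rather than technical: $\mathcal{E}_n$ is conjecturally infinite\dash dimensional for $n>5$ and is certainly not a finitely generated $\overline{H}_{0,c}(\Symm_n)$\dash module in general, so one cannot naively invoke a single composition series. The mild subtlety is thus to run the argument degree by degree (or, equivalently, via the filtration by finite\dash dimensional cyclic $\overline{H}_{0,c}(\Symm_n)$\dash submodules) and then use that any $\Symm_n$\dash submodule of a (possibly infinite) direct sum of regular representations is itself a direct sum of regular representations. A secondary bookkeeping point is to make sure that the statement is independent of the particular choice of $c$; but since the $\Symm_n$\dash module structure on $\mathcal{E}_n$ does not depend on $c$, it is enough to verify the conclusion for a single generic $c$ for which Proposition~\ref{prop:min_restr} guarantees that Gordon's description of simple modules applies.
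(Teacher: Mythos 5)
Your overall strategy --- combine the $\overline{H}_{0,c}(\Symm_n)$\dash action on $\mathcal{E}_n$ with Gordon's theorem and the semisimplicity of $\mathbb{C}\Symm_n$ --- is exactly the paper's (whose ``proof'' is the one\dash line remark preceding the Proposition), and your treatment of the cyclic submodules $N_x=\overline{H}_{0,c}(\Symm_n)\cdot x$ is correct: finite dimensionality of $\overline{H}_{0,c}(\Symm_n)$, a composition series, Gordon's theorem and Maschke show each $N_x$ is a multiple of the regular representation. But the passage from the $N_x$ to $\mathcal{E}_n$ contains two genuine errors. First, the reduction ``it suffices to show that each graded piece $\mathcal{E}_n^{(d)}$ is a direct sum of regular representations'' targets a false statement: the graded pieces are not $\overline{H}_{0,c}$\dash submodules (as you yourself note, $V$ and $V^*$ shift degree), and they are visibly not multiples of the regular representation --- $\mathcal{E}_n^{(0)}=\mathbb{C}$ is the trivial representation and $\mathcal{E}_n^{(1)}$ has dimension $\binom{n}{2}<n!$. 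Second, the lemma you invoke, ``any $\Symm_n$\dash submodule of a sum of regular representations is again a sum of regular representations,'' is false: the line of invariants $\mathbb{C}\cdot\sum_{g\in\Symm_n}g$ is a submodule of $\mathbb{C}\Symm_n$ isomorphic to the trivial representation. Semisimplicity only bounds the isotypic multiplicities of a submodule; it does not force them to stay proportional to the dimensions of the irreducibles.

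The repair is to stay inside the category of $\overline{H}_{0,c}(\Symm_n)$\dash modules until the last step. Put $M_d=\sum_x N_x$, the sum taken over a basis of $\oplus_{e\le d}\mathcal{E}_n^{(e)}$. Each $M_d$ is a finite\dash dimensional $\overline{H}_{0,c}(\Symm_n)$\dash submodule, and so is each successive quotient $M_{d+1}/M_d$; by your composition\dash series argument both are multiples of the regular representation as $\Symm_n$\dash modules. Maschke's theorem splits each inclusion $M_d\subset M_{d+1}$ $\Symm_n$\dash equivariantly, and since $\mathcal{E}_n=\bigcup_d M_d$ one gets $\mathcal{E}_n\cong M_0\oplus\bigoplus_{d\ge 0} M_{d+1}/M_d$ as an $\Symm_n$\dash module, a direct sum of regular representations. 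In short: decompose $\mathcal{E}_n$ along an exhaustion by $\overline{H}_{0,c}$\dash submodules and their successive quotients, not along the grading and not via $\Symm_n$\dash submodules of free $\mathbb{C}\Symm_n$\dash modules.
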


\subsection{The case $t\ne 0$.} 
When $t\ne 0$, the algebra $H_{t,c}(G)$ embeds, as a subalgebra, 
in a ``deformed quadratic double'' associated to $Y_G$. 

Note that $Y_G$ is a Yetter\dash Drinfeld module over $\mathbb{C}G$ which
is a cocommutative Hopf algebra. Hence there exists a deformed
Nichols\dash Woronowicz algebra $\mathcal{B}_\tau(Y_G)$. We will only
need its quadratic cover 
\begin{equation*}
{\mathcal{B}_{\mathit{quad}}}_\tau(Y_G) = T(Y_G) / {I_{\mathit{quad}}}_\tau(Y_G)
\end{equation*}
given in Lemma \ref{lem:quadcovers}, where the ideal 
${I_{\mathit{quad}}}_\tau(Y_G)$ is generated by $\ker(\id+\Psi)\cap
\wedge^2 Y_G$. The corresponding braided double is 
$\mathcal{H}_t(Y_G)$ with relations
\begin{gather*}
      {I_{\mathit{quad}}}_\tau(Y_G), \quad 
      {I_{\mathit{quad}}}_\tau(Y_G^*), \quad 
      g\cdot [s]=g([s])\cdot g, \quad  g\cdot [s]^*=g([s]^*)\cdot g,
\\
 [r]^*\cdot [s] - [s]\cdot [r]^* = 
\Bigl\{\begin{matrix}&s+t\cdot 1,\text{ if $r=s$}, 
\\ &0,\text{ if $r\ne s$}.\end{matrix}\Bigr.
\end{gather*}
\begin{theorem}
\label{th:emb}
Let $t\ne 0$ and $c$ be generic.
In the notation of Theorem \ref{th:emb0}, the map $M_c$ extends to an
injective algebra homomorphism $H_{t,c}(G) \to \mathcal{H}_{t'}(Y_G)$
for some $t'$ depending on $t$ and $c$. 
\end{theorem}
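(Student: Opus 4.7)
The plan is to realize $(V,\delta_{t,c})$ as a perfect subquotient of the module $Y_G$ equipped with a compatible-braidings deformation of its Yetter--Drinfeld structure. Since $\mathbb{C}G$ is cocommutative, Lemma~\ref{lem:cocomm} produces, for any $t' \in \mathbb{C}$, a Yetter--Drinfeld quasicoaction $\delta_{\Psi, t'\tau}([s]) = s \otimes [s] + t' \cdot 1 \otimes [s]$ on $Y_G$, whose associated minimal quadratic double is precisely $\mathcal{H}_{t'}(Y_G) = {\mathcal{B}_{\mathit{quad}}}_\tau(Y_G) \lcprod \mathbb{C}G \rcprod {\mathcal{B}_{\mathit{quad}}}_\tau(Y_G^*)$. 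Since $c$ is a class function and $V$ is irreducible, the $G$-equivariant endomorphism $\sum_{s \in \PR} c_s(1-s)|_V$ equals $\kappa(c) \cdot \id_V$ for some scalar $\kappa(c) \in \mathbb{C}$ by Schur's lemma; for generic $c$ one has $\kappa(c) \ne 0$, so I set $t' := t/\kappa(c)$. A direct computation, using the maps $\mu, \nu$ from Proposition~\ref{prop:embed}, gives
\begin{equation*}
(\id \otimes \nu) \circ \delta_{\Psi, t'\tau} \circ \mu(v) = \sum_s c_s\, s \otimes (v - s(v)) + t' \cdot 1 \otimes \kappa(c)\,v = \delta_{t,c}(v),
\end{equation*}
so $(V, \delta_{t,c})$ is a subquotient of $(Y_G, \delta_{\Psi, t'\tau})$ via $\mu, \nu$; symmetrically $V^*$ is a right quasi-YD subquotient of $Y_G^*$ via $\nu^*, \mu^*$.

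To show these subquotients are perfect, recall that $U(V, \delta_{t,c}) = S(V)$ by Theorem~\ref{th:comm}, hence $I(V, \delta_{t,c}) = \lgen \wedge^2 V \rgen$. The Abelian property of $\mu(V)$ inside $(Y_G, \Psi)$---that is, $\mu(v) \wedge \mu(w) \in \ker(\id + \Psi)$ for all $v, w \in V$---was already established in the proof of Theorem~\ref{th:emb0}, and it depends only on the $Y_G$-side, independently of $t'$. Hence $T(\mu)(\wedge^2 V) \subseteq \ker(\id + \Psi) \cap \wedge^2 Y_G \subseteq I(Y_G, \delta_{\Psi, t'\tau})$, which combined with Proposition~\ref{prop:preimage} upgrades to the equality $I(V, \delta_{t,c}) = T(\mu)^{-1}(I(Y_G, \delta_{\Psi, t'\tau}))$; analogously for $V^*$. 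Well-definedness of $M_c : H_{t,c}(G) \to \mathcal{H}_{t'}(Y_G)$ then follows: the semidirect relations hold by $G$-equivariance of $\mu, \nu^*$, the relations $[v,v'] = 0$ and $[f,f']=0$ descend via the Abelian property to the quadratic cover, and the commutator
\begin{equation*}
[\nu^*(f), \mu(v)] = \sum_s c_s \langle f, \check\alpha_s\rangle \langle \alpha_s, v\rangle (s + t' \cdot 1) = \sum_s c_s \langle f, (1-s)v\rangle\, s + t \langle f, v\rangle \cdot 1
\end{equation*}
(using $t'\kappa(c) = t$) reproduces the Cherednik commutation relation exactly.

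For injectivity, $M_c$ is triangular and the identity on $\mathbb{C}G$; for generic $c$ the maps $\mu$ and $\nu^*$ are injective on degree $1$ (their kernels are proper $G$-submodules of $V$ and $V^*$, hence trivial by irreducibility once $\{s : c_s \ne 0\}$ is nonempty). Perfectness of the subquotients yields an injection $S(V) = U(V, \delta_{t,c}) \hookrightarrow U(Y_G, \delta_{\Psi, t'\tau}) = \mathcal{B}_\tau(Y_G)$, and since this factors through the surjection ${\mathcal{B}_{\mathit{quad}}}_\tau(Y_G) \twoheadrightarrow \mathcal{B}_\tau(Y_G)$, the map $S(V) \hookrightarrow {\mathcal{B}_{\mathit{quad}}}_\tau(Y_G)$ is itself injective; symmetrically on $S(V^*)$. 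The main obstacle is the bookkeeping that transfers the trivial YD summand $V_1 \subset \Ycr(G)$ appearing in Proposition~\ref{prop:embed} into the single deformation parameter $t'$ of $\mathcal{H}_{t'}(Y_G)$: it is precisely Schur's lemma and the nonvanishing of $\kappa(c)$ that make this consolidation possible, at the cost of restricting to generic $c$.
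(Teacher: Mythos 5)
Your proposal is correct and follows essentially the same route as the paper's proof: the same commutator computation $[\nu^*(f),\mu(v)]=\sum_s c_s\langle f,(1-s)v\rangle(s+t'\cdot 1)$, the same use of Schur's lemma on the $G$-invariant pairing to fix $t'$ via $t'\kappa(c)=t$, and the same appeal to the Abelian property of $M_c(V)$ inside $(Y_G,\Psi)$ to see that the symmetric-algebra relations land in ${I_{\mathit{quad}}}_\tau(Y_G)$. The only difference is cosmetic and concerns the injectivity step, which the paper dispatches by observing that $H_{t,c}(G)$ is a minimal double (so the kernel, being a triangular ideal, vanishes), whereas you route it through perfect subquotients --- but your perfectness argument itself rests on $I(V,\delta_{t,c})=\lgen \wedge^2 V\rgen$, i.e.\ on exactly that minimality fact, so the two arguments coincide in substance.
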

\begin{proof}
Computing the commutator of $M_c(f)$ and $M_c(v)$ in a braided double 
$\mathcal{H}_{t'}(Y_G)$, we obtain 
\begin{equation*}
  [M_c(f),M_c(v)] = \sum\nolimits_{s\in S} c_s \langle f, (1-s)v\rangle
  (s+t'\cdot 1).
\end{equation*}
Note that $\sum_s  c_s \langle f, (1-s)v\rangle$ is a $G$\dash
invariant pairing between $V^*$ and $V$, and because $V$ is
irreducible, this pairing is proportional to $\langle\cdot, \cdot
\rangle$ (and is non\dash zero for $c$ generic).

 Choose $t'$ in such a
way that $\sum_s  c_s \langle f, (1-s)v\rangle\cdot t' = t$. Then
$M_c$ extends to a morphism of braided doubles between $T(V)\lcprod 
\mathbb{C}\rcprod T(V^*)$  (with commutator given by the quasicoaction
$\delta_{t,c}$) and $\mathcal{H}_{t'}(Y_G)$. It follows from
Lemma \ref{lem:quadcovers} that $M_c(V)$ (resp.\ $M_c(V^*)$) generates
a commutative subalgebra in ${\mathcal{B}_{\mathit{quad}}}_\tau(Y_G)$
  (resp.\ ${\mathcal{B}_{\mathit{quad}}}_\tau(Y_G^*)$). 
Therefore, $M_c$ factors through the rational Cherednik algebra
$H_{t,c}(G)$, and is injective on this algebra because 
$H_{t,c}(G)$ is a minimal double. 
\end{proof}
\begin{remark}
In fact, by modifying the map $M_c$ on the space $V^*$, one can relax
 the condition that $c$ is generic and only assume that $c$ is not
 identically zero on $\PR$.
\end{remark}

\begin{example}
In the case $G=\Symm_n$, the quadratic algebra
${\mathcal{B}_{\mathit{quad}}}_\tau(Y_G)$ coincides with the universal
  enveloping algebra $U(\mathrm{tr}_n)$ associated to the classical
  Yang\dash Baxter equation and introduced in \cite{BEER}. 
It is a Koszul algebra. We thus obtain an action of $H_{t,c}(\Symm_n)$
on  $U(\mathrm{tr}_n)$ and a generalisation of
$U(\mathrm{tr}_n)$ for an arbitrary irreducible complex reflection
group.
\end{example}


\setcounter{subsection}{0}
\setcounter{theorem}{0}

\section*{Appendix: Triangular decomposition over a bialgebra} 
\renewcommand{\thesection}{A}

This Appendix contains proofs to a number of facts about
triangular ideals in braided doubles.
We do it in somewhat more general situation, 
for algebras with triangular decomposition over a bialgebra. 
Triangular decomposition of universal enveloping algebras of Kac\dash 
Moody algebras has been generalised in several useful ways
\cite{Ginzb_prim,K}.
Most relevant for the present paper is the following
definition
(all algebras are assumed to be associative and unital):

\begin{definition}
\label{def_triang_decomp}
An algebra $A$ has \emph{triangular decomposition 
over a bialgebra $H$} if $A$ has distinguished subalgebras
$H \hookrightarrow A$, $U^- \hookrightarrow A$, $U^+ \hookrightarrow A$ such
that:
\begin{itemize} 

\item 
$H$ acts covariantly from the  left on the algebra $U^-$, and
from the right on $U^+$; 

\item 

the multiplication in $A$ induces a linear isomorphism $\Uminus \tensor H
\tensor \Uplus \to A$; it makes $\Uminus \tensor H$ a subalgebra of
$A$ isomorphic to the semidirect product $\Uminus\lcprod
H$ by the left $H$\dash action, and similarly $ H \tensor \Uplus $ a
subalgebra isomorphic to the semidirect product $H\rcprod \Uplus$;

\item 
the algebras $\Uminus $, $\Uplus $ are equipped 
with $H$\dash  equivariant characters (algebra homomorphisms) 
$\epsilonminus \colon \Uminus \to
\field$, $\epsilonplus \colon \Uplus  \to \field$.  
\end{itemize}
\end{definition}

An $H$\dash equivariant character $\epsilon^\pm\colon U^\pm\to\field$ is  
a homomorphism of $H$\dash modules, where the action of $H$ on
$\field$ is via the counit $\epsilon$ of $H$. (These characters are required
to ensure that $A$ has a maximal triangular ideal 
--- see below.) Semidirect products are also
known as smash products \cite[Definition 4.1.3]{Mon}.

\subsection{Triangular subalgebras, ideals and quotients. 
Triangular-simple algebras}

\label{triang_simple}

Let $H$ be a bialgebra.
Algebras $A\cong \Uminus \tensor H \tensor \Uplus$ with triangular
decomposition over $H$ form a 
category. Morphisms in this category are algebra maps of the form
\begin{equation*}
\mu = \mu^-\tensor \id_H \tensor \mu^+ \colon 
\Uminus_1 \tensor H \tensor \Uplus_1 \to 
\Uminus_2 \tensor H \tensor \Uplus_2,
\end{equation*}
where $\mu^-\colon U_1^- \to U_2^-$ (resp.\ $\mu^+\colon U_1^+ \to
U_2^+$)  is a left (resp.\ right) $H$\dash module algebra
homomorphism, which intertwines the characters $\epsilonminus$ (resp.\
$\epsilonplus$). 
Among morphisms  are embeddings of triangular subalgebras
\begin{equation*}
\Dbl' = {U'}^-\tensor H \tensor {U'}^+ \hookrightarrow
\Dbl = \Uminus \tensor H \tensor \Uplus,
\end{equation*}
where ${U'}^\pm$ embed in $U^\pm$ as subalgebras, and 
triangular quotient maps
\begin{equation*}
    \Dbl = \Uminus \tensor H \tensor \Uplus \twoheadrightarrow 
    \Dbl'' = {U''}^-\tensor H \tensor {U''}^+,
\end{equation*} 
induced by pairs $U^\pm \twoheadrightarrow {U''}^\pm$ 
of surjective algebra maps.

A kernel of a triangular quotient map will be called a \emph{triangular
ideal}. 
We say that
$\Dbl = \Uminus \tensor H \tensor \Uplus$ is a \emph{triangular\dash simple
algebra} over $H$, if $\Dbl$ has no non\dash
trivial triangular quotients.

The next Proposition describes triangular ideals.

\begin{proposition}
\label{triang_ideals}
Let $\Dbl = \Uminus \tensor H \tensor \Uplus$ be an algebra with
triangular decomposition.  
A linear subspace $J\subset \Dbl$ is a triangular ideal
in $\Dbl$, if and only if 
\begin{equation*}
           J = J^- \tensor H \tensor \Uplus + \Uminus \tensor H
           \tensor J^+,
\end{equation*}
where $J^-$ (resp.\ $J^+$) is a two\dash sided ideal in $\Uminus$
(resp.\ $\Uplus$), satisfying the following:
\begin{itemize}
\item[$(\ref{triang_ideals}.1)$] 
$J^-$, $J^+$ are invariant with respect to the $H$\dash action on
$\Uminus$, $\Uplus$;
\item[$(\ref{triang_ideals}.2)$]
$J^- \subset \ker \epsilonminus$, $J^+ \subset \ker \epsilonplus$;
\item[$(\ref{triang_ideals}.3)$]
$\Uplus \cdot J^-$ and $J^+\cdot \Uminus$  (the products with respect
to the multiplication in the algebra $\Dbl$) lie in 
$J^-\tensor H \tensor \Uplus+\Uminus \tensor H \tensor J^+$.
\end{itemize}
\end{proposition}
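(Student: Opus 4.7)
The plan is to prove the two implications separately, with the essential content concentrated in the ``if'' direction; the ``only if'' direction is a direct unpacking of the definition of a triangular quotient map.

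For the ``only if'' direction, suppose $J$ is the kernel of a triangular quotient map $\mu = \mu^- \tensor \id_H \tensor \mu^+$, and set $J^\pm := \ker \mu^\pm$. The stated form $J = J^- \tensor H \tensor U^+ + U^- \tensor H \tensor J^+$ follows at once from the tensor decomposition of $\mu$. Conditions $(1)$ and $(2)$ are immediate from $\mu^\pm$ being $H$-equivariant algebra homomorphisms intertwining the characters $\epsilon^\pm$, and condition $(3)$ holds because $J = \ker \mu$ is a two-sided ideal in $A$.

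For the ``if'' direction, given $J^\pm$ satisfying $(1)$--$(3)$, I would first show that $J := J^- \tensor H \tensor U^+ + U^- \tensor H \tensor J^+$ is a two-sided ideal of $A$. Stability of $J$ under left/right multiplication by $U^-$ and $U^+$ on the ``matching'' summand (e.g.\ $U^-$ acting on the left of the $J^-$-summand) is immediate from $J^\pm$ being two-sided ideals in $U^\pm$. Stability under $H$-action reduces, via the semidirect product relations $h \cdot b = (h_{(1)} \act b) h_{(2)}$ and $a \cdot h = h_{(1)} (a \ract h_{(2)})$, to the $H$-invariance of $J^\pm$ from condition $(1)$; this in particular gives $J^+ H \subseteq H J^+$, so that $J$ is closed under right multiplication by $H$, and by the ideal property of $J^+$ also closed under right multiplication by $U^+$. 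The two genuinely ``crossed'' cases --- left multiplication of $J^- \tensor H \tensor U^+$ by $U^+$ and right multiplication of $U^- \tensor H \tensor J^+$ by $U^-$ --- use condition $(3)$: for example, by associativity $U^+ \cdot (J^- H U^+) = (U^+ \cdot J^-) \cdot H U^+ \subseteq J \cdot H U^+ \subseteq J$, the last inclusion already having been verified. The symmetric case is analogous.

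Once $J$ is known to be a two-sided ideal, the quotient $A/J$ inherits an algebra structure and is isomorphic as a vector space to $\bar U^- \tensor H \tensor \bar U^+$ with $\bar U^\pm := U^\pm / J^\pm$; by $(1)$ the $H$-module algebra structures on $U^\pm$ descend to $\bar U^\pm$, and by $(2)$ the characters $\epsilon^\pm$ factor through $\bar U^\pm$. The inherited multiplication on $A/J$ restricts to the semidirect products $\bar U^- \lcprod H$ and $H \rcprod \bar U^+$, so $A/J$ satisfies Definition \ref{def_triang_decomp}, and the projection $A \twoheadrightarrow A/J$ is the desired triangular quotient map with kernel $J$. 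I expect the main obstacle to be the bookkeeping in the case analysis for $J$ being a two-sided ideal; each check is routine, but the task is to verify that conditions $(1)$--$(3)$ are precisely what is required, with $(3)$ encoding exactly the ``cross-term'' interaction between the $U^-$ and $U^+$ factors that is not otherwise tracked by the triangular decomposition.
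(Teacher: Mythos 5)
Your proposal is correct and follows essentially the same route as the paper's proof: the ``only if'' direction unpacks the definition of a triangular quotient map, and the ``if'' direction first verifies that $J$ is a two-sided ideal (with condition $(\ref{triang_ideals}.3)$ handling exactly the cross-terms $\Uplus\cdot J^-$ and $J^+\cdot\Uminus$, and $(\ref{triang_ideals}.1)$ handling conjugation past $H$) and then checks that $\Dbl/J\cong(\Uminus/J^-)\tensor H\tensor(\Uplus/J^+)$ inherits the triangular decomposition. The only cases you leave implicit --- e.g.\ $\Uplus\cdot(\Uminus\tensor H\tensor J^+)$, which the paper absorbs via $(\Uminus H\Uplus)J^+=\Uminus H J^+$ using that $J^+$ is a two-sided ideal --- are indeed routine, as you anticipate.
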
 
\begin{proof}
We start with the `only if' part.
Let $J$ be a triangular ideal, i.e., the kernel of a map 
\begin{equation*}
     \mu^-\tensor \id_H \tensor \mu^+ \colon \Uminus \tensor H \tensor
\Uplus \to {U'}^- \tensor H \tensor {U'}^+,
\end{equation*}
 which is a morphism of
algebras with triangular decomposition over $H$. 
Then $J=J^-\tensor H \tensor\Uplus+\Uminus\tensor H \tensor J^+$
where $J^\pm = \ker \mu^\pm$. 
Since $\mu^-$ is an algebra morphism, $J^-$ is a two\dash sided
ideal in $U^-$. Furthermore, $J^-$ is $H$\dash invariant because 
$\mu^-$ is an $H$\dash morphism, and $J^-\subset\ker \epsilonminus$ 
because $\epsilonminus|_{\vphantom{{U'}^-}{U}^-} = \epsilonminus|_{{U'}^-}\circ \mu^-$. 
This verifies properties $(\ref{triang_ideals}.1)$,
$(\ref{triang_ideals}.2)$ for the ideal $J^-$, and they are verified
for $J^+$ in the same way. 
Property $(\ref{triang_ideals}.3)$ follows from the fact that
$J^-=J^-\tensor 1 \tensor 1$ and $J^+=1\tensor 1 \tensor J^+$ lie in
$J$ which is a two\dash sided ideal in $\Dbl$.

To prove the `if' part of the Proposition,
we must show that if two\dash sided ideals $J^\pm\subset U^\pm$
satisfy $(\ref{triang_ideals}.1)$--$(\ref{triang_ideals}.3)$, then
$J=J^- \tensor H \tensor \Uplus + \Uminus \tensor H \tensor J^+$ is a
triangular ideal in $\Dbl$.
We begin by checking that $J$ is a two\dash sided ideal in $\Dbl$.
First,
\begin{equation*}
\Uminus \cdot J \subset \Uminus \cdot J^- H \Uplus
+ \Uminus \cdot \Uminus H J^+
= J^- H \Uplus +  \Uminus H J^+ = J
\end{equation*}
because $J^-$ is a left ideal in $\Uminus$.
Similarly, $J \cdot \Uplus \subset J$. Next,
\begin{equation*}
 H \cdot J \subset H \cdot J^- H \Uplus + H \cdot \Uminus H J^+
\subset J^- H \Uplus + \Uminus H J^+ = J,
\end{equation*}
as $h\cdot J^- = (h_{(1)}\act J^-) h_{(2)} \subset J^-\cdot H$ by 
$(\ref{triang_ideals}.1)$ for any $h\in H$.
Similarly, $J\cdot H \subset J$.
Finally, using $(\ref{triang_ideals}.3)$, 
\begin{equation*}
\Uplus \cdot J \subset 
(\Uplus \cdot J^-) H \Uplus + (\Uplus \cdot \Uminus H )J^+
\subset J H \Uplus + (\Uminus H \Uplus)J^+;
\end{equation*}
we have already established that $J H \Uplus\subset J$, 
and $(\Uminus H \Uplus)J^+ = \Uminus H J^+$ because $J^+$ is a two\dash
sided ideal in $\Uplus$. Thus, $\Uplus \cdot J \subset J$. Quite
similar argument shows that $J\cdot \Uminus\subset J$.
The subalgebras $\Uminus$, $H$ and $\Uplus$ generate $\Dbl$ as
an algebra, hence $\Dbl \cdot J$, $J\cdot\Dbl\subset J$
as required.

Let $p\colon \Dbl \twoheadrightarrow \Dbl / J$ be the quotient map of
algebras. 
Our goal is to show that $\Dbl/J$ is an algebra with
triangular decomposition over $H$, and $p$ is a triangular morphism 
(then $J=\ker p$ is, by definition, a triangular ideal).

Since $J=J^- \tensor H \tensor \Uplus + \Uminus \tensor H
\tensor J^+$, we have a vector space tensor product decomposition
\begin{equation*}
       \Dbl/J = (\Uminus /J^-) \tensor H \tensor  (\Uplus /J^+),
\end{equation*}
and $p=p^-\tensor \id_H \tensor p^+$ 
where $p^\pm\colon U^\pm \twoheadrightarrow U^\pm/J^\pm$ are quotient maps. 
We observe that $U^\pm/J^\pm$ and $H$ are subalgebras in $\Dbl/J$
(these are $p$\dash images of $U^\pm$ and $H$, respectively).
Moreover,
by $(\ref{triang_ideals}.1)$, one has the induced $H$\dash action on
$U^\pm/J^\pm$, and $p^\pm$ are $H$\dash algebra homomorphisms.
The relation between $\bar b=p^-(b)\in \Uminus/J^-$ and $h\in H$,
\begin{equation*}
     h\cdot \bar b = p(h\cdot b) = 
    p((h_{(1)}\act b)\, h_{(2)}) = 
    (h_{(1)}\act \bar b )  \, h_{(2)},
\end{equation*}
is the cross\dash product relation between $\Uminus/J^-$ and $H$.
Similarly for $H$ and $\Uplus/J^+$.
And finally, by $(\ref{triang_ideals}.2)$, there are induced characters
$\epsilon^\pm \colon {U}^\pm/J^\pm \to \field$, such that $p^-$
intertwines the characters $\epsilonminus$ on $\Uminus$ and
${U}^-/J^-$ (similarly for $p^+$). 
Thus, $\Dbl/J$ is indeed an algebra with triangular decomposition over
$H$, and $p\colon \Dbl\twoheadrightarrow \Dbl / J$ is a triangular morphism.
\end{proof}
The Proposition and its proof have the following important corollary:
\begin{corollary}
\label{cor_sum}
Let $\Dbl$ be an algebra with triangular decomposition over a
bialgebra~$H$.

1. If $J$ is a triangular ideal in $\Dbl$, the quotient algebra
$\Dbl/J$ has triangular decomposition over $H$.

2. A surjective morphism of algebras with triangular decomposition
   over $H$ maps triangular ideals to triangular ideals. 

3. A sum of triangular ideals in $\Dbl$ is a triangular ideal in $\Dbl$. 

4. The algebra $\Dbl$ has a greatest triangular ideal $I_\Dbl$.

5. The algebra $\Dbl$ has a unique triangular\dash simple quotient
   over $H$, which    is $\Dbl/I_\Dbl$.
\end{corollary}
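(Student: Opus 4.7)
Proof proposal for Corollary \ref{cor_sum}.

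My approach is to leverage the characterisation of triangular ideals given by Proposition~\ref{triang_ideals}: a triangular ideal $J\subset \Dbl$ is the same data as a pair $(J^-,J^+)$ of two\dash sided $H$\dash invariant ideals in $\Uminus,\Uplus$ satisfying properties $(\ref{triang_ideals}.1)$--$(\ref{triang_ideals}.3)$. Once triangular ideals are encoded this way, all five assertions reduce to routine bookkeeping on the pairs $(J^-,J^+)$.

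For part~1, I would simply quote the construction in the proof of Proposition~\ref{triang_ideals}, which already produces the triangular decomposition $\Dbl/J=(\Uminus/J^-)\tensor H\tensor(\Uplus/J^+)$ together with induced $H$\dash module algebra structure on the two sides and induced characters.  For part~2, given a surjective triangular morphism $\mu=\mu^-\tensor \id_H\tensor\mu^+\colon A\twoheadrightarrow B$ and a triangular ideal $J=J^- H \Uplus_A+\Uminus_A H J^+$ in $A$, I would show that $(\mu^-(J^-),\mu^+(J^+))$ satisfies the three conditions of Proposition~\ref{triang_ideals}: surjectivity gives two\dash sided ideals, $H$\dash equivariance of $\mu^\pm$ preserves $H$\dash invariance, the intertwining of characters yields $\mu^\pm(J^\pm)\subset \ker\epsilon^\pm_B$, and condition $(\ref{triang_ideals}.3)$ is transported by applying $\mu$ to the analogous inclusion in $A$.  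Thus $\mu(J)$ is again triangular.

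For part~3, given a family $\{J_\alpha\}$ of triangular ideals with $J_\alpha = J^-_\alpha H\Uplus + \Uminus H J^+_\alpha$, I will verify that the pair $(\sum_\alpha J^-_\alpha,\sum_\alpha J^+_\alpha)$ satisfies $(\ref{triang_ideals}.1)$--$(\ref{triang_ideals}.3)$: each condition is preserved under arbitrary sums, since the properties of being a two\dash sided ideal, $H$\dash invariant, contained in $\ker\epsilon^\pm$, and the inclusion in $(\ref{triang_ideals}.3)$ are all closed under linear sums.  Part~4 is then immediate: by part~3, the sum $I_\Dbl$ of all triangular ideals of $\Dbl$ is itself a triangular ideal, automatically maximal.

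Part~5 is the only step requiring minor care.  I would argue as follows.  First, $I_\Dbl\ne \Dbl$, since condition $(\ref{triang_ideals}.2)$ forces $I^\pm_\Dbl\subset \ker\epsilon^\pm$, so $\Dbl/I_\Dbl$ is non\dash zero.  Second, I claim $\Dbl/I_\Dbl$ is triangular\dash simple: by part~1 it has triangular decomposition, and any triangular ideal in the quotient pulls back (componentwise on $U^\pm/I^\pm_\Dbl$) to a triangular ideal in $\Dbl$ containing $I_\Dbl$, which by maximality equals $I_\Dbl$, so the quotient ideal is zero.  Finally, for uniqueness, if $\Dbl/J$ is another triangular\dash simple quotient then $J\subseteq I_\Dbl$, and the induced surjection $\Dbl/J\twoheadrightarrow\Dbl/I_\Dbl$ is a triangular morphism whose kernel $I_\Dbl/J$ is a triangular ideal in $\Dbl/J$ by part~2; triangular\dash simplicity forces this kernel to be $0$ (it cannot be the whole algebra since $\Dbl/I_\Dbl\ne 0$), so $J=I_\Dbl$.

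The only mildly delicate point I expect is the pullback of triangular ideals along the quotient map in parts~2 and~5: one must check that the pair $(p^-)^{-1}(\bar K^-),(p^+)^{-1}(\bar K^+)$ in $\Uminus,\Uplus$ satisfies condition $(\ref{triang_ideals}.3)$ starting from the corresponding property in the quotient.  This follows by lifting elements and using that $p^-\tensor \id_H\tensor p^+$ is a linear bijection onto the quotient triangular decomposition, so no genuine obstacle arises.
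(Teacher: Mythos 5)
Your proposal is correct and follows essentially the same route as the paper: all five parts are reduced to the characterisation of triangular ideals as pairs $(J^-,J^+)$ satisfying conditions $(\ref{triang_ideals}.1)$--$(\ref{triang_ideals}.3)$ of Proposition~\ref{triang_ideals}, which are then checked to be stable under images, sums, and quotients. The only cosmetic difference is in part~5, where you establish triangular-simplicity of $\Dbl/I_\Dbl$ by pulling triangular ideals back along the quotient map, while the paper phrases the same step as composing two triangular quotient maps; the kernel of the composite is exactly your preimage, so the two arguments coincide.
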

\begin{proof}
1. In the proof of the `if' part of 
   the Proposition, the quotient $\Dbl/J$ was explicitly constructed
   and shown to have triangular decomposition.

2. Let $\mu=\mu^-\tensor \id_H \tensor \mu^+$ be a triangular morphism
from $\Dbl=\Uminus \tensor H \tensor \Uplus$ onto $\Dbl'' = {U'}^-
\tensor H \tensor {U''}^+$.
It is easy to see that if $J^\pm\subset U^\pm$ are ideals satisfying
$(\ref{triang_ideals}.1)$--$(\ref{triang_ideals}.3)$, then 
$\mu^\pm(J^\pm)$ are ideals in ${U''}^\pm=\mu^\pm(U^\pm)$ which, too, satisfy 
$(\ref{triang_ideals}.1)$--$(\ref{triang_ideals}.3)$.

3. Let $\{J_\alpha\}$ be a family of triangular ideals in $\Dbl$.
Then for each index $\alpha$, $J_\alpha = J_\alpha^- \tensor H \tensor
\Uplus + \Uminus \tensor H \tensor J_\alpha^+$, where
$J_\alpha^\pm\subset U^\pm$ are a pair of two\dash sided
ideals satisfying $(\ref{triang_ideals}.1)-(\ref{triang_ideals}.3)$.
It is clear that $J^-=\sum_\alpha J^-_\alpha$,  $J^+=\sum_\alpha
J^+_\alpha$ is a pair of two\dash sided 
ideals in $\Uminus$, $\Uplus$ satisfying
$(\ref{triang_ideals}.1)-(\ref{triang_ideals}.3)$ (in particular,
$J^\pm\ne U^\pm$ because $J^\pm\subset \ker\epsilon^\pm$).
Thus, $J=\sum_\alpha J_\alpha = J^- H \Uplus + \Uminus H J^+$ is a
triangular ideal in $\Dbl$.

4. The ideal $I_\Dbl$ is the sum of all triangular ideals in $\Dbl$.

5. It follows from 2.\ that if a triangular ideal $J$ is not greatest
   in $\Dbl$, then $\Dbl/J$ has a non\dash trivial triangular ideal
   $I_\Dbl/J$, hence is not triangular\dash
   simple.
   It remains to observe that $\Dbl/I_\Dbl$ is triangular\dash simple,
   because a non\dash trivial triangular quotient map
   $\Dbl/I_\Dbl\twoheadrightarrow \Dbl''$ 
   would give rise to composite triangular quotient map
   $\Dbl\twoheadrightarrow \Dbl/I_\Dbl \twoheadrightarrow \Dbl''$
   whose kernel is a triangular ideal strictly larger than $I_\Dbl$.
\end{proof}

A key question about an algebra $\Dbl$ with triangular decomposition
over $H$ is to find its unique maximal triangular ideal $I_\Dbl$. 
We will now show that there is a natural ``upper
bound'' for $I_\Dbl$, given by kernels of the \emph{Harish\dash
  Chandra pairing} in $\Dbl$.

\subsection{The Harish-Chandra pairing}
\label{HCpairing}

Let $\Dbl  = \Uminus  \tensor H \tensor \Uplus $ be an algebra with
triangular decomposition over a bialgebra $H$. 
The \emph{Harish\dash Chandra projection map} is a 
linear map from $\Dbl $ onto $H$ defined as
\begin{equation*}
    \pr = \epsilonminus  \tensor \id_H \tensor \epsilonplus  
          \colon \Uminus  \tensor H \tensor \Uplus  
          \twoheadrightarrow H. 
\end{equation*}
The \emph{Harish\dash Chandra pairing} is an $H$\dash valued
bilinear pairing
 between $\Uplus $ and $\Uminus $:
\begin{equation*}
   (\cdot,\cdot)_H\colon \Uplus \times \Uminus \to H, 
\qquad 
  (\phi,b)_H = \pr(\phi b),
\end{equation*}
where the product of $\phi\in \Uplus $ and $b\in \Uminus $ is taken in
$\Dbl $.    

The Harish\dash Chandra projection map $\pr$ will in general not be an
algebra homomorphism. 
However, it is an $H$--$H$ bimodule map. 

\subsection{The kernels of the Harish-Chandra pairing}
\label{subsect:reduction}

Let $\Dbl = \Uminus  \tensor H \tensor \Uplus $ be any algebra
with triangular decomposition over $H$. Let 
\begin{equation*}
   \Iminus  = \{ b \in \Uminus  \mid (\phi,b)_H=0\ 
                 \forall  \phi\in \Uplus \},
\quad
   \Iplus   = \{ \phi \in \Uplus  \mid (\phi,b)_H=0\ 
                 \forall b \in \Uminus \}
\end{equation*}
be the kernels of the Harish\dash Chandra pairing in $\Uminus $ and
$\Uplus $.
We have the following 
\begin{proposition}
\label{HCkernels}
All triangular ideals in $\Dbl$ lie in $\Iminus \tensor H \tensor
\Uplus + \Uminus \tensor H \tensor \Iplus$.
\end{proposition}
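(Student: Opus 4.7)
My plan is to reduce the statement directly to the characterization of triangular ideals given by Proposition~\ref{triang_ideals}, using the fact that the Harish--Chandra projection $\mathrm{pr}=\epsilon^-\tensor\id_H\tensor\epsilon^+$ is built out of exactly the two characters $\epsilon^\pm$ that appear in condition $(\ref{triang_ideals}.2)$. So the whole argument should come out almost by inspection: the obstacle will be bookkeeping rather than any genuine difficulty.

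Concretely, let $J$ be an arbitrary triangular ideal in $\Dbl$. By Proposition~\ref{triang_ideals} we may write
\begin{equation*}
J = J^- \tensor H \tensor \Uplus + \Uminus \tensor H \tensor J^+,
\end{equation*}
where the two\dash sided ideals $J^\pm \subset U^\pm$ satisfy conditions $(\ref{triang_ideals}.1)$--$(\ref{triang_ideals}.3)$. It suffices to show $J^- \subseteq \Iminus$ and $J^+ \subseteq \Iplus$, since then $J$ is contained in $\Iminus H \Uplus + \Uminus H \Iplus$.

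For the first inclusion, take any $b\in J^-$ and any $\phi\in \Uplus$. I want to compute $(\phi,b)_H = \mathrm{pr}(\phi b)$. By condition $(\ref{triang_ideals}.3)$, the product $\phi \cdot b$ (computed in $\Dbl$) lies in $J^- \tensor H \tensor \Uplus + \Uminus \tensor H \tensor J^+$. Applying $\mathrm{pr}=\epsilon^-\tensor\id_H\tensor\epsilon^+$ to this sum and invoking condition $(\ref{triang_ideals}.2)$, namely $\epsilon^-(J^-)=0$ and $\epsilon^+(J^+)=0$, both summands are killed. Hence $(\phi,b)_H=0$ for all $\phi$, i.e.\ $b\in \Iminus$. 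The argument for $J^+\subseteq \Iplus$ is entirely symmetric: for $\phi\in J^+$ and $b\in \Uminus$, condition $(\ref{triang_ideals}.3)$ puts $\phi b$ into $J^- H\Uplus + \Uminus H J^+$, and $\mathrm{pr}$ annihilates it for the same reason.

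The one point to double\dash check is that the decomposition of $\phi b$ coming from condition $(\ref{triang_ideals}.3)$ really is a decomposition inside the triangular tensor product $\Uminus\tensor H\tensor \Uplus$, so that applying $\mathrm{pr}$ componentwise makes sense; but this is built into the statement of $(\ref{triang_ideals}.3)$ as written. No further ingredients from the theory of braided doubles or quasi\dash Yetter\dash Drinfeld modules are required --- the proposition is really a statement about arbitrary algebras with triangular decomposition over a bialgebra, and the proof works at that level of generality.
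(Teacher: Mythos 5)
Your proof is correct and follows essentially the same route as the paper's: write the triangular ideal as $J^-\tensor H\tensor \Uplus+\Uminus\tensor H\tensor J^+$ via Proposition~\ref{triang_ideals}, use condition $(\ref{triang_ideals}.3)$ to place $\Uplus\cdot J^-$ and $J^+\cdot\Uminus$ inside $J$, and use condition $(\ref{triang_ideals}.2)$ to see that $\pr$ annihilates $J$, so $J^\pm$ land in the kernels of the Harish--Chandra pairing. No gaps.
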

\begin{proof}
A triangular ideal in $\Dbl$ is of the form $J^-\tensor H \tensor
\Uplus + \Uminus \tensor H \tensor J^+$, where $J^\pm$ are ideals in
$U^\pm$ satisfying
$(\ref{triang_ideals}.1)$--$(\ref{triang_ideals}.3)$. 
In particular, $(\ref{triang_ideals}.2)$ says that 
$J^\pm \subset \ker\epsilon^\pm$, therefore $J$ lies in the kernel of
the Harish\dash Chandra projection $\pr=\epsilonminus \tensor \id_H
\tensor \epsilonplus$; and $(\ref{triang_ideals}.3)$ says that 
$\Uplus \cdot J^- \subset J$, hence 
$(\Uplus,J^-)_H= \pr(\Uplus \cdot J^-) = 0$ and $J^-\subset \Iminus$. 
Similarly, $J^+\subset \Iplus$. 
\end{proof}

Thus, if the Harish\dash Chandra pairing in $\Dbl$ is non\dash
degenerate, the algebra $\Dbl$ is automatically triangular\dash
simple. The converse is not true (and has explicit counterexamples). 
Note that $\Iminus H \Uplus+\Uminus H \Iplus$ 
is not even guaranteed to be an ideal in~$\Dbl$.



\begin{thebibliography}{GGOR}

\bibitem[AF]{AF}
N.~Andruskiewitsch, F.~Fantino, 
On pointed Hopf algebras associated with unmixed conjugacy classes in
$\mathbb{S}_m$, \textsl{J.\ Math.\ Phys.} 48 (2007).

\bibitem[AS1]{AS1}
N.~Andruskiewitsch, H.-J.~Schneider,
Finite Quantum Groups and Cartan Matrices,
\textsl{Adv.\ Math.} 154 (2000), 1--45.

\bibitem[AS2]{AS2}
N.~Andruskiewitsch, H.-J.~Schneider,  
Pointed Hopf algebras, in:  
\textsl{New directions in Hopf algebras},  MSRI Publ., 
43, Cambridge Univ. Press, Cambridge, 2002,
1--68.

\bibitem[BS]{BS}
S.~Baaj, G.~Skandalis,
Unitaires multiplicatifs et dualit\'e pour les produits crois\'es de 
$C^*$-alg\'ebres. 
\textsl{Ann.\ Sci.\ \'Ecole Norm.\ Sup.} (4) 26 (1993), no.~4, 
425--488. 


\bibitem[BEER]{BEER}
L.~Bartholdi, B.~Enriquez, P.~Etingof, E.~Rains.
Groups and Lie algebras corresponding to the Yang-Baxter equations, 
\textsl{J.\ Algebra} 305 (2006), no.\ 2, 742--764. 

\bibitem[B]{B}
Y.~Bazlov,
Nichols-Woronowicz algebra model for Schubert calculus on
Coxeter groups, \textsl{J.~Algebra} 297 (2006), no.~2, 372--399.

\bibitem[BBD]{BBD}
J.~Beckers, Y.~Brihaye, N.~Debergh,
On realizations of `nonlinear' Lie algebras by differential
operators, \textsl{J.\ Phys.\ A: Math.\ Gen.} 32 (1999), 2791--2803.

\bibitem[BGG]{BGG}
I.~N.~Bernstein, I.~M.~Gelfand, S.~I.~Gelfand, 
A certain category of ${\mathfrak g}$-modules.  
\textsl{Funkcional.\ Anal.\ i Prilo\v zen.} 10 (1976), no.~2, 1--8. 

\bibitem[BFG]{BFG}
R.~Bezrukavnikov, M.~Finkelberg, V.~Ginzburg, 
Cherednik algebras and Hilbert schemes in characteristic $p$. With an
appendix by Pavel Etingof.  
\textsl{Represent. Theory}  10  (2006), 254--298 (electronic).

\bibitem[C]{Ch}
I.~Cherednik, Double affine Hecke algebras, Knizhnik-Zamolodchikov
equations, and Macdonald operators. 
\textsl{IMRN} (\textsl{Duke Math.\ J.}) 9 (1992), 171--180.

\bibitem[CGW]{CGW}
M.~Cohen, S.~Gelaki, S.~Westreich, 
Hopf algebras, in: \textsl{Handbook of algebra}, vol.\ 4
(M. Hazewinkel, Ed.), Elsevier, 2006, 173--239. 


\bibitem[D1]{D} V.~Drinfeld,
Degenerate affine Hecke algebras and Yangians. 
\textsl{Funktsional. Anal. i Prilozhen.} 20 (1986), no. 1, 69--70.

\bibitem[D2]{D2} V.~Drinfeld, 
On quadratic commutation relations in the quasiclassical case, 
\textsl{Mathematical physics, functional analysis} (Russian), 25--34,
143, ``Naukova Dumka'', Kiev, 1986. 
Selecta Math.\ Sovietica, 11 (1992), 317--326.

\bibitem[DO]{DO}
C.~F.~Dunkl, E.~M.~Opdam,
Dunkl operators for complex reflection groups. 
\textsl{Proc.\ London Math.\ Soc. (3)} 86 (2003), no.\ 1, 70--108.

\bibitem[E]{E}
P.~Etingof, Exploring noncommutative algebras via deformation theory,
preprint math.QA/0506144. 

\bibitem[EG]{EG}
P.~Etingof, V.~Ginzburg,
Symplectic reflection algebras, Calogero-Moser space, 
and deformed Harish-Chandra homomorphism.  
\textsl{Invent. Math.}  147  (2002),  no.\ 2, 243--348. 

\bibitem[FRT]{FRT}
N.~Yu.~Reshetikhin, L.~A.~Takhtadjian, L.~D.~Faddeev, Quantization of
Lie groups and Lie algebras. \textsl{Algebra i Analiz} 1 (1989),
178--206. (English transl.: \textsl{Leningrad Math.~J.} 1 (1990), 193--225)

\bibitem[FK]{FK}
S.~Fomin, A.~N.~Kirillov,
Quadratic algebras, Dunkl elements, and Schubert calculus. 
Advances in geometry, 147--182, \textsl{Progr.\ Math.}, 172, 
Birkh\" auser Boston, Boston, MA, 1999. 

\bibitem[G]{Ginzb_prim}
V.~Ginzburg, 
On primitive ideals,
\textsl{Selecta Math.}\ (\textsl{N.S.}) 9 (2003), no.\ 3, 379--407.

\bibitem[GGOR]{GGOR}
V.~Ginzburg, N.~Guay, E.~Opdam, R.~Rouquier. 
On the category $\mathcal O$ for rational Cherednik algebras. 
\textsl{Invent.\ Math.} 154 (2003), no.~3, 617--651.

\bibitem[Go]{Go}
I.~Gordon,
Baby Verma modules for rational Cherednik algebras. 
\textsl{Bull.\ London Math.\ Soc.} 35 (2003), no.\ 3, 321--336. 

\bibitem[H]{H}
I.~Heckenberger, 
Classification of arithmetic root systems, 
preprint math.QA/0605795. 

\bibitem[J]{Joseph}
A.~Joseph, 
A generalization of Quillen's lemma and its application to the Weyl algebras, 
\textsl{Israel J. Math.} 28 (1977), no.~3, 177--192.

\bibitem[JS1]{JStannaka}
A.~Joyal, R.~Street,
An introduction to Tannaka duality and quantum groups, in:
\textsl{Category theory (Como, 1990)}, 
413--492, Lecture Notes in Math., 1488, Springer, Berlin, 1991. 

\bibitem[JS2]{JS}
A.~Joyal, R.~Street,
Braided tensor categories. 
\textsl{Adv.\ Math.} 102 (1993), no.\ 1, 20-78. 

\bibitem[KeM]{KeM}
G.~Kemper, G.~Malle, The finite irreducible linear groups with
polynomial ring of invariants. \textsl{Transform.\ Groups} 2 (1997), no.\ 1,
57--89.  

\bibitem[Kh]{K}
A.~Khare, Axiomatic framework for the BGG category $\mathcal O$,
preprint math.RT/0502227.   

\bibitem[KiM1]{KMexterior}
A.N.~Kirillov, T.~Maeno,
Exterior differential algebras and flat connections on
Weyl groups,
\textsl{Proc.\ Japan Acad.\ Ser.\ A Math.\ Sci.}  81  (2005),  no.~2, 30--35.

\bibitem[KiM2]{KM}
A.~N.~Kirillov, T.~Maeno,
Nichols\dash Woronowicz model of coinvariant algebra of complex
reflection groups, preprint math.QA/0703748.


\bibitem[La]{La}
F.~Latour, 
Representations of rational Cherednik algebras of rank one in positive
characteristic.  
\textsl{J.\ Pure Appl.\ Algebra}  195  (2005),  no.~1, 97--112.  

\bibitem[Lu]{Lu}
J.-H.~Lu,
On the Drinfeld double and the Heisenberg double of a Hopf algebra.
\textsl{Duke Math.\ J.}\ 74 (1994), no.\ 3, 763--776.

\bibitem[Lus]{L}
G.~Lusztig,
\textsl{Introduction to Quantum Groups}. 
Progress in Mathematics, vol.\ 110, Birkh\"auser Boston 1993.
 
\bibitem[ML]{MacLane}
S.~Mac Lane,
\textsl{Categories for the working mathematician}, 
Second edition, Graduate Texts in Mathematics, 5, Springer\dash Verlag, 
New York, 1998.


\bibitem[Maj1]{Mdoubles}
S.~Majid,
Doubles of quasitriangular Hopf algebras, 
\textsl{Comm. Algebra} 19 (1991), no.\ 11, 3061--3073.

\bibitem[Maj2]{Mbraidedgps}
S.~Majid,
Braided groups.
\textsl{J.\ Pure Appl.\ Algebra}  86  (1993),  no.\ 2, 187--221.


\bibitem[Maj3]{Mcalculus}
S.~Majid, 
Free braided differential calculus, braided binomial theorem, and the
braided exponential map.
\textsl{J.\ Math.\ Phys.}\ 34 (1993), no.\ 10, 4843--4856.

\bibitem[Maj4]{Mbook}
S.~Majid,
\textsl{Foundations of quantum group theory}. Cambridge
University Press, 1995 (Paperback ed.\ 2000).

\bibitem[Maj5]{Menveloping}
S.~Majid, Double-bosonization of braided groups and the construction
of $U_q(\mathfrak g)$, 
\textsl{Math.\ Proc.\ Camb.\ Phil.\ Soc.} 125 (1999), 151--192. 

\bibitem[Maj6]{Mcompanion}
S.~Majid, 
\textsl{A Quantum Groups Primer}. London Mathematical
Society Lecture Note Series 292, Cambridge
University Press, 2002.


\bibitem[Maj7]{Mnoncomm}
S.~Majid, 
Noncommutative differentials and Yang-Mills on permutation
groups $S_N$,
Lect.\ Notes Pure Appl.\ Maths, 239, Marcel Dekker,
2004, pp.\ 189--214.


\bibitem[Man]{Man}
Yu.~I.~Manin, 
\textsl{Quantum groups and noncommutative geometry}.
Universit\' e de Montr\' eal, Centre de Recherches Math\' ematiques, 
Montreal, QC, 1988. 


\bibitem[Mi]{Mi}
G.~Militaru,
Heisenberg double, pentagon equation, structure and classification of 
finite\dash dimensional Hopf algebras.
\textsl{J. London Math. Soc.} (2) 69 (2004), no.~1, 44--64.

\bibitem[Mon]{Mon}
S.~Montgomery,
\textsl{Hopf algebras and their actions on rings}.
CBMS Regional Conference Series in Mathematics, 82, 
American Mathematical Society, Providence, RI, 1993.

\bibitem[N]{N}
W.D.~Nichols, 
Bialgebras of type one,
Comm.\ Algebra 6 (15) (1978), 1521--1552. 

\bibitem[R]{R}
M.~Rosso, Quantum groups and quantum shuffles. 
\textsl{Invent.\ Math.} 133 (1998), no.\ 2, 399--416. 

\bibitem[Sch1]{Sch}
P.~Schauenburg,
On coquasitriangular Hopf algebras and the quantum Yang-Baxter
equation. 
\textsl{Algebra Berichte}, 67, Verlag Reinhard Fischer, M\"unich, 1992.

\bibitem[Sch2]{Sch2}
P.~Schauenburg, 
Turning monoidal categories into strict ones.  
\textsl{New York J.\ Math.}  7  (2001), 257--265. 

\bibitem[STS]{STS}
M.~A.~Semenov-Tian-Shansky, 
Poisson Lie groups, quantum duality principle, and the quantum double, 
\textit{in:}
Mathematical aspects of conformal and topological field theories and
quantum groups (South Hadley, MA, 1992),  219--248, \textsl{Contemp. Math.},
175, Amer.\ Math.\ Soc., Providence, RI, 1994. 

\bibitem[Sm]{Smith}
S.~P.~Smith, A class of algebras similar to the enveloping algebra of
$\mathrm{sl}(2)$, \textsl{Trans.\ AMS} 322 (1990), no.\ 1, 285--314.  

\bibitem[So]{So}
Y.~Sommerh\" auser, On Kaplansky's conjectures, \textit{in:} 
Interactions between ring theory and representations of algebras
(Murcia), 
393--412, \textsl{Lecture Notes in Pure and Appl.\ Math.}, 210, 
Dekker, New York, 2000.

\bibitem[Sw]{Sw}
M.~E.~Sweedler, 
\textsl{Hopf algebras}.
Mathematics Lecture Note Series, W. A. Benjamin, Inc., New York 1969.


\bibitem[Tak]{Tak}
M.~Takeuchi, 
Survey of braided Hopf algebras, 
in:
\textsl{New trends in Hopf algebra theory (La Falda,
 1999)}, 
Contemp.\ Math., 267,
Amer.\ Math.\ Soc., Providence, RI, 2000,
pp.\ 301--323.

\bibitem[U]{U}
K.-H.~Ulbrich,
On Hopf algebras and rigid monoidal categories.
\textsl{Israel J.\ Math.}\ 72 (1990), no.\ 1-2, 252--256.

\bibitem[Wh]{W}
J.H.C.~Whitehead, 
Combinatorial homotopy. II, \textsl{Bull.\ Amer.\ Math.\ Soc.} 55
(1949), 453--496.

\bibitem[Wo]{Wo}
S.~L.~Woronowicz,  
Differential calculus on compact matrix pseudogroups (quantum groups).  
\textsl{Comm.\ Math.\ Phys.}\  122 (1989),  no.\ 1, 125--170.

\bibitem[Y]{Y}
D.-N.~Yetter,
Quantum groups and representations of monoidal categories.  
\textsl{Math.\ Proc.\ Cambridge Philos.\ Soc.}\  108  (1990),  
no.\ 2, 261--290.


\end{thebibliography}
\end{document}